\documentclass[12pt, reqno]{amsart}
\usepackage{}
\usepackage{amsfonts}
\usepackage{bbm}
\usepackage{amscd,amsfonts}
\usepackage{amssymb, eucal, amsfonts, amsmath, xypic, latexsym}
\usepackage{pifont}
\usepackage{mathrsfs,color}
\usepackage{amsthm,indentfirst,bm,fancyhdr,dsfont}
\usepackage{graphicx}
\usepackage[all]{xy}
\usepackage[CJKbookmarks=true]{hyperref}

\setlength{\textheight}{8.6in} \setlength{\textwidth}{35pc}
\setlength{\topmargin}{-0.1in} \setlength{\footskip}{0.2in}
\setlength{\oddsidemargin}{.573125pc}
\setlength{\evensidemargin}{\oddsidemargin}

\newtheorem{theorem}{Theorem}[section]
\newtheorem{lemma}[theorem]{Lemma}
\newtheorem{prop}[theorem]{Proposition}
\newtheorem{corollary}[theorem]{Corollary}
\theoremstyle{definition}
\newtheorem{conj}[theorem]{Conjecture}
\newtheorem{defn}[theorem]{Definition}

\newtheorem{rem}[theorem]{Remark}

\numberwithin{equation}{section}

\def\ggg{\mathfrak{g}}

\def\ggg{\mathfrak{g}}
\def\mmm{\mathfrak{m}}
\def\ppp{\mathfrak{p}}

\def\bbc{\mathbb{C}}
\def\bbf{\mathbb{F}}
\def\bbz{\mathbb{Z}}

\def\bbk{{\mathds{k}}}

\def\bo{{\bar 1}}
\def\bz{{\bar 0}}
\def\ev{{\text{ev}}}

\def\sfr{\textsf{r}}

\def\Lie{\text{Lie}}
\def\ad{\text{ad}}

{\vskip-\lastskip\medskip
  \noindent
  {\em #1.}\enspace
  }%
{\qed\par\medskip
  }

\begin{document}
\title[Minimal $W$-superalgebras and modular representations]{Minimal $W$-superalgebras and the modular representations of basic Lie superalgebras}
\author{Yang Zeng and Bin Shu}
\thanks{\nonumber{{\it{Mathematics Subject Classification}} (2010):
Primary 17B50, Secondary 17B05, 17B35 and 17B81.
 {\it{The Key words}}: finite $W$-(super)algebras, basic (classical) Lie superalgebras, minimal nilpotent elements, modular representations of Lie (super)algebras, Kac-Weisfeiler conjecture (property) for modular Lie (super)algebras.
 This work is  supported partially by the NSFC (Nos. 11701284; 11671138; 11401312), Shanghai Key Laboratory of PMMP (No. 13dz2260400),  Natural Science Foundation for Colleges and Universities in Jiangsu Province (No. 17KJB110004). }}
\address{School of Science, Nanjing Audit University, Nanjing, Jiangsu Province 211815, China}
\email{zengyang214@163.com}
\address{Department of Mathematics, East China Normal University, Shanghai 200241, China}
\email{bshu@math.ecnu.edu.cn}
\begin{abstract}
Let $\ggg=\ggg_\bz+\ggg_\bo$ be a basic Lie superalgebra over $\mathbb{C}$, and $e$ a minimal nilpotent element in $\ggg_\bz$. Set $W_\chi'$ to be the refined $W$-superalgebra associated with the pair $(\ggg,e)$, which is called a minimal $W$-superalgebra. In this paper we present a set of explicit generators of minimal $W$-superalgebras and the commutators between them. In virtue of this, we show that over an algebraically closed field $\bbk$ of characteristic $p\gg0$, the lower bounds of dimensions in the modular representations of basic Lie superalgebras with minimal nilpotent $p$-characters are attainable. Such lower bounds are  indicated  in \cite{WZ} as the super Kac-Weisfeiler property.
\end{abstract}
\maketitle
\setcounter{tocdepth}{1}
\tableofcontents
\section{Introduction}

This work  is a sequel to \cite{ZS2}-\cite{ZS4}. In \cite{ZS4}, we asserted that the lower bounds of modular representation dimensions stated in the abstract are attainable under the assumption that the associated complex finite $W$-superalgebras admit one- or two-dimensional representations. In the present paper, we certify the assumption in the case of minimal nilpotent elements.

\subsection{} A finite $W$-algebra $U(\ggg,e)$ is a certain associative algebra associated with a complex semisimple Lie algebra $\mathfrak{g}$ and a nilpotent element $e\in{\ggg}$. The study of finite $W$-algebras can be traced back to Kostant's work in the case when $e$ is regular \cite{Ko}, then a further study was done by Lynch in the case when $e$ is arbitrary even nilpotent element (cf. \cite{Ly}). Premet developed finite $W$-algebras in full generality in \cite{P2}. On his way to proving the celebrated Kac-Weisfeiler conjecture for Lie algebras of reductive groups in \cite{P1}, Premet first constructed the modular version of finite $W$-algebras in \cite{P2}. By means of a complicated but natural ``admissible'' procedure, the finite $W$-algebras over the field of complex numbers were introduced in \cite{P2}, which shows that they are filtered deformations of the coordinate rings of Slodowy slices.

Aside from the advances in finite $W$-algebras over complex numbers, the modular theory of finite $W$-algebras has also developed excitingly. It is worth noting  that in \cite{P7} Premet proved that if the $\bbc$-algebra $U(\ggg,e)$ has a one-dimensional representation, then under the assumption $p\gg 0$ for the ground algebraically closed field $\bbk$ of positive characteristic $p$, the reduced enveloping algebra $U_\chi(\ggg_\bbk)$ of the modular counterpart $\ggg_\bbk$ of $\ggg$ possesses an irreducible module of dimension $d(e)$ (where $\chi$ is the linear function on $\ggg_{{\bbk}}$ corresponding to $e$, and $d(e)$ is half of the dimension of the orbit $G_\bbk\cdot \chi$ for the simple, simply connected algebraic group $G_\bbk$ with $\ggg_\bbk=\Lie(G_\bbk)$), which is a lower bound predicted by the Kac-Weisfeiler conjecture mentioned above.

The existence of one-dimensional representations for $U(\ggg,e)$ associated with $\ggg=\text{Lie}(G)$ of a simple algebraic group $G$ over $\mathbb{C}$ was conjectured by Premet, and confirmed in the classical cases by Losev in \cite[Theorem 1.2.3(1)]{L3} (see also \cite[\S6]{L1}). Goodwin-R\"{o}hrle-Ubly \cite{GRU} proved that the finite $W$-algebras associated with exceptional Lie algebras $E_6,E_7,F_4,G_2$, or $E_8$ with $e$ not rigid, admit one-dimensional representations (see also \cite{P7}). Finally Premet solved this problem completely in \cite{P9}.

\subsection{}
The theory of finite $W$-superalgebras was developed in the same time.  In the work of Sole and Kac \cite{SK}, finite $W$-superalgebras were defined in terms of BRST cohomology under the background of vertex algebras and quantum reduction. The theory of finite $W$-superalgebras for the queer Lie superalgebras over an algebraically closed field of characteristic $p>2$ was first introduced and discussed by Wang and Zhao in \cite{WZ2}, then studied by Zhao over the field of complex numbers in \cite{Z2}. The topics on finite $W$-superalgebras attracted many researchers, and the structure theory of $W$-superalgebras is developed in various articles (cf.  \cite{BBG2},  \cite{BBG3}, \cite{Peng2}, \cite{Peng3}, \cite{PS} and \cite{PS2}, $\sl{etc.}$).

In mathematical physics, $W$-(super)algebras are divided into four types: classical affine, classical finite, quantum affine, and quantum finite $W$-(super)algebras. These types of algebras are endowed with Poisson vertex algebras, Poisson algebras, vertex algebras, and associative algebras structures, respectively. In the present paper, finite $W$-superalgebras will be referred to the so-called quantum finite $W$-(super)algebras.

Apart from the ones associated with principal nilpotent elements, the most elementary examples of finite $W$-superalgebras are those ones corresponding to (even) minimal nilpotent elements of a given Lie superalgebra $\ggg$. For the counterpart associated with Lie algebras, Premet described generators of finite $W$-algebras associated with minimal nilpotent elements in \cite{P3}. Under the background of vertex algebras and quantum reduction, similar results for classical affine $W$-algebras can be found in \cite{SKV} and \cite{suh2}.

For the case of basic Lie superalgebras, the related study associated with (even) minimal nilpotent elements is made  mainly in the context of vertex  operators and quantum reduction (see \cite{AKFPP}, \cite{KRW}, \cite{KW}, {\sl{etc}}.).
Recently, Suh described the generators of a classical affine $W$-(super)algebra associated with a minimal nilpotent element explicitly in \cite{suh}, and also the ones of quantum finite $W$-superalgebras analogously. It is worthwhile reminding  that   there are some errors in the presentation of generators and their relations of finite $W$-superalgebras in \cite{suh}. In the present paper, we will rewrite these generators and their commutators in an analogue of Premet's strategy of finite $W$-algebras case in \cite{P3}.

\subsection{}
In \cite{WZ}, the authors initiated the study of modular representations of basic Lie superalgebras over an algebraically closed field of positive characteristic, formulating the super Kac-Weisfeiler property for those Lie superalgebras as well as presenting the definition of modular $W$-superalgebras. 

\subsection{}
Based on Premet's and Wang-Zhao's work as mentioned above, in \cite{ZS2} we presented the PBW  theorem for the finite $W$-superalgebras over $\bbf$ ($\bbf=\bbc$ or $\bbk$ with characteristic $p\gg0$), which shows that the construction of finite $W$-superalgebras can be divided into two cases by virtue of the so-called judging  parity of the dimension of certain specific subspace for  basic Lie superalgebra $\mathfrak{g_\bbf}$. The situation of finite $W$-superalgebras is significantly different from that of finite $W$-algebras at the odd judging parity.

To be explicit, for a given complex basic classical Lie superalgebra  ${\ggg}={\ggg}_{\bar0}+{\ggg}_{\bar1}$ and a nilpotent element $e\in\ggg_\bz$ (thereby a linear function $\chi$ in $\ggg^*_\bz$. see \S\ref{0.1.1}),
one has a so-called $\chi$-admissible algebra $\mmm$ (see (\ref{admissible alg})).
Consider a generalized Gelfand-Graev ${\ggg}$-module associated with $\chi$
 $$Q_\chi:=U({\ggg})\otimes_{U(\mathfrak{m})}{\bbc}_\chi,$$
where ${\bbc}_\chi={\bbc}1_\chi$ is a one-dimensional  $\mathfrak{m}$-module such that $x.1_\chi=\chi(x)1_\chi$ for all $x\in\mathfrak{m}$.
A finite $W$-superalgebra $U(\ggg,e)$ is by definition equal to    $(\text{End}_{\ggg}Q_{\chi})^{\text{op}}$ which is isomorphic to $Q_{\chi}^{\text{ad}\,{\mmm}}$ (see \cite[Theorem 2.12]{ZS2}),
where $Q_{\chi}^{\text{ad}\,{\mmm}}$ is the invariant subalgebra of $Q_{\chi}$ under the adjoint action of ${\mmm}$.  A PBW theorem of $U(\ggg,e)$ (see \cite[Theorem 0.1]{ZS2}) shows that the structure of $U(\ggg,e)$ is crucially dependent on the parity of a discriminant number $\mathsf{r}$ (the meaning of this notation can be seen in the above of (\ref{extend admiss alg})). The parity of $\mathsf{r}$ is therefore called the judging parity.

\subsection{} When we turn to finite-dimensional representations of finite $W$-superalgebras over complex numbers, their minimal dimensions will be crucial to small representations of modular Lie superalgebras.
Under an assumption on the minimal dimensions of representations for complex finite $W$-superalgebrs, we proved in \cite[Theorem 1.6]{ZS4}  the accessibility of lower-bounds of dimensions for modular representations of basic Lie superalgebras (see the next subsection). Such an assumption is also predicted to be true, as a conjecture listed below (as an analogy of Premet's work on finite $W$-algebras).

\begin{conj}\label{conjectureold}(\cite{ZS4})
Let ${\ggg}$ be a basic Lie superalgebra over ${\bbc}$. Then the following statements hold:
\begin{itemize}
\item[(1)] when $\sfr$ is even, the finite $W$-superalgebra $U({\ggg},e)$ affords a one-dimensional representation;
\item[(2)] when $\sfr$ is odd, the finite $W$-superalgebra $U({\ggg},e)$ affords a two-dimensional representation.
\end{itemize}
\end{conj}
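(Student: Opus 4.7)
The plan is to follow Premet's strategy from the pure Lie algebra setting \cite{P3,P7} and adapt it to the super context using the explicit presentation of the minimal $W$-superalgebra $W_\chi'$ that the paper establishes in the preceding sections. Attached to the minimal nilpotent $e$ there is an $\mathfrak{sl}_2$-triple $(e,h,f)$ and a good $\hf\bbz$-grading $\ggg=\bigoplus_i\ggg(i)$ with $\ggg(-2)=\bbc f$ and $\ggg(2)=\bbc e$; the pairing $\langle x,y\rangle_e:=\chi([x,y])$ endows $\ggg(-1)$ with a nondegenerate super-skew-symmetric form whose parity features decide whether $\sfr$ is even or odd. By the PBW theorem of \cite{ZS2} and the explicit commutator formulas to be proved, $W_\chi'$ is generated by a family of elements indexed by a homogeneous basis of the centralizer $\ggg^e$, graded by $\ad h$-weights, with quadratic corrections involving the form $\langle\cdot,\cdot\rangle_e$ and a Casimir-type element of $\ggg^e(0)$.

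For Part (1), with $\sfr$ even, I would seek a one-dimensional representation by sending to zero every generator attached to a basis vector of $\ggg^e$ of strictly positive $\ad h$-weight, together with the generators coming from the non-central part of $\ggg^e(0)$. This reduces matters to producing a character on the subalgebra generated by the center of $\ggg^e(0)$ and the distinguished degree-$2$ generator corresponding to $e$. Exploiting the reductive structure of $(\ggg^e(0))_{\bz}$ inside a Levi subalgebra of $\ggg_{\bz}$, such a character should exist with the values on the center tuned so as to absorb the quadratic correction terms in the commutator relations. The remaining task is to check that this choice of scalars is compatible with every commutator formula produced in the preceding section.

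For Part (2), with $\sfr$ odd, the parity obstruction forces a Clifford-type phenomenon: one of the generators of $W_\chi'$, call it $\Theta$, is odd and satisfies a relation of the form $\Theta^2=P$ where $P$ is a nonzero polynomial in central generators, reflecting the fact that the responsible $\bbz_2$-homogeneous piece of $\ggg(-1)$ has odd dimension and therefore admits no Lagrangian subspace. Since a rank-one Clifford superalgebra has a unique irreducible module of dimension $2$, no one-dimensional representation of $W_\chi'$ can exist. A two-dimensional representation is then built by combining the one-dimensional character constructed as in Part (1) on the subalgebra generated by the degree-$0$ and degree-$2$ even generators with the natural two-dimensional spin-type module of the Clifford factor generated by $\Theta$; the scalar $P$ must be arranged to be nonzero under the chosen character in order to avoid falling back into the even case.

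The principal obstacle I expect is the consistency check: one must show that the scalars assigned to the degree-$0$ generators can be chosen so that all quadratic correction terms in the commutator formulas evaluate to zero (respectively, to the prescribed Clifford constant) on the proposed module. These corrections involve pairings over $\ggg(-1)$ and a partial-Casimir element of $\ggg^e(0)$, and their vanishing is the super analog of the delicate identities Premet must verify in \cite{P3}. For the classical families $\gl(m|n)$ and $\mathfrak{osp}(m|2n)$ one can expect a uniform treatment via matrix realizations, whereas for the exceptional basic superalgebras $D(2,1;\alpha)$, $G(3)$ and $F(3|1)$ a case-by-case verification using the explicit generators seems unavoidable, in analogy with the use of structural and computational arguments in \cite{GRU,P7,P9}.
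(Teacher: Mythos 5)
Your overall architecture --- explicit Premet-style generators for the minimal $W$-superalgebra, a codimension-one ideal obtained by killing generators, and a Clifford-type doubling to pass from dimension one to dimension two when $\sfr$ is odd --- is the same as the paper's. However, Part (2) contains a genuine conceptual error, and the step you yourself identify as the principal obstacle is exactly the part that is not resolved the way you propose. In the paper the refined $W$-superalgebra $W_\chi'=Q_\chi^{\ad\,\mmm'}$ is defined precisely so that the odd ``square root of a scalar'' is excluded: $\gr(W_\chi')\cong S(\ggg^e)$ for either parity of $\sfr$ (Corollary \ref{isogr}), and Proposition \ref{minimal nilpotent} shows that $W_\chi'$ \emph{does} afford a one-dimensional representation when $e$ is minimal, also for odd $\sfr$. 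The odd element squaring to a nonzero constant is $v_{\frac{\sfr+1}{2}}\otimes1_\chi$ with $v_{\frac{\sfr+1}{2}}^2\otimes1_\chi=\frac{1}{2}\otimes1_\chi$, and it lies in $U(\ggg,e)=Q_\chi^{\ad\,\mmm}\supsetneq W_\chi'$, not in $W_\chi'$. So your assertion that ``no one-dimensional representation of $W_\chi'$ can exist'' contradicts the paper's main result; the correct statement concerns $U(\ggg,e)$. The passage from the one-dimensional representation of $W_\chi'$ (with kernel $I$) to a two-dimensional irreducible representation of $U(\ggg,e)$ is Proposition \ref{intromain}: $I\oplus I(v_{\frac{\sfr+1}{2}}\otimes1_\chi)$ is a two-sided ideal of codimension $2$. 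Note also that the square of the Clifford generator is the fixed scalar $\frac12$, not a ``polynomial in central generators'' whose nonvanishing must be arranged.

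The consistency check is where the real work lies, and it is not handled by tuning scalars on the centre of $\ggg^e(0)$ nor by case-by-case verification over the classical and exceptional families. The paper proves (Theorem \ref{maiin1}(3), via the lengthy computation of Proposition \ref{1commutator}, which also corrects errors in Suh's formulas) that for $w_1,w_2\in\ggg^e(1)$ the bracket $[\Theta_{w_1},\Theta_{w_2}]$ equals $\frac{1}{2}([w_1,w_2],f)(C-\Theta_{\text{Cas}}-c_0)$ plus a sum of products $\Theta_a\Theta_b$ with $a,b\in\ggg^e(0)$. Because every term on the right is either a multiple of $C-c_0$ or a product of two generators, the ideal generated by all $\Theta_x$ with $x\in\ggg^e(0)\oplus\ggg^e(1)$ together with $C-c_0$ closes up when \emph{every} generator is sent to zero; no tuning is needed (and for $\ggg$ not of type $A(m|n)$ the algebra $\ggg^e(0)$ is semisimple, so there is no centre to tune). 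The only non-uniform input is the proof that the error term $b(w_1,w_2)$ is a scalar multiple of $([w_1,w_2],f)$: this uses $\ggg^e(0)_{\bz}$-invariance of $b$ together with the irreducibility of $\ggg^e(1)$ as a $\ggg^e(0)_{\bz}$-module from the Kac--Roan--Wakimoto tables, with the single case $\mathfrak{sl}(2|2)/\bbc I$ treated by direct computation. Without establishing this precise form of the commutator, your plan does not close.
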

For the case ${\ggg}$ is of type $A(m,n)$, Conjecture \ref{conjectureold} was confirmed in \cite[Proposition 4.7]{ZS4}, which was accomplished by conversion from the verification of the attainableness of lower-bounds of modular dimensions for basic Lie superalgebras of the same type by some direct computation; see \cite{ZS3} for more details. For the case ${\ggg}$ is of type $B(0,n)$ with $e$ being a regular nilpotent element in ${\ggg}$, we certified  Conjecture \ref{conjectureold} in \cite[Proposition 5.8]{ZS4}. In the present paper, we will certify this conjecture for minimal nilpotent elements.


\subsection{}\label{1.5} Let us recall the lower bounds of dimensions in  modular representation for  basic Lie superalgebras. Let $(\cdot,\cdot)$ be a bilinear form on ${\ggg}_{\bbk}$ which is induced from that on ${\ggg}$, and $\chi\in({\ggg}_{\bbk})^*_{\bar0}$ be the nilpotent $p$-character of ${\ggg}_{\bbk}$ corresponding to $\bar e\in ({\ggg}_{\bbk})_{\bar0}$ such that $\chi(\bar y)=(\bar e,\bar y)$ for any $\bar y\in{\ggg}_{\bbk}$; where $\bar{e}=e\otimes1$ is obtained from $e\in\ggg$ by ``reduction modulo $p$" .

Set $d_0=\text{dim}\,({\ggg}_{\bbk})_{\bar 0}-\text{dim}\,({\ggg}_{\bbk}^{\bar e})_{\bar 0}$ and $d_1=\text{dim}\,({\ggg}_{\bbk})_{\bar 1}-\text{dim}\,({\ggg}_{\bbk}^{\bar e})_{\bar 1}$, where ${\ggg}_{\bbk}^{\bar e}$ denotes the centralizer of $\bar e$ in ${\ggg}_{\bbk}$. For any real number $a\in\mathbb{R}$, let $\lfloor a\rfloor$ denote the least integer upper bound of $a$. In \cite[Theorem 5.6]{WZ}, Wang-Zhao showed that the dimension of any irreducible representation of $\ggg_{\bbk}$ is divisible by the number $p^{\frac{d_0}{2}}2^{\lfloor\frac{d_1}{2}\rfloor}$. With this number, we partially answered the question whether there exist modules of dimensions equal to such a number (see \cite[Theorems 1.5 and 1.6]{ZS4})

\vskip0.3cm
The main purpose of the present paper is to certify Conjecture \ref{conjectureold} for the case when $e$ is an (even) minimal nilpotent element, and thereby to show the accessibility of the lower bounds of dimensions in the modular representations of basic Lie superalgebras in this case (see the forthcoming Theorem  \ref{intromainminnimalf}).

\subsection{} Recall that in \cite[Remark 70]{W}, Wang introduced another definition of finite $W$-superalgebra $W'_\chi:=Q_\chi^{\text{ad}\,{\mmm}'}$, where $\mmm'$ is the so-called extended $\chi$-admissible algebra which is either a one-dimensional extension of $\mmm$, or $\mmm$ itself, dependent on the judging parity (see (\ref{extend admiss alg})).
 We will call it a refined $W$-superalgebra in the present paper. By definition, the refined $W$-superalgebra $W'_\chi$ is a subalgebra of the finite $W$-superalgebra $U(\ggg,e)$.

In this paper, we will take use of the refined $W$-superalgebras, instead of our original finite $W$-superalgebras, which enables us to unify the related statements.

\subsection{} Let us introduce the main results in the present paper.

We first establish an isomorphism (Proposition \ref{important}) between the refined $W$-superalgebra $W_\chi'$ and the quantum finite $W$-superalgebra $W^{\text{fin}}(\ggg,e)$ introduced by Suh in \cite[Definition 4.3]{suh}. This enables us to take $W^{\text{fin}}(\ggg,e)$ as the refined $W$-superalgebra $W_\chi'$ for the following discussions.

 Then  we focus  on the case with $e$ being a minimal nilpotent element in basic Lie superalgebra $\ggg$. We call a refined $W$-superalgebra  minimal when the defining nilpotent element associated with $W_\chi'$ is  minimal in $\ggg_\bz$. We can present a set of generators  of minimal $W$-superalgebras as below, correcting and reformulating the ones in   \cite[Propositions 5.3 and 5.4]{suh}.

\begin{prop}\label{ge}
Let $e$ be a minimal nilpotent element in $\ggg$. Suppose $v\in\ggg^e(0)$, $w\in\ggg^e(1)$, $C$ is a central element of $W_\chi'$, and set $s=\text{dim}\,\ggg(-1)_{\bar0}$ and $\sfr=\text{dim}\,\ggg(-1)_{\bar1}$. Then the followings are free generators of the refined $W$-superalgebra $W_\chi'$:
\begin{equation*}
\begin{split}
\Theta_v=&(v-\frac{1}{2}\sum\limits_{\alpha\in S(-1)}z_\alpha[z_\alpha^*,v])\otimes1_\chi,\\
\Theta_w=&(w-\sum\limits_{\alpha\in S(-1)}z_\alpha[z_\alpha^*,w]+\frac{1}{3}(\sum\limits_{\alpha,\beta\in S(-1)}z_\alpha z_\beta[z_\beta^*,[z_\alpha^*,w]]-2[w,f]))\otimes1_\chi,\\
C=&(2e+\frac{h^2}{2}-(1+\frac{s-\sfr}{2})h+\sum\limits_{i\in I}(-1)^{|i|}a_ib_i
+2\sum\limits_{\alpha\in S(-1)}(-1)^{|\alpha|}[e,z_\alpha^*]z_\alpha)\otimes1_\chi,
\end{split}
\end{equation*}where $\{z_\alpha^*\mid\alpha\in S(-1)\}$ and $\{z_\alpha\mid\alpha\in S(-1)\}$ are dual bases of $\ggg(-1)$ with respect to $\langle\cdot,\cdot\rangle=(e,[\cdot,\cdot])$, and $\{a_i\mid i\in I\}$ and $\{b_i\mid i\in I\}$ are dual bases of $\ggg^e(0)$ with respect to $(\cdot,\cdot)$.
\end{prop}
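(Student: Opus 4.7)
The plan is to deduce the proposition by leveraging the isomorphism $W_\chi'\cong W^{\mathrm{fin}}(\ggg,e)$ established in Proposition \ref{important}, together with the PBW structure theorem for $W_\chi'$ and the very restrictive shape of the good grading attached to a minimal nilpotent element. Recall that when $e$ is minimal one has $\ggg(j)=0$ for $|j|>2$, with $\ggg(2)=\bbc e$, $\ggg(-2)=\bbc f$, and the centralizer decomposes as $\ggg^e=\ggg^e(0)\oplus\ggg^e(1)\oplus\bbc e$. The PBW theorem therefore tells us in advance that a system of free generators of $W_\chi'$ must consist of one element per basis vector of $\ggg^e(0)$, one per basis vector of $\ggg^e(1)$, and exactly one ``top'' generator attached to $e\in\ggg^e(2)$. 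So the natural strategy is to write down a candidate in $Q_\chi$ whose leading term under the Kazhdan filtration matches $v$, $w$, or $e$ respectively, and then verify that the correction terms make it $\ad\,\mmm'$-invariant.

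First I would construct $\Theta_v$ by prescribing the leading term $v\otimes 1_\chi$ and solving the $\ad\,\mmm'$-invariance equation order by order in the Kazhdan filtration. Because $v\in\ggg^e(0)$, the only obstruction to invariance comes from $\ad\,\ggg(-1)$, and it is killed precisely by subtracting $\tfrac12\sum_{\alpha\in S(-1)}z_\alpha[z_\alpha^*,v]$, using that $\{z_\alpha\}$ and $\{z_\alpha^*\}$ are dual with respect to $\langle\cdot,\cdot\rangle=(e,[\cdot,\cdot])$. Next I would treat $\Theta_w$ for $w\in\ggg^e(1)$ in the same spirit, but now running the invariance procedure one step further: the quadratic correction cancels the first-order obstruction, and a cubic correction together with the $-\tfrac23[w,f]$ term cancels the residual contribution from $\ad f$-type commutators that appear when one pushes the invariance computation through $\ggg(-1)$ twice; the coefficients $\tfrac13$ and $-\tfrac23$ are forced by this computation. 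For the central element $C$, I would verify separately that the proposed formula is $\ad\,\mmm'$-invariant and moreover commutes with each $\Theta_v$ and $\Theta_w$, which reduces, via the $\sssl_2$-triple relations and the duality identities between $\{z_\alpha,z_\alpha^*\}$ and $\{a_i,b_i\}$, to a collection of identities on structure constants that collapse because $e\in\ggg^e(2)$ is annihilated by $\ad\,\ggg(j)$ for $j\ge 0$.

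Once $\ad\,\mmm'$-invariance is in hand, the generation and freeness statements are automatic from PBW: the $\Theta_v$, $\Theta_w$, and $C$ have leading symbols forming a basis of $\gr\,W_\chi'\cong S(\ggg^e)$ indexed by the prescribed data, so ordered monomials in them are linearly independent and span. The only genuinely delicate step is the invariance verification for $\Theta_w$ and $C$; this is the main obstacle, because the cubic term in $\Theta_w$ mixes the parities of $z_\alpha,z_\alpha^*$ and thus requires careful sign tracking in a super-setting, and because the proof that $C$ is central (rather than merely $\mmm'$-invariant) uses the minimality of $e$ in an essential way to make the otherwise obstructing $\ggg(j)$-contributions ($j>0$) vanish.

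I expect that after these verifications the proposition reduces to two independent checks, both of which can be done by a direct but careful bracket calculation in $U(\ggg)\otimes_{U(\mmm)}\bbc_\chi$: (i) each candidate lies in $Q_\chi^{\ad\,\mmm'}=W_\chi'$, and (ii) the symbol map sends the ordered monomials in these candidates to a PBW basis of $S(\ggg^e)$. The corrections to Suh's formulas in \cite{suh} will emerge naturally from step (i), since the discrepancies there are precisely in the coefficients of the quadratic/cubic tails that are fixed by the invariance requirement.
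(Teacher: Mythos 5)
Your overall architecture matches the paper's: write down candidates with prescribed Kazhdan-leading terms, verify $\ad$-invariance by a direct bracket computation, and then read off freeness and generation from the PBW theorem (Theorem \ref{PBWC}), since the leading symbols run over a basis of $\ggg^e=\ggg^e(0)\oplus\ggg^e(1)\oplus\bbc e$. For $\Theta_w$ your order-by-order cancellation is exactly the computation the paper carries out in Proposition \ref{w} (checking $[z_\gamma^*,\Theta_w]=0$ for all $\gamma\in S(-1)$, which suffices because $\ggg(-2)=\bbc f$ lies in $[\ggg(-1),\ggg(-1)]$). The one place where you genuinely diverge is $C$: the paper never verifies $\mmm'$-invariance or centrality of the displayed formula. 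It takes $C$ to be the image in $Q_\chi$ of the Casimir element of $U(\ggg)$ attached to $(\cdot,\cdot)$, which is central in $U(\ggg)$ and hence automatically induces a central element of $W_\chi'$, with no appeal to minimality; the only computation needed is to rewrite that image in the stated normal form using the dual bases and the identity $\sum_{\alpha\in S(-1)}[z_\alpha,[e,z_\alpha^*]]\equiv\frac{\sfr-s}{2}h\pmod{I_\chi}$ (see \eqref{zalphazalpha}--\eqref{C}). Your plan of proving centrality by checking commutators with every $\Theta_v$ and $\Theta_w$ would eventually succeed, but it is far more laborious, and the mechanism you invoke (minimality killing the $\ggg(j)$-contributions for $j>0$) is not what makes $C$ central --- centrality is inherited from $U(\ggg)$; minimality enters only through the short grading, which is what makes the normal form of the Casimir collapse to the stated expression. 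Both routes deliver the proposition; the paper's treatment of $C$ is the cheaper one.
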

The proof of Proposition \ref{ge} will be given in \S\ref{3.1.3}.  Then the commutators between the generators are presented as below.
\begin{theorem}\label{maiin1}
The minimal $W$-superalgebra is generated by the Casimir element $C$ and the subspaces $\Theta_{\ggg^e(i)}$ for $i=0,1$, as described in Proposition \ref{ge}, subject to the following relations:
\begin{itemize}
\item[(1)] $[\Theta_{v_1},\Theta_{v_2}]=\Theta_{[v_1,v_2]}$ for all $v_1, v_2\in\ggg^e(0)$;
\item[(2)] $[\Theta_{v},\Theta_{w}]=\Theta_{[v,w]}$ for all $v\in\ggg^e(0)$ and $w\in\ggg^e(1)$;
\item[(3)] $[\Theta_{w_1},\Theta_{w_2}]=\frac{1}{2}([w_1,w_2],f)(C-\Theta_{\text{Cas}}-c_0)-\frac{1}{2}\sum\limits_{\alpha\in S(-1)}(\Theta_{[w_1,z_\alpha]^{\sharp}}\Theta_{[z_\alpha^*,w_2]^{\sharp}}\\ \hspace*{2.2cm} -(-1)^{|w_1||w_2|}\Theta_{[w_2,z_\alpha]^{\sharp}}\Theta_{[z_\alpha^*, w_1]^{\sharp}})$
    for all $w_1, w_2\in\ggg^e(1)$;
\item[(4)]  $[C,W_\chi']=0$.
\end{itemize}
In (3), the meaning of the notation $\sharp$ will be explained in \eqref{xh}, and the constant $c_0$ is decided by the following equation:
\begin{equation*}
\begin{split}
c_0([w_1,w_2],f)=&\frac{1}{12}\sum\limits_{\alpha,\beta\in S(-1)}(-1)^{|\alpha||w_1|+|\beta||w_1|+|\alpha||\beta|}\otimes[[z_\beta,[z_\alpha,w_1]],[z_\beta^*,[z_\alpha^*,w_2]]]\\
&-\frac{3(s-\sfr)+4}{12}([w_1,w_2],f),
\end{split}
\end{equation*}
where $s=\text{dim}\,\ggg(-1)_{\bar0}$ and $\sfr=\text{dim}\,\ggg(-1)_{\bar1}$.
\end{theorem}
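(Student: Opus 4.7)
The plan is to verify the four relations by direct computation in the Gelfand-Graev module $Q_\chi$, using the identification $W_\chi'\cong Q_\chi^{\ad\,\mmm'}$. I would work with the explicit representatives furnished by Proposition \ref{ge} and exploit the good $\bbz$-grading $\ggg=\bigoplus_{k=-2}^{2}\ggg(k)$ associated with the minimal $\sssl_2$-triple $\{e,h,f\}$, for which $\ggg(\pm 2)=\bbc e$ and $\bbc f$ respectively. Under the Kazhdan filtration $\Theta_v$ has degree $2$, $\Theta_w$ has degree $3$, and $C$ has degree $4$; every relation in the theorem is balanced with respect to this grading. The overall strategy is the super analogue of Premet's treatment of the minimal case in \cite{P3}, with added care for the $\bbz_2$-signs throughout.

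For (1) and (2), which are linear in the $\Theta$'s, I would expand each commutator in $Q_\chi$ using the explicit formulas. The leading brackets $[v_1,v_2]$ and $[v,w]$ appear immediately, and the remaining contributions come from the quadratic correction $\tfrac12\sum_\alpha z_\alpha[z_\alpha^*,\cdot]$ (and, in the case of $\Theta_w$, from the cubic correction as well). These terms must be reorganized using the $\ad\,\ggg^e(0)$-equivariance of the skew-symmetric pairing $\langle\cdot,\cdot\rangle=(e,[\cdot,\cdot])$ on $\ggg(-1)$, which translates into sum identities relating $\{z_\alpha\}$ and $\{z_\alpha^*\}$ under the action of $\ggg^e(0)$. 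The outcome is precisely $\Theta_{[v_1,v_2]}$ and $\Theta_{[v,w]}$, with all extraneous contributions lying in the $\ad\,\mmm'$-image and therefore vanishing in $W_\chi'$.

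The main obstacle is relation (3). For $w_1,w_2\in\ggg^e(1)$, the bracket $[\Theta_{w_1},\Theta_{w_2}]$ has Kazhdan degree at most $4$, matching the degrees of $C$, of $\Theta_{\text{Cas}}$, and of the quadratic monomials $\Theta_{[w_1,z_\alpha]^{\sharp}}\Theta_{[z_\alpha^*,w_2]^{\sharp}}$ on the right. I would stratify the expansion by $\ad h$-weight, organizing the contributions as follows: (i) a top weight-$2$ piece coming from $[w_1,w_2]\in\bbc e$, which, matched against the leading term $2e$ of $C$, produces $\tfrac12([w_1,w_2],f)C$; (ii) a weight-$0$ piece that assembles via the dual-basis structure of $\ggg^e(0)$ into $\sum_i(-1)^{|i|}a_ib_i$, yielding $-\tfrac12([w_1,w_2],f)\Theta_{\text{Cas}}$; (iii) genuinely quadratic-in-$\Theta$ contributions from cross products between the $\ggg(-1)$-corrections of $\Theta_{w_1}$ and $\Theta_{w_2}$, which collapse via the dual-basis identity into the super-symmetric sum displayed in the statement; and (iv) scalar contributions arising from quartic contractions in $S(-1)$ together with the residues of the $h^2/2$ and $(1+\tfrac{s-\sfr}{2})h$ pieces of $C$, producing the precise formula for $c_0$. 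The bulk of the work is a meticulous super-signed expansion, applying the super-Jacobi identity and the dual-basis identities repeatedly to cancel terms modulo $\ad\,\mmm'$; once the combinatorial dust settles, matching the constant $c_0$ is essentially bookkeeping.

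Finally, for (4) I would check $[C,\Theta_v]=0$ and $[C,\Theta_w]=0$ on generators. The Casimir piece $\sum_i(-1)^{|i|}a_ib_i$ is $\ad\,\ggg^e(0)$-invariant by construction; the pieces $2e$, $h^2/2$ and $(1+\tfrac{s-\sfr}{2})h$ commute with $\ggg^e(0)$ modulo $\ad\,\mmm'$-terms; and the cross term $2\sum_\alpha(-1)^{|\alpha|}[e,z_\alpha^*]z_\alpha$ is designed precisely to absorb the $\ad\,\ggg(1)$-residue, by the same dual-basis calculation used in the construction of $\Theta_w$. Centrality on $\Theta_w$ then follows from the analogous weight analysis used in (3). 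Once (1)--(4) are verified, Proposition \ref{ge} together with the PBW theorem of \cite{ZS2} shows that the listed elements freely generate $W_\chi'$, so no further relations can occur and the presentation is complete.
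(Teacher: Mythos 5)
Your overall strategy---verifying (1), (2), (4) by expansion in the Gelfand--Graev module and reducing (3) to a weight-stratified computation---is the same Premet-style route the paper follows, and it is adequate for (1), (2) and (4). (For (2) the paper actually sidesteps the direct expansion: the involution $\sigma=\text{Ad}\,\gamma(-1)$ fixes $\Theta_v$ and negates $\Theta_w$, so $[\Theta_v,\Theta_w]-\Theta_{[v,w]}$ is a $\sigma$-anti-invariant element that the PBW theorem forces to be a polynomial in the $\sigma$-invariant generators $\Theta_{\ggg^e(0)}$, hence zero. Your direct computation would also work, but as the paper itself remarks it is considerably messier.)

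The genuine gap is in step (iv) of your treatment of (3), where you dismiss the identification of $c_0$ as ``bookkeeping.'' After subtracting $\tfrac12([w_1,w_2],f)(C-\Theta_{\text{Cas}})$ and the quadratic $\Theta$-terms from $[\Theta_{w_1},\Theta_{w_2}]$, what remains is a priori a bilinear map $b:\ggg^e(1)\times\ggg^e(1)\to W_\chi'$, and two nontrivial facts must be established before one may write the remainder as $-\tfrac12 c_0([w_1,w_2],f)$ with a single constant $c_0$. First, $b(w_1,w_2)$ must be shown to be a scalar; the paper gets this from the algebra homomorphism $\mu:W_\chi'\to U(\ppp)\otimes A_e^{\text{op}}$ (which shows the remainder lies in $1\otimes A_e$) combined with the leading-term lemma for elements of $W_\chi'$. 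Second, and more seriously, the scalar-valued form $b$ must be shown to be \emph{proportional} to $([\cdot,\cdot],f)$: the residual double sum $\sum_{\alpha,\beta}(-1)^{|\alpha||w_1|+|\beta||w_1|+|\alpha||\beta|}[[z_\beta,[z_\alpha,w_1]],[z_\beta^*,[z_\alpha^*,w_2]]]$ is not visibly a multiple of $([w_1,w_2],f)$, and the displayed definition of $c_0$ in the theorem only makes sense once this proportionality is known. The paper proves it by checking that $b$ is invariant under $\ad\,\ggg^e(0)_{\bar0}$ and then invoking the Kac--Roan--Wakimoto description of $\ggg(1)$ as a $\ggg(0)^\sharp$-module (irreducible, or $M\oplus M^*$ with $M\ncong M^*$), which makes the space of invariant bilinear forms one-dimensional---\emph{except} for $\ggg=\sssl(2|2)/\bbc I$, where $\ggg^e(1)\cong\bbc^2\oplus\bbc^2$ and the multiplicity-one argument fails, so that case requires a separate explicit computation (yielding $c_0=1$). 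Your proposal contains neither the invariance argument nor the exceptional case, and without them relation (3), and hence the presentation, is not established.
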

In virtue of these results, Proposition \ref{minimal nilpotent} says that minimal $W$-superalgebra affords a two-sided ideal of codimension one.  As an immediate consequence, we obtain the main result  as below
\begin{theorem}\label{intromainminnimalf}
Let ${\ggg}_{\bbk}$ be a basic Lie superalgebra over ${\bbk}=\overline{\mathbb{F}}_p$, and let $\chi\in({\ggg}_{\bbk})^*_{\bar0}$ be a nilpotent $p$-character, with respect to a minimal nilpotent element $e\in(\ggg_\bbk)_\bz$. If $p\gg0$, then the reduced enveloping algebra $U_\chi({\ggg}_{\bbk})$ admits irreducible representations of dimension $p^{\frac{d_0}{2}}2^{\lfloor\frac{d_1}{2}\rfloor}$.
\end{theorem}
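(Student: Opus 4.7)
The strategy is to derive Theorem \ref{intromainminnimalf} as an immediate consequence of Proposition \ref{minimal nilpotent} via the machinery of \cite[Theorem 1.6]{ZS4}. That reference shows that once the refined $W$-superalgebra $W'_\chi$ affords a two-sided ideal of codimension one, the lower bound $p^{d_0/2}2^{\lfloor d_1/2\rfloor}$ of \cite{WZ} is attained by some irreducible $U_\chi(\ggg_\bbk)$-module for $p\gg 0$. Working with $W'_\chi$ rather than with $U(\ggg,e)$ uniformly handles both parities of $\sfr$: for $\sfr$ even, $W'_\chi = U(\ggg,e)$ and a codimension-one ideal gives a one-dimensional representation (case (1) of Conjecture \ref{conjectureold}); for $\sfr$ odd, the codimension-one ideal of $W'_\chi$ induces the required two-dimensional representation of $U(\ggg,e)$ (case (2)).

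The proof therefore reduces to establishing Proposition \ref{minimal nilpotent}, namely the existence of a two-sided ideal of codimension one in $W'_\chi$. I would produce such an ideal as the kernel of the character $\rho: W'_\chi \to \bbc$ determined on the generators of Proposition \ref{ge} by
\begin{equation*}
\rho(\Theta_v)=0 \quad (v\in\ggg^e(0)), \qquad \rho(\Theta_w)=0 \quad (w\in\ggg^e(1)), \qquad \rho(C)=c_0,
\end{equation*}
where $c_0$ is the scalar appearing in relation (3) of Theorem \ref{maiin1}. Relations (1), (2) and (4) are then immediate: the first two reduce to $0=0$, and the fourth is automatic since $C$ is central. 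For relation (3) the left-hand side vanishes because both factors lie in $\ker\rho$, while the right-hand side vanishes since the leading term $\tfrac{1}{2}([w_1,w_2],f)(C-\Theta_{\mathrm{Cas}}-c_0)$ is killed by $\rho(C)=c_0$ together with $\rho(\Theta_{\mathrm{Cas}})=0$ (the Casimir-type element $\Theta_{\mathrm{Cas}}$ being a sum of products of $\Theta$-generators on which $\rho$ vanishes), and each summand in the quadratic correction is likewise a product of two such generators.

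The principal obstacle is justifying that this assignment on generators extends to a well-defined algebra homomorphism. This amounts to checking that the relations in Theorem \ref{maiin1} form a \emph{complete} set of defining relations for $W'_\chi$, which in turn relies on the PBW theorem from \cite{ZS2} together with a careful analysis of the Kazhdan filtration so as to rule out hidden relations in higher filtered degree. A secondary technical difficulty is the bookkeeping of the $\bbz_2$-signs in the expression for $c_0$: one must verify that the sum $\sum_{\alpha,\beta\in S(-1)}(-1)^{|\alpha||w_1|+|\beta||w_1|+|\alpha||\beta|}[[z_\beta,[z_\alpha,w_1]],[z_\beta^*,[z_\alpha^*,w_2]]]$, after normalization by $12([w_1,w_2],f)$, delivers a scalar that is genuinely independent of the choice of $w_1,w_2 \in \ggg^e(1)$. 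Once these compatibility checks are in place, $\ker\rho$ provides the required codimension-one ideal, and Theorem \ref{intromainminnimalf} follows at once.
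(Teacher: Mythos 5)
Your proposal follows essentially the same route as the paper: Theorem \ref{intromainminnimalf} is deduced by combining Theorem \ref{intromain-2} (which rests on Proposition \ref{intromain} and \cite[Theorem 1.6]{ZS4}) with the existence of a codimension-one two-sided ideal in $W'_\chi$, established in Proposition \ref{minimal nilpotent}. The only packaging difference is that you realize the ideal as the kernel of a character on the generators, whereas the paper exhibits it directly as the span $(W'_\chi)^+$ of the positive-degree monomials and checks stability under multiplication via Propositions \ref{v1v2}, \ref{vw} and \ref{1commutator}; the two compatibility issues you flag are resolved in the paper, the completeness of the defining relations by Theorem \ref{PBWC}(4) (restated as Theorem \ref{maiin1}) and the well-definedness of $c_0$ in \S\ref{6.4} via the $\ggg^e(0)_{\bar0}$-invariance of the form $b$ together with the irreducibility of $\ggg^e(1)$ (plus a direct computation for $\mathfrak{sl}(2|2)/\mathbb{C}I$).
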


The proof of the above theorems will be fulfilled in \S\ref{5.2.4} and \S\ref{5.2.5} respectively.
\subsection{}
The paper is organized as follows. In \S\ref{Backgrounds}, some basics on  Lie superalgebras and finite $W$-superalgebras are recalled. In \S\ref{crw}, we first study the construction of refined reduced $W$-superalgebra $(Q_\chi^\chi)^{\text{ad}\,{\mmm}'_\mathds{k}}$ over $\bbk$, and then reformulate  the PBW theorem for refined $W$-superalgebra $W_\chi'$ over $\mathbb{C}$. In the new setting-up of refined $W$-superalgebras, we refine the conjecture \cite[Conjecture 1.3]{ZS4} in \S\ref{Refined W-superalgebras and the dimensional lower bounds}.  We first introduce Conjecture \ref{conjecture22}, which is irrelevant to the judging parity we mentioned above. Under the assumption of Conjecture \ref{conjecture22}, we show that the lower bounds of dimensions in the modular representations of basic Lie superalgebras are attainable. In the end of \S\ref{Refined W-superalgebras and the dimensional lower bounds}, we introduce a variation on the definition of refined $W$-superalgebras, which will be applied in the next section. \S\ref{structure} is the main body of the present paper. In the first part of \S\ref{structure}, we introduce the explicit expression of the generators of lower Kazhdan degree for finite $W$-superalgebras associate with arbitrary nilpotent elements over $\mathbb{C}$, and also the commutators between them. Then in the second part of \S\ref{structure}, we  completely determine the structure of minimal $W$-superalgebras. In the meantime, we complete the proof of  Proposition \ref{ge} in \S\ref{3.1.3}, Theorem \ref{maiin1} in \S\ref{5.2.4} (modulo Proposition \ref{1commutator}),  and Theorem \ref{intromainminnimalf} in \S\ref{5.2.5}. The concluding and lengthy section \S\ref{proof}  will be devoted to the proof of Proposition \ref{1commutator} leading to Theorem \ref{maiin1} (3) by lots of computation,
which is postponed there from \S\ref{structure}.
\subsection{}
Throughout we work with the field of complex numbers ${\bbc}$, or the algebraically closed field ${\bbk}=\overline{\mathbb{F}}_p$ of positive characteristic $p$ as the ground field.

Let ${\bbz}_+$ be the set of all the non-negative integers in ${\bbz}$, and denote by ${\bbz}_2$ the residue class ring modulo $2$ in ${\bbz}$. A superspace is a ${\bbz}_2$-graded vector space $V=V_{\bar0}\oplus V_{\bar1}$, in which we call elements in $V_{\bar0}$ and $V_{\bar1}$ even and odd, respectively. Write $|v|\in{\bbz}_2$ for the parity (or degree) of $v\in V$, which is implicitly assumed to be ${\bbz}_2$-homogeneous. We will use the notation $\text{dim}V=\text{dim}V_{\bar0}+\text{dim}V_{\bar1}$. All Lie superalgebras ${\ggg}$ will be assumed to be finite-dimensional.

We consider vector spaces, subalgebras, ideals, modules, and submodules $etc.$ in the super sense throughout the paper.

\section{Preliminaries}\label{Backgrounds}
In this section, we will recall some knowledge on basic classical Lie superalgebras and finite $W$-(super)algebras for use in the sequel. We refer the readers to  \cite{CW}, \cite{K} and \cite{K2} for Lie superalgebras, and \cite{P2}, \cite{P3}, \cite{P7}, \cite{W}, \cite{ZS2} and \cite{ZS4} for finite $W$-(super)algebras.

\subsection{Basic Lie superalgebras}
Following \cite[\S1]{CW}, \cite[\S2.3-\S2.4]{K}, \cite[\S1]{K2} and \cite[\S2]{WZ},  we recall the list of basic classical Lie superalgebras over $\bbf$ for $\bbf=\bbc$ or $\bbf=\bbk$.
These Lie superalgebras, with even parts being Lie algebras of reductive algebraic groups, are simple over $\bbf$ (the
general linear Lie superalgebras, though not simple, are also included), and they admit an even non-degenerate supersymmetric invariant bilinear form in the following sense.
\begin{defn}\label{form}
Let $V=V_{\bar0}\oplus V_{\bar1}$ be a $\mathbb{Z}_2$-graded space and $(\cdot,\cdot)$ be a bilinear form on $V$.
\begin{itemize}
\item[(1)] If $(a,b)=0$ for any $a\in V_{\bar0}, b\in V_{\bar1}$, then $(\cdot,\cdot)$ is called even.
\item[(2)] If $(a,b)=(-1)^{|a||b|}(b,a)$ for any homogeneous elements $a,b\in V$, then $(\cdot,\cdot)$ is called supersymmetric.
\item[(3)] If $([a,b],c)=(a,[b,c])$ for any homogeneous elements $a,b,c\in V$, then $(\cdot,\cdot)$ is called invariant.
\item[(4)] If one can conclude from $(a,V)=0$ that $a=0$, then $(\cdot,\cdot)$ is called non-degenerate.
\end{itemize}
\end{defn}

Note that when $\bbf=\bbk$ is a field of characteristic $p>0$, there are restrictions on $p$, as shown for example in \cite[Table 1]{WZ}. So we have the following list

\vskip0.3cm
\begin{center}\label{Table 1}
({\sl{Table 1}}): basic classical Lie superalgebras over $\bbk$
\vskip0.3cm
\begin{tabular}{ccc}
\hline
 $\frak{g}_\bbk$ & $\ggg_{\bar 0}$  & Restriction of $p$ when $\bbf=\bbk$\\
\hline
$\frak{gl}(m|n$) &  $\frak{gl}(m)\oplus \frak{gl}(n)$                &$p>2$            \\
$\frak{sl}(m|n)$ &  $\frak{sl}(m)\oplus \frak{sl}(n)\oplus \bbk$    & $p>2, p\nmid (m-n)$   \\
$\frak{osp}(m|n)$ & $\frak{so}(m)\oplus \frak{sp}(n)$                  & $p>2$ \\
$\text{D}(2,1,\bar a)$   & $\frak{sl}(2)\oplus \frak{sl}(2)\oplus \frak{sl}(2)$        & $p>3$   \\
$\text{F}(4)$            & $\frak{sl}(2)\oplus \frak{so}(7)$                  & $p>15$  \\
$\text{G}(3)$            & $\frak{sl}(2)\oplus \text{G}_2$                    & $p>15$     \\
\hline
\end{tabular}

\end{center}

\vskip0.3cm

Throughout the paper, we will simply call all $\ggg_\bbf$ listed above {\sl{``basic Lie superalgebras"}} for both $\bbf=\bbc$ and $\bbf=\bbk$.
\subsection{Finite $W$-superalgebras over the field of complex numbers}\label{background}
\subsubsection{}\label{0.1.1}
Let ${\ggg}$ be a basic Lie superalgebra over ${\bbc}$, and $\mathfrak{h}$ be a typical Cartan subalgebra of ${\ggg}$. Let $\Phi$ be a root system of ${\ggg}$ relative to $\mathfrak{h}$ whose simple root system $\Delta=\{\alpha_1,\cdots,\alpha_l\}$ is distinguished (cf. \cite[Proposition 1.5]{K2}). By \cite[\S3.3]{FG} we can choose a Chevalley basis $B=\{e_\gamma\mid\gamma\in\Phi\}\cup\{h_\alpha\mid\alpha\in\Delta\}$ of ${\ggg}$ excluding the case $D(2,1;a)$ with $a\notin{\bbz}$ (in the case $D(2,1;a)$ with $a\notin\bbz$ being an algebraic number, one needs to adjust the definition of Chevalley basis by changing $\bbz$ to the $\bbz$-algebra generated by $(a)$, in the range of construction constants; see \cite[\S3.1]{Gav}). Let ${\ggg}_{\bbz}$ denote the Chevalley ${\bbz}$-form in ${\ggg}$ and $U_{\bbz}$ the Kostant ${\bbz}$-form of $U({\ggg})$ associated with $B$. Given a ${\bbz}$-module $V$ and a ${\bbz}$-algebra $A$, we write $V_A:=V\otimes_{\bbz}A$.

Let $G$ be an algebraic supergroup with $\Lie(G)=\ggg$, and let $G_\ev$ be a subgroup scheme of $G$ such that $G_\ev$ is an ordinary connected reductive group with $\Lie(G_\ev)=\ggg_\bz$. Denote the corresponding super Harish-Chandra pair by $(G_\ev,\ggg)$. For a given nilpotent element $e\in{\ggg}_{\bar0}$, by Dynkin-Kostant theory one can further assume that $e$ is in $({\ggg}_{\bbz})_{\bar{0}}$ up to an $\text{Ad}\,G_{\text{ev}}$-action. Choose $f,h\in({\ggg}_\mathbb{Q})_{\bar{0}}$ such that $(e,h,f)$ is an $\mathfrak{sl}_2$-triple in ${\ggg}$. Let $(\cdot,\cdot)$ be an even nondegenerate supersymmetric invariant bilinear form, under which the Chevalley basis $B$ of ${\ggg}$ take values in $\mathbb{Q}$, and $(e,f)=1$. Define $\chi\in{\ggg}^{*}$ by letting $\chi(x)=(e,x)$ for all $x\in{\ggg}$.

A commutative ring $A$ is called {\sl admissible} if $A$ is a finitely generated ${\bbz}$-subalgebra of ${\bbc}$, $(e,f)\in A^{\times}(=A\backslash \{0\})$ and all bad primes of the root system of ${\ggg}$ and the determinant of the Gram matrix of ($\cdot,\cdot$) relative to a Chevalley basis of ${\ggg}$ are invertible in $A$. It is clear by the definition that every admissible ring is a Noetherian domain. Given a finitely generated ${\bbz}$-subalgebra $A$ of ${\bbc}$, denote by $\text{Specm}\,A$ the maximal spectrum of $A$. It is well known that for every element $\mathfrak{P}\in\text{Specm}\,A$, the residue field $A/\mathfrak{P}$ is isomorphic to $\mathbb{F}_{q}$, where $q$ is a $p$-power depending on $\mathfrak{P}$. We denote by $\Pi(A)$ the set of all primes $p\in\mathbb{N}$ that occur in this way, and the set $\Pi(A)$ contains almost all primes in $\mathbb{N}$. We denote by ${\ggg}_A$ the $A$-submodule of ${\ggg}$ generated by the Chevalley basis $B$.

Let ${\ggg}(i)=\{x\in{\ggg}\mid[h,x]=ix\}$, then ${\ggg}=\bigoplus_{i\in{\bbz}}{\ggg}(i)$. By $\mathfrak{sl}_2$-theory, all subspaces ${\ggg}(i)$ are defined over $\mathbb{Q}$. Also, $e\in{\ggg}(2)_{\bar{0}}$ and $f\in{\ggg}(-2)_{\bar{0}}$. Define a symplectic (resp. symmetric) bilinear form $\langle\cdot,\cdot\rangle$ on the ${\bbz}_2$-graded subspace ${\ggg}(-1)_{\bar{0}}$ (resp. ${\ggg}(-1)_{\bar{1}}$) by $\langle x,y\rangle:=(e,[x,y])=\chi([x,y])$ for all $x,y\in{\ggg}(-1)_{\bar0}~(\text{resp.}\,x,y\in{\ggg}(-1)_{\bar1})$.
There exist bases $\{u_1,\cdots,u_{s}\}$ of ${\ggg}(-1)_{\bar0}$ and $\{v_1,\cdots,v_\sfr\}$ of ${\ggg}(-1)_{\bar1}$ contained in ${\ggg}_\mathbb{Q}:={\ggg}_A\otimes_{A}\mathbb{Q}$ such that $\langle u_i, u_j\rangle =i^*\delta_{i+j,s+1}$ for $1\leqslant i,j\leqslant s$, where $i^*=\left\{\begin{array}{ll}-1&\text{if}~1\leqslant i\leqslant \frac{s}{2};\\ 1&\text{if}~\frac{s}{2}+1\leqslant i\leqslant s\end{array}\right.$, and $\langle v_i,v_j\rangle=\delta_{i+j,\sfr+1}$ for $1\leqslant i,j\leqslant \sfr$.
We can introduce the so-called ``$\chi$-admissible algebra" as below
\begin{align}\label{admissible alg}
\mathfrak{m}:=\bigoplus_{i\leqslant -2}{\ggg}(i)\oplus{\ggg}(-1)^{\prime}
 \end{align}
 with ${\ggg}(-1)^{\prime}={\ggg}(-1)^{\prime}_{\bar0}\oplus{\ggg}(-1)^{\prime}_{\bar1}$, where ${\ggg}(-1)^{\prime}_{\bar0}$ is the ${\bbc}$-span of $u_{{s\over 2}+1},\cdots,u_{s}$ and
${\ggg}(-1)^{\prime}_{\bar1}$ is the ${\bbc}$-span of $v_{\frac{\sfr}{2}+1},\cdots,v_\sfr$ (resp. $v_{\frac{\sfr+3}{2}},\cdots,v_\sfr$) when $\sfr:=\text{dim}\,{\ggg}(-1)_{\bar{1}}$ is even (resp. odd), then $\chi$ vanishes on the derived subalgebra of $\mathfrak{m}$. Define ${\ppp}:=\bigoplus_{i\geqslant 0}{\ggg}(i)$. We also have an extended $\chi$-admissible algebra as below
\begin{align}\label{extend admiss alg}
\mathfrak{m}^{\prime}:=\left\{\begin{array}{ll}\mathfrak{m}&\text{if}~\sfr~\text{is even;}\\
\mathfrak{m}\oplus {\bbc}v_{\frac{\sfr+1}{2}}&\text{if}~\sfr~\text{is odd.}\end{array}\right.
\end{align}
Write ${\ggg}^e$ for the centralizer of $e$ in ${\ggg}$ and denote by $d_i:=\text{dim}\,{\ggg}_i-\text{dim}\,{\ggg}^e_i$ for $i\in{\bbz}_2$, then \cite[Theorem 4.3]{WZ} shows that $\sfr$ and $d_1$ always have the same parity. This parity is a crucial factor deciding the structure of finite $W$-superalgebras (cf. \cite[Theorem 4.5]{ZS2}).
After enlarging $A$ one can assume that ${\ggg}_A=\bigoplus_{i\in{\bbz}}{\ggg}_A(i)$, and each ${\ggg}_A(i):={\ggg}_A\cap{\ggg}(i)$ is freely generated over $A$ by a basis of the vector space ${\ggg}(i)$. Then $\{u_1,\cdots,u_{s}\}$ and $\{v_1,\cdots,v_\sfr\}$ are free basis of $A$-modules ${\ggg}_A(-1)_{\bar0}$ and  ${\ggg}_A(-1)_{\bar1}$, respectively. It is obvious that
$\mathfrak{m}_A:={\ggg}_A\cap\mathfrak{m}$, $\mathfrak{m}^{\prime}_A:={\ggg}_A\cap\mathfrak{m}^{\prime}$ and ${\ppp}_A:={\ggg}_A\cap{\ppp}$ are free $A$-modules and direct summands of ${\ggg}_A$. Moreover, one can assume $e,f\in({\ggg}_A)_{\bar0}$ after enlarging $A$ possibly; $[e,{\ggg}_A(i)]$ and $[f,{\ggg}_A(i)]$ are direct summands of ${\ggg}_A(i+2)$ and ${\ggg}_A(i-2)$ respectively, and ${\ggg}_A(i+2)=[e,{\ggg}_A(i)]$ for each $i\geqslant -1$ by $\mathfrak{sl}_2$-theory.

\subsubsection{}\label{2.2.2}
Define a generalized Gelfand-Graev ${\ggg}$-module associated with $\chi$ by $$Q_\chi:=U({\ggg})\otimes_{U(\mathfrak{m})}{\bbc}_\chi,$$
where ${\bbc}_\chi={\bbc}1_\chi$ is a one-dimensional  $\mathfrak{m}$-module such that $x.1_\chi=\chi(x)1_\chi$ for all $x\in\mathfrak{m}$. The super structure of $Q_\chi$ is dependent on the parity of $\bbc_\chi$, which is indicated to be even hereafter.
Define the {\sl finite $W$-superalgebra over $\mathbb{C}$} by $$U({\ggg},e):=(\text{End}_{\ggg}Q_{\chi})^{\text{op}},$$
where $(\text{End}_{\ggg}Q_{\chi})^{\text{op}}$ denotes the opposite algebra of the endomorphism algebra of ${\ggg}$-module $Q_{\chi}$.

Let $I_\chi$ denote the ${\bbz}_2$-graded ideal in $U({\ggg})$ generated by all $x-\chi(x)$ with $x\in\mathfrak{m}$. The fixed point space $(U({\ggg})/I_\chi)^{\ad\,\mmm}$ carries a natural algebra structure given by $(x+I_\chi)\cdot(y+I_\chi):=(xy+I_\chi)$ for all $x,y\in U({\ggg})$. Then $Q_\chi\cong U({\ggg})/I_\chi$ as ${\ggg}$-modules via the ${\ggg}$-module map sending $1+I_\chi$ to $1_\chi$, and $Q_{\chi}^{\ad\,\mmm}\cong U({\ggg},e)$ as $\bbc$-algebras. Any element of $U({\ggg},e)$ is uniquely determined by its effect on the generator $1_\chi\in Q_\chi$, and the canonical isomorphism between $U({\ggg},e)$ and $Q_{\chi}^{\ad\,\mmm}$ is given by $u\mapsto u(1_\chi)$ for any $u\in U({\ggg},e)$. In what follows we will often identify $Q_\chi$ with $U({\ggg})/I_\chi$ and $U({\ggg},e)$ with $Q_{\chi}^{\ad\,\mmm}$.

Let $w_1,\cdots, w_c$ be a basis of $\ggg$ over $\bbc$. Let $U({\ggg})=\bigcup_{i\in{\bbz}}\text{F}_iU({\ggg})$ be a filtration of $U({\ggg})$, where $\text{F}_iU({\ggg})$ is the ${\bbc}$-span of all $w_1\cdots w_c$ with $w_1\in{\ggg}(j_1),\cdots,w_c\in{\ggg}(j_c)$ and $(j_1+2)+\cdots+(j_c+2)\leqslant  i$. This filtration is called {\sl Kazhdan filtration}.  The Kazhdan filtration on $Q_{\chi}$ is defined by $\text{F}_iQ_{\chi}:=\pi(\text{F}_iU({\ggg}))$ with $\pi:U({\ggg})\twoheadrightarrow U({\ggg})/I_\chi$ being the canonical homomorphism, which makes $Q_{\chi}$ into a filtered $U({\ggg})$-module. Then there is an induced Kazhdan filtration
$\text{F}_i U({\ggg},e)$ on the subspace $U({\ggg},e)=Q_{\chi}^{\ad\,\mmm}$ of $Q_{\chi}$ such that $\text{F}_j U({\ggg},e)=0$
unless $j\geqslant0$.

Choose a basis $x_1,\cdots,x_l,x_{l+1},\cdots,x_m\in({\ppp}_A)
_{\bar{0}}, y_1,\cdots, y_q, y_{q+1}, \cdots,y_n\in({\ppp}_A)_{\bar{1}}$ of the free $A$-module ${\ppp}_A=\bigoplus_{i\geqslant 0}{\ggg}_A(i)$ such that

(a) $x_i\in{\ggg}_A(k_i)_{\bar{0}}, y_j\in{\ggg}_A(k'_j)_{\bar{1}}$, where $k_i,k'_j\in{\bbz}_+$ with $1\leqslant i\leqslant m$ and $1\leqslant j\leqslant n$;

(b) $x_1,\cdots,x_l$ is a basis of $({\ggg}_A)^e_{\bar{0}}$ and $y_1,\cdots,y_q$ is a basis of $({\ggg}_A)^e_{\bar{1}}$;

(c) $x_{l+1},\cdots,x_m\in[f,({\ggg}_A)_{\bar{0}}]$ and $ y_{q+1},\cdots,y_n\in[f,({\ggg}_A)_{\bar{1}}]$.

For $k\in\mathbb{Z}_+$, define
\begin{equation*}
\begin{split}
\mathbb{Z}_+^k:=&\{(i_1,\cdots,i_k)\mid i_j\in\mathbb{Z}_+\},\\
\Lambda_k:=&\{(i_1,\cdots,i_k)\mid i_j\in{\bbz}_+,~0\leqslant  i_j\leqslant  p-1\},\\
\Lambda'_k:=&\{(i_1,\cdots,i_k)\mid i_j\in\{0,1\}\}
\end{split}
\end{equation*}
with $1\leqslant j\leqslant k$. For $\mathbf{i}=(i_1,\cdots,i_k)$ in $\mathbb{Z}_+^k$, $\Lambda_k$ or $\Lambda'_k$, set $|\mathbf{i}|=i_1+\cdots+i_k$. For any real number $a\in\mathbb{R}$, let $\lceil a\rceil$ denote the largest integer lower bound of $a$, and $\lfloor a\rfloor$ the least integer upper bound of $a$. Given $(\mathbf{a},\mathbf{b},\mathbf{c},\mathbf{d})\in{\bbz}^m_+\times\Lambda'_n\times{\bbz}^{\frac{s}{2}}_+\times\Lambda'_{\lfloor\frac{\sfr}{2}\rfloor}$, let $x^\mathbf{a}y^\mathbf{b}u^\mathbf{c}v^\mathbf{d}$ denote the monomial $x_1^{a_1}\cdots x_m^{a_m}y_1^{b_1}\cdots y_n^{b_n}u_1^{c_1}\cdots u_{\frac{s}{2}}^{c_{\frac{s}{2}}}v_1^{d_1}\cdots v_{\lfloor\frac{\sfr}{2}\rfloor}^{d_{\lfloor\frac{\sfr}{2}\rfloor}}$ in $U({\ggg})$. Set $Q_{\chi,A}:=U({\ggg}_A)\otimes_{U(\mathfrak{m}_A)}A_\chi$, where $A_\chi=A1_\chi$. It is obvious that $Q_{\chi,A}$ is a ${\ggg}_A$-stable $A$-lattice in $Q_{\chi}$ with $\{x^\mathbf{a}y^\mathbf{b}u^\mathbf{c}v^\mathbf{d}\otimes1_\chi\mid(\mathbf{a},\mathbf{b},\mathbf{c},\mathbf{d})\in{\bbz}^m_+\times\Lambda'_n\times{\bbz}^{\frac{s}{2}}_+\times\Lambda'_{\lfloor\frac{\sfr}{2}\rfloor}\}$
being a free basis. Given $(\mathbf{a},\mathbf{b},\mathbf{c},\mathbf{d})\in{\bbz}_+^m\times\Lambda'_n\times{\bbz}_+^{\frac{s}{2}}\times\Lambda'_{\lfloor\frac{\sfr}{2}\rfloor}$, set
\begin{equation*}
\begin{split}
|(\mathbf{a},\mathbf{b},\mathbf{c},\mathbf{d})|_e:=&\sum_{i=1}^ma_i(k_i+2)+\sum_{i=1}^nb_i(k'_i+2)+\sum_{i=1}^{\frac{s}{2}}c_i+\sum_{i=1}^{\lfloor\frac{\sfr}{2}\rfloor}d_i,\\
\text{wt}(\bar x^{\mathbf{a}}\bar y^\mathbf{b}\bar u^\mathbf{c}\bar v^\mathbf{d}):=&(\sum\limits_{i=1}^mk_ia_i)+(\sum\limits_{i=1}^nk'_ib_i)-|\mathbf{c}|-|\mathbf{d}|,
\end{split}
\end{equation*} which are called the $e$-degree and the weight of $x^{\mathbf{a}}y^\mathbf{b}u^\mathbf{c}v^\mathbf{d}$.

For any non-zero element $h\in Q_{\chi}^{\ad\,\mmm}$, write$$h=(\sum\limits_{|(\mathbf{a},\mathbf{b},\mathbf{c},\mathbf{d})|_e\leqslant  n(h)}\lambda_{\mathbf{a},\mathbf{b},\mathbf{c},\mathbf{d}}
x^{\mathbf{a}}y^\mathbf{b}u^\mathbf{c}v^\mathbf{d})\otimes1_\chi,$$ where $n(h)$ is the highest $e$-degree of the terms in the linear expansion of $h$, and $\lambda_{\mathbf{a},\mathbf{b},\mathbf{c},\mathbf{d}}\neq0$ for at least one $(\mathbf{a},\mathbf{b},\mathbf{c},\mathbf{d})$ with $|(\mathbf{a},\mathbf{b},\mathbf{c},\mathbf{d})|_e=n(h)$.

For $k\in\mathbb{Z}_+$, put $\Lambda^{k}_{h}=\{(\mathbf{a},\mathbf{b},\mathbf{c},\mathbf{d})\mid\lambda_{\mathbf{a},\mathbf{b},\mathbf{c},\mathbf{d}}\neq0\mbox{ and }|(\mathbf{a},\mathbf{b},\mathbf{c},\mathbf{d})|_e=k\}$ and set
\begin{align}\label{MaxLambda}
\Lambda^{\text{max}}_{h}:=
\{ (\mathbf{a},\mathbf{b},\mathbf{c},\mathbf{d})\in\Lambda^{n( h)}_{h}\mid
    \text{wt}(x^{\mathbf{a}}y^\mathbf{b} u^\mathbf{c}v^\mathbf{d}) \mbox{ takes its maximum value}\}.
    \end{align}
This maximum value mentioned in (\ref{MaxLambda}) will be denoted by $N(h)$.

\subsection{Finite $W$-superalgebras in positive characteristic}\label{2.3}
\subsubsection{}\label{2.3.1}
Pick a prime $p\in\Pi(A)$ and denote by ${\bbk}=\overline{\mathbb{F}}_p$ the algebraic closure of $\mathbb{F}_p$. Since the bilinear form $(\cdot,\cdot)$ is $A$-valued on ${\ggg}_A$, it induces a bilinear form on the Lie superalgebra ${\ggg}_{\bbk}\cong{\ggg}_A\otimes_A{\bbk}$. For $x\in{\ggg}_A$, set $\bar{x}:=x\otimes1$, an element of ${\ggg}_{\bbk}$. To simplify notation we identify $e,f,h$ with the nilpotent elements $\bar{e}=e\otimes1,~\bar{f}=f\otimes1$ and $\bar{h}=h\otimes1$ in ${\ggg}_{\bbk}$, and $\chi$ with the linear function $(\bar e,\cdot)$ on ${\ggg}_{\bbk}$.

The Lie superalgebra ${\ggg}_{\bbk}$ carries a natural $p$-mapping $x\mapsto x^{[p]}$ for all $x\in({\ggg}_{\bbk})_{\bar0}$. For any $\xi\in({\ggg}_{\bbk})_{\bar{0}}^*$ we denote by $J_\xi$ the two-sided ideal of $U({\ggg}_{\bbk})$ generated by the even central elements $\{\bar x^p-\bar x^{[p]}-\xi(\bar x)^p\mid\bar x\in({\ggg}_{\bbk})_{\bar{0}}\}$. Then the quotient algebra $U_\xi({\ggg}_{\bbk}):=U({\ggg}_{\bbk})/J_\xi$ is called the reduced enveloping algebra with $p$-character $\xi$. We often regard $\xi\in{\ggg}_{\bbk}^*$ by letting $\xi(({\ggg}_{\bbk})_{\bar{1}})=0$.
\subsubsection{}\label{2.3.2}
For $i\in{\bbz}$, set ${\ggg}_{\bbk}(i):={\ggg}_A(i)\otimes_A{\bbk}$ and put $\mathfrak{m}_{\bbk}:=\mathfrak{m}_A\otimes_A{\bbk}$, then $\mathfrak{m}_{\bbk}$ is a restricted subalgebra of $\mathfrak{g}_{\bbk}$. Denote by $\mathfrak{m}'_{\bbk}:=\mathfrak{m}'_A\otimes_A{\bbk}$ and $\mathfrak{p}_{\bbk}:=\mathfrak{p}_A\otimes_A{\bbk}$. Due to our assumptions on $A$, the elements $\bar{x}_1,\cdots,\bar{x}_l$ and $\bar{y}_1,\cdots,\bar{y}_q$ form  bases of the centralizer $({\ggg}^e_{\bbk})_{\bar{0}}$ and $({\ggg}^e_{\bbk})_{\bar{1}}$ of $e$ in ${\ggg}_{\bbk}$, respectively. \cite[\S4.1]{WZ} showed that the subalgebra $\mathfrak{m}_{\bbk}$ is $p$-nilpotent, and the linear function $\chi$ vanishes on the $p$-closure of $[\mathfrak{m}_{\bbk},\mathfrak{m}_{\bbk}]$. Set $Q_{\chi,{\bbk}}:=U({\ggg}_{\bbk})\otimes_{U(\mathfrak{m}_{\bbk})}{\bbk}_\chi$, where ${\bbk}_\chi=A_\chi\otimes_{A}{\bbk}={\bbk}1_\chi$. Clearly, ${\bbk}1_\chi$ is a one-dimensional  $\mathfrak{m}_{\bbk}$-module with the property $\bar x.1_\chi=\chi(\bar x)1_\chi$ for all $\bar x\in\mathfrak{m}_{\bbk}$, and it is obvious that $Q_{\chi,{\bbk}}\cong Q_{\chi,A}\otimes_A{\bbk}$ as ${\ggg}_{\bbk}$-modules.

Set the ${\ggg}_{\bbk}$-module $Q_{\chi}^\chi:=Q_{\chi,{\bbk}}/J_\chi Q_{\chi,{\bbk}}$. Then we can define the {\sl reduced $W$-superalgebra} by $U_\chi({\ggg}_{\bbk},e):=(\text{End}_{{\ggg}_{\bbk}}Q_{\chi}^\chi)^{\text{op}}$. It follows from \cite[Proposition 2.21]{ZS2} that $U_\chi({\ggg}_{\bbk},e)\cong(Q_\chi^\chi)^{\text{ad}\,{\mmm}_{\bbk}}$ as ${\bbk}$-algebras.

Given $(\mathbf{a},\mathbf{b},\mathbf{c},\mathbf{d})\in\Lambda_m\times\Lambda'_n\times\Lambda_{\frac{s}{2}}\times\Lambda'_{\lfloor\frac{\sfr}{2}\rfloor}$, let $\bar x^\mathbf{a}\bar y^\mathbf{b}\bar u^\mathbf{c}\bar v^\mathbf{d}$ denote the monomial $\bar x_1^{a_1}\cdots\bar x_m^{a_m}\bar y_1^{b_1}\cdots\bar y_n^{b_n}\bar u_1^{c_1}\cdots\bar u_{\frac{s}{2}}^{c_{\frac{s}{2}}}\bar v_1^{d_1}\cdots\bar v_{\lfloor\frac{\sfr}{2}\rfloor}^{d_{\lfloor\frac{\sfr}{2}\rfloor}}$ in $U({\ggg}_{\bbk})$. Denote by  $|(\mathbf{a},\mathbf{b},\mathbf{c},\mathbf{d})|_e$ and $\text{wt}(\bar x^{\mathbf{a}}\bar y^\mathbf{b}\bar u^\mathbf{c}\bar v^\mathbf{d})$ be the $e$-degree and the weight of $\bar x^{\mathbf{a}}\bar y^\mathbf{b}\bar u^\mathbf{c}\bar v^\mathbf{d}$ as defined in \S\ref{2.2.2}, respectively.
For any non-zero element $\bar h\in (Q_\chi^\chi)^{\text{ad}\,{\mmm}_{\bbk}}$ we let $n(\bar h)$, $\Lambda^{k}_{\bar h}$, $\Lambda_{\bar h}^{\text{max}}$ and $N(\bar h)$ have the same meaning as in \S\ref{2.2.2}.

\section{The structure theory of refined $W$-superalgebras}\label{crw}
In this section we will first introduce the definition of refined reduced $W$-superalgebras associated with a
basic Lie superalgebra $\ggg$ over $\bbk$, an algebraically closed field of prime characteristic $p$; and also refined $W$-superalgebras over $\mathbb{C}$. Then the structure theory of these algebras is studied. We mainly follow Premet's strategy on finite $W$-algebras \cite[\S3-\S4]{P2}, and also the method applied by Zeng-Shu on finite $W$-superalgebras  \cite[\S3-\S4]{ZS2}, with a few modifications.
\subsection{The structure theory of refined reduced $W$-superalgebras over $\bbk$}\label{rrw}
This subsection is devoted to the refined reduced $W$-superalgebras over $\bbk$.
\begin{defn}\label{rewcc} Define the refined reduced $W$-superalgebra over $\bbk$ by
 $$(Q_\chi^{\chi})^{\text{ad}\,{\mmm}'_\mathds{k}}:=(Q_{\chi,{\bbk}}/J_\chi Q_{\chi,{\bbk}})^{\text{ad}\,{\mmm}'_\mathds{k}}
\equiv\{\bar{y}\in Q_{\chi,{\bbk}}/J_\chi Q_{\chi,{\bbk}} \mid [a,y]\in J_\chi Q_{\chi,{\bbk}}, \forall a\in{\mmm}'_\mathds{k}\},$$
and $\bar{y}_1\cdot\bar{y}_2:=\overline{y_1y_2}$ for all $\bar{y}_1,\bar{y}_2\in (Q_\chi^{\chi})^{\text{ad}\,{\mmm}'_\mathds{k}}$.
\end{defn}

Retain the notations as in \S\ref{2.3}.
Now we will discuss the structure of refined reduced $W$-superalgebra over $\bbk$. First note that
\begin{lemma} \label{hw}
Let $\bar h\in (Q_\chi^{\chi})^{\text{ad}\,{\mmm}'_\mathds{k}}\backslash\{0\}$ and $(\mathbf{a},\mathbf{b},\mathbf{c},\mathbf{d})\in \Lambda^{\text{max}}_{\bar h}$. Then $\mathbf{a}\in\Lambda_{l}\times\{\mathbf{0}\}, \mathbf{b}\in\Lambda'_{q}\times\{\mathbf{0}\}$, $\mathbf{c}=\mathbf{0}$  and  $\mathbf{d}=\mathbf{0}$.
\end{lemma}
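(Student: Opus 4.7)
My plan is to exploit the interplay between the Kazhdan $e$-degree and the $\ad h$-weight on $Q_\chi^\chi$, combined with the $\ad \mathfrak{m}'_\bbk$-invariance of $\bar h$, following Premet's strategy in \cite[\S4]{P2} and its super-adaptation in \cite[\S3]{ZS2}, with the modifications required by the extended admissible subalgebra $\mathfrak{m}'_\bbk \supseteq \mathfrak{m}_\bbk$. A useful preliminary observation is the identity
\begin{equation*}
|(\mathbf{a},\mathbf{b},\mathbf{c},\mathbf{d})|_e - \text{wt}(\bar x^\mathbf{a}\bar y^\mathbf{b}\bar u^\mathbf{c}\bar v^\mathbf{d}) = 2(|\mathbf{a}|+|\mathbf{b}|+|\mathbf{c}|+|\mathbf{d}|),
\end{equation*}
which shows that, among monomials of fixed top $e$-degree $n(\bar h)$, maximising the weight is equivalent to minimising the total number of PBW factors. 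Hence every $(\mathbf{a},\mathbf{b},\mathbf{c},\mathbf{d}) \in \Lambda^{\text{max}}_{\bar h}$ labels a ``shortest'' monomial of top $e$-degree, and the task reduces to showing that $\ad \mathfrak{m}'_\bbk$-invariance leaves no room for a ``wrong-kind'' factor that could be traded for a strictly shorter monomial of the same $e$-degree.

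I would first dispose of $\mathbf{c}$ and $\mathbf{d}$. The dual element $\bar u_{s-j+1}$ lies in $\mathfrak{m}_\bbk \subseteq \mathfrak{m}'_\bbk$ for every $j \leq s/2$, and the crucial input is that $[\bar u_{s-j+1}, \bar u_j] \in \ggg(-2)_\bz \subseteq \mathfrak{m}_\bbk$ reduces inside $Q_\chi^\chi$ to the scalar $\chi([\bar u_{s-j+1}, \bar u_j]) = j^* \in \bbk^\times$. Consequently, in the Kazhdan-graded picture $\ad \bar u_{s-j+1}$ acts on the top symbol of $\bar h$ as a derivation proportional to $\partial/\partial \bar u_j$ on the $\bar u$-variables, and the $\ad \mathfrak{m}'_\bbk$-invariance forces this derivation to annihilate the maximal-weight part, yielding $c_j = 0$. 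Running through all $j$ gives $\mathbf{c} = \mathbf{0}$. The parallel argument with $\bar v_{\sfr-j+1}$ disposes of $\mathbf{d}$; in the odd-$\sfr$ case the middle coordinate is eliminated using the extra generator $\bar v_{(\sfr+1)/2} \in \mathfrak{m}'_\bbk \setminus \mathfrak{m}_\bbk$, whose self-bracket has $\chi$-value $1$, which is precisely the reason for enlarging $\mathfrak{m}_\bbk$ to $\mathfrak{m}'_\bbk$.

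To force $a_i = 0$ for $i > l$ and $b_i = 0$ for $i > q$, I would use that by construction $\bar x_i \in [\bar f, (\ggg_\bbk)_\bz]$ for $i > l$ and $\bar y_i \in [\bar f, (\ggg_\bbk)_\bo]$ for $i > q$. Once the previous step has pinned down the top symbol of $\bar h$ as a polynomial in the $\bar x_i$ and $\bar y_j$ alone, the $\mathfrak{sl}_2$-decomposition $\ggg_\bbk(i) = \ggg_\bbk^{\bar e}(i) \oplus [\bar f, \ggg_\bbk(i+2)]$ for $i \geq 0$ allows any $\bar x_i$ (resp. $\bar y_i$) with $i > l$ (resp. $i > q$) to be rewritten as $[\bar f, \bar z]$ and then traded, via a suitable commutator with an element of $\mathfrak{m}'_\bbk$, for a strictly shorter monomial of the same $e$-degree — again contradicting the minimality characterisation of $\Lambda^{\text{max}}_{\bar h}$. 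I expect the main obstacle to be the clean bookkeeping in this last reduction, in particular the simultaneous control of the Kazhdan degree, the weight, and the parity-dependent contribution of $\bar v_{(\sfr+1)/2}$ in the odd-$\sfr$ case; once these are handled, the lemma follows by a standard graded-degeneration argument.
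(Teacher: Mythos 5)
Your organizing identity $|(\mathbf{a},\mathbf{b},\mathbf{c},\mathbf{d})|_e-\text{wt}(\bar x^{\mathbf{a}}\bar y^{\mathbf{b}}\bar u^{\mathbf{c}}\bar v^{\mathbf{d}})=2(|\mathbf{a}|+|\mathbf{b}|+|\mathbf{c}|+|\mathbf{d}|)$ is correct and is indeed the bookkeeping that drives the argument, but the proposal has two genuine gaps and also misses the reduction that makes the paper's proof short. Since $\mmm_\bbk\subseteq\mmm'_\bbk$, the space $(Q_\chi^\chi)^{\text{ad}\,\mmm'_\bbk}$ is a subalgebra of $(Q_\chi^\chi)^{\text{ad}\,\mmm_\bbk}\cong U_\chi(\ggg_\bbk,e)$, so \cite[Lemma 3.3]{ZS2} already yields $\mathbf{a}\in\Lambda_l\times\{\mathbf{0}\}$, $\mathbf{b}\in\Lambda'_q\times\{\mathbf{0}\}$, $\mathbf{c}=\mathbf{0}$, and, when $\sfr$ is odd, $\mathbf{d}\in\{\mathbf{0}\}_{\frac{\sfr-1}{2}}\times\Lambda'_1$; the only new content of the lemma is killing the last coordinate $d_{\frac{\sfr+1}{2}}$, and that is all the paper actually proves. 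Re-deriving the rest is legitimate in principle, but your mechanism for forcing $a_i=0$ ($i>l$) and $b_i=0$ ($i>q$) --- writing $\bar x_i=[\bar f,\bar z]$ and ``trading it, via a commutator with an element of $\mmm'_\bbk$, for a strictly shorter monomial of the same $e$-degree'' --- cannot work as stated: when a bracket with an element of $\mmm'_\bbk$ contracts a PBW factor against $\chi$, the $e$-degree drops by the Kazhdan degree of the removed factor, so the resulting shorter monomial never has the same $e$-degree as the original (indeed your own identity forbids ``same $e$-degree and strictly shorter''). Moreover $\Lambda^{\text{max}}_{\bar h}$ records monomials occurring in $\bar h$ itself, so exhibiting a shorter monomial inside a bracket contradicts nothing by itself; the contradiction has to come from showing that $[\,\cdot\,,\bar h]$ is nonzero in a specific shifted bidegree.

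That non-cancellation is precisely what is missing at the decisive step as well. You correctly identify the tool for the odd-$\sfr$ middle coordinate, namely $\bar v_{\frac{\sfr+1}{2}}\in\mmm'_\bbk\setminus\mmm_\bbk$ with $\bar v_{\frac{\sfr+1}{2}}^2\otimes\bar 1_\chi=\frac12\otimes\bar1_\chi$, and the ``derivation'' heuristic is right. But the proof consists of verifying that the terms this derivation produces actually survive: in $[\bar v_{\frac{\sfr+1}{2}}\otimes\bar 1_\chi,\bar h]$ the contributions coming from top monomials with $d_{\frac{\sfr+1}{2}}=1$ land in $e$-degree $n(\bar h)-1$ and weight $N(\bar h)+1$, and one must check that no other contribution --- brackets of $\bar v_{\frac{\sfr+1}{2}}$ with the $\bar x$- and $\bar y$-factors, or the length-increasing multiplications against $\bar u^{\mathbf{c}}\bar v^{\mathbf{d}}$ --- can reach that bidegree. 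This is exactly the paper's case analysis (Cases 1, 2(a), 2(b)) together with the projection $\pi_{n(\bar h)-1,N(\bar h)+1}$, whose nonvanishing on the bracket contradicts $\text{ad}\,\mmm'_\bbk$-invariance. As written, your argument asserts this cancellation-free survival rather than proving it, so it is a plan rather than a proof.
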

\begin{proof}
Since $(Q_\chi^\chi)^{\text{ad}\,{\mmm}'_\mathds{k}}$ is a subalgebra of the reduced $W$-superalgebra $U_\chi(\ggg_\mathds{k},e)\cong(Q_\chi^\chi)^{\text{ad}\,{\mmm}_\mathds{k}}$, it follows from \cite[Lemma 3.3]{ZS2} that $\mathbf{a}\in\Lambda_{l}\times\{\mathbf{0}\},~\mathbf{b}\in\Lambda'_{q}\times\{\mathbf{0}\}$ and $\mathbf{c}=\mathbf{0}$. Moreover, the sequence $\mathbf{d}$ satisfies:

(1) $\mathbf{d}=\mathbf{0}$ when $\sfr=\dim\ggg_{\bbk}(-1)_{\bar{1}}$ is even;

(2) $\mathbf{d}\in\{\mathbf{0}\}_{\frac{\sfr-1}{2}}\times\Lambda'_{1}$ when $\sfr=\dim\ggg_{\bbk}(-1)_{\bar{1}}$ is odd.

For the case when $\sfr$ is even, we have ${\mmm}'_\mathds{k}={\mmm}_\mathds{k}$ by definition, then $(Q_\chi^\chi)^{\text{ad}\,{\mmm}'_\mathds{k}}=(Q_\chi^\chi)^{\text{ad}\,{\mmm}_\mathds{k}}\cong U_\chi(\ggg_\mathds{k},e)$,  and the lemma readily follows from (1). So we just need to consider the case when $\sfr$ is odd. Our arguments mainly follow the proof of \cite[Lemma 3.3]{ZS2}, with some modifications. Now we proceed by steps.

Step 1: Denote by $\mathbf{e}_{\frac{\sfr+1}{2}}:=(\{\mathbf{0}\}_{\frac{\sfr-1}{2}},1)$.
We claim that $(\Lambda_{l}\times\{\mathbf{0}\},\Lambda'_{q}\times\{\mathbf{0}\},\mathbf{0},\mathbf{e}_{\frac{\sfr+1}{2}})\notin\Lambda^{\text{max}}_{\bar h}$ for any $\bar h\in(Q_\chi^\chi)^{\text{ad}\,{\mmm}'_\mathds{k}}$.
Suppose the contrary. Let $(\mathbf{a},\mathbf{b},\mathbf{c},\mathbf{d})\in\Lambda^{d'}_{\bar h}$ where $d'\in\mathbb{Z}_+$.  It follows from \cite[Lemma 3.1]{ZS2} and \cite[(3.7)]{ZS2} that
\begin{equation}\label{rhow}
(\bar v_{\frac{\sfr+1}{2}}\otimes\bar1_\chi)\cdot(\bar x^{\mathbf{a}}\bar y^\mathbf{b}\bar u^\mathbf{c}\bar v^\mathbf{d}\otimes\bar 1_\chi)=\sum_{\mathbf{i}\in\Lambda_{m}}\sum_{\mathbf{j}\in\Lambda'_{n}}\left(\begin{array}{@{\hspace{0pt}}c@{\hspace{0pt}}} \mathbf{a}\\ \mathbf{i}\end{array}\right)\bar x^{\mathbf{a}-\mathbf{i}}\bar y^{\mathbf{b}-\mathbf{j}}\cdot[\bar v_{\frac{\sfr+1}{2}}\bar x^{\mathbf{i}}\bar y^{\mathbf{j}}]\cdot \bar u^\mathbf{c}\bar v^\mathbf{d}\otimes\bar 1_\chi,
\end{equation}
where $\mathbf{a}\choose\mathbf{i}$$=\prod\limits_{l'=1}^m$$a_{l'}\choose i_{l'}$ and $$[\bar v_{\frac{\sfr+1}{2}}\bar x^{\mathbf{i}}\bar y^{\mathbf{j}}]=k_{1,b_1,j_1}\cdots k_{n,b_n,j_n}(-1)^{|\mathbf{i}|}(\text{ad}\,\bar y_n)^{j_n}\cdots(\text{ad}\,\bar y_1)^{j_1}(\text{ad}\,\bar x_m)^{i_m}\cdots(\text{ad}\,\bar x_1)^{i_1}(\bar v_{\frac{\sfr+1}{2}}),$$
in which the coefficients $k_{1,b_1,j_1},\cdots,k_{n,b_n,j_n}\in {\bbk}$ (note that $b_1,\cdots,b_n,j_1,\cdots,j_n\in\{0,1\}$) are defined by
$$k_{t',0,0}=1, k_{t',0,1}=0, k_{t',1,0}=(-1)^{1+j_1+\cdots+j_{{t'}-1}}, k_{t',1,1}=(-1)^{j_1+\cdots+j_{{t'}-1}}$$
with $1\leqslant {t'}\leqslant n$ ($j_0$ is interpreted as $0$). Moreover, the summation on the right side of  \eqref{rhow} runs through all $(\mathbf{i},\mathbf{j})\in\Lambda_m\times\Lambda'_n$ such that $[\bar v_{\frac{\sfr+1}{2}}\bar x^{\mathbf{i}}\bar y^{\mathbf{j}}]$ is nonzero and $\text{wt}([\bar v_{\frac{\sfr+1}{2}}\bar x^{\mathbf{i}}\bar y^{\mathbf{j}}])\geqslant -1$.

Step 2: First note that
$[\bar v_{\frac{\sfr+1}{2}},\bar v_i]\otimes\bar1_\chi=1\otimes\chi([\bar v_{\frac{\sfr+1}{2}},\bar v_i])\bar1_\chi=0$ for all $1\leqslant  i\leqslant \frac{\sfr-1}{2}$.
We continue the arguments case by case, according to the values of $\text{wt}([\bar v_{\frac{\sfr+1}{2}}\bar x^{\mathbf{i}}\bar y^{\mathbf{j}}])$ with respect to  summation parameters  $(\mathbf{i},\mathbf{j})\in \Lambda_m\times\Lambda'_n$.

(Case 1)  wt$([\bar v_{\frac{\sfr+1}{2}}\bar x^{\mathbf{i}}\bar y^{\mathbf{j}}])\geqslant 0$.
   We have $|\mathbf{i}|+|\mathbf{j}|\geqslant 1$. By Step 2(Case 1) in the proof of \cite[Lemma 3.3]{ZS2} we know that $\bar x^{\mathbf{a}-\mathbf{i}}\bar y^{\mathbf{b}-\mathbf{j}}\cdot[\bar v_{\frac{\sfr+1}{2}}\bar x^{\mathbf{i}}\bar y^{\mathbf{j}}]\cdot\bar u^\mathbf{c}\bar v^\mathbf{d}\otimes\bar 1_\chi$ is a linear combination of $\bar x^{\mathbf{i'}}\bar y^{\mathbf{j'}}\bar u^\mathbf{c}\bar v^\mathbf{d}\otimes\bar 1_\chi$ with
$$\text{wt}(\bar x^{\mathbf{i'}}\bar y^{\mathbf{j'}}\bar u^\mathbf{c}\bar v^\mathbf{d})=-1+\text{wt}(\bar x^{\mathbf{a}}\bar y^{\mathbf{b}}\bar u^\mathbf{c}\bar v^\mathbf{d}),$$
and \[
\text{deg}_e(\bar x^{\mathbf{i'}}\bar y^{\mathbf{j'}}\bar u^\mathbf{c}\bar v^\mathbf{d})
\leqslant1+d'-2(|\mathbf{i}|+|\mathbf{j}|).
\]

(Case 2) wt$([\bar v_{\frac{\sfr+1}{2}}\bar x^{\mathbf{i}}\bar y^{\mathbf{j}}])=-1$. By Step 2(Case 2) in the proof of \cite[Lemma 3.3]{ZS2}, the vector $\bar x^{\mathbf{a}-\mathbf{i}}\bar y^{\mathbf{b}-\mathbf{j}}\cdot[\bar v_{\frac{\sfr+1}{2}}\bar x^{\mathbf{i}}\bar y^{\mathbf{j}}]\cdot \bar u^\mathbf{c}\bar v^\mathbf{d}\otimes\bar 1_\chi$ is a linear combination of $\bar x^{\mathbf{a}-\mathbf{i}}\bar y^{\mathbf{b}-\mathbf{j}}\bar u^\mathbf{i'}\bar v^\mathbf{j'}\otimes\bar 1_\chi$ with $|\mathbf{i'}|=|\mathbf{c}|\pm1, \mathbf{j'}=\mathbf{d}$, or $\mathbf{i'}=\mathbf{c}, |\mathbf{j'}|=|\mathbf{d}|\pm1$.

(a) If $|\mathbf{i'}|=|\mathbf{c}|+1, \mathbf{j'}=\mathbf{d}$, or $\mathbf{i'}=\mathbf{c}, |\mathbf{j'}|=|\mathbf{d}|+1$, then $|\mathbf{i}|+|\mathbf{j}|\geqslant 1$,
$$\text{wt}(\bar x^{\mathbf{a}-\mathbf{i}}\bar y^{\mathbf{b}-\mathbf{j}}\bar u^\mathbf{i'}\bar v^\mathbf{j'})=-1+\text{wt}(\bar x^{\mathbf{a}}\bar y^{\mathbf{b}}\bar u^\mathbf{c}\bar v^\mathbf{d}),$$
and\[
\text{deg}_e(\bar x^{\mathbf{a}-\mathbf{i}}\bar y^{\mathbf{b}-\mathbf{j}}\bar u^\mathbf{i'}\bar v^\mathbf{j'})=1+d'-2(|\mathbf{i}|+|\mathbf{j}|).
\]

(b) If $|\mathbf{i'}|=|\mathbf{c}|-1, \mathbf{j'}=\mathbf{d}$, or $\mathbf{i'}=\mathbf{c}, |\mathbf{j'}|=|\mathbf{d}|-1$, then
$$\text{wt}(\bar x^{\mathbf{a}-\mathbf{i}}\bar y^{\mathbf{b}-\mathbf{j}}\bar u^\mathbf{i'}\bar v^\mathbf{j'})
=1+\text{wt}(\bar x^{\mathbf{a}}\bar y^{\mathbf{b}}\bar u^\mathbf{c}\bar v^\mathbf{d}),$$
and\[\text{deg}_e(\bar x^{\mathbf{a}-\mathbf{i}}\bar y^{\mathbf{b}-\mathbf{j}}\bar u^\mathbf{i'}\bar v^\mathbf{j'})=-1+d'-2(|\mathbf{i}|+|\mathbf{j}|).
\]

For concluding  our arguments, we adopt an auxiliary endomorphism.
For $i,j\in\mathbb{Z}$, take $\pi_{ij}$ to be an endomorphism of $Q_\chi^\chi$ defined  via
\begin{align}\label{pi}
\pi_{ij}(\bar x^{\mathbf{a}}\bar y^{\mathbf{b}}\bar u^\mathbf{c}\bar v^\mathbf{d}\otimes\bar1_\chi)=\begin{cases}\bar x^{\mathbf{a}}\bar y^{\mathbf{b}}\bar u^\mathbf{c}\bar v^\mathbf{d}\otimes\bar 1_\chi &\mbox{ if }\text{deg}_e(\bar x^{\mathbf{a}}\bar y^{\mathbf{b}}\bar u^\mathbf{c}\bar v^\mathbf{d})=i\\ &\mbox{ and } \text{wt}(\bar x^{\mathbf{a}}\bar y^{\mathbf{b}}\bar u^\mathbf{c}\bar v^\mathbf{d})=j;\\0 &\mbox{ otherwise}.
\end{cases}
\end{align}

 Step 3:
Now we proceed to complete the arguments by reducing contradictions in Step 1. Since $\bar h\in(Q_\chi^\chi)^{\text{ad}\,{\mmm}'_\mathds{k}}$ and $\bar v_{\frac{\sfr+1}{2}}\in{\mmm}'_\mathds{k}$, then we have
\begin{equation}\label{mh}
[\bar v_{\frac{\sfr+1}{2}}\otimes\bar1_\chi,\bar h]=0
\end{equation}
by the definition. On the other hand, by \cite[(3.12)]{ZS2} we have
\begin{equation}\label{exten}
\begin{split}
&[\bar v_{\frac{\sfr+1}{2}}\otimes\bar1_\chi,\bar h]=(\bar v_{\frac{\sfr+1}{2}}\otimes\bar 1_\chi)\cdot\bar h-(-1)^{|\bar h|}\bar h\cdot(\bar v_{\frac{\sfr+1}{2}}\otimes\bar 1_\chi)\\
=&(\sum\limits_{(\mathbf{a},\mathbf{b},\mathbf{c},\mathbf{d})
\in\Lambda_{\bar h}^{n(\bar h)}}\lambda_{\mathbf{a},\mathbf{b},\mathbf{c},\mathbf{d}}(
\sum\limits_{i=1}^m(-1)^{\sum\limits_{j=1}^nb_j}\bar{a}_i\bar x^{\mathbf{a}-\mathbf{e}_i}\bar y^{\mathbf{b}}\cdot[\bar v_{\frac{\sfr+1}{2}},\bar x_i]\cdot \bar u^\mathbf{c}\bar v^\mathbf{d}\\
&+\sum\limits_{i=1}^n(-1)^{1+\sum\limits_{j=1}^ib_j}\bar x^{\mathbf{a}}\bar y^{\mathbf{b}-\mathbf{e}_i}\cdot[\bar v_{\frac{\sfr+1}{2}},\bar y_i]\cdot \bar u^\mathbf{c}\bar v^\mathbf{d})\\
&+\sum\limits_{(\mathbf{a},\mathbf{b},\mathbf{c},d_1,\cdots,d_{\frac{\sfr-1}{2}},0)
\in\Lambda_{\bar h}^{n(\bar h)}}\lambda_{\mathbf{a},\mathbf{b},\mathbf{c},d_1,\cdots,d_{\frac{\sfr-1}{2}},0}(-1)^{\sum\limits_{j=1}^nb_j+\sum\limits_{j=1}^{\frac{r-1}{2}}d_{j}} \bar x^{\mathbf{a}}\bar y^{\mathbf{b}}\cdot\bar u^{\mathbf{c}}\bar v_1^{d_1}\cdots\bar v_{\frac{\sfr-1}{2}}^{d_{\frac{\sfr-1}{2}}}\cdot v_{\frac{\sfr+1}{2}}\\
&+\sum\limits_{(\mathbf{a},\mathbf{b},\mathbf{c},d_1,\cdots,d_{\frac{\sfr-1}{2}},1)
\in\Lambda_{\bar h}^{n(\bar h)}}\lambda_{\mathbf{a},\mathbf{b},\mathbf{c},d_1,\cdots,d_{\frac{\sfr-1}{2}},1}
(-1)^{\sum\limits_{j=1}^nb_j+\sum\limits_{j=1}^{\frac{r-1}{2}}d_{j}} \bar x^{\mathbf{a}}\bar y^{\mathbf{b}}\cdot\bar u^{\mathbf{c}}\bar v_1^{d_1}\cdots\bar v_{\frac{\sfr-1}{2}}^{d_{\frac{\sfr-1}{2}}}v_{\frac{\sfr+1}{2}}\cdot v_{\frac{\sfr+1}{2}}\\&+
\sum\limits_{|(\mathbf{i},
\mathbf{j},\mathbf{k},\mathbf{l})|_e\leqslant  n(\bar h)-2}\beta_{\mathbf{i},\mathbf{j},\mathbf{k},\mathbf{l}}\cdot\bar x^{\mathbf{i}}\bar y^{\mathbf{j}}\bar u^\mathbf{k}\bar v^\mathbf{l})\otimes\bar1_\chi\\
&-(\sum\limits_{(\mathbf{a},\mathbf{b},\mathbf{c},d_1,\cdots,d_{\frac{\sfr-1}{2}},0)
\in\Lambda_{\bar h}^{n(\bar h)}}\lambda_{\mathbf{a},\mathbf{b},\mathbf{c},d_1,\cdots,d_{\frac{\sfr-1}{2}},0}
(-1)^{\sum\limits_{j=1}^nb_j+\sum\limits_{j=1}^{\frac{r-1}{2}}d_{j}}
\bar x^{\mathbf{a}}\bar y^{\mathbf{b}}\cdot\bar u^{\mathbf{c}}\bar v_1^{d_1}\cdots\bar v_{\frac{\sfr-1}{2}}^{d_{\frac{\sfr-1}{2}}}\cdot v_{\frac{\sfr+1}{2}}\\
&+\sum\limits_{(\mathbf{a},\mathbf{b},\mathbf{c},d_1,\cdots,d_{\frac{\sfr-1}{2}},1)
\in\Lambda_{\bar h}^{n(\bar h)}}\lambda_{\mathbf{a},\mathbf{b},\mathbf{c},d_1,\cdots,d_{\frac{\sfr-1}{2}},1}
(-1)^{\sum\limits_{j=1}^nb_j+\sum\limits_{j=1}^{\frac{r-1}{2}}d_{j}+1}
\bar x^{\mathbf{a}}\bar y^{\mathbf{b}}\cdot\bar u^{\mathbf{c}}\bar v_1^{d_1}\cdots\bar v_{\frac{\sfr-1}{2}}^{d_{\frac{\sfr-1}{2}}}v_{\frac{\sfr+1}{2}}\cdot
\\& v_{\frac{\sfr+1}{2}}+\sum\limits_{|(\mathbf{i},
\mathbf{j},\mathbf{k},\mathbf{l})|_e\leqslant  n(\bar h)-2}\beta_{\mathbf{i},\mathbf{j},\mathbf{k},\mathbf{l}}\cdot\bar x^{\mathbf{i}}\bar y^{\mathbf{j}}\bar u^\mathbf{k}\bar v^\mathbf{l})\otimes\bar1_\chi,
\end{split}
\end{equation}
where at least one $\lambda_{\mathbf{a},\mathbf{b},\mathbf{c},d_1,\cdots,d_{\frac{\sfr-1}{2}},1}\neq0$ by our assumption in Step 1.

It is obvious that
\begin{equation*}
2\bar v_{\frac{\sfr+1}{2}}^2\otimes\bar1_\chi=[\bar v_{\frac{\sfr+1}{2}},\bar v_{\frac{\sfr+1}{2}}]\otimes
\bar1_\chi=1\otimes\chi([\bar v_{\frac{\sfr+1}{2}},\bar v_{\frac{\sfr+1}{2}}])\bar1_\chi=1\otimes\bar1_\chi,
\end{equation*}thus \eqref{exten} equals
\begin{equation}\label{exten2}
\begin{split}
[\bar v_{\frac{\sfr+1}{2}}\otimes\bar1_\chi,\bar h]
=&(\sum\limits_{(\mathbf{a},\mathbf{b},\mathbf{c},\mathbf{d})
\in\Lambda_{\bar h}^{n(\bar h)}}\lambda_{\mathbf{a},\mathbf{b},\mathbf{c},\mathbf{d}}(
\sum\limits_{i=1}^m(-1)^{\sum\limits_{j=1}^nb_j}\bar{a}_i\bar x^{\mathbf{a}-\mathbf{e}_i}\bar y^{\mathbf{b}}\cdot[\bar v_{\frac{\sfr+1}{2}},\bar x_i]\\
&\cdot \bar u^\mathbf{c}\bar v^\mathbf{d}+\sum\limits_{i=1}^n(-1)^{1+\sum\limits_{j=1}^ib_j}\bar x^{\mathbf{a}}\bar y^{\mathbf{b}-\mathbf{e}_i}\cdot[\bar v_{\frac{\sfr+1}{2}},\bar y_i]\cdot \bar u^\mathbf{c}\bar v^\mathbf{d}
\\&+\sum\limits_{|(\mathbf{a},
\mathbf{b},\mathbf{c},\mathbf{d})|_e=n(\bar h)-1}\lambda_{\mathbf{a},\mathbf{b},\mathbf{c},\mathbf{d}}
(-1)^{\sum\limits_{j=1}^nb_j+\sum\limits_{j=1}^{\frac{r-1}{2}}d_{j}} \bar x^{\mathbf{a}}\bar y^{\mathbf{b}}\cdot\bar u^{\mathbf{c}}\bar v_1^{d_1}\cdots\bar v_{\frac{\sfr-1}{2}}^{d_{\frac{\sfr-1}{2}}}\\
&+\sum\limits_{|(\mathbf{i},
\mathbf{j},\mathbf{k},\mathbf{l})|_e\leqslant  n(\bar h)-2}\beta_{\mathbf{i},\mathbf{j},\mathbf{k},\mathbf{l}}\cdot\bar x^{\mathbf{i}}\bar y^{\mathbf{j}}\bar u^\mathbf{k}\bar v^\mathbf{l})\otimes\bar1_\chi.
\end{split}
\end{equation}

When $[\bar v_{\frac{\sfr+1}{2}}\otimes\bar1_\chi,\bar h]$ is written as a linear combination of the canonical basis of $Q_\chi^\chi$, it is immediate from \eqref{exten2} and the arguments in Step 2 that the terms with $e$-degree $n(\bar h)-1$ and weight $N(\bar h)+1$ only occur as in (Case 2)(b) with wt$([\bar  v_{\frac{\sfr+1}{2}}\bar x^{\mathbf{i}}\bar y^{\mathbf{j}}])=-1$. By \eqref{exten2} we have
\[\begin{array}{ll}
&\pi_{n(\bar h)-1,N(\bar h)+1}([\bar v_{\frac{\sfr+1}{2}}\otimes\bar1_\chi,\bar h])\\
=&\sum\limits_{|(\mathbf{a},
\mathbf{b},\mathbf{c},\mathbf{d})|_e=n(\bar h)-1}\lambda_{\mathbf{a},\mathbf{b},\mathbf{c},\mathbf{d}}
(-1)^{\sum\limits_{j=1}^nb_j+\sum\limits_{j=1}^{\frac{r-1}{2}}d_{j}} \bar x^{\mathbf{a}}\bar y^{\mathbf{b}}\cdot\bar u^{\mathbf{c}}\bar v_1^{d_1}\cdots\bar v_{\frac{\sfr-1}{2}}^{d_{\frac{\sfr-1}{2}}}\otimes\bar1_\chi\\
\neq&0,
\end{array}\]
which contradicts to \eqref{mh}. This is to say, $(\Lambda_{l}\times\{\mathbf{0}\},\Lambda'_{q}\times\{\mathbf{0}\},\mathbf{0},
\mathbf{e}_{\frac{\sfr+1}{2}})\in\Lambda^{\text{max}}_{\bar h}$ is not possible. The proof is completed.

\end{proof}

In the following we will introduce a basis of the refined reduced $W$-superalgebra $(Q_\chi^{\chi})^{\text{ad}\,{\mmm}'_\mathds{k}}$ over $\mathds{k}$. We mainly follow Premet's strategy on finite $W$-algebras in \cite[Proposition 3.3]{P2}, with a few modifications.

\begin{prop}\label{reduced basis} For any $(\mathbf{a},\mathbf{b})\in\Lambda_l\times\Lambda'_q$ there is $\bar h_{\mathbf{a},\mathbf{b}}\in (Q_\chi^{\chi})^{\text{ad}\,{\mmm}'_\mathds{k}}$ such that $\Lambda_{\bar h_{\mathbf{a},\mathbf{b}}}^{\text{max}}=\{(\mathbf{a},\mathbf{b})\}$. The vectors $\{\bar h_{\mathbf{a},\mathbf{b}}\mid(\mathbf{a},\mathbf{b})\in\Lambda_l\times\Lambda'_q\}$ form a basis of $(Q_\chi^{\chi})^{\text{ad}\,{\mmm}'_\mathds{k}}$ over $\mathds{k}$.
\end{prop}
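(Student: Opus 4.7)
The plan is to construct, for each $(\mathbf{a},\mathbf{b})\in\Lambda_l\times\Lambda'_q$, an element $\bar h_{\mathbf{a},\mathbf{b}}\in(Q_\chi^\chi)^{\mathrm{ad}\,\mmm'_{\bbk}}$ whose leading Kazhdan symbol is $\bar x^{\mathbf{a}}\bar y^{\mathbf{b}}\otimes\bar 1_\chi$, and then to verify that these form a basis. I would follow the inductive correction scheme used by Premet in \cite{P2} and by Zeng--Shu in \cite[Proposition 3.3]{ZS2} for the ordinary reduced $W$-superalgebra $U_\chi(\ggg_{\bbk},e)\cong(Q_\chi^\chi)^{\mathrm{ad}\,\mmm_{\bbk}}$, adapted to the enlarged algebra $\mmm'_{\bbk}$.

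First I would exhibit $\bar h_{\mathbf{a},\mathbf{b}}$. Set the leading term $h^{(0)}=\bar x^{\mathbf{a}}\bar y^{\mathbf{b}}\otimes\bar 1_\chi$, which has $e$-degree $N=\sum k_ia_i+\sum k'_jb_j+2|\mathbf{a}|+2|\mathbf{b}|$. Every element of $\mmm'_{\bbk}$ is supported in $\ggg_{\bbk}(i)$ with $i\leqslant -1$, so (after quotienting by $J_\chi Q_{\chi,\bbk}$ and shifting by $\chi$) $\mathrm{ad}\,\mmm'_{\bbk}$ strictly lowers Kazhdan degree on $Q_\chi^\chi$. The obstruction $\mathrm{ad}\,\mmm'_{\bbk}(h^{(0)})$ therefore lies in $\mathrm{F}_{N-1}Q_\chi^\chi$, and the correction procedure amounts to inductively finding $h^{(i)}\in\mathrm{F}_{N-i}Q_\chi^\chi$ with $\mathrm{ad}\,\mmm'_{\bbk}\bigl(\sum_{j\leqslant i} h^{(j)}\bigr)\in\mathrm{F}_{N-i-1}Q_\chi^\chi$; this terminates because the weight grading is bounded. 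What makes each step possible is the exactness of the Koszul-type complex computing $\mathrm{ad}\,\mmm'_{\bbk}$-cohomology on the relevant graded pieces, which in turn reduces (by $\mathfrak{sl}_2$-theory for $(e,h,f)$ and the non-degeneracy of $\langle\cdot,\cdot\rangle$ on $\ggg(-1)$) to a standard Clifford/symplectic-Fock computation. In the odd-$\sfr$ case, the extra generator $\bar v_{(\sfr+1)/2}$ contributes one more Clifford relation $\bar v_{(\sfr+1)/2}^2\otimes\bar 1_\chi=\tfrac12\otimes\bar 1_\chi$, which is precisely the identity underlying Lemma \ref{hw} and guarantees that the correction can always be carried out without enlarging $(\mathbf{c},\mathbf{d})$ beyond $(\mathbf{0},\mathbf{0})$ in the leading term.

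Linear independence of $\{\bar h_{\mathbf{a},\mathbf{b}}\}$ is then immediate: in any non-trivial relation $\sum\mu_{\mathbf{a},\mathbf{b}}\bar h_{\mathbf{a},\mathbf{b}}=0$, pick a term with maximal $e$-degree among those with $\mu_{\mathbf{a},\mathbf{b}}\neq 0$; its leading monomial $\bar x^{\mathbf{a}}\bar y^{\mathbf{b}}\otimes\bar 1_\chi$ appears in no other summand and so survives in the PBW-basis of $Q_\chi^\chi$, a contradiction. For spanning I would argue by descending induction on the $e$-degree of an arbitrary $\bar h\in(Q_\chi^\chi)^{\mathrm{ad}\,\mmm'_{\bbk}}$. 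Lemma \ref{hw} forces every $(\mathbf{a},\mathbf{b},\mathbf{c},\mathbf{d})\in\Lambda^{\max}_{\bar h}$ to satisfy $\mathbf{c}=\mathbf{0}$ and $\mathbf{d}=\mathbf{0}$; thus the top-degree component of $\bar h$ is a linear combination of $\bar x^{\mathbf{a}}\bar y^{\mathbf{b}}\otimes\bar 1_\chi$, and subtracting the corresponding combination of $\bar h_{\mathbf{a},\mathbf{b}}$ reduces $n(\bar h)$, closing the induction.

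The main obstacle will be the existence step in the odd-$\sfr$ case: here $\mmm'_{\bbk}$ is not an ordinary subalgebra on which $\chi$ kills the derived subalgebra, since $\chi([\bar v_{(\sfr+1)/2},\bar v_{(\sfr+1)/2}])=1$, so the usual twisted-invariant formalism must be set up with care. The verification that $\mathrm{ad}\,\mmm'_{\bbk}$ still gives an exact Koszul-type complex on the Kazhdan-graded module $\mathrm{gr}\,Q_\chi^\chi$ (equivalently, that the relevant twisted cohomology is concentrated in a single degree with basis given by the monomials in $\bar x_1,\ldots,\bar x_l,\bar y_1,\ldots,\bar y_q$) is the technical heart of the proof and is where the difference from the $\mmm_{\bbk}$-case, handled in \cite[Proposition 3.3]{ZS2}, must be accommodated.
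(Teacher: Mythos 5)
Your overall architecture (construct elements with prescribed leading term, then linear independence via leading symbols and spanning via descending induction on $e$-degree using Lemma \ref{hw}) is sound, and the independence and spanning steps are essentially what the paper does. The genuine gap is the existence step, which you yourself defer as ``the technical heart'': you assert that each correction is possible because of ``the exactness of the Koszul-type complex computing $\mathrm{ad}\,\mmm'_{\bbk}$-cohomology,'' but this is precisely where the odd-$\sfr$ case breaks the standard machinery. The usual vanishing argument for $\mmm_{\bbk}$ rests on $\chi$ being a character of $\mmm_{\bbk}$ (it kills the $p$-closure of $[\mmm_{\bbk},\mmm_{\bbk}]$), whereas $\chi([\bar v_{\frac{\sfr+1}{2}},\bar v_{\frac{\sfr+1}{2}}])=1$, so $\chi$ is not a character of $\mmm'_{\bbk}$ and there is no one-dimensional twisted $\mmm'_{\bbk}$-module with which to set up the Whittaker/Koszul complex. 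Without an actual proof of the claimed exactness, the existence of $\bar h_{\mathbf{a},\mathbf{b}}$ --- and hence the whole proposition --- is not established.

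The paper circumvents this entirely by a dimension count rather than a cohomological construction. Injectivity of the leading-symbol map $\pi_B:(Q_\chi^\chi)^{\text{ad}\,\mmm'_{\bbk}}\rightarrow U_\chi(\ggg^e_{\bbk})\otimes\bar 1_\chi$ follows from Lemma \ref{hw}; surjectivity is then forced by the identity $(Q_\chi^\chi)^{\text{ad}\,{\mmm}'_{\bbk}}=[\bar v_{\frac{\sfr+1}{2}}\otimes\bar1_\chi,(Q_\chi^\chi)^{\text{ad}\,{\mmm}_{\bbk}}]$ combined with $(Q_\chi^\chi)^{\text{ad}\,{\mmm}'_{\bbk}}=((Q_\chi^\chi)^{\text{ad}\,{\mmm}_{\bbk}})^{\text{ad}\,\bar v_{\frac{\sfr+1}{2}}}$, which together give $\text{dim}\,(Q_\chi^\chi)^{\text{ad}\,{\mmm}'_{\bbk}}=\frac{1}{2}\text{dim}\,(Q_\chi^\chi)^{\text{ad}\,{\mmm}_{\bbk}}=p^{\text{dim}\,({\ggg}_{\bbk}^e)_{\bar{0}}}\cdot2^{\text{dim}\,({\ggg}_{\bbk}^e)_{\bar{1}}}$, exactly the dimension of the target; the elements $\bar h_{\mathbf{a},\mathbf{b}}$ are then simply $\pi_B^{-1}(\bar x^{\mathbf{a}}\bar y^{\mathbf{b}}\otimes\bar 1_\chi)$. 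To repair your argument you would either need to import this counting step, or genuinely prove the cohomology vanishing for the rank-one Clifford factor generated by $\bar v_{\frac{\sfr+1}{2}}$ by hand; as written, the existence claim is assumed rather than proved.
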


\begin{proof}
As $(Q_\chi^\chi)^{\text{ad}\,{\mmm}'_\mathds{k}}\cong U_\chi(\ggg_\mathds{k},e)$ when $\sfr$ is even, the proposition follows from \cite[Proposition 3.5(1)]{ZS2} in this case.

Now we will consider the case when $\sfr$ is odd. For $k\in\mathbb{Z}_+$ let $H^k$ denote the ${\bbk}$-linear span of all $0\neq \bar h\in (Q_\chi^\chi)^{\text{ad}\,{\mmm}'_\mathds{k}}$ with $n(\bar h)\leqslant  k$.
Given $(a,b)\in\mathbb{Z}_+^2$, let $H^{a,b}$ denote the subspace of $(Q_\chi^\chi)^{\text{ad}\,{\mmm}'_\mathds{k}}$ spanned by $H^{a-1}$ and all $\bar h\in (Q_\chi^\chi)^{\text{ad}\,{\mmm}'_\mathds{k}}$ with $n(\bar h)=a,~N(\bar h)\leqslant  b$. Order the elements in $\mathbb{Z}_+^2$ lexicographically. By construction, $H^{a,b}\subseteq H^{c,d}$ whenever $(a,b)\prec(c,d)$. Note that $(Q_\chi^\chi)^{\text{ad}\,{\mmm}'_\mathds{k}}$ is finite-dimensional, then $(Q_\chi^\chi)^{\text{ad}\,{\mmm}'_\mathds{k}}$ has basis $B:=\bigcup_{(i,j)}B_{i,j}$ such that $n(\mu)=i,~N(\mu)=j$ whenever $\mu\in B_{i,j}$.

For $i,j\in\mathbb{Z}$, take $\pi_{ij}$ to be an endomorphism of $Q_\chi^\chi$ defined via
\begin{align*}\label{pi}
\pi_{ij}(\bar x^{\mathbf{a}}\bar y^{\mathbf{b}}\bar u^\mathbf{c}\bar v^\mathbf{d}\otimes\bar1_\chi)=\begin{cases}\bar x^{\mathbf{a}}\bar y^{\mathbf{b}}\bar u^\mathbf{c}\bar v^\mathbf{d}\otimes\bar 1_\chi &\mbox{ if }\text{deg}_e(\bar x^{\mathbf{a}}\bar y^{\mathbf{b}}\bar u^\mathbf{c}\bar v^\mathbf{d})=i\\ &\mbox{ and } \text{wt}(\bar x^{\mathbf{a}}\bar y^{\mathbf{b}}\bar u^\mathbf{c}\bar v^\mathbf{d})=j;\\0 &\mbox{otherwise}.
\end{cases}
\end{align*}
Define the linear map $\pi_B: (Q_\chi^\chi)^{\text{ad}\,{\mmm}'_\mathds{k}}\longrightarrow Q_\chi^\chi$ via $\pi_B(\mu)=\pi_{i,j}(\mu)$ for any $\mu\in B_{i,j}$. It follows  from Lemma \ref{hw} that $\pi_B$ maps $(Q_\chi^\chi)^{\text{ad}\,{\mmm}'_\mathds{k}}$ into the subspace of $U_\chi({\ggg}_{\bbk}^e)\otimes\bar1_\chi$ of $Q_\chi^\chi$. By construction, $\pi_B$ is injective.

By the same discussion as in \cite[Proposition 4.9]{ZS2}, one can verify that
\begin{equation}\label{mvm'}
(Q_\chi^\chi)^{\text{ad}\,{\mmm}'_\mathds{k}}=[\bar v_{\frac{\sfr+1}{2}}\otimes\bar1_\chi,(Q_\chi^\chi)^{\text{ad}\,{\mmm}_\mathds{k}}].
\end{equation}
Let $((Q_\chi^\chi)^{\text{ad}\,{\mmm}_\mathds{k}})^{\text{ad}\,\bar v_{\frac{\sfr+1}{2}}}$ denote the invariant subspace of $(Q_\chi^\chi)^{\text{ad}\,{\mmm}_\mathds{k}}$ under the adjoint action of $\bar v_{\frac{\sfr+1}{2}}$. Obviously we have $$((Q_\chi^\chi)^{\text{ad}\,{\mmm}_\mathds{k}})^{\text{ad}\,\bar v_{\frac{\sfr+1}{2}}}=(Q_\chi^\chi)^{\text{ad}\,{\mmm}'_\mathds{k}}$$ by definition, and we can conclude from \eqref{mvm'} that
\begin{equation*}
\begin{split}
\text{dim}\,(Q_\chi^\chi)^{\text{ad}\,{\mmm}'_\mathds{k}}=&\text{dim}\,[\bar v_{\frac{\sfr+1}{2}}\otimes\bar1_\chi,(Q_\chi^\chi)^{\text{ad}\,{\mmm}_\mathds{k}}]=\text{dim}\,(Q_\chi^\chi)^{\text{ad}\,{\mmm}_\mathds{k}}
-\text{dim}\,((Q_\chi^\chi)^{\text{ad}\,{\mmm}_\mathds{k}})^{\text{ad}\,\bar v_{\frac{\sfr+1}{2}}}\\
=&\text{dim}\,(Q_\chi^\chi)^{\text{ad}\,{\mmm}_\mathds{k}}
-\text{dim}\,(Q_\chi^\chi)^{\text{ad}\,{\mmm}'_\mathds{k}},
\end{split}
\end{equation*}i.e.,
\begin{equation}\label{half}
\text{dim}\,(Q_\chi^\chi)^{\text{ad}\,{\mmm}'_\mathds{k}}=\frac{1}{2}\text{dim}\,
(Q_\chi^\chi)^{\text{ad}\,{\mmm}_\mathds{k}}.
\end{equation}

Recall that in the proof of \cite[Proposition 3.5]{ZS2}, we showed that $$\text{dim}\,
(Q_\chi^\chi)^{\text{ad}\,{\mmm}_\mathds{k}}=p^{\text{dim}\,({\ggg}_{\bbk}^e)_{\bar{0}}}\cdot2^{\text{dim}\,({\ggg}_{\bbk}^e)_{\bar{1}}+1},$$ then it follows from \eqref{half} that $$\text{dim}\,(Q_\chi^\chi)^{\text{ad}\,{\mmm}'_\mathds{k}}=p^{\text{dim}\,({\ggg}_{\bbk}^e)_{\bar{0}}}\cdot2^{\text{dim}\,({\ggg}_{\bbk}^e)_{\bar{1}}}
=\text{dim}\,U_\chi({\ggg}_{\bbk}^e)\otimes\bar1_\chi.$$
Thus $\pi_B: (Q_\chi^\chi)^{\text{ad}\,{\mmm}'_\mathds{k}}\longrightarrow U_\chi({\ggg}_{\bbk}^e)\otimes\bar1_\chi$ is a linear isomorphism. For $(\mathbf{a},\mathbf{b})=(a_1,\cdots,a_l;b_1,\cdots,b_q)\in\Lambda_l\times\Lambda'_q$ set $$h_{\mathbf{a},\mathbf{b}}=\pi_B^{-1}(\bar x_1^{a_1}\cdots \bar x_l^{a_l}\bar y_1^{b_1}\cdots \bar y_q^{b_q}\otimes\bar1_\chi).$$ By the bijectivity of $\pi_B$ and the PBW theorem of $U_\chi({\ggg}_{\bbk}^e)$, the vectors $h_{\mathbf{a},\mathbf{b}}$ with $(\mathbf{a},\mathbf{b})\in\Lambda_l\times\Lambda'_q$ form a basis of $(Q_\chi^\chi)^{\text{ad}\,{\mmm}'_\mathds{k}}$, while from the definition of $\pi_B$ it follows that $\Lambda_{\bar h_{\mathbf{a},\mathbf{b}}}^{\text{max}}=\{(\mathbf{a},\mathbf{b})\}$ for any $(\mathbf{a},\mathbf{b})\in\Lambda_l\times\Lambda'_q$.
\end{proof}

As an immediate consequence of Proposition~\ref{reduced basis}, we have

\begin{corollary}\label{rg}
There exist even elements $\theta_1,\cdots,\theta_l\in (Q_\chi^\chi)^{\text{ad}\,{\mmm}'_\mathds{k}}_{\bar0}$ and odd elements
$\theta_{l+1},\cdots,\theta_{l+q}\in (Q_\chi^\chi)^{\text{ad}\,{\mmm}'_\mathds{k}}_{\bar1}$ such that
\begin{itemize}
\item[(a)]\begin{equation*}
\begin{split}
\theta_k=&(\bar x_k+\sum\limits_{\mbox{\tiny $\begin{array}{c}|\mathbf{a},\mathbf{b},\mathbf{c},\mathbf{d}|_e=m_k+2,\\|\mathbf{a}|
+|\mathbf{b}|+|\mathbf{c}|+|\mathbf{d}|\geqslant 2\end{array}$}}\lambda^k_{\mathbf{a},\mathbf{b},\mathbf{c},\mathbf{d}}\bar x^{\mathbf{a}}
\bar y^{\mathbf{b}}\bar u^{\mathbf{c}}\bar v^{\mathbf{d}}\\&+\sum\limits_{|\mathbf{a},\mathbf{b},\mathbf{c},\mathbf{d}|_e<m_k+2}
\lambda^k_{\mathbf{a},\mathbf{b},\mathbf{c},\mathbf{d}}\bar x^{\mathbf{a}}
\bar y^{\mathbf{b}}\bar u^{\mathbf{c}}\bar v^{\mathbf{d}})\otimes\bar 1_\chi,
\end{split}
\end{equation*}
where $\bar x_k\in{\ggg}^e_{\bbk}(m_k)_{\bar0}$ for $1\leqslant  k\leqslant  l$.
\item[(b)]\begin{equation*}
\begin{split}
\theta_{l+k}=&(\bar y_k+\sum\limits_{\mbox{\tiny $\begin{array}{c}|\mathbf{a},\mathbf{b},\mathbf{c},\mathbf{d}|_e=n_k+2,\\|\mathbf{a}|
+|\mathbf{b}|+|\mathbf{c}|+|\mathbf{d}|\geqslant 2\end{array}$}}\lambda^k_{\mathbf{a},\mathbf{b},\mathbf{c},\mathbf{d}}\bar x^{\mathbf{a}}
\bar y^{\mathbf{b}}\bar u^{\mathbf{c}}\bar v^{\mathbf{d}}\\&+\sum\limits_{|\mathbf{a},\mathbf{b},\mathbf{c},\mathbf{d}|_e<n_k+2}
\lambda^k_{\mathbf{a},\mathbf{b},\mathbf{c},\mathbf{d}}\bar x^{\mathbf{a}}
\bar y^{\mathbf{b}}\bar u^{\mathbf{c}}\bar v^{\mathbf{d}})\otimes\bar 1_\chi,
\end{split}
\end{equation*}
where $\bar y_k\in{\ggg}^e_{\bbk}(n_k)_{\bar1}$ for $1\leqslant  k\leqslant  q$.
\end{itemize}
All the coefficients $\lambda^k_{\mathbf{a},\mathbf{b},\mathbf{c},\mathbf{d}}$ above are in ${\bbk}$. Moreover, $\lambda^k_{\mathbf{a},\mathbf{b},\mathbf{c},\mathbf{d}}=0$ if $(\mathbf{a},\mathbf{b},\mathbf{c},\mathbf{d})$ is such that $a_{l+1}=\cdots=a_m=b_{q+1}=\cdots=b_n=c_1=\cdots=c_s=d_1=\cdots=d_{\lfloor\frac{\sfr}{2}\rfloor}=0$.
\end{corollary}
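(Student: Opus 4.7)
The plan is to derive the corollary as an immediate specialization of Proposition \ref{reduced basis} followed by a mild normalization step. For $1\leqslant k\leqslant l$, set $\theta_k:=\bar h_{\mathbf{e}_k,\mathbf{0}}$, and for $1\leqslant k\leqslant q$, set $\theta_{l+k}:=\bar h_{\mathbf{0},\mathbf{e}_k}$, where $\mathbf{e}_j$ denotes the $j$-th standard basis vector. Proposition \ref{reduced basis} immediately gives $\theta_k,\theta_{l+k}\in(Q_\chi^\chi)^{\text{ad}\,{\mmm}'_\mathds{k}}$ with $\Lambda^{\text{max}}_{\theta_k}=\{(\mathbf{e}_k,\mathbf{0})\}$ and $\Lambda^{\text{max}}_{\theta_{l+k}}=\{(\mathbf{0},\mathbf{e}_k)\}$; in particular $n(\theta_k)=m_k+2$, $N(\theta_k)=m_k$, and the unique monomial at the top $e$-degree of maximum weight is $\bar x_k\otimes\bar 1_\chi$ (analogously for $\theta_{l+k}$). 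The parities are immediate from $\bar x_k\in(\ggg^e_{\bbk})_{\bar 0}$ and $\bar y_k\in(\ggg^e_{\bbk})_{\bar 1}$.

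Next I would extract the structural form of the top layer using the elementary identity
\[
|(\mathbf{a},\mathbf{b},\mathbf{c},\mathbf{d})|_e - \text{wt}(\bar x^{\mathbf{a}}\bar y^{\mathbf{b}}\bar u^{\mathbf{c}}\bar v^{\mathbf{d}}) = 2(|\mathbf{a}|+|\mathbf{b}|+|\mathbf{c}|+|\mathbf{d}|),
\]
which follows at once from the definitions in \S\ref{2.2.2}. At $e$-degree $m_k+2$, weight $m_k$ forces $|\mathbf{a}|+|\mathbf{b}|+|\mathbf{c}|+|\mathbf{d}|=1$; the only single-factor contribution at this position is the one pinned down by the singleton $\Lambda^{\text{max}}_{\theta_k}$, namely $\bar x_k$. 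Hence every other monomial of $\theta_k$ at $e$-degree $m_k+2$ satisfies $|\mathbf{a}|+|\mathbf{b}|+|\mathbf{c}|+|\mathbf{d}|\geqslant 2$, giving form (a); the same argument with $\bar y_k$ in place of $\bar x_k$ gives form (b).

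The remaining task is to eliminate the unwanted pure-centralizer monomials $\bar x^{\mathbf{a}'}\bar y^{\mathbf{b}'}\otimes\bar 1_\chi$ with $(\mathbf{a}',\mathbf{b}')\in(\Lambda_l\times\Lambda'_q)\setminus\{(\mathbf{e}_k,\mathbf{0})\}$ that may still appear in $\theta_k$---either at $e$-degree $m_k+2$ (necessarily with weight strictly less than $m_k$) or at lower $e$-degree. I would enumerate such pairs in strictly decreasing lexicographic order of $(|(\mathbf{a}',\mathbf{b}',\mathbf{0},\mathbf{0})|_e,\,\text{wt}(\bar x^{\mathbf{a}'}\bar y^{\mathbf{b}'}))$; for each one, if the current $\theta_k$ contains $\bar x^{\mathbf{a}'}\bar y^{\mathbf{b}'}\otimes\bar 1_\chi$ with coefficient $\nu\neq 0$, replace $\theta_k$ by $\theta_k-\nu\,\bar h_{\mathbf{a}',\mathbf{b}'}$. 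By the leading-term control in Proposition \ref{reduced basis}, this cancels the offending coefficient, preserves the top term $\bar x_k$ (the subtracted element sits strictly below $(m_k+2,m_k)$), and introduces only new terms at positions strictly lower than $(\mathbf{a}',\mathbf{b}')$, so positions already cleared are never resurrected. The $\theta_{l+k}$ case is entirely parallel.

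I foresee no deep obstacle. The only technical care is the choice of elimination order so that earlier cleanings survive, and the strictly decreasing lex order on $(e$-degree, weight$)$ is chosen precisely for that reason; termination is then automatic from the finiteness of $\Lambda_l\times\Lambda'_q$ and of each $(e$-degree, weight$)$-slice of $Q_\chi^\chi$.
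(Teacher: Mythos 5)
Your proposal is correct and takes essentially the same route the paper intends: the corollary is obtained from Proposition \ref{reduced basis} by setting $\theta_k=\bar h_{\mathbf{e}_k,\mathbf{0}}$ and $\theta_{l+k}=\bar h_{\mathbf{0},\mathbf{e}_k}$, reading off the shape of the top layer from the identity $|(\mathbf{a},\mathbf{b},\mathbf{c},\mathbf{d})|_e-\text{wt}=2(|\mathbf{a}|+|\mathbf{b}|+|\mathbf{c}|+|\mathbf{d}|)$ together with the singleton $\Lambda^{\text{max}}$ condition, and killing the remaining pure-centralizer monomials by the standard triangular normalization against the basis $\{\bar h_{\mathbf{a},\mathbf{b}}\}$. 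Your write-up simply supplies the details (the elimination order and the preservation of the leading term) that the paper compresses into ``an immediate consequence.''
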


Set
\[\bar Y_i:=\left\{
\begin{array}{ll}
\bar x_i&\text{if}~1\leqslant  i\leqslant  l;\\
\bar y_{i-l}&\text{if}~l+1\leqslant  i\leqslant  l+q,
\end{array}
\right.
\]
and assume that the homogenous element $\bar Y_i$ is in $\ggg_{\bbk}(m_i)$ for  $1\leqslant  i\leqslant  l+q$.
Since $(Q_\chi^\chi)^{\text{ad}\,{\mmm}'_\mathds{k}}$ is a subalgebra of the reduced $W$-superalgebra $U_\chi(\ggg_\mathds{k},e)$,
we can substitute the generators $\{\theta_1,\cdots,\theta_{l+q}\}$ of $U_\chi(\ggg_\mathds{k},e)$ in \cite[Corollary 3.6(1)]{ZS2} for the ones in Corollary~\ref{rg}, and all the results in \cite[Theorem 3.7]{ZS2} remain true. What is more, the same discussion as in \cite[Remark 3.8(2)]{ZS2} shows that there exist super-polynomials $\bar F_{ij}$ of $l+q$ indeterminants over $\bbk$ ($1\leqslant  i<j\leqslant  l+q$, or $l+1\leqslant  i=j\leqslant  l+q$) with the first $l$ indeterminants being even, and the others being odd, such that
\begin{align*}\label{explainforeq}
[\theta_i,\theta_j]=\bar F_{i,j}(\theta_1,\cdots,\theta_{l+q}),\quad i,j=1,\cdots,l+q.
\end{align*}
Moreover, the Lie bracket relations in the Lie superalgebra $\ggg_\bbk^e$ with
$$[\bar Y_i,\bar Y_j]=\sum\limits_{k=1}^{l+q}\alpha_{ij}^k\bar Y_k$$ imply
$$\bar F_{ij}(\theta_1,\cdots,\theta_{l+q})\equiv\sum\limits_{k=1}^{l+q}\alpha_{ij}^k\theta_k+\bar q_{ij}(\theta_1,\cdots,\theta_{l+q})\quad(\mbox{mod }F_{m_i+m_j+1}(Q_\chi^\chi)^{\text{ad}\,{\mmm}'_\mathds{k}}),$$ where $\bar q_{ij}\in\mathds{k}[X_1,\cdots,X_l;X_{l+1},\cdots,X_{l+q}]$ is a super-polynomial in $l+q$ invariables whose constant term and linear part are both zero, and $F_{m_i+m_j+1}(Q_\chi^\chi)^{\text{ad}\,{\mmm}'_\mathds{k}}$ denotes the component of Kazhdan filtration of $(Q_\chi^\chi)^{\text{ad}\,{\mmm}'_\mathds{k}}$ with degree $m_i+m_j+1$.

\subsection{The PBW structure theory of refined $W$-superalgebras over $\mathbb{C}$}\label{PBWCC}
Recall that in \cite[Remark 70]{W} Wang introduced what we call refined $W$-superalgebras as follows.
\begin{defn}(\cite{W})\label{rewcc} Let $\ggg$ be a basic Lie superalgebra over $\mathbb{C}$. Define the refined $W$-superalgebra over $\bbc$ by$$W'_\chi:=(U({\ggg})/I_\chi)^{\text{ad}\,{\mmm}'}\cong Q_\chi^{\text{ad}\,{\mmm}'}
\equiv\{\bar{y}\in U({\ggg})/I_\chi \mid [a,y]\in I_\chi, \forall a\in{\mmm}'\},$$
and $\bar{y}_1\cdot\bar{y}_2:=\overline{y_1y_2}$ for all $\bar{y}_1,\bar{y}_2\in W'_\chi$.
\end{defn}

In this subsection we will concentrate on the structure of refined $W$-superalgebras over $\mathbb{C}$. First note that
\begin{lemma}\label{hwc2}
Let $h\in W_\chi'\backslash\{0\}$ and $(\mathbf{a},\mathbf{b},\mathbf{c},\mathbf{d})\in \Lambda^{\text{max}}_h$. Then $\mathbf{a}\in\mathbb{Z}_+^{l}\times\{\mathbf{0}\},~\mathbf{b}\in\Lambda'_{q}\times\{\mathbf{0}\}$, $\mathbf{c}=\mathbf{0}$,  $\mathbf{d}=\mathbf{0}$.
\end{lemma}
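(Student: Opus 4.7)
The plan is to mirror the argument already carried out in Lemma \ref{hw} for the modular refined $W$-superalgebra, transferring it to the characteristic-zero setting. Since $W'_\chi = Q_\chi^{\mathrm{ad}\,\mmm'}$ is a subalgebra of the ordinary finite $W$-superalgebra $U(\ggg,e) \cong Q_\chi^{\mathrm{ad}\,\mmm}$, I would first invoke the PBW-type result for $U(\ggg,e)$ (namely \cite[Theorem 0.1]{ZS2} and the associated \cite[Lemma 4.1]{ZS2}) to immediately obtain the constraints $\mathbf{a}\in\mathbb{Z}_+^l\times\{\mathbf{0}\}$, $\mathbf{b}\in\Lambda'_q\times\{\mathbf{0}\}$, $\mathbf{c}=\mathbf{0}$, together with $\mathbf{d}=\mathbf{0}$ when $\sfr$ is even and $\mathbf{d}\in\{\mathbf{0}\}_{(\sfr-1)/2}\times\Lambda'_1$ when $\sfr$ is odd. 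When $\sfr$ is even, $\mmm'=\mmm$ and $W'_\chi=U(\ggg,e)$, so there is nothing more to prove.

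The remaining task is thus to exclude the possibility $\mathbf{d}=\mathbf{e}_{(\sfr+1)/2}:=(\mathbf{0}_{(\sfr-1)/2},1)$ when $\sfr$ is odd. To do this, I would assume for contradiction that some $(\mathbf{a},\mathbf{b},\mathbf{0},\mathbf{e}_{(\sfr+1)/2})$ with $\mathbf{a}\in\mathbb{Z}_+^l\times\{\mathbf{0}\}$ and $\mathbf{b}\in\Lambda'_q\times\{\mathbf{0}\}$ lies in $\Lambda^{\max}_h$, and then compute the commutator $[v_{(\sfr+1)/2}\otimes 1_\chi,\,h]$ inside $Q_\chi$. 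Since $v_{(\sfr+1)/2}\in\mmm'$, this commutator must vanish. Expanding the product via the analog of \cite[(3.7) and (3.12)]{ZS2} (now without reduction mod $p$, so exponents are unbounded in $\mathbb{Z}_+$ but the structure of the expansion is identical), and using $\langle v_{(\sfr+1)/2},v_{(\sfr+1)/2}\rangle = \chi([v_{(\sfr+1)/2},v_{(\sfr+1)/2}])=1$ as in Step 3 of Lemma \ref{hw}, I would split the resulting sum according to the weight of $[v_{(\sfr+1)/2}\,x^{\mathbf{i}}\,y^{\mathbf{j}}]$ exactly as in the three cases (Case 1, Case 2(a), Case 2(b)) of the mod $p$ proof.

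The crucial step is then the same projection-and-extraction trick: apply the linear projection $\pi_{n(h)-1,\,N(h)+1}$ onto the PBW basis vectors of prescribed $e$-degree and weight. Only Case 2(b) contributes to this isotypic component at the top level, and the contribution is precisely
\[
\sum_{|(\mathbf{a},\mathbf{b},\mathbf{c},\mathbf{d})|_e = n(h)-1}\!\!\lambda_{\mathbf{a},\mathbf{b},\mathbf{c},\mathbf{d}}\,(-1)^{\sum_j b_j+\sum_j d_j}\,x^{\mathbf{a}}y^{\mathbf{b}}u^{\mathbf{c}}v_1^{d_1}\cdots v_{(\sfr-1)/2}^{d_{(\sfr-1)/2}}\otimes 1_\chi,
\]
which is nonzero by our contrary assumption — contradicting $[v_{(\sfr+1)/2}\otimes 1_\chi,\,h]=0$.

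The main obstacle I anticipate is purely bookkeeping rather than conceptual: one must verify that the combinatorial expansion of $(v_{(\sfr+1)/2}\otimes 1_\chi)\cdot h$ and $h\cdot (v_{(\sfr+1)/2}\otimes 1_\chi)$ in the PBW basis of $Q_\chi$, including the sign conventions from the super-commutator and the contributions of the $v_{(\sfr+1)/2}^2$ term (which reduces to $\tfrac12\otimes 1_\chi$), behaves identically to the characteristic-$p$ computation. Since no feature of the mod $p$ argument actually used the prime $p$ in an essential way — the exponents being allowed to be arbitrary in $\mathbb{Z}_+$ rather than bounded by $p-1$ is harmless because the $e$-degree and weight functions are defined the same way — the transfer should go through unchanged, and the proof terminates in the same contradiction.
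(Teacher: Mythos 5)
Your proposal is correct and is essentially the paper's own argument: the paper simply states that the proof of Lemma \ref{hwc2} is the same as that of Lemma \ref{hw}, with the characteristic-zero result \cite[Lemma 4.3]{ZS2} substituted for the modular \cite[Lemma 3.3]{ZS2}, and your detailed transfer of the $v_{\frac{\sfr+1}{2}}$-commutator contradiction is exactly that argument spelled out. The only cosmetic difference is the precise reference within \cite{ZS2} used to get the initial constraints on $(\mathbf{a},\mathbf{b},\mathbf{c},\mathbf{d})$.
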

\begin{proof}
The proof is the same as proof of Lemma \ref{hw} but apply \cite[Lemma 4.3]{ZS2} in place of \cite[Lemma 3.3]{ZS2}, thus will be omitted.
\end{proof}

Recall that $\{x_1,\cdots,x_l\}$ and $\{y_1,\cdots,y_q\}$ are the $\mathbb{C}$-basis of ${\ggg}^e_{\bar{0}}$ and ${\ggg}^e_{\bar{1}}$, respectively. Set
\begin{equation}\label{Y_i}
Y_i:=\left\{
\begin{array}{ll}
x_i&\text{if}~1\leqslant  i\leqslant  l;\\
y_{i-l}&\text{if}~l+1\leqslant  i\leqslant  l+q.
\end{array}
\right.
\end{equation}
Assume that $Y_i$ belongs to ${\ggg}(m_i)$ for $1\leqslant  i\leqslant  l+q$.
In virtue of the structure theory of $(Q_\chi^{\chi})^{\text{ad}\,{\mmm}'_\mathds{k}}$ over $\mathds{k}$ obtained in \S\ref{rrw}, the same discussion as in \cite[Theorem 4.5, Theorem 4.7]{ZS2} shows that

\begin{theorem}\label{PBWC} The following PBW structural statements for $W'_\chi$ hold.
\begin{itemize}
\item[(1)] There exist homogeneous elements $\Theta_1,\cdots,\Theta_{l}\in (W'_\chi)_{\bar0}$ and $\Theta_{l+1},\cdots,\Theta_{l+q}\in (W'_\chi)_{\bar1}$ such that
\begin{equation*}
\begin{split}
\Theta_k=&(Y_k+\sum\limits_{\mbox{\tiny $\begin{array}{c}|\mathbf{a},\mathbf{b},\mathbf{c},\mathbf{d}|_e=m_k+2,\\|\mathbf{a}|
+|\mathbf{b}|+|\mathbf{c}|+|\mathbf{d}|\geqslant 2\end{array}$}}\lambda^k_{\mathbf{a},\mathbf{b},\mathbf{c},\mathbf{d}}x^{\mathbf{a}}
y^{\mathbf{b}}u^{\mathbf{c}}v^{\mathbf{d}}\\&+\sum\limits_{|\mathbf{a},\mathbf{b},\mathbf{c},\mathbf{d}|_e<m_k+2}\lambda^k_{\mathbf{a},\mathbf{b},\mathbf{c},\mathbf{d}}x^{\mathbf{a}}
y^{\mathbf{b}}u^{\mathbf{c}}v^{\mathbf{d}})\otimes1_\chi
\end{split}
\end{equation*}
for $1\leqslant  k\leqslant  l+q$, where $\lambda^k_{\mathbf{a},\mathbf{b},\mathbf{c},\mathbf{d}}\in\mathbb{Q}$, and $\lambda^k_{\mathbf{a},\mathbf{b},\mathbf{c},\mathbf{d}}=0$ if $a_{l+1}=\cdots=a_m=b_{q+1}=\cdots=b_n=c_1=\cdots=c_s=
d_1=\cdots=d_{\lfloor\frac{\sfr}{2}\rfloor}=0$.

\item[(2)] The monomials $\Theta_1^{a_1}\cdots\Theta_l^{a_l}\Theta_{l+1}^{b_1}\cdots\Theta_{l+q}^{b_{q}}$ with $a_i\in\mathbb{Z}_+,~b_j\in\Lambda'_1$ for $1\leqslant i\leqslant l$ and $1\leqslant j\leqslant q$ form a basis of $W'_\chi$ over $\mathbb{C}$.

\item[(3)] For $i,j$ satisfying $1\leqslant  i<j\leqslant  l+q$ and $l+1\leqslant  i=j\leqslant  l+q$, there exist super-polynomials
$F_{ij}\in\mathbb{Q}[X_1,\cdots,X_{l+q}]$ such that $$[\Theta_i,\Theta_{j}]=F_{ij}(\Theta_1,\cdots,\Theta_{l+q}).$$ Moreover, if the elements $Y_i,~Y_j\in {\ggg}^e$ satisfy $[Y_i,Y_j]=\sum\limits_{k=1}^{l+q}\alpha_{ij}^kY_k$ in ${\ggg}^e$, then
\begin{equation}\label{Thetaa2}
\begin{array}{llllll}
&F_{ij}[X_1,\cdots,X_{l+q}]&\equiv&\sum\limits_{k=1}^{l+q}\alpha_{ij}^k\Theta_k+q_{ij}(\Theta_1,\cdots,\Theta_{l+q})&(\mbox{mod }\text{F}_{m_i+m_j+1}W_\chi'),
\end{array}
\end{equation}where $q_{ij}$ is a super-polynomial in $l+q$ variables in $\mathbb{Q}$ whose constant term and linear part are zero, and $\text{F}_{m_i+m_j+1}W_\chi'$ denotes the component of Kazhdan filtration of $W_\chi'$ with degree $m_i+m_j+1$.

\item[(4)] The refined $W$-superalgebra $W'_\chi$ is generated by the $\mathbb{Z}_2$-homogeneous elements $\Theta_1,\cdots,\Theta_{l}\in (W'_\chi)_{\bar0}$ and $\Theta_{l+1},\cdots,\Theta_{l+q}\in (W'_\chi)_{\bar1}$ subject to the relations in \eqref{Thetaa2}
with $1\leqslant  i<j\leqslant  l+q$ and $l+1\leqslant  i=j\leqslant  l+q$.
\end{itemize}
\end{theorem}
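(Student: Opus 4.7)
My proof plan is to transfer the characteristic-$p$ structure theorem established in \S\ref{rrw} back to characteristic zero via the admissible $A$-form, following the strategy used in \cite[\S4]{ZS2}. First, I would form the natural $A$-submodule $Q_{\chi,A}^{\text{ad}\,{\mmm}'_A}$ inside $W'_\chi=Q_\chi^{\text{ad}\,{\mmm}'}$ and verify, after possibly enlarging $A$, that it is a Kazhdan-filtered $A$-lattice satisfying
\[
(Q_{\chi,A}^{\text{ad}\,{\mmm}'_A})\otimes_A\bbc\cong W'_\chi,\qquad (Q_{\chi,A}^{\text{ad}\,{\mmm}'_A})\otimes_A{\bbk}\cong (Q_\chi^\chi)^{\text{ad}\,{\mmm}'_{\bbk}}
\]
for every $p\in\Pi(A)$ with $p\gg 0$. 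This compatibility, which on the right uses Proposition~\ref{reduced basis} to compare dimensions, is the usual admissible-reduction argument and lets me pull the modular basis back to an $A$-basis of $Q_{\chi,A}^{\text{ad}\,{\mmm}'_A}$.

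For part (1), I would mimic Corollary~\ref{rg}: starting from each $Y_k$ in $\ggg^e$, I build $\Theta_k\in Q_{\chi,A}^{\text{ad}\,{\mmm}'_A}$ by successive corrections of lower Kazhdan degree to achieve $\ad\,{\mmm}'$-invariance. The point is that the required corrections can be solved over $A\otimes_{\bbz}\mathbb{Q}=\mathbb{Q}$, because the equations to be solved (imposing invariance degree by degree, with leading symbol $\bar Y_k\otimes 1_\chi$) are $\mathbb{Q}$-linear systems whose solvability follows from the modular solvability for infinitely many $p\in\Pi(A)$. This yields $\Theta_k$ with the prescribed expansion and rational coefficients, and with $\lambda^k_{\mathbf{a},\mathbf{b},\mathbf{c},\mathbf{d}}=0$ whenever all higher-index entries of $(\mathbf{a},\mathbf{b},\mathbf{c},\mathbf{d})$ vanish (matching the vanishing statement at the end of (1)).

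For (2), I would show that the monomials $\Theta^{\mathbf{a}}_{\leqslant l}\Theta^{\mathbf{b}}_{>l}$ with $\mathbf{a}\in\bbz_+^l$, $\mathbf{b}\in\Lambda'_q$ are $\bbc$-linearly independent and span $W'_\chi$. Independence follows from Lemma~\ref{hwc2}: the $\Lambda^{\text{max}}$-data of a nontrivial linear relation would have to cancel among distinct monomials, but the leading symbols are precisely the distinct PBW monomials $\bar x^{\mathbf{a}}\bar y^{\mathbf{b}}\otimes 1_\chi$ of $U(\ggg^e)\otimes 1_\chi$. For spanning, I reduce mod $p$ for large enough $p$ and invoke Proposition~\ref{reduced basis}: any $h\in Q_{\chi,A}^{\text{ad}\,{\mmm}'_A}$ can, modulo a Kazhdan-lower term, be written as a polynomial in the $\Theta_k$'s after passage to $\bbk$, and the usual inductive descent on Kazhdan degree, combined with the admissibility of $A$, lifts the identity back to $A$ and hence to $\bbc$.

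For (3) and (4), once the $\Theta_k$'s generate, I read off each commutator $[\Theta_i,\Theta_j]$ in the basis of (2), giving the polynomial $F_{ij}$. Its coefficients lie in $\mathbb{Q}$ because the structure constants of $\ad\,{\mmm}'_A$ on $Q_{\chi,A}^{\text{ad}\,{\mmm}'_A}$ are $A$-valued. The congruence \eqref{Thetaa2} modulo $\text{F}_{m_i+m_j+1}W'_\chi$ is just the Kazhdan-graded computation: the leading symbols of $\Theta_i,\Theta_j$ are $Y_i,Y_j$, so $[\Theta_i,\Theta_j]$ has leading symbol $[Y_i,Y_j]\otimes 1_\chi=\sum_k\alpha_{ij}^k Y_k\otimes 1_\chi$, and the remainder $q_{ij}$ sits in strictly lower Kazhdan degree with no constant or linear part. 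Part (4) is then immediate from (2) and (3). The main obstacle I expect is the careful bookkeeping in the inductive lift of part (1), where one must simultaneously control Kazhdan degree, weight, the $\ad\,{\mmm}'$-invariance equations, and rationality; once that is in place, the remaining steps are essentially formal.
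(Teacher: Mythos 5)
Your proposal is correct and follows essentially the same route as the paper: the paper's own proof is a one-line deferral to the modular structure theory of $(Q_\chi^{\chi})^{\text{ad}\,{\mmm}'_\mathds{k}}$ established in \S\ref{rrw} together with the arguments of \cite[Theorems 4.5 and 4.7]{ZS2}, which is precisely the admissible-$A$-form reduction, mod-$p$ basis comparison, and Kazhdan-degree induction that you spell out. No substantive difference in method.
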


As a direct corollary of Theorem~\ref{PBWC}, we have
\begin{corollary}\label{isogr}
There is an isomorphism between $\mathbb{C}$-algebras $$\text{gr}\,(W_\chi')\cong S(\ggg^e),$$ where $\text{gr}\,(W_\chi')$ denotes the graded algebra of $W_\chi'$ under Kazhdan grading, and $S(\ggg^e)$ is the supersymmetric algebra on $\ggg^e$.
\end{corollary}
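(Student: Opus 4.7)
\begin{demo}{Proof proposal}
The plan is to read the isomorphism directly off the PBW generators $\Theta_1,\dots,\Theta_{l+q}$ supplied by Theorem~\ref{PBWC}. There are three steps: (i) show $\text{gr}(W_\chi')$ is supercommutative; (ii) define the map from $S(\ggg^e)$; (iii) check bijectivity by comparing PBW bases on both sides.

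\textbf{Step 1: Supercommutativity of $\text{gr}(W_\chi')$.} By construction $\Theta_k\in\text{F}_{m_k+2}W_\chi'$ with nonzero principal symbol $\sigma_k:=\text{gr}(\Theta_k)$ of Kazhdan degree $m_k+2$. Then $\Theta_i\Theta_j$ lies in Kazhdan degree $m_i+m_j+4$, so one needs to show the commutator $[\Theta_i,\Theta_j]$ actually lies in strictly smaller degree, say $\text{F}_{m_i+m_j+3}W_\chi'$. This is read off Theorem~\ref{PBWC}(3): the equality
\[
[\Theta_i,\Theta_j]=F_{ij}(\Theta_1,\dots,\Theta_{l+q})\equiv \sum_{k=1}^{l+q}\alpha_{ij}^k\Theta_k+q_{ij}(\Theta_1,\dots,\Theta_{l+q})\pmod{\text{F}_{m_i+m_j+1}W_\chi'}
\]
together with the fact that $\alpha_{ij}^k\neq 0$ forces $Y_k\in\ggg^e(m_i+m_j)$, hence $\deg(\Theta_k)=m_i+m_j+2$, and that the tail $q_{ij}$ has neither constant nor linear part, confines the whole commutator to $\text{F}_{m_i+m_j+2}W_\chi'$. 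Consequently $[\sigma_i,\sigma_j]=0$ in $\text{gr}(W_\chi')$, and the associated graded is supercommutative.

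\textbf{Step 2: Construction of the map.} Since $S(\ggg^e)=\mathrm{Sym}(\ggg^e_{\bar 0})\otimes\Lambda(\ggg^e_{\bar 1})$ is the free supercommutative superalgebra on $\ggg^e$, Step~1 allows one to extend the linear assignment $Y_k\mapsto\sigma_k$ (for $1\leqslant k\leqslant l+q$) uniquely to a homomorphism $\phi\colon S(\ggg^e)\to\text{gr}(W_\chi')$ of $\mathbb{Z}_2$-graded $\mathbb{C}$-algebras. Giving each $Y_k$ Kazhdan degree $m_k+2$ turns $\phi$ into a homomorphism of graded algebras.

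\textbf{Step 3: Bijectivity.} By Theorem~\ref{PBWC}(2), the ordered monomials $\Theta_1^{a_1}\cdots\Theta_l^{a_l}\Theta_{l+1}^{b_1}\cdots\Theta_{l+q}^{b_q}$ with $a_i\in\mathbb{Z}_+$, $b_j\in\{0,1\}$ form a $\mathbb{C}$-basis of $W_\chi'$. Passing to Kazhdan symbols, the corresponding products $\sigma_1^{a_1}\cdots\sigma_l^{a_l}\sigma_{l+1}^{b_1}\cdots\sigma_{l+q}^{b_q}$ form a basis of $\text{gr}(W_\chi')$. These are precisely the images under $\phi$ of the standard PBW basis of $S(\ggg^e)$, so $\phi$ is a linear bijection and hence the desired isomorphism.

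The only real bookkeeping point, and the place where one must look carefully, is the degree control in Step~1: one has to be sure that $q_{ij}(\Theta)$, though a higher-degree polynomial in the generators, produces an element of Kazhdan degree no larger than $m_i+m_j+2$. Because Theorem~\ref{PBWC}(3) already pins down the congruence modulo $\text{F}_{m_i+m_j+1}W_\chi'$, this is essentially a matter of matching degrees on both sides; once this is granted the isomorphism is a direct PBW-type comparison, with no further obstacle.
\end{demo}
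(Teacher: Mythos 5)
Your proposal is correct and is essentially the argument the paper intends: the corollary is stated there as a direct consequence of Theorem \ref{PBWC}, and your three steps (supercommutativity of the associated graded via the degree drop in Theorem \ref{PBWC}(3), the induced map $Y_k\mapsto\sigma_k$ from $S(\ggg^e)$, and bijectivity by matching the PBW basis of Theorem \ref{PBWC}(2) against the standard monomial basis of $S(\ggg^e)$) simply make that deduction explicit. The degree control on $q_{ij}$ that you flag is indeed the only point needing care, and it is supplied by the Kazhdan-degree bound on $F_{ij}$ inherited from the corresponding statement for finite $W$-superalgebras in \cite{ZS2}.
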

\begin{rem}
It is worth noting that in the case when $\sfr$ is even, we have $W_\chi'\cong U(\ggg,e)$ as $\mathbb{C}$-algebras, and Corollary \ref{isogr} is just \cite[Theorem 0.1(1)]{ZS2}. But for the case when $\sfr$ is odd, $W_\chi'$ is a proper subalgebra of $U(\ggg,e)$, and the consequence of Corollary \ref{isogr} is completely different from that of \cite[Theorem 0.1(2)]{ZS2}.
\end{rem}

\section{Refined $W$-superalgebras and the dimensional lower bounds for the modular representations of basic Lie superalgebras}\label{Refined W-superalgebras and the dimensional lower bounds}
In virtue of the results on refined $W$-superalgebras in \S\ref{crw}, in this section we will refine \cite[Conjecture 1.3]{ZS4}, then discuss the accessibility of the dimensional lower bounds for the modular representations of basic Lie superalgebras based on this conjecture. In the final part we will introduce an equivalent definition of refined $W$-superalgebras, which will be applied to describe the structure of minimal $W$-superalgebras in the next section.
\subsection{The super Kac-Weisfeiler property}
Recall that the dimensional lower bounds for the irreducible representations of $U_\xi({\ggg}_{\bbk})$ were predicted by Wang-Zhao in \cite[Theorem 5.6]{WZ} as below:
\begin{prop}(\cite{WZ})\label{wzd2}
Let ${\ggg}_{\bbk}$ be a basic Lie superalgebra over ${\bbk}=\overline{\mathbb{F}}_p$, assuming that the prime $p$ satisfies the restriction imposed in \cite[Table 1]{WZ}. Let $\xi$ be arbitrary $p$-character in $({\ggg}_{\bbk})^*_{\bar0}$, corresponding to an element $\bar x\in ({\ggg}_{\bbk})_{\bar0}$ such that $\xi(\bar y)=(\bar x,\bar y)$ for any $\bar y\in{\ggg}_{\bbk}$. Set $d_0=\text{dim}\,({\ggg}_{\bbk})_{\bar 0}-\text{dim}\,({\ggg}_{\bbk}^{\bar x})_{\bar 0}$ and $d_1=\text{dim}\,({\ggg}_{\bbk})_{\bar 1}-\text{dim}\,({\ggg}_{\bbk}^{\bar x})_{\bar 1}$, where ${\ggg}_{\bbk}^{\bar x}$ denotes the centralizer of $\bar x$ in ${\ggg}_{\bbk}$. Denote by $\lfloor\frac{d_1}{2}\rfloor$ the least integer upper bound of $\frac{d_1}{2}$. Then the dimension of every $U_\xi({\ggg}_{\bbk})$-module $M$ is divisible by $p^{\frac{d_0}{2}}2^{\lfloor\frac{d_1}{2}\rfloor}$.
\end{prop}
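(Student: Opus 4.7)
The plan is to extend the classical Kac--Weisfeiler divisibility argument for restricted Lie algebras to the super setting, by separately analysing the contributions coming from the even and the odd parts of $\ggg_\bbk$, and carefully tracking the parity of the odd dimension $d_1$.

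The starting point is the super-bilinear form $B_\xi(y,z):=\xi([y,z])$ on $\ggg_\bbk$. Because $\xi$ vanishes on $(\ggg_\bbk)_{\bar 1}$, the form $B_\xi$ is block-diagonal with respect to the $\bbz_2$-grading: on the even part it is alternating (the usual Kirillov--Kostant form), while on the odd part it is \emph{symmetric}, since $[y,z]=[z,y]$ when both $y$ and $z$ are odd. In both cases the radical of $B_\xi$ coincides with $\ggg_\bbk^{\bar x}$, so $B_\xi$ induces a non-degenerate symplectic form on $(\ggg_\bbk)_{\bar 0}/(\ggg_\bbk^{\bar x})_{\bar 0}$ (forcing $d_0$ to be even) and a non-degenerate symmetric form on $(\ggg_\bbk)_{\bar 1}/(\ggg_\bbk^{\bar x})_{\bar 1}$ of rank $d_1$ (of either parity).

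Next I would choose an even Lagrangian $\mathfrak{l}_0\supseteq(\ggg_\bbk^{\bar x})_{\bar 0}$ of dimension $\dim(\ggg_\bbk^{\bar x})_{\bar 0}+d_0/2$, and a maximal odd isotropic $\mathfrak{l}_1\supseteq(\ggg_\bbk^{\bar x})_{\bar 1}$. On the even side, the standard Kac--Weisfeiler argument (Lagrangian induction combined with Frobenius reciprocity applied to the restricted enveloping subalgebra $u(\mathfrak{l}_0,\xi)$) yields the divisibility of $\dim M$ by $p^{d_0/2}$ for any simple $U_\xi(\ggg_\bbk)$-module $M$. On the odd side, a super-analogue of Lagrangian induction, combined with an analysis of simple super-modules over the residual Clifford-type algebra arising from the non-isotropic complement of $\mathfrak{l}_1$, produces the divisibility factor $2^{\lfloor d_1/2\rfloor}$; here the ceiling convention (the paper's $\lfloor\cdot\rfloor$) is forced precisely by the type-$Q$ doubling of simple super-modules that occurs when $d_1$ is odd.

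The principal obstacle is to simultaneously handle the $\bbz_2$-grading and the $p$-restricted structure while producing a genuine polarising sub-\emph{superalgebra}, rather than merely a subspace on which $\xi$ vanishes on the bracket. For a general $\xi$ this cannot be arranged directly: one first reduces to the nilpotent case by means of a super-analogue of the Jordan decomposition of $p$-characters, and then uses the $\sssl_2$-triple $(\bar e,\bar h,\bar f)$ associated with the nilpotent $\bar e\in(\ggg_\bbk)_{\bar 0}$ and the induced grading $\ggg_\bbk=\bigoplus_i\ggg_\bbk(i)$ to construct the polarisation out of the $\chi$-admissible algebra $\mmm_\bbk$ supplemented by the non-negatively graded part $\bigoplus_{i\geq 0}\ggg_\bbk(i)$. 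The delicate parity issue appearing when $d_1$ is odd, which is the origin of the ceiling $\lfloor d_1/2\rfloor$ in the exponent, is precisely the ``judging parity'' of $\sfr\pmod 2$ that will drive the case distinctions in the rest of the paper.
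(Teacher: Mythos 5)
The paper does not prove this statement: it is imported wholesale from \cite[Theorem 5.6]{WZ}, so the only meaningful comparison is with Wang--Zhao's argument. Your opening analysis is sound and matches theirs: the form $B_\xi(y,z)=\xi([y,z])$ is block-diagonal for the $\bbz_2$-grading, alternating on $(\ggg_\bbk)_{\bar0}$ and symmetric on $(\ggg_\bbk)_{\bar1}$, with radical $\ggg_\bbk^{\bar x}$, so $d_0$ is even while $d_1$ may have either parity; and the ceiling in $2^{\lfloor d_1/2\rfloor}$ does come from a residual rank-one Clifford factor. The reduction of a general $\xi$ to a nilpotent one via the centralizer of the semisimple part of $\bar x$ is also the route taken in \cite{WZ} (it is a Morita-equivalence statement rather than a formal consequence of a Jordan decomposition, but you correctly flag it as a separate input).

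The genuine gap is in the step that actually produces the divisibility. The subspace $\mmm_\bbk+\bigoplus_{i\geq 0}\ggg_\bbk(i)$ you offer as the polarisation is neither a subalgebra (the level-zero part $\ggg_\bbk(0)$ does not preserve the chosen Lagrangian $\ggg_\bbk(-1)'$ inside $\ggg_\bbk(-1)$) nor isotropic for $B_\chi$ (already $\chi([h,f])=-2\neq 0$, with $h\in\ggg_\bbk(0)$ and $f\in\ggg_\bbk(-2)\subseteq\mmm_\bbk$ both lying in your subspace); moreover its even codimension in $\ggg_\bbk$ is only $\frac{1}{2}\dim\ggg_\bbk(-1)_{\bar0}$, which is strictly smaller than $\frac{d_0}{2}$, so induction from it could never account for the full factor $p^{d_0/2}$. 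The mechanism in \cite{WZ}, following Premet, is the opposite one: one works with the small admissible algebra $\mmm_\bbk$ alone, with $\dim(\mmm_\bbk)_{\bar 0}=\frac{d_0}{2}$, and proves that \emph{every} $U_\chi(\ggg_\bbk)$-module restricts to a \emph{free} $U_\chi(\mmm_\bbk)$-module (a filtration argument exploiting the non-degenerate pairings between $\ggg(i)$ and $\ggg(-i-2)$); combined, when $\sfr$ is odd, with the extra odd element $v_{\frac{\sfr+1}{2}}$ whose square is a nonzero scalar in $U_\chi$, this yields exactly $p^{\frac{d_0}{2}}2^{\lfloor\frac{d_1}{2}\rfloor}\mid\dim M$. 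That freeness theorem is the entire content of the proposition and is nowhere established, or even stated, in your outline; ``Lagrangian induction combined with Frobenius reciprocity'' from a maximal isotropic subalgebra containing the centralizer is precisely what is \emph{not} available for a general nilpotent $\chi$, which is why the admissible-algebra approach was invented in the first place.
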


A natural question is whether or not there exist some irreducible modules of $U_\xi(\ggg_\bbk)$ with dimension equaling $p^{\frac{d_0}{2}}2^{\lfloor\frac{d_1}{2}\rfloor}$. Conjecture \ref{conjectureold} on the minimal dimensional representations of the complex finite $W$-superalgebra $U({\ggg},e)$ was  raised in \cite{ZS4}.
Under the assumption of the conjecture, the lower bounds of the dimensions given in Proposition \ref{wzd2} are correspondingly to be indicated accessible for $p\gg0$ in \cite[Theorem 1.5]{ZS4}.

The conjecture in the general case is still open, out of scope we solved in \cite[Proposition 4.7, Proposition 5.8]{ZS4}.
\subsection{A conjecture on the representation of refined $W$-superalgebras}
In virtue of the structure theory of refined $W$-superalgebras in \S\ref{crw}, we can reformulate Conjecture \ref{conjectureold} as follows.
\begin{conj}\label{conjecture22}
Let ${\ggg}$ be a basic Lie superalgebra over ${\bbc}$, then the refined $W$-superalgebra $W_\chi'$ affords a one-dimensional representation.
\end{conj}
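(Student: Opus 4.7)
The plan is to attack Conjecture \ref{conjecture22} in its full generality by extending the strategy used by Premet and Losev in the finite $W$-algebra case, combined with the two known special instances (type $A(m,n)$ and type $B(0,n)$ with $e$ regular) and the explicit minimal case treated in this paper. By Theorem~\ref{PBWC} and Corollary~\ref{isogr}, $\mathrm{gr}\,W'_\chi \cong S(\ggg^e)$, so any algebra homomorphism $W'_\chi \to \bbc$ is a filtered lift of the augmentation $S(\ggg^e) \to \bbc$ sending $\ggg^e$ to $0$. The task is therefore to construct a two-sided ideal $J \subset W'_\chi$ of codimension one, or equivalently to produce scalars $c_1,\dots,c_{l+q} \in \bbc$ (assigned to the generators $\Theta_1,\dots,\Theta_{l+q}$ of Theorem~\ref{PBWC}(1)) such that the defining relations $[\Theta_i,\Theta_j]=F_{ij}(\Theta_1,\dots,\Theta_{l+q})$ remain consistent under the substitutions $\Theta_k \mapsto c_k$.

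My first step would be to develop a parabolic induction / restriction for refined $W$-superalgebras, with the aim of reducing to the case when $e$ is rigid in $\ggg_\bz$, i.e., not induced from any proper Levi. Given a $\theta$-stable Levi subsuperalgebra $\ppp \subset \ggg$ containing a representative $e_0$ of an orbit that induces to the orbit of $e$, one expects a homomorphism $W'_\chi(\ggg,e) \to W'_{\chi_0}(\ppp,e_0)$ through which one-dimensional modules pull back. The natural framework is Suh's vertex-algebraic definition of $W^{\mathrm{fin}}(\ggg,e)$ identified with $W'_\chi$ in Proposition~\ref{important}, since BRST quantum reduction behaves predictably under restriction to Levi subsuperalgebras.

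The second step is, for each type in Table~1 and each rigid nilpotent $e$, to exhibit a one-dimensional representation explicitly. Type $A(m,n)$ is settled by \cite[Proposition 4.7]{ZS4}. For the orthosymplectic types $B(m,n), C(n), D(m,n)$, one would use the explicit matrix realization of $\ggg$: after listing rigid even orbits (a short list by Kempken--Spaltenstein in $\ggg_\bz$), one writes the generators of lowest Kazhdan degree via Premet-style correction terms — generalizing the formulas of Proposition~\ref{ge} to arbitrary $e$, which are developed in \S\ref{structure} of the paper — and then checks that the commutator relations can be satisfied in $\bbc$. For the exceptional types $D(2,1;a), F(4), G(3)$, the list of rigid nilpotent orbits in $\ggg_\bz$ is short and the computation can be carried out case-by-case, in parallel with the exceptional Lie algebra treatments of \cite{GRU} and \cite{P9}.

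The decisive obstacle, and the reason Conjecture~\ref{conjecture22} remains open in the non-minimal case, lies in the first step. In the non-super setting, Losev's parabolic induction rests on an equivalence of categories between $W$-algebra modules and certain Harish-Chandra bimodules, ultimately using harmonic analysis on the reductive group $G$. The super analogue is subtle because the extended admissible algebra $\mmm'$ from \eqref{extend admiss alg} jumps between $\mmm$ and $\mmm \oplus \bbc v_{(\sfr+1)/2}$ according to the parity of $\sfr = \dim\ggg(-1)_\bo$, and this parity need not be preserved under restriction to a Levi subsuperalgebra. A uniform reduction must therefore be phrased in terms of $W'_\chi$ (designed precisely to smooth over this parity ambiguity) rather than $U(\ggg,e)$, and a careful analysis of how $\sfr$ changes under Levi restriction, together with the absence of a full super Harish-Chandra bimodule theory, will be the main technical hurdle.
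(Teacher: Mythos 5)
The statement you are addressing is a \emph{conjecture}, and the paper does not prove it in general: its actual contribution is to verify Conjecture \ref{conjecture22} only when $e$ is a minimal nilpotent element, by writing down explicit generators $\Theta_v$ ($v\in\ggg^e(0)$), $\Theta_w$ ($w\in\ggg^e(1)$) and the central element $C$ (Proposition \ref{ge}), computing all commutators between them (Theorem \ref{maiin1}, with the hard case being Proposition \ref{1commutator}), and then observing that the span $(W_\chi')^+$ of all PBW monomials of positive degree in these generators and in $C-c_0$ is a two-sided ideal of codimension one (Proposition \ref{minimal nilpotent}). The consistency of the one-dimensional representation $\Theta_x\mapsto 0$, $C\mapsto c_0$ rests entirely on the precise form of relation (3) in Theorem \ref{maiin1}, namely that $[\Theta_{w_1},\Theta_{w_2}]$ lies in the ideal generated by $C-\Theta_{\text{Cas}}-c_0$ and products of two $\Theta$'s. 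Your proposal does not reproduce or replace this computation.

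The genuine gap in your plan is that neither of its two steps is established, and you concede as much. Step one --- a parabolic induction functor $W'_\chi(\ggg,e)\to W'_{\chi_0}(\ppp,e_0)$ reducing the conjecture to rigid orbits --- does not exist in the literature for basic Lie superalgebras, and your own diagnosis (the parity of $\sfr=\dim\ggg(-1)_{\bar 1}$ is not controlled under Levi restriction, and there is no super Harish--Chandra bimodule theory) shows why it cannot simply be imported from Losev's work; identifying an obstacle is not the same as overcoming it. Step two is likewise not carried out: the paper only produces explicit generators of Kazhdan degree $\leqslant 3$ (i.e.\ lifts of $\ggg^e(0)$ and $\ggg^e(1)$) for arbitrary $e$, and the full presentation exists only in the minimal case, so ``generalizing the formulas of Proposition \ref{ge} to arbitrary rigid $e$'' is itself an open problem, not a routine verification. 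As written, the proposal is a research program whose every essential ingredient is missing; it neither proves the conjecture nor recovers the special case (minimal $e$) that the paper actually settles, and it would be strictly harder to complete than the direct computation the paper performs.
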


Now we make some explanation on the above. Let ${\ggg}$ be a basic Lie superalgebra over $\mathbb{C}$.  For the case with $\sfr$ being odd, we can obtain the following result.

\begin{prop}\label{intromain}
Assume $\sfr$ is odd. If the refined $W$-superalgebra $W_\chi'$ affords a one-dimensional representation, then the corresponding finite $W$-superalgebra $U({\ggg},e)$ admits a two-dimensional irreducible representation.
\end{prop}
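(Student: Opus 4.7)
The plan is to realize $U(\ggg,e)$ as a rank-$2$ Clifford extension of $W_\chi'$ when $\sfr$ is odd, and to induce a $2$-dimensional irreducible super-representation from the given $1$-dimensional representation of $W_\chi'$. Set $v := v_{(\sfr+1)/2}\in\mmm'\setminus\mmm$, the extra odd direction that distinguishes $\mmm'$ from $\mmm$.

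First I would use the complex analog of \eqref{mvm'} to free-generate $U(\ggg,e)$ over $W_\chi'$. Since $[v,v]\in\ggg(-2)\subset\mmm$ acts trivially on $U(\ggg,e)=Q_\chi^{\ad\mmm}$, the odd super-derivation $\ad v$ on $U(\ggg,e)$ satisfies $(\ad v)^2=\tfrac12\ad[v,v]=0$, with kernel $Q_\chi^{\ad\mmm'}=W_\chi'$. Combined with the PBW equality $\dim U(\ggg,e)=2\dim W_\chi'$ from \cite{ZS2}, this forces $\ad v$ to map $U(\ggg,e)$ onto $W_\chi'$. Consequently there is an odd element $\Theta_\star\in U(\ggg,e)\setminus W_\chi'$ with $[v,\Theta_\star]=1$, and $\{1,\Theta_\star\}$ is a free right $W_\chi'$-basis of $U(\ggg,e)$. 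The Kazhdan leading term of $\Theta_\star$ is $v\otimes 1_\chi$, whose formal square equals $\tfrac12[v,v]\otimes 1_\chi=\tfrac12\chi([v,v])=\tfrac12$, using $\langle v,v\rangle=1$. This suggests a normalization under which $\Theta_\star^2=\tfrac12\in\bbc^\times$ and $\Theta_\star$ super-commutes with $W_\chi'$, yielding a $\mathbb{Z}_2$-graded algebra isomorphism $U(\ggg,e)\cong W_\chi'\otimes\mathrm{Cl}_1$ with $\mathrm{Cl}_1:=\bbc[\Theta_\star]/(\Theta_\star^2-\tfrac12)$. Establishing this normalization is the principal technical obstacle; I would carry it out by an iterative filtered-algebra argument in decreasing Kazhdan degree, mirroring the construction of the PBW generators $\Theta_i$ in Theorem \ref{PBWC}.

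With the Clifford factorization in hand, I would finish as follows. Given the one-dimensional representation $\psi:W_\chi'\to\bbc$, set $M:=U(\ggg,e)\otimes_{W_\chi'}\bbc_\psi$, which has $\dim M=2$ by PBW and decomposes as $\bbc_\psi\otimes S$, where $S$ is the unique irreducible super-module of $\mathrm{Cl}_1$ (one-dimensional in each parity, with $\Theta_\star$ acting as an invertible odd operator of square $\tfrac12$). Since both tensor factors are irreducible in their respective super-categories, $M$ is an irreducible super-representation of $U(\ggg,e)$ of dimension $2$. Equivalently, $U(\ggg,e)$ admits no $1$-dimensional super-representation at all when $\sfr$ is odd, since in any such representation the odd element $\Theta_\star$ would be forced to act as zero, giving $0=\Theta_\star^2=\tfrac12$, a contradiction.
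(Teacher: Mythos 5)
Your proposal is correct and, once unwound, is the paper's own argument in different packaging: everything hinges on the single odd element $v_{\frac{\sfr+1}{2}}\otimes 1_\chi\in Q_\chi^{\ad\,\mmm}\setminus Q_\chi^{\ad\,\mmm'}$, its square $\tfrac12$, and its super-commutation with $W_\chi'$. The one thing to flag is that what you call the ``principal technical obstacle'' --- normalizing $\Theta_\star$ so that $\Theta_\star^2=\tfrac12$ and $[\Theta_\star,W_\chi']=0$ --- requires no iterative filtered-algebra correction at all: the naive candidate $\Theta_\star=v_{\frac{\sfr+1}{2}}\otimes 1_\chi$ already works on the nose, since $[v_{\frac{\sfr+1}{2}}\otimes 1_\chi,h]=0$ for all $h\in W_\chi'$ is literally the definition of $W_\chi'=Q_\chi^{\ad\,\mmm'}$ (as $v_{\frac{\sfr+1}{2}}\in\mmm'$), and $\Theta_\star^2=\tfrac12[v_{\frac{\sfr+1}{2}},v_{\frac{\sfr+1}{2}}]\otimes 1_\chi=\tfrac12\chi([v_{\frac{\sfr+1}{2}},v_{\frac{\sfr+1}{2}}])1_\chi=\tfrac12\otimes 1_\chi$ exactly, not merely to leading Kazhdan order, because $[v_{\frac{\sfr+1}{2}},v_{\frac{\sfr+1}{2}}]\in\ggg(-2)\subseteq\mmm$. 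These are precisely the two identities the paper's proof computes. Where you then induce $M=U(\ggg,e)\otimes_{W_\chi'}\bbc_\psi\cong\bbc_\psi\boxtimes S$ from the Clifford factorization, the paper works on the ideal side, checking directly that $I\oplus I(v_{\frac{\sfr+1}{2}}\otimes 1_\chi)$ is a two-sided ideal of codimension $2$ whose quotient is $\mathrm{Cl}_1$; the two formulations are equivalent, and your module-theoretic version arguably makes the ``type $Q$'' nature of the resulting two-dimensional irreducible more transparent. One small care point: the equality $\dim U(\ggg,e)=2\dim W_\chi'$ should be read through the PBW monomial bases (as in \cite[Theorem 4.5(2)]{ZS2}, which the paper cites for exactly this purpose), since both algebras are infinite-dimensional over $\bbc$ and a bare dimension count of kernel versus image of $\ad\,v_{\frac{\sfr+1}{2}}$ does not literally apply.
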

\begin{proof}
As the refined $W$-superalgebra $W_\chi'$ affords a one-dimensional representation, it is immediate that $Q_\chi^{\text{ad}\,{\mmm}'}=W_\chi'$ has a two-sided ideal of codimensional $1$. Denote this ideal by $I$. In the following we will prove that $I\oplus I(v_{\frac{\sfr+1}{2}}\otimes1_\chi)$ is a two-sided ideal of codimensional $2$ in $Q_\chi^{\text{ad}\,{\mmm}}\cong U({\ggg},e)$.

For any  $\mathbb{Z}_2$-homogeneous element $h\in Q_\chi^{\text{ad}\,{\mmm}'}$, both $h\cdot I$ and $I\cdot h$ are contained in $I$, then $h\cdot I(v_{\frac{\sfr+1}{2}}\otimes1_\chi)\subseteq I(v_{\frac{\sfr+1}{2}}\otimes1_\chi)$. Let $h'$ be a $\mathbb{Z}_2$-homogeneous element in $I$, then $$h'(v_{\frac{\sfr+1}{2}}\otimes1_\chi)\cdot h=(-1)^{|h|}h'h (v_{\frac{\sfr+1}{2}}\otimes1_\chi)+h'[v_{\frac{\sfr+1}{2}}\otimes1_\chi,h].$$ Since $h'h(v_{\frac{\sfr+1}{2}}\otimes1_\chi)\in I(v_{\frac{\sfr+1}{2}}\otimes1_\chi)$, and $[v_{\frac{\sfr+1}{2}}\otimes1_\chi,h]=0$ by the definition of $Q_\chi^{\text{ad}\,{\mmm}'}$, we have $h'(v_{\frac{\sfr+1}{2}}\otimes1_\chi)\cdot h\in I(v_{\frac{\sfr+1}{2}}\otimes1_\chi)$, which yields $I(v_{\frac{\sfr+1}{2}}\otimes1_\chi)\cdot h\subseteq I(v_{\frac{\sfr+1}{2}}\otimes1_\chi)$ by the arbitrary of $h'$. Summing up, we have
\begin{equation}\label{eq0}
Q_\chi^{\text{ad}\,{\mmm}'}\cdot(I\oplus I(v_{\frac{\sfr+1}{2}}\otimes1_\chi))\subseteq I\oplus I(v_{\frac{\sfr+1}{2}}\otimes1_\chi),\quad(I\oplus I(v_{\frac{\sfr+1}{2}}\otimes1_\chi))\cdot Q_\chi^{\text{ad}\,{\mmm}'}\subseteq I\oplus I(v_{\frac{\sfr+1}{2}}\otimes1_\chi).
\end{equation}

Now consider the element $v_{\frac{\sfr+1}{2}}\otimes1_\chi\in Q_\chi^{\text{ad}\,{\mmm}}\backslash Q_\chi^{\text{ad}\,{\mmm}'}$.
Let $h$ be arbitrary $\mathbb{Z}_2$-homogeneous element in $I\subseteq Q_\chi^{\text{ad}\,{\mmm}'}$. Note that $$v_{\frac{\sfr+1}{2}}^2\otimes1_\chi=\frac{1}{2}
[v_{\frac{\sfr+1}{2}},v_{\frac{\sfr+1}{2}}]\otimes1_\chi=\frac{1}{2}\otimes1_\chi,$$
and $[v_{\frac{\sfr+1}{2}}\otimes1_\chi,h]=0$ by the definition of $Q_\chi^{\text{ad}\,{\mmm}'}$,
then we have
\begin{equation*}
\begin{split}
(v_{\frac{\sfr+1}{2}}\otimes1_\chi)\cdot h=&[v_{\frac{\sfr+1}{2}}\otimes1_\chi,h]+(-1)^{|h|}h (v_{\frac{\sfr+1}{2}}\otimes1_\chi)\\
=&(-1)^{|h|}h(v_{\frac{\sfr+1}{2}}\otimes1_\chi)\in I(v_{\frac{\sfr+1}{2}}\otimes1_\chi),\\
(v_{\frac{\sfr+1}{2}}\otimes1_\chi)\cdot h(v_{\frac{\sfr+1}{2}}\otimes1_\chi)
=&[v_{\frac{\sfr+1}{2}}\otimes1_\chi,h](v_{\frac{\sfr+1}{2}}\otimes1_\chi)+(-1)^{|h|}h(v_{\frac{\sfr+1}{2}}^2\otimes1_\chi)\\
=&(-1)^{|h|}[v_{\frac{\sfr+1}{2}}\otimes1_\chi,[v_{\frac{\sfr+1}{2}}\otimes1_\chi,h]]-(-1)^{|h|}(v_{\frac{\sfr+1}{2}}\otimes1_\chi)\cdot\\
&[v_{\frac{\sfr+1}{2}}\otimes1_\chi,h]+(-1)^{|h|}h(v_{\frac{\sfr+1}{2}}^2\otimes1_\chi)\\
=&\frac{(-1)^{|h|}}{2}h\in I,
\end{split}\end{equation*}
which shows that \begin{equation}\label{eq1}
(v_{\frac{\sfr+1}{2}}\otimes1_\chi)\cdot(I\oplus I(v_{\frac{\sfr+1}{2}}\otimes1_\chi))\subseteq I\oplus I(v_{\frac{\sfr+1}{2}}\otimes1_\chi).\end{equation}

On the other hand, we have
\begin{equation*}
\begin{split}
h\cdot(v_{\frac{\sfr+1}{2}}\otimes1_\chi)\in&I(v_{\frac{\sfr+1}{2}}\otimes1_\chi),\\
h(v_{\frac{\sfr+1}{2}}\otimes1_\chi)\cdot(v_{\frac{\sfr+1}{2}}\otimes1_\chi)=&h(v_{\frac{\sfr+1}{2}}^2\otimes1_\chi)
=\frac{h}{2}\in I,\end{split}\end{equation*}
i.e., \begin{equation}\label{eq2}(I\oplus I(v_{\frac{\sfr+1}{2}}\otimes1_\chi))\cdot(v_{\frac{\sfr+1}{2}}\otimes1_\chi)\subseteq I\oplus I(v_{\frac{\sfr+1}{2}}\otimes1_\chi).\end{equation}
Since the $\mathbb{C}$-algebra $Q_\chi^{\text{ad}\,{\mmm}}$ is generated by $v_{\frac{\sfr+1}{2}}\otimes1_\chi$ and the elements in $Q_\chi^{\text{ad}\,{\mmm}'}$, by \eqref{eq0}, \eqref{eq1} and \eqref{eq2} we can conclude that $I\oplus I(v_{\frac{\sfr+1}{2}}\otimes1_\chi)$ is a two-sided ideal of $U({\ggg},e)$.

Moreover, it is immediate from the PBW theorem of $Q_\chi^{\text{ad}\,{\mmm}}$ in \cite[Theorem 4.5(2)]{ZS2} that $Q_\chi^{\text{ad}\,{\mmm}}/(I\oplus I(v_{\frac{\sfr+1}{2}}\otimes1_\chi))\cong \bbc_\chi\oplus\bbc (v_{\frac{\sfr+1}{2}}\otimes1_\chi)$ as $\bbc$-vector spaces, then $I\oplus I(v_{\frac{\sfr+1}{2}}\otimes1_\chi)$ is codimensional $2$ in $U({\ggg},e)\cong Q_\chi^{\text{ad}\,{\mmm}}$. Thus the $\mathbb{C}$-algebra $U({\ggg},e)$ admits a two-dimensional irreducible representation, completing the proof.
\end{proof}

Recall that for the case when $\sfr$ is even, we have $W_\chi'=U(\ggg,e)$ by the definition. Then
in virtue of Proposition \ref{intromain}, we can refine Conjecture \ref{conjectureold} as Conjecture \ref{conjecture22}.
It is worth noting that in contrast to Conjecture \ref{conjectureold}, the assumption in Conjecture \ref{conjecture22} is irrelevant to the parity of $\sfr$.

\subsection{The minimal dimensional representations of reduced enveloping algebras}
We will consider the minimal dimensional representations of reduced enveloping algebra of a
basic Lie superalgebra in this subsection.

Let ${\ggg}_{\bbk}$ be a basic Lie superalgebra over ${\bbk}=\overline{\mathbb{F}}_p$. Set $d_0=\text{dim}\,({\ggg}_{\bbk})_{\bar 0}-\text{dim}\,({\ggg}_{\bbk}^{e})_{\bar 0}$ and $d_1=\text{dim}\,({\ggg}_{\bbk})_{\bar 1}-\text{dim}\,({\ggg}_{\bbk}^{e})_{\bar 1}$, where ${\ggg}_{\bbk}^{e}$ denotes the centralizer of $e$ in ${\ggg}_{\bbk}$.
Denote by $\lfloor\frac{d_1}{2}\rfloor$ the least integer upper bound of $\frac{d_1}{2}$. The main result of this section is the following theorem.

\begin{theorem}\label{intromain-2}
Let ${\ggg}_{\bbk}$ be a basic Lie superalgebra over ${\bbk}=\overline{\mathbb{F}}_p$, and let $\chi\in({\ggg}_{\bbk})^*_{\bar0}$ be the $p$-character of ${\ggg}_{\bbk}$ corresponding to a nilpotent element $e\in(\ggg_\bbk)_\bz$ such that $\chi(\bar y)=(e,\bar y)$ for any $\bar y\in{\ggg}_{\bbk}$. If Conjecture \ref{conjecture22} establishes for the refined $W$-superalgebra $W_\chi'$ associated with (${\ggg},e)$ over $\mathbb{C}$, then for $p\gg0$ the reduced enveloping algebra $U_\chi({\ggg}_{\bbk})$ admits irreducible representations of dimension $p^{\frac{d_0}{2}}2^{\lfloor\frac{d_1}{2}\rfloor}$.
\end{theorem}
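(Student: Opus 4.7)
The plan is to deduce Theorem \ref{intromain-2} from \cite[Theorem 1.5]{ZS4}, using Proposition \ref{intromain} as the bridge that absorbs the parity-dependent formulation of the older Conjecture \ref{conjectureold} into the uniform hypothesis of Conjecture \ref{conjecture22}. Under the standing assumption, fix a one-dimensional representation of $W_\chi'$ over $\mathbb{C}$, equivalently a two-sided ideal $I \subset W_\chi'$ of codimension one, and split into cases on the parity of $\sfr$.

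When $\sfr$ is even, one has $\mmm' = \mmm$ and hence $W_\chi' = Q_\chi^{\text{ad}\,\mmm} = U(\ggg,e)$, so $I$ is already a codimension-one ideal of the finite $W$-superalgebra itself. This verifies part (1) of Conjecture \ref{conjectureold}, and an appeal to \cite[Theorem 1.5]{ZS4} produces, for $p \gg 0$, an irreducible $U_\chi(\ggg_\bbk)$-module of dimension $p^{d_0/2} 2^{d_1/2}$; since $\sfr$ and $d_1$ share the same parity by \cite[Theorem 4.3]{WZ}, the exponent $d_1/2$ equals $\lfloor d_1/2 \rfloor$ and this matches the target. When $\sfr$ is odd, Proposition \ref{intromain} upgrades $I$ to the codimension-two two-sided ideal $I \oplus I \cdot (v_{\frac{\sfr+1}{2}} \otimes 1_\chi)$ of $U(\ggg,e)$, yielding a two-dimensional irreducible representation — precisely part (2) of Conjecture \ref{conjectureold} — and \cite[Theorem 1.5]{ZS4} again supplies an irreducible $U_\chi(\ggg_\bbk)$-module of the asserted dimension $p^{d_0/2} 2^{\lfloor d_1/2 \rfloor}$.

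The only point requiring care is the compatibility of the doubling construction of Proposition \ref{intromain} with the admissible reduction procedure underlying \cite[Theorem 1.5]{ZS4}: one must ensure that the codimension-two ideal in the odd case descends from an admissible $A$-form and reduces non-trivially modulo $p$ for all $p \gg 0$. This is handled by enlarging the admissible ring $A$ so that $v_{\frac{\sfr+1}{2}} \in \mmm_A'$, a PBW basis of $Q_\chi^{\text{ad}\,\mmm}$ from \cite[Theorem 4.5]{ZS2}, and a finite set of generators for $I$ are all simultaneously available over $A$; the PBW isomorphism $\mathrm{gr}\, W_\chi' \cong S(\ggg^e)$ from Corollary \ref{isogr} then ensures that codimension is preserved upon reduction. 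Beyond this routine bookkeeping, no fresh input is required, since the full weight of the admissible reduction and the Morita-type passage from modules over the reduced $W$-superalgebra to modules over $U_\chi(\ggg_\bbk)$ is already encoded in \cite[Theorem 1.5]{ZS4}; this last step is expected to be the main technical hurdle, but one that can be cited rather than redone.
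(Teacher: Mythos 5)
Your argument is correct and is essentially the paper's own proof: the paper likewise deduces the theorem by combining Proposition \ref{intromain} (which converts a one-dimensional representation of $W_\chi'$ into the one- or two-dimensional representation of $U(\ggg,e)$ required by Conjecture \ref{conjectureold}, according to the parity of $\sfr$) with the conditional accessibility theorem of \cite{ZS4}. Your closing worry about compatibility with the admissible reduction is already absorbed into the hypothesis-conclusion format of the cited result from \cite{ZS4}, as you yourself note, so no additional bookkeeping is needed.
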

\begin{proof}
The theorem readily follows from proposition \ref{intromain} and \cite[Theorem 1.6]{ZS4}.
\end{proof}

As for the general case, let $\xi\in({\ggg}_{\bbk})^*_{\bar0}$ be any $p$-character of ${\ggg}_{\bbk}$ corresponding to an element $\bar x\in ({\ggg}_{\bbk})_{\bar0}$ such that $\xi(\bar y)=(\bar x,\bar y)$ for any $\bar y\in{\ggg}_{\bbk}$. Let $d'_0=\text{dim}\,({\ggg}_{\bbk})_{\bar 0}-\text{dim}\,({\ggg}_{\bbk}^{\bar x})_{\bar 0}$ and $d'_1=\text{dim}\,({\ggg}_{\bbk})_{\bar 1}-\text{dim}\,({\ggg}_{\bbk}^{\bar x})_{\bar 1}$, where ${\ggg}_{\bbk}^{\bar x}$ denotes the centralizer of $\bar x$ in ${\ggg}_{\bbk}$. Then we have
\begin{theorem}\label{intromain-3}
Let ${\ggg}_{\bbk}$ be a basic Lie superalgebra over ${\bbk}=\overline{\mathbb{F}}_p$, and let $\xi\in({\ggg}_{\bbk})^*_{\bar0}$. If all the refined $W$-superalgebras associated with the basic Lie superalgebras over $\bbc$ excluding type $D(2,1;a)$ with $a\notin\overline{\mathbb{Q}}$ afford one-dimensional representations, then for $p\gg0$ the reduced enveloping algebra $U_\xi({\ggg}_{\bbk})$ admits irreducible representations of dimension $p^{\frac{d'_0}{2}}2^{\lfloor\frac{d'_1}{2}\rfloor}$.
\end{theorem}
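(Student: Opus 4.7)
\begin{demo}{Proof plan for Theorem \ref{intromain-3}}
The plan is to reduce from a general $p$-character $\xi$ to a nilpotent one by using the Jordan decomposition of $\bar x$ and a super-analogue of the Kac--Weisfeiler/Friedlander--Parshall Morita reduction, and then apply Theorem \ref{intromain-2} on the centralizer. This is exactly the strategy used to deduce \cite[Theorem 1.6]{ZS4} from \cite[Theorem 1.5]{ZS4}, the only change being that the nilpotent input is now supplied by Theorem \ref{intromain-2} under the (refined) Conjecture \ref{conjecture22} for all basic Lie superalgebras except $D(2,1;a)$ with $a\notin\overline{\bbq}$.

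First, write the Jordan decomposition $\bar x=\bar x_s+\bar x_n$ in $({\ggg}_{\bbk})_{\bar0}$, with $[\bar x_s,\bar x_n]=0$, $\bar x_s$ semisimple and $\bar x_n$ nilpotent. Let $\ll:=\ggg_{\bbk}^{\bar x_s}$ be the centralizer, a Lie sub-superalgebra of $\ggg_{\bbk}$ whose even part $\ll_{\bar0}$ is the Levi subalgebra of $(\ggg_{\bbk})_{\bar0}$ containing $\bar x_s$. Inspecting the list in Table 1 case by case (which is precisely what forces the exclusion of $D(2,1;a)$ with transcendental $a$, since only then is the Chevalley $\mathbb{Z}$-form and the reductive-Levi structure of $\ll$ well-behaved under reduction mod $p$), one checks that $\ll$ is itself a direct sum of basic Lie superalgebras of the types appearing in Table 1, together possibly with a central torus. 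Split $\xi=\xi_s+\xi_n$ accordingly, where $\xi_s(\bar y)=(\bar x_s,\bar y)$ vanishes on $[\ll,\ggg_{\bbk}]$ and $\xi_n(\bar y)=(\bar x_n,\bar y)$ restricts to the $p$-character on $\ll$ attached to the nilpotent element $\bar x_n\in\ll_{\bar0}$.

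Next, invoke the super Morita equivalence (see \cite[Theorem 5.2]{WZ} and the version used in \cite[\S6]{ZS4}) which, under the hypothesis $p\gg 0$, induces a bijection between isomorphism classes of irreducible $U_{\xi}({\ggg}_{\bbk})$-modules and irreducible $U_{\xi|_\ll}(\ll)$-modules, multiplying dimensions by a factor
\[
p^{\frac{\dim(\ggg_{\bbk})_{\bar0}-\dim\ll_{\bar0}}{2}}\cdot 2^{\lfloor\frac{\dim(\ggg_{\bbk})_{\bar1}-\dim\ll_{\bar1}}{2}\rfloor}.
\]
Here one has to be slightly careful with the odd contribution so that the floor functions compose correctly; the equality
\[
\Bigl\lfloor\tfrac{\dim(\ggg_{\bbk})_{\bar1}-\dim(\ggg_{\bbk}^{\bar x})_{\bar1}}{2}\Bigr\rfloor
=\Bigl\lfloor\tfrac{\dim(\ggg_{\bbk})_{\bar1}-\dim\ll_{\bar1}}{2}\Bigr\rfloor
+\Bigl\lfloor\tfrac{\dim\ll_{\bar1}-\dim(\ggg_{\bbk}^{\bar x})_{\bar1}}{2}\Bigr\rfloor
\]
follows since $\dim(\ggg_{\bbk})_{\bar1}-\dim\ll_{\bar1}$ is even (pairing under $(\cdot,\cdot)$ of root spaces off $\ll$), exactly as in \cite[\S6]{ZS4}.

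Finally, the $p$-character $\xi|_\ll$ on $\ll$ is nilpotent in the sense that it corresponds to $\bar x_n\in\ll_{\bar0}$. Each basic summand of $\ll$ falls under the hypothesis of Theorem \ref{intromain-2}: the associated refined $W$-superalgebra admits a one-dimensional representation by assumption, so for $p\gg 0$ the reduced enveloping algebra of that summand admits an irreducible module of dimension $p^{(\dim\ll_{\bar0}-\dim\ll^{\bar x_n}_{\bar0})/2}\,2^{\lfloor(\dim\ll_{\bar1}-\dim\ll^{\bar x_n}_{\bar1})/2\rfloor}$; taking tensor products over the summands and using that $\ll^{\bar x_n}=\ggg_{\bbk}^{\bar x}$, one obtains an irreducible $U_{\xi|_\ll}(\ll)$-module of that dimension. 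Pulling it back via the Morita equivalence and multiplying by the factor above yields an irreducible $U_\xi(\ggg_{\bbk})$-module of dimension $p^{d'_0/2}2^{\lfloor d'_1/2\rfloor}$, as desired. I expect the main technical obstacle to be the verification that the super-Morita reduction behaves correctly on odd dimensions and that the Levi-type centralizer $\ll$ is a direct sum of algebras from Table 1 (this is where the $D(2,1;a)$, $a\notin\overline{\bbq}$, case has to be ruled out, as the Chevalley $A$-form used throughout \S\ref{background} is not available there).
\end{demo}
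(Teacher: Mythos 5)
Your plan is correct and is essentially the route the paper intends: the paper states Theorem \ref{intromain-3} without proof, implicitly deferring to Proposition \ref{intromain} together with the general-$\xi$ reduction theorem of \cite{ZS4} (Jordan decomposition, passage to the Levi-type centralizer $\ggg_{\bbk}^{\bar x_s}$, and the super Morita equivalence of \cite{WZ}), which is exactly what you unpack before feeding in Theorem \ref{intromain-2} on the nilpotent part. The only difference is presentational --- you re-derive the reduction rather than citing it --- and your parity observation making the floor functions compose is the same one used there.
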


\subsection{A variation on the definition of refined $W$-superalgebras}\label{variation}
In this part we will introduce a variation on the definition of refined $W$-superalgebras.

Let $\ggg$ be a basic Lie superalgebra over $\mathbb{C}$, then we have ${\ggg}=\bigoplus_{i\in{\bbz}}{\ggg}(i)$, where ${\ggg}(i)=\{x\in{\ggg}\mid[h,x]=ix\}$. Set $\mathfrak{n}:=\bigoplus_{i\leqslant-1}{\ggg}(i)$ to be a nilpotent subalgebra of $\ggg$.
Let $I^{\text{fin}}$ be the ideal of $U(\ggg)$ generated by the elements $\{x-\chi(x) \mid x\in\mathfrak{n}\}$, and let $Q^{\text{fin}}_\chi:=U(\ggg)/I^{\text{fin}}$ be an induced $\ggg$-module. In \cite[Definition 4.3]{suh} Suh introduced the quantum finite $W$-superalgebra $W^{\text{fin}}(\ggg,e)$ as follows.
\begin{defn}(\cite{suh})\label{quafi}
Define the quantum finite $W$-superalgebra $W^{\text{fin}}(\ggg,e)$ associated with $\ggg$ and $e$ by
$$W^{\text{fin}}(\ggg,e):=(Q^{\text{fin}}_\chi)^{\text{ad}\,\mathfrak{n}},$$
where $(Q^{\text{fin}}_\chi)^{\text{ad}\,\mathfrak{n}}$ denotes the invariant subspace of $Q^{\text{fin}}_\chi$ under the adjoint action of $\mathfrak{n}$, and the associative product of $W^{\text{fin}}(\ggg,e)$ is defined by $$(x+I^{\text{fin}})\cdot(y+I^{\text{fin}}):=xy+I^{\text{fin}}.$$
\end{defn}
In \cite[(3.25)]{suh}, Suh introduced a grading on $Q^{\text{fin}}_\chi$. Let $w_1,\cdots, w_c$ be a basis of $\ggg$ over $\bbc$. Let $U({\ggg})=\bigcup_{i\in{\bbz}}\text{F}_i^pU({\ggg})$ be a filtration of $U({\ggg})$, where $\text{F}_i^pU({\ggg})$ is the ${\bbc}$-span of all $w_1\cdots w_c$ with $w_1\in{\ggg}(j_1),\cdots,w_c\in{\ggg}(j_c)$ and $(j_1+1)+\cdots+(j_c+1)\leqslant  i$. The corresponding filtration on $Q_{\chi}^{\text{fin}}$ is defined by $\text{F}_i^pQ_{\chi}^{\text{fin}}:=\pi(\text{F}_i^pU({\ggg}))$ with $\pi:U({\ggg})\twoheadrightarrow U({\ggg})/I^{\text{fin}}$ being the canonical homomorphism, which makes $Q_{\chi}^{\text{fin}}$ into a filtered $U({\ggg})$-module. This grading is called $p$-grading.

Suppose $\{v_\alpha\}_{\alpha\in J}$ is a basis of $\ggg^e$ such that $v_\alpha\in\ggg(j_\alpha)$, and let $S=\{v_\alpha+a_\alpha\mid\alpha\in J\}$ be a subset of $W^{\text{fin}}(\ggg,e)$ such that the $p$-grading of $a_\alpha$ is smaller than that of $v_\alpha$. In \cite[Proposition 4.7]{suh} Suh showed that $S$ is a set of free generators of $W^{\text{fin}}(\ggg,e)$. Moreover, the proofs of \cite[Proposition 3.12]{suh} and \cite[Proposition 4.7]{suh} showed that the quantum finite $W$-superalgebra version of \cite[Proposition 3.10]{suh} is also valid, then we have
\begin{prop}\label{Suh}
There is an isomorphism between $\mathbb{C}$-algebras $$\text{gr}^p\,(W^{\text{fin}}(\ggg,e))\cong S(\ggg^e),$$ where $\text{gr}^p\,(W^{\text{fin}}(\ggg,e))$ denotes the graded algebra of $W^{\text{fin}}(\ggg,e)$ under $p$-grading, and
$S(\ggg^e)$ is the supersymmetric algebra on $\ggg^e$.
\end{prop}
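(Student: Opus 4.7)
The plan is to directly adapt the filtration/graded-algebra strategy that Suh uses for classical affine $W$-superalgebras (her \cite[Proposition 3.10]{suh}) to the quantum finite setting, exactly as the excerpt hints. So I would proceed in four short steps.

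First, I would verify that the $p$-filtration $\{\text{F}_i^p U(\ggg)\}$ descends to an algebra filtration on $W^{\text{fin}}(\ggg,e)$. Because $I^{\text{fin}}$ is generated by elements $x-\chi(x)$ with $x\in\ggg(i)$ for $i\leqslant -1$, and the shift $\chi(x)$ has strictly smaller $p$-degree than $x$ itself (since $i+1\leqslant 0$ while $\chi(x)\in\mathbb{C}$ sits in degree $0$), the ideal $I^{\text{fin}}$ is compatible with the $p$-filtration and one gets an induced filtration $\text{F}_i^p Q^{\text{fin}}_\chi$. Intersecting with $(Q_\chi^{\text{fin}})^{\ad\,\mathfrak{n}}$ yields the filtration on $W^{\text{fin}}(\ggg,e)$ whose associated graded I want to compute.

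Second, I would exploit the generating set. By \cite[Proposition 4.7]{suh}, for each basis vector $v_\alpha\in\ggg^e\cap\ggg(j_\alpha)$ there is a free generator $V_\alpha:=v_\alpha+a_\alpha\in W^{\text{fin}}(\ggg,e)$ with $a_\alpha$ of strictly smaller $p$-degree than $v_\alpha$. Passing to $\gr^p$, the symbol $\overline{V_\alpha}$ coincides with $\overline{v_\alpha}$, so the images of the $V_\alpha$ in $\gr^p\,W^{\text{fin}}(\ggg,e)$ naturally match the basis of $\ggg^e$ that they came from, which defines a graded super-algebra map $\varphi\colon S(\ggg^e)\to\gr^p\,W^{\text{fin}}(\ggg,e)$.

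Third, I would check that $\gr^p\,W^{\text{fin}}(\ggg,e)$ is supercommutative, which makes $\varphi$ well-defined. This is the small computation at the heart of the $p$-grading: for $w_1\in\ggg(j_1)$ and $w_2\in\ggg(j_2)$, the product $w_1w_2$ has $p$-degree $(j_1+1)+(j_2+1)$, while $[w_1,w_2]\in\ggg(j_1+j_2)$ has $p$-degree $j_1+j_2+1$, one less. Hence the super-commutator of two elements of $p$-degrees $i,j$ lies in $\text{F}^p_{i+j-1}$, so in $\gr^p$ everything super-commutes.

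Finally, I would conclude by a standard PBW-style counting. The free generation in \cite[Proposition 4.7]{suh} says the ordered monomials in the $V_\alpha$ form a basis of $W^{\text{fin}}(\ggg,e)$, so their leading $p$-symbols form a basis of $\gr^p\,W^{\text{fin}}(\ggg,e)$; under $\varphi$ these correspond exactly to the basis monomials of $S(\ggg^e)$, so $\varphi$ is bijective. I expect the main obstacle to be only bookkeeping: being careful that the ``strictly smaller $p$-degree'' clause in Suh's generating set really gives the clean leading-symbol identification, and that the supersign conventions in the $p$-filtration match those in the supersymmetric algebra $S(\ggg^e)$; once those are lined up, the argument is essentially the one Premet uses in the Lie-algebra case transplanted into the super/$p$-graded setting.
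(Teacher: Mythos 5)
Your proposal follows essentially the same route as the paper, which itself only defers to Suh: it invokes the free generating set $S=\{v_\alpha+a_\alpha\}$ from \cite[Proposition 4.7]{suh} and observes that the arguments of \cite[Propositions 3.12 and 4.7]{suh} transplant the classical-affine statement \cite[Proposition 3.10]{suh} to the quantum finite setting, which is exactly the symbol-map-plus-PBW-counting argument you spell out. One small correction to your first step: for $x\in\ggg(-2)$ the $p$-degree of $x$ is $-1$ while the scalar $\chi(x)$ sits in degree $0$, so $\chi(x)$ has \emph{larger} $p$-degree than $x$, not strictly smaller as you claim; this is harmless here because the filtration on $Q_\chi^{\text{fin}}$ is simply defined as the image $\text{F}_i^pQ_\chi^{\text{fin}}=\pi(\text{F}_i^pU(\ggg))$ and so descends regardless, and your actual identification of $\gr^p$ rests on the generators $V_\alpha$ rather than on computing $\gr I^{\text{fin}}$.
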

\begin{rem}
It is worth noting that in \cite{suh} the quantum finite $W$-superalgebras are introduced in the context of vertex algebras and quantum reduction, and the nilpotent element chosen there is $f$, but not $e$. What is more, the original grading on $\ggg$ applied there was under the action of $\text{ad}\,\frac{h}{2}$, while the grading we used is under the action of $\text{ad}\,h$. Moreover, the $p$-grading on $Q^{\text{fin}}_\chi$ introduced there is also a little different from Kazhdan grading. Although there are so many differences, one can find that they are essentially the same, i.e., all the discussions there still go through in our settings. Therefore, the isomorphism between $\mathbb{C}$-algebras $\text{gr}\,(W^{\text{fin}}(\ggg,e))$ and $S(\ggg^e)$ in Proposition~\ref{Suh} can also be established under Kazhdan grading, i.e.,
\end{rem}

\begin{prop}\label{p-kazhdan}
There is an isomorphism between $\mathbb{C}$-algebras $$\text{gr}\,(W^{\text{fin}}(\ggg,e))\cong S(\ggg^e),$$ where $\text{gr}\,(W^{\text{fin}}(\ggg,e))$ denotes the graded algebra of $W^{\text{fin}}(\ggg,e)$ under Kazhdan grading, and
$S(\ggg^e)$ is the supersymmetric algebra on $\ggg^e$.
\end{prop}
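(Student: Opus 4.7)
My plan is to re-run Suh's proof of Proposition \ref{Suh} with the Kazhdan filtration $\text{F}_\bullet U(\ggg)$ of \S\ref{2.2.2} replacing the $p$-filtration $\text{F}_\bullet^p U(\ggg)$ throughout. The two filtrations are structurally analogous: both are exhaustive algebra filtrations of $U(\ggg)$ that descend to $Q_\chi^{\text{fin}}$ through the quotient, and both assign a constant weight to each graded piece $\ggg(j)$ --- namely $j+2$ for Kazhdan and $j+1$ for Suh's $p$-grading. The essential property Suh's construction exploits is that the adjoint action of any $x \in \mathfrak{n}$ strictly lowers the ambient filtration; for the Kazhdan filtration this holds because $\text{ad}(x)$ for $x \in \ggg(k)$ is a graded derivation of Kazhdan degree $k$ on $U(\ggg)$ (the two $+2$ shifts in $xy$ and $yx$ cancel), and $k \leqslant -1$ for $x \in \mathfrak{n}$.

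Granted this compatibility, I would imitate Suh's iterative construction of generators. For each basis element $v_\alpha \in \ggg^e(j_\alpha)$, one begins with $v_\alpha \in Q_\chi^{\text{fin}}$, whose Kazhdan degree is $j_\alpha + 2$; notes that $\text{ad}(x)(v_\alpha)$ for $x \in \mathfrak{n}$ lies in strictly smaller Kazhdan filtration; and inductively adds corrections of strictly smaller Kazhdan filtration level to annihilate the $\text{ad}\,\mathfrak{n}$-image. Because the Kazhdan filtration is bounded below on each $\text{ad}\,\mathfrak{p}$-submodule and is strictly decreased at every step, the iteration terminates and yields $\Theta_\alpha \in W^{\text{fin}}(\ggg, e)$ of Kazhdan degree $j_\alpha + 2$. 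The remainder of Suh's argument --- that ordered monomials in the $\Theta_\alpha$ form a $\bbc$-basis of $W^{\text{fin}}(\ggg, e)$ and that the Kazhdan-symbol map extends to the asserted isomorphism with $S(\ggg^e)$ --- then transfers without essential change, in parallel with the treatment of $W_\chi'$ already carried out in Theorem \ref{PBWC} and Corollary \ref{isogr}.

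I expect the main obstacle to be bookkeeping rather than anything conceptually new. Since the Kazhdan and $p$-degrees of a monomial $w_1 \cdots w_c$ differ by the length $c$, a Suh-style generator whose corrections are small in $p$-degree need not be Kazhdan-homogeneous of the correct degree --- for instance, a generator constructed via Suh's $p$-filtration procedure may admit correction terms of strictly larger Kazhdan degree, so these generators cannot simply be re-used. The lifting procedure must therefore be re-executed from scratch with Kazhdan filtration as its governing invariant, and at each inductive step one must verify that the new correction has strictly smaller Kazhdan filtration level. Because Suh's induction depends only on the abstract property that $\text{ad}\,\mathfrak{n}$ strictly lowers filtration --- a property enjoyed equally by both gradings --- this re-execution is procedural rather than substantive, matching the author's assurance in the remark preceding the proposition.
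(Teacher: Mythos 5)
Your proposal matches the paper's own treatment: the paper simply observes (in the remark preceding the proposition) that Suh's arguments for the $p$-grading go through verbatim under the Kazhdan grading, which is exactly the substitution you carry out, and your verification that $\text{ad}(x)$ for $x\in\ggg(k)$ shifts Kazhdan degree by $k\leqslant -1$ on $\mathfrak{n}$ is the correct key point. If anything, your discussion of why the $p$-graded generators cannot be reused and the lifting must be re-run is more careful than what the paper records.
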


As an immediate corollary of Corollary~\ref{isogr} and Proposition~\ref{p-kazhdan}, we have

\begin{prop}\label{p-kazhdan2}
The graded algebra of finite $W$-superalgebra $\text{gr}\,(W_\chi')$ is isomorphism to the graded algebra of the quantum finite $W$-superalgebras $\text{gr}\,(W^{\text{fin}}(\ggg,e))$ under Kazhdan grading.
\end{prop}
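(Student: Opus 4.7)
The statement is essentially a transitivity assertion: both $\mathrm{gr}\,(W_\chi')$ and $\mathrm{gr}\,(W^{\mathrm{fin}}(\ggg,e))$ have already been identified with $S(\ggg^e)$ under the Kazhdan grading, so the plan is simply to compose these two identifications and verify that the composite is a graded $\bbc$-algebra isomorphism rather than merely a linear one.

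First, I would invoke Corollary \ref{isogr}, which yields a graded $\bbc$-algebra isomorphism $\Phi\colon \mathrm{gr}\,(W_\chi') \xrightarrow{\sim} S(\ggg^e)$. This isomorphism is built from the PBW generators $\Theta_1,\ldots,\Theta_{l+q}$ of Theorem \ref{PBWC}, whose principal symbols (with respect to the Kazhdan filtration) are the chosen basis vectors $Y_k$ of $\ggg^e$, together with the fact that the monomials in the $\Theta_k$ form a free basis. Next, I would invoke Proposition \ref{p-kazhdan}, which supplies a graded $\bbc$-algebra isomorphism $\Psi\colon \mathrm{gr}\,(W^{\mathrm{fin}}(\ggg,e)) \xrightarrow{\sim} S(\ggg^e)$; this was obtained from Suh's generators $\{v_\alpha + a_\alpha\}_{\alpha\in J}$, whose principal symbols are again a basis of $\ggg^e$. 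The composition $\Psi^{-1}\circ\Phi$ is then the desired isomorphism.

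The only point that merits explicit comment is that both $\Phi$ and $\Psi$ are homogeneous of degree zero in the same grading, namely the Kazhdan grading; this is automatic for $\Phi$ from the construction in \S\ref{PBWCC}, and for $\Psi$ it is exactly the content of Proposition \ref{p-kazhdan} (as opposed to its $p$-graded forerunner Proposition \ref{Suh}). Given this, nothing further is required: the composite is a bijective $\bbc$-algebra map preserving the grading, hence an isomorphism of graded superalgebras.

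There is really no hard step here; the substantive work has been done upstream, first in establishing the PBW theorem for $W_\chi'$ and then in transferring Suh's $p$-graded description of $W^{\mathrm{fin}}(\ggg,e)$ to the Kazhdan grading. The proposition should therefore be presented as a one-line corollary, with the remark that it sets the stage for the identification $W_\chi' \cong W^{\mathrm{fin}}(\ggg,e)$ that is promised in Proposition \ref{important} and used throughout the next section to import Suh's explicit generators into the refined $W$-superalgebra framework.
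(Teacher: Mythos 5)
Your proposal is correct and matches the paper exactly: the paper derives Proposition \ref{p-kazhdan2} as an immediate corollary of Corollary \ref{isogr} and Proposition \ref{p-kazhdan}, composing the two graded isomorphisms onto $S(\ggg^e)$. Your additional remark that both identifications are homogeneous with respect to the same (Kazhdan) grading is the only point worth making explicit, and you have made it.
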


As $\ggg^e$ is finite-dimensional as a vector space, it is readily from Proposition \ref{p-kazhdan2} that
\begin{theorem}\label{important}
There is an isomorphism between Kazhdan filtered algebras
$$W_\chi'\cong W^{\text{fin}}(\ggg,e).$$
\end{theorem}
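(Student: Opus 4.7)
My plan is to establish the filtered isomorphism by combining the PBW structures on both sides with the graded algebra isomorphism of Proposition~\ref{p-kazhdan2}. Fix a homogeneous basis $\{Y_\alpha\}_{\alpha \in J}$ of $\ggg^e$ with $Y_\alpha \in \ggg^e(j_\alpha)$. By Theorem~\ref{PBWC}(1), there exist generators $\{\Theta_\alpha\}$ of $W_\chi'$ with $\Theta_\alpha \in \text{F}_{j_\alpha+2}W_\chi'$ and principal Kazhdan symbol $Y_\alpha$; by \cite[Proposition 4.7]{suh} together with Proposition~\ref{p-kazhdan}, the quantum finite $W$-superalgebra $W^{\text{fin}}(\ggg,e)$ carries analogous generators $\{\Theta_\alpha^{\text{fin}}\}$ with the same principal symbols in Kazhdan grading. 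The ordered monomials in each family form a $\bbc$-basis of the respective algebra.

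The strategy is to define a map $\varphi: W_\chi' \to W^{\text{fin}}(\ggg,e)$ by $\varphi(\Theta_\alpha) = \Theta_\alpha^{\text{fin}}$, extended as a $\bbc$-algebra homomorphism, and then argue it is a filtered isomorphism. Once $\varphi$ is shown to be a well-defined filtered algebra homomorphism whose associated graded $\text{gr}(\varphi)$ agrees with the isomorphism from Proposition~\ref{p-kazhdan2}, the following standard argument closes the proof: since $\ggg^e$ is finite-dimensional and the Kazhdan grading is strictly positive on $\ggg^e$ (indeed $j_\alpha+2 \geq 1$), each filtration piece $\text{F}_n$ on both sides is finite-dimensional, and a dimension count at each Kazhdan degree upgrades the graded isomorphism $\text{gr}(\varphi)$ to a filtered isomorphism $\varphi$.

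The main obstacle is verifying that $\varphi$ is a well-defined algebra homomorphism, i.e., that the commutator super-polynomial relations $[\Theta_\alpha, \Theta_\beta] = F_{\alpha\beta}(\Theta_\bullet)$ from Theorem~\ref{PBWC}(3) match their counterparts on the $W^{\text{fin}}$-side after the generator swap. Their leading Kazhdan components do agree (both encode the bracket on $\ggg^e \subseteq S(\ggg^e)$, matched via Proposition~\ref{p-kazhdan2}), so discrepancies can only appear in strictly lower Kazhdan degree. I would address this by inductively modifying the generators $\Theta_\alpha^{\text{fin}}$ via lower-Kazhdan-degree corrections---such corrections preserve both the principal symbols and the PBW property---so as to align the two families of relations term by term. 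Finite-dimensionality of each filtration piece ensures the induction terminates at each degree, producing the desired filtered isomorphism $\varphi$.
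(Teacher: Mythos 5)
Your overall skeleton mirrors the paper's (very terse) argument: both sides have associated graded algebra $S(\ggg^e)$ in the Kazhdan grading (Corollary \ref{isogr} and Proposition \ref{p-kazhdan}), and finite-dimensionality of $\ggg^e$ is invoked to upgrade the graded statement to a filtered one. The genuine gap in your proposal is the step where you claim the lower-Kazhdan-degree discrepancies between the two families of commutator relations can always be removed "by inductively modifying the generators $\Theta^{\text{fin}}_\alpha$ via lower-Kazhdan-degree corrections." This is not justified, and it is false as a general principle: an isomorphism of associated graded algebras does not yield an isomorphism of filtered algebras. The standard counterexample is the Weyl algebra versus the polynomial algebra under the Bernstein filtration --- both have associated graded $\bbc[x,y]$, the discrepancy $[\hat x,\hat \partial]=1$ versus $[x,\partial]=0$ lives in strictly lower filtration degree, and yet no correction of the generators by lower-order terms can absorb it. In the present situation both algebras are filtered quantizations of the same graded (Poisson) algebra $S(\ggg^e)$, but filtered quantizations of a fixed graded Poisson algebra are in general not unique, so your induction can meet a nonvanishing obstruction that you have not ruled out.

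What is actually needed --- and what makes the theorem true --- is an honest filtered algebra homomorphism between $W^{\text{fin}}(\ggg,e)$ and $W_\chi'$ constructed from the definitions rather than from a choice of generators: since $\mmm\subseteq\mmm'\subseteq\mathfrak{n}$, one has $I_\chi\subseteq I^{\text{fin}}$, and a Gan--Ginzburg-style comparison of the reductions attached to the isotropic subspaces $\mathfrak{l}=0$ and $\mathfrak{l}$ Lagrangian (exactly the comparison alluded to in the Remark following Theorem \ref{important}) produces a canonical filtered map whose associated graded is the identity on $S(\ggg^e)$. Only once such a map exists does your concluding dimension count on the finite-dimensional filtration pieces legitimately upgrade the graded isomorphism to a filtered one. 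To be fair, the paper's own one-line proof suppresses this same point; but your write-up makes the missing step explicit and then asserts, without argument, precisely the claim that fails in general, so as it stands the proposal does not constitute a proof.
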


In virtue of Theorem \ref{important}, in the following we will take the quantum finite $W$-superalgebra $W^{\text{fin}}(\ggg,e)$ in Definition \ref{quafi} as the refined $W$-superalgebra $W_\chi'$ introduced in Definition \ref{rewcc} , which will cause no confusion.

\begin{rem}
In fact, we can generalize the definition of refined $W$-superalgebras as follows.
Fix an isotropic subspace $\mathfrak{l}$ of $\ggg(-1)$ with respect to $\langle\cdot,\cdot\rangle$, and let $\mathfrak{l}'=\{x\in\ggg(-1)\mid\langle x,\mathfrak{l}\rangle=0\}$. Obviously we have $\mathfrak{l}\subseteq \mathfrak{l}'$. Let $\mathfrak{m}_\mathfrak{l}$ and $\mathfrak{m}'_\mathfrak{l}$ be nilpotent subalgebras of $\ggg$ defined by
$${\mmm}_\mathfrak{l}=\bigoplus_{i\leqslant -2}{\ggg}(i)\oplus \mathfrak{l},\qquad {\mmm}'_{\mathfrak{l}}=\bigoplus_{i\leqslant -2}{\ggg}(i)\oplus \mathfrak{l}',$$ then we have ${\mmm}_\mathfrak{l}\subseteq{\mmm}'_{\mathfrak{l}}$.

The linear function $\chi\in\ggg^*$ restricts to a character on ${\mmm}_\mathfrak{l}$, and denote by $\mathbb{C}_\mathfrak{l}$ the corresponding $1$-dimensional representation of ${\mmm}_\mathfrak{l}$. Define the generalized Gelfand-Graev module associated with $\mathfrak{l}$ by $Q_\mathfrak{l}=U({\ggg})\otimes_{U({\mmm}_\mathfrak{l})}\mathbb{C}_\mathfrak{l}=U({\ggg})/I_\mathfrak{l}$, where $I_\mathfrak{l}$ is the ${\bbz}_2$-graded ideal of $U({\ggg})$ generated by all $x-\chi(x)$ with $x\in\mathfrak{m}_\mathfrak{l}$.

Define the $W$-superalgebra associated with isotropic subspace $\mathfrak{l}$ by
$$W_\mathfrak{l}:=(U({\ggg})/I_\mathfrak{l})^{\text{ad}\,{\mmm}'_\mathfrak{l}}\cong\{\bar x\in U({\ggg})/I_\mathfrak{l}\mid
[a,x]\in I_\mathfrak{l},~\forall a\in{\mmm}'_{\mathfrak{l}}\},$$ where $\bar x$ is the coset of $x\in U(\ggg)$, and the multiplication is given by $\bar x_1\cdot\bar x_2=\overline{x_1x_2}$ for $\bar x_1\, \bar x_2\in W_\mathfrak{l}$.

It is easy to observe that the refined $W$-superalgebra $W'_\chi$ in Definition \ref{rewcc} is the case with $\mathfrak{l}$ being chosen as a Lagrangian subspace of $\ggg(-1)$, and the algebra $W^{\text{fin}}(\ggg,e)$ in Definition \ref{quafi}
corresponds to the case with $\mathfrak{l}=0$.

Recall that in \cite[Theorem 4.1]{GG} Gan-Ginzburg showed that the definition of finite $W$-algebras is independent of the choice of an isotropic subspaces $\mathfrak{l}\subseteq\ggg'(-1)$ associated with a complex semisimple Lie algebra $\ggg'$.
For the super case, one may wonder whether this conclusion is also valid for the algebra $W_\mathfrak{l}$. In fact, for the case when $\sfr=\dim\ggg(-1)_{\bar{1}}$ is even,
Zhao has proved in \cite[Remark 3.11]{Z2} that the definition of the $W$-superalgebra $W_\mathfrak{l}$ is indeed independent of the choice of isotropic subspaces. For the case with $\sfr$ being odd, one needs to check whether the discussions in \cite[Lemma 3.9, Propostion 3.10, Theorem 4.6]{suh} go through for the $\mathbb{C}$-algebra $W_\mathfrak{l}$. If they are true, then Theorem \ref{important} follows just as a corollary.
\end{rem}

\section{The structure theory of minimal $W$-superalgebras}\label{structure}
This section is the main part of the paper. A refined $W$-superalgebra $W_\chi'$ will be called a minimal one if it is associated with a minimal nilpotent element in $\ggg$. In this section we will describe the explicit expression of the generators of minimal $W$-superalgebras over $\mathbb{C}$, and also the commutators between them. These generators and their relations completely determine the structure of minimal $W$-superalgebras. In virtue of these results, we will show that the dimensional lower bounds for the modular representations of reduced enveloping of basic Lie superalgebras associated with minimal nilpotent elements are accessible, completing the proof of Theorem \ref{intromainminnimalf}.

In \cite[Proposition 5.3]{suh} Suh has introduced the free generators of $W_\chi'$, and also the commutators between these generators in \cite[Proposition 5.4]{suh}. However, there are some errors in Suh's proof, thus also the second equation of (5.12) and the last equation of (5.17) in the results obtained by Suh. Moreover, the proof there is too brief to get the point. In order to determine the minimal dimension for the representations of reduced enveloping algebra $U_\chi(\ggg_{\mathds{k}})$, we must give an accurate description on the construction of corresponding refined $W$-superalgebra $W_\chi'$. In this section we will correct these errors and give the detailed calculation, mainly following Premet's strategy on finite $W$-algebras \cite[\S4]{P3}, with a few modifications.

This section is divided into two parts. The first part will be devoted to the general situation, where we consider refined $W$-superalgebras associated with arbitrary nilpotent elements, and the minimal $W$-superalgebras case will be dealt with in the second part.

Throughout this section, we will take $W^{\text{fin}}(\ggg,e)$ in Definition \ref{quafi} as the refined $W$-superalgebra $W_\chi'$ by Theorem \ref{important}.

\subsection{The case of arbitrary nilpotent elements}
In this part we will give the explicit formulae for the generators of lower Kazhdan degree for refined $W$-superalgebras associated with arbitrary nilpotent elements, and also some commutators between them.
\subsubsection{}\label{general}
Let ${\ggg}$ be a basic Lie superalgebra over ${\bbc}$ and $\mathfrak{h}$ be a typical Cartan subalgebra of ${\ggg}$. Let $\Phi$ be a root system of ${\ggg}$ relative to $\mathfrak{h}$ whose simple root system $\Delta=\{\alpha_1,\cdots,\alpha_l\}$ is distinguished. Let $\{e,h,f\}$ be an $\mathfrak{sl}_2$-triple in ${\ggg}$ such that
$$[e,f]=h\in\mathfrak{h},\quad[h,e]=2e,\quad[h,f]=-2f.$$
Normalize the invariant bilinear form $(\cdot,\cdot)$ on $\ggg$ by the condition $(e,f)=1$. This entails $(h,h)=2$.

Recall that there exists a symplectic (resp. symmetric) bilinear form $\langle\cdot,\cdot\rangle$ on the ${\bbz}_2$-graded subspace ${\ggg}(-1)_{\bar{0}}$ (resp. ${\ggg}(-1)_{\bar{1}}$) given by $\langle x,y\rangle=(e,[x,y])=\chi([x,y])$ for all $x,y\in{\ggg}(-1)_{\bar0}~(\text{resp.}\,x,y\in{\ggg}(-1)_{\bar1})$. Set $s=\text{dim}\,\ggg(-1)_{\bar0}$ (note that $s$ is an even number), and $\sfr=\text{dim}\,\ggg(-1)_{\bar1}$. In \S\ref{0.1.1} we choose $\mathbb{Z}_2$-homogenenous bases $\{u_1,\cdots,u_{s}\}$ of ${\ggg}(-1)_{\bar0}$ and $\{v_1,\cdots,v_\sfr\}$ of ${\ggg}(-1)_{\bar1}$  contained in ${\ggg}$ such that $\langle u_i, u_j\rangle =i^*\delta_{i+j,s+1}$ for $1\leqslant i,j\leqslant s$, where $i^*=\left\{\begin{array}{ll}-1&\text{if}~1\leqslant i\leqslant \frac{s}{2};\\ 1&\text{if}~\frac{s}{2}+1\leqslant i\leqslant s\end{array}\right.$, and $\langle v_i,v_j\rangle=\delta_{i+j,\sfr+1}$ for $1\leqslant i,j\leqslant \sfr$.
Set $z_\alpha:=u_\alpha$ for $1\leqslant\alpha\leqslant s$, and $z_{\alpha+s}:=v_\alpha$ for $1\leqslant\alpha\leqslant\sfr$. Let $\{z_\alpha\mid\alpha\in S(-1)\}$ denote the union of $\{z_\alpha\mid\alpha\in S(-1)_{\bar0}\}$ and $\{z_{\alpha+s}\mid\alpha\in S(-1)_{\bar1}\}$, which is a base of ${\ggg}(-1)$. Denote by $A_e$ the associative algebra generated by $z_\alpha$ with $\alpha\in S(-1)$ subject to the relations given above.

Set $z_\alpha^*:=\alpha^\natural z_{s+1-\alpha}$ for $1\leqslant\alpha\leqslant s$, where $\alpha^\natural=\left\{\begin{array}{ll}1&\text{if}~1\leqslant \alpha\leqslant \frac{s}{2};\\ -1&\text{if}~\frac{s}{2}+1\leqslant \alpha\leqslant s\end{array}\right.$, and $z_{\alpha+s}^*:=z_{\sfr+1-\alpha+s}$ for $1\leqslant\alpha\leqslant \sfr$. Then $\{z_\alpha^*\mid\alpha\in S(-1)\}$ is dual base of $\{z_\alpha\mid\alpha\in S(-1)\}$ such that $\langle z^*_\alpha,z_\beta\rangle =\delta_{\alpha,\beta}$ for $\alpha,\beta\in S(-1)$. From the assumption above we can conclude that $\langle z_\alpha,z_\beta\rangle\neq0$ if and only if $z_\alpha=\mathbb{C}z_\beta^*$ and $z_\beta=\mathbb{C}z_\alpha^*$, and $\langle z_\alpha,z_\beta\rangle=0$ otherwise. Moreover, $\langle z_\alpha,z_\beta\rangle=1$ if and only if $z_\alpha=z_\beta^*$ and $z_\beta=-(-1)^{|\alpha|}z_\alpha^*$. The same conclusion also establishes for the pair $(z_\alpha^*,z_\beta^*)$.  We can further assume that $\{z_\alpha\mid\alpha\in S(-1)\}$ and $\{z_\alpha^*\mid\alpha\in S(-1)\}$ are root vectors for $\mathfrak{h}$.

In the following discussion, for any $\alpha\in S(-1)$ we will denote the parity of $z_\alpha$ by $|\alpha|$ for simplicity. It is straightforward that $z_\alpha$ and $z_\alpha^*$ have the same parity.
\subsubsection{}
Let $Y_1,\cdots,Y_{l+q}$ be a basis of $\ggg^e$ as defined in \eqref{Y_i}. Recall that in Theorem \ref{PBWC} we choose $\Theta_{k}$ for $1\leqslant k\leqslant l+q$ as the generators of the refined $W$-superalgebra, with $Y_k$ being the leading term. In what follows we will introduce explicit formulae for the generators $\Theta_k$ of lower Kazhdan degree for a refined $W$-superalgebra $W_\chi'$.

For the case with $v\in\ggg^e(0)$, Suh introduced in \cite[Proposition 5.3]{suh} the corresponding generators $\Theta_v$ of $W_\chi'$ as follows:
\begin{prop}(\cite{suh})\label{v}
If $v\in\ggg^e(0)$ then it can be assumed that
\begin{equation*}
\Theta_v=(v-\frac{1}{2}\sum\limits_{\alpha\in S(-1)}z_\alpha[z_\alpha^*,v])\otimes1_\chi.
\end{equation*}
\end{prop}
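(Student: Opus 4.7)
\smallskip

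\textbf{Proof proposal.} The goal is to show that the element
$\tilde\Theta_v := v - \tfrac{1}{2}\sum_{\alpha\in S(-1)} z_\alpha[z_\alpha^*,v] \in U(\ggg)$,
when viewed in $Q^{\text{fin}}_\chi$, represents an element of
$W^{\text{fin}}(\ggg,e)=(Q^{\text{fin}}_\chi)^{\text{ad}\,\mathfrak{n}}$ whose Kazhdan leading term is $v$.
By Theorem~\ref{important} this will exhibit a valid choice of the generator $\Theta_v$ attached to $v\in\ggg^e(0)$ in the sense of Theorem~\ref{PBWC}(1).

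First I would handle the easy half, the leading term. Since $v\in\ggg(0)$ has Kazhdan degree $2$, while each summand $z_\alpha[z_\alpha^*,v]$ lies in $\ggg(-1)\cdot\ggg(-1)$ and so also has Kazhdan degree $1+1=2$, the expression $\tilde\Theta_v\otimes 1_\chi$ lies in $\text{F}_2 Q^{\text{fin}}_\chi$. After using PBW to rewrite the correction term with all $\mathfrak{n}$-factors on the right and reducing modulo $I^{\text{fin}}$, one sees that the unique monomial involving only $\ggg^e$-factors in the highest Kazhdan degree is $v\otimes 1_\chi$; hence the leading symbol of $\Theta_v$ is $v$, as required by Theorem~\ref{PBWC}(1).

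The bulk of the work is verifying $[a,\tilde\Theta_v]\in I^{\text{fin}}$ for every homogeneous $a\in\ggg(i)$ with $i\leqslant-1$.
For $i\leqslant-2$ the argument is a routine degree count: the bracket $[a,v]\in\ggg(i)$ and each piece of the Leibniz expansion of $[a,z_\alpha[z_\alpha^*,v]]$ lands either in $\mathfrak{n}$ or in a product of $\mathfrak{n}$-elements with scalars, all of which collapse into $I^{\text{fin}}$ because $\chi$ is supported in degree $-2$.
The essential case is $a\in\ggg(-1)$. Here Leibniz gives
\begin{equation*}
[a,\tilde\Theta_v]=[a,v]-\tfrac{1}{2}\sum_{\alpha}[a,z_\alpha][z_\alpha^*,v]-\tfrac{1}{2}\sum_{\alpha}(-1)^{|a||\alpha|}z_\alpha[a,[z_\alpha^*,v]].
\end{equation*}
Since $[a,z_\alpha]\in\ggg(-2)$, modulo $I^{\text{fin}}$ it reduces to the scalar $\chi([a,z_\alpha])=\langle a,z_\alpha\rangle$, and the duality relation $\sum_{\alpha}\langle a,z_\alpha\rangle z_\alpha^*=a$ (with appropriate parity-dependent signs) collapses the first sum to $[a,v]$. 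For the second sum I would apply the Jacobi identity,
$[a,[z_\alpha^*,v]]=[[a,z_\alpha^*],v]+(-1)^{|a||\alpha|}[z_\alpha^*,[a,v]]$,
absorb the piece $[[a,z_\alpha^*],v]\in\ggg(-2)$ into $I^{\text{fin}}$, and then use the same dual-basis completeness to show that the remaining contribution cancels $[a,v]$ and the leftover half from the first sum.

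The main technical obstacle will be bookkeeping the signs. One must track the parity factors $(-1)^{|a||\alpha|}$, the asymmetric rescaling $z_\alpha^*=\alpha^\natural z_{s+1-\alpha}$ versus $z_{\alpha+s}^*=z_{\sfr+1-\alpha+s}$, and the fact that $\langle\cdot,\cdot\rangle$ is symplectic on $\ggg(-1)_{\bar 0}$ but symmetric on $\ggg(-1)_{\bar 1}$. Once these conventions are fixed, the cancellation between the $\tfrac12\sum[a,z_\alpha][z_\alpha^*,v]$ contribution and the $\tfrac12\sum z_\alpha[z_\alpha^*,[a,v]]$ contribution is the clean heart of the argument, and everything else is a degree-and-parity count.
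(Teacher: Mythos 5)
Your overall strategy is the right one, and it is exactly how the paper treats the companion statement Proposition \ref{w} (for Proposition \ref{v} itself the paper offers no proof, simply citing \cite[Proposition 5.3]{suh}): one verifies $[a,\Theta_v]\otimes 1_\chi=0$ for $a\in\ggg(-1)$ by the super Leibniz rule, the Jacobi identity and the dual-basis completeness relation, and the bookkeeping you describe in the degree $-1$ case does close up, the three contributions $[a,v]$, $-\tfrac12[a,v]$ and $-\tfrac12[a,v]$ cancelling as you indicate.

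There is, however, one step whose justification as written would fail. You claim that the pieces landing in $\ggg(-2)$ --- the term $[[a,z_\alpha^*],v]$ produced by Jacobi, and $[a,v]$ itself when $a\in\ggg(i)$ with $i\leqslant -2$ --- can be ``absorbed into $I^{\text{fin}}$'' or ``collapse into $I^{\text{fin}}$ because $\chi$ is supported in degree $-2$.'' Elements of $\ggg(-2)$ do \emph{not} lie in $I^{\text{fin}}$: only $x-\chi(x)$ does, and $\chi$ is precisely \emph{nonzero} on $\ggg(-2)$ (indeed $f\otimes 1_\chi=1_\chi$, which is what makes the analogous computation for $w\in\ggg^e(1)$ nontrivial). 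Such a term therefore reduces to its $\chi$-value, and the vanishing you need is the identity $\chi([y,v])=(e,[y,v])=-(-1)^{|y||v|}([e,v],y)=0$, which holds \emph{because $v$ centralizes $e$}; this is the only place the hypothesis $v\in\ggg^e(0)$ rather than $v\in\ggg(0)$ enters, and it is invisible in your sketch. With that observation inserted --- together with the small remark that $[a,z_\alpha]$, sitting to the \emph{left} of $[z_\alpha^*,v]$, must first be commuted past it (the resulting bracket lies in degree $\leqslant -3$, where $\chi$ genuinely vanishes) before being replaced by the scalar $\langle a,z_\alpha\rangle$ --- your argument is complete and agrees with the method of the paper.
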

For the generators of $W_\chi'$ introduced in Proposition \ref{v}, Suh introduced the commutators between them in \cite[Proposition 5.4]{suh}, i.e.,
\begin{prop}(\cite{suh})\label{v1v2}
Let $v_1, v_2$ be elements in $\ggg^e(0)$. The commutator between the generators introduced in Proposition \ref{v} is
\begin{equation}
[\Theta_{v_1},\Theta_{v_2}]=\Theta_{[v_1,v_2]}.
\end{equation}
\end{prop}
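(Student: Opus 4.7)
The plan is to verify the identity by direct computation in $U(\ggg)$, reducing modulo the defining ideal only at the end. Write $\Theta_v = T_v \otimes 1_\chi$ with $T_v := v - \tfrac12 \sum_{\alpha \in S(-1)} z_\alpha [z_\alpha^*, v] \in U(\ggg)$, so the claim reduces to showing $[T_{v_1}, T_{v_2}] \equiv T_{[v_1, v_2]}$ modulo the ideal. Two structural facts will be used throughout: $[z_\alpha^*, v_i] \in \ggg(-1)$ (since $v_i \in \ggg(0)$ and $z_\alpha^* \in \ggg(-1)$), and $|z_\alpha| = |z_\alpha^*|$, so products of Koszul signs of the form $(-1)^{|v_i||z_\alpha|+|v_i||z_\alpha^*|}$ collapse to $+1$.

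First I would expand $[T_{v_1}, T_{v_2}]$ by bilinearity into four blocks: the naive bracket $[v_1, v_2]$; two cross blocks $-\tfrac12 \sum_\beta [v_1, z_\beta[z_\beta^*, v_2]]$ and $-\tfrac12 \sum_\alpha [z_\alpha[z_\alpha^*, v_1], v_2]$; and a quadratic-in-$z$ block $\tfrac14 \sum_{\alpha, \beta} [z_\alpha[z_\alpha^*, v_1],\, z_\beta[z_\beta^*, v_2]]$. Applying super-Leibniz inside the first cross block yields $\sum_\beta [v_1, z_\beta][z_\beta^*, v_2] + (-1)^{|v_1||z_\beta|} z_\beta [v_1, [z_\beta^*, v_2]]$, and super-Jacobi on the inner bracket extracts the crucial piece $(-1)^{|v_1||z_\beta|+|v_1||z_\beta^*|}\, z_\beta[z_\beta^*, [v_1, v_2]] = z_\beta[z_\beta^*, [v_1, v_2]]$. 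Summed over $\beta$ and combined with $[v_1, v_2]$, this reconstitutes exactly $T_{[v_1, v_2]}$; everything else is a ``leftover'' that must be shown to vanish modulo the ideal.

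The leftover consists of products of two $\ggg(-1)$-elements (some sandwiched by an extra $z$): terms like $[v_1, z_\beta]\cdot[z_\beta^*, v_2]$, $z_\beta \cdot [[v_1, z_\beta^*], v_2]$ and their $v_1\leftrightarrow v_2$ counterparts, plus the full quadratic block. The plan is to cancel these using (i) the dual-basis expansion $[x, a] = \sum_\gamma \langle [x, a], z_\gamma^*\rangle z_\gamma$ for $x\in\ggg(0), a\in\ggg(-1)$; (ii) supersymmetric invariance $\langle [x, a], b\rangle + (-1)^{|x||a|}\langle a, [x, b]\rangle = 0$; and (iii) the reduction $[a,b]\otimes 1_\chi = \chi([a,b])\otimes 1_\chi = \langle a, b\rangle\otimes 1_\chi$ for $a,b\in\ggg(-1)$. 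The main obstacle is the systematic Koszul-sign bookkeeping and matching the numerous quadratic leftovers into cancelling pairs; one expects the two cross-block leftovers to cancel each other after applying the dual-basis expansion, while the quadratic block contributes only $\chi$-scalars that combine to zero. A more abstract backup is to invoke Theorem \ref{PBWC}(3) directly: since $\ggg^e = \bigoplus_{i\geq 0}\ggg^e(i)$ by $\mathfrak{sl}_2$-theory, every generator $\Theta_k$ has Kazhdan degree $\geq 2$, so any quadratic polynomial in the $\Theta_k$'s has Kazhdan degree $\geq 4$; this forces the nonlinear correction $q_{ij}$ of Theorem \ref{PBWC}(3) to vanish at the relevant degree $m_i+m_j+2 = 2$, and a residual scalar discrepancy in $F_1 W'_\chi = \mathbb{C}$ is excluded by Kazhdan homogeneity of both sides.
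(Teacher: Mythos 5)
The paper itself does not prove this proposition (it is quoted from \cite{suh}), so your attempt has to be judged on its own. Your reduction to $[T_{v_1},T_{v_2}]\equiv T_{[v_1,v_2]}\pmod{I_\chi}$ and the extraction of the correction term $-\tfrac12\sum_\beta z_\beta[z_\beta^*,[v_1,v_2]]$ via super-Leibniz and super-Jacobi are correct. But your description of how the leftovers die is not. Only the $z_\beta[[z_\beta^*,v_1],v_2]$-type pieces of the two cross blocks cancel each other; the remaining pieces $-\tfrac12\sum_\beta[v_1,z_\beta][z_\beta^*,v_2]$ and $-\tfrac12\sum_\alpha\pm[z_\alpha,v_2][z_\alpha^*,v_1]$ do \emph{not} cancel one another (already for an $\mathfrak{sl}_2$-pair $v_1=E$, $v_2=F$ acting on a $2$-dimensional symplectic $\ggg(-1)$ they differ by a nontrivial quadratic element plus a scalar). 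Moreover the quadratic block $\tfrac14\sum_{\alpha,\beta}[z_\alpha[z_\alpha^*,v_1],z_\beta[z_\beta^*,v_2]]$ is emphatically not ``only $\chi$-scalars'': a commutator of two quadratic elements of the Weyl--Clifford algebra on $\ggg(-1)$ is again quadratic, and these quadratic contributions are precisely what must absorb the two surviving cross-block pieces. After all quadratic terms cancel, what survives is a normal-ordering scalar, and proving that this scalar vanishes is the actual content of the proposition.

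Your backup via Theorem \ref{PBWC}(3) has the same blind spot. The estimate $[\text{F}_2W_\chi',\text{F}_2W_\chi']\subseteq \text{F}_2W_\chi'$ together with the PBW basis does legitimately give $[\Theta_{v_1},\Theta_{v_2}]=\Theta_{[v_1,v_2]}+c$ with $c\in\mathbb{C}$, which is a real simplification; but ``Kazhdan homogeneity'' does not kill $c$: the Kazhdan filtration is a filtration, not a grading, scalars lie in $\text{F}_0\subseteq\text{F}_2$, and Theorem \ref{PBWC}(3) is itself only stated modulo $\text{F}_{m_i+m_j+1}W_\chi'=\mathbb{C}$, so the scalar ambiguity is built into the very statement you invoke. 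The automorphism $\sigma$ that the paper uses to kill the analogous discrepancy in Proposition \ref{vw} fixes everything in sight here, and weight considerations only dispose of the case where the weights of $v_1$ and $v_2$ do not sum to zero. That such constants can genuinely be nonzero in this circle of computations is exactly what Proposition \ref{1commutator} shows: the analogous $c_0$ is nonzero in general (e.g.\ $c_0=1$ for $\mathfrak{sl}(2|2)/\mathbb{C}I$) and costs the paper all of \S\ref{proof} to pin down. So the missing step is a concrete verification that the normal-ordering constant vanishes for $v_1,v_2\in\ggg^e(0)$ --- equivalently, that $v\mapsto -\tfrac12\sum_\alpha z_\alpha[z_\alpha^*,v]$ is an honest Lie superalgebra homomorphism of $\ggg(0)$ into the Weyl--Clifford algebra of $(\ggg(-1),\langle\cdot,\cdot\rangle)$ with no central correction --- and neither route in your proposal establishes this.
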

For the case with $w\in\ggg^e(1)$, Suh introduced the corresponding generators $\Theta_w$ of $W_\chi'$ in \cite[Proposition 5.3]{suh}. However, there are some errors in the proof there. Now we will put forward a new set of generators.
\begin{prop}\label{w}
If $w\in\ggg^e(1)$ then the generator $\Theta_w\in W_\chi'$ has the following property:
\begin{equation*}
\Theta_w=(w-\sum\limits_{\alpha\in S(-1)}z_\alpha[z_\alpha^*,w]+\frac{1}{3}(\sum\limits_{\alpha,\beta\in S(-1)}z_\alpha z_\beta[z_\beta^*,[z_\alpha^*,w]]-2[w,f]))\otimes1_\chi.
\end{equation*}
\end{prop}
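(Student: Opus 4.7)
The strategy is direct verification. Let $\tilde\Theta_w\in U(\ggg)$ denote the element appearing on the right-hand side of the displayed formula. By Theorem \ref{PBWC} combined with Theorem \ref{important}, a generator of $W_\chi'$ with Kazhdan-leading term $w$ exists and is uniquely determined modulo Kazhdan filtration strictly below $k_w+2=3$; every summand of $\tilde\Theta_w$ has Kazhdan degree at most $3$, with $w$ as the leading term, so once we establish $\tilde\Theta_w\otimes 1_\chi\in(Q^{\text{fin}}_\chi)^{\text{ad}\,\mathfrak{n}}$ we may take it as $\Theta_w$.

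To prove $\mathfrak{n}$-invariance it suffices to verify $[y,\tilde\Theta_w\otimes 1_\chi]=0$ in $Q^{\text{fin}}_\chi$ for each homogeneous $y\in\ggg(i)$ with $i\leq -1$. The cases $i\leq -2$ are automatic: every bracket produced by the super-Leibniz expansion lands in $\ggg(j)$ with $j\leq -3$ or is multiplied by a factor in $\ggg(-1)$ with vanishing $\chi$-value, so standardization modulo $I^{\text{fin}}$ yields zero. The essential calculation is $y\in\ggg(-1)$. Expand $[y,\tilde\Theta_w]$ via super-Leibniz and apply, modulo $I^{\text{fin}}$, the key reduction $[y,z_\alpha]\equiv\langle y,z_\alpha\rangle$ (because $[y,z_\alpha]\in\ggg(-2)$ and $\chi([y,z_\alpha])=\langle y,z_\alpha\rangle$). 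After straightening into normal form, the result splits into pieces organized by the total number of $\ggg(-1)$-factors appearing: a piece quadratic in the $z_\beta$'s, a piece cubic in the $z_\beta$'s, and a residual piece in $\ggg(0)$ together with a scalar contribution coming from $[y,[w,f]]\in\ggg(-2)$. Each piece must vanish independently in the PBW basis. The quadratic cancellation forces the coefficient $-1$ on $\sum_\alpha z_\alpha[z_\alpha^{*},w]$; the cubic cancellation, which uses the dual-basis completeness relation expressing $y$ in terms of the $z_\alpha^{*}$ and the super-Jacobi identity applied to $[z_\beta^{*},[z_\alpha^{*},w]]$, forces the coefficient $\frac{1}{3}$ on the triple term; the residue in $\ggg(0)$ then reduces to a scalar multiple of $[y,[w,f]]$ modulo $I^{\text{fin}}$, and the choice $-\frac{2}{3}$ as the coefficient of $[w,f]$ is precisely what is needed to annihilate it.

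The main obstacle will be the careful bookkeeping of supersigns in the Leibniz and Jacobi identities, combined with tracking the asymmetry between the symplectic form on $\ggg(-1)_{\bar 0}$ and the symmetric form on $\ggg(-1)_{\bar 1}$; this asymmetry enters every use of the dual basis $\{z_\alpha,z_\alpha^{*}\}$ and is the very source of the sign errors in \cite[Proposition 5.3]{suh} that the present paper is correcting. Once the signs are correctly tracked, the rational coefficients $\frac{1}{3}$ on the cubic term and $-\frac{2}{3}$ on $[w,f]$ are forced by the cancellation, and no further conceptual step is required.
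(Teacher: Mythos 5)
Your proposal takes essentially the same route as the paper: the paper likewise proves the claim by directly verifying $[z_\gamma^*,\Theta_w]=0$ for all $\gamma\in S(-1)$ via a super-Leibniz expansion together with the reduction of $\ggg(-2)$-factors through $\chi$, and the cancellation pattern it finds is exactly the one you predict — the cubic term yields three copies of $\sum_\alpha(-1)^{|\alpha||\gamma|}z_\alpha[z_\gamma^*,[z_\alpha^*,w]]$ (forcing the $\tfrac13$), one of which cancels the quadratic contribution, while the $-\tfrac23[w,f]$ term absorbs the residual $[z_\gamma^*,[w,f]]$. Your additional remarks on the $\ggg(i)$, $i\leq-2$, cases and on uniqueness modulo lower Kazhdan filtration are correct and are left implicit in the paper.
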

\begin{proof}
By the definition of $W_\chi'$, it is enough to show that $[z_\gamma^*,\Theta_w]=0$ for any $\gamma\in S(-1)$. First note that
\begin{equation}\label{wxyuv}
\begin{split}
&\sum\limits_{\alpha\in S(-1)}[z_\gamma^*,z_\alpha[z_\alpha^*,w]]\otimes1_\chi\\
=&\sum\limits_{\alpha\in S(-1)}[z^*_\gamma,z_\alpha][z_\alpha^*,w]\otimes1_\chi
+\sum\limits_{\alpha\in S(-1)}(-1)^{|\alpha||\gamma|}z_\alpha[z_\gamma^*,[z_\alpha^*,w]]\otimes1_\chi\\
=&[z_\gamma^*,w]\otimes1_\chi+[f,[z_\gamma^*,w]]\otimes1_\chi+\sum\limits_{\alpha\in S(-1)}(-1)^{|\alpha||\gamma|}z_\alpha[z_\gamma^*,[z_\alpha^*,w]]\otimes1_\chi\\
=&[z_\gamma^*,w]\otimes1_\chi-[z_\gamma^*,[w,f]]\otimes1_\chi+\sum\limits_{\alpha\in S(-1)}(-1)^{|\alpha||\gamma|}z_\alpha[z_\gamma^*,[z_\alpha^*,w]]\otimes1_\chi.
\end{split}
\end{equation}

Recall that $[z_\alpha^*,z_\beta]=\delta_{\alpha,\beta}f$ for all $\alpha,\beta\in S(-1)$, and $\chi(f)=(e,f)=1$, then we have
\begin{equation}\label{3.5}
\begin{split}
&\sum\limits_{\alpha,\beta\in S(-1)}[z_\gamma^*,z_\alpha z_\beta[z_\beta^*,[z_\alpha^*,w]]]\otimes1_\chi\\
=&\sum\limits_{\alpha,\beta\in S(-1)}[z_\gamma^*,z_\alpha]z_\beta[z_\beta^*,[z_\alpha^*,w]]\otimes1_\chi
+\sum\limits_{\alpha,\beta\in S(-1)}(-1)^{|\alpha||\gamma|}z_\alpha[z_\gamma^*,z_\beta][z_\beta^*,[z_\alpha^*,w]]\otimes1_\chi\\
&+\sum\limits_{\alpha,\beta\in S(-1)}(-1)^{(|\alpha|+|\beta|)|\gamma|}z_\alpha z_\beta[z_\gamma^*,[z_\beta^*,[z_\alpha^*,w]]]\otimes1_\chi\\
=&\sum\limits_{\beta\in S(-1)}z_\beta[z_\beta^*,[z_\gamma^*,w]]\otimes1_\chi+\sum\limits_{\alpha\in S(-1)}(-1)^{|\alpha||\gamma|}z_\alpha[z_\gamma^*,[z_\alpha^*,w]]\otimes1_\chi\\
&+\sum\limits_{\alpha,\beta\in S(-1)}(-1)^{(|\alpha|+|\beta|)|\gamma|}z_\alpha z_\beta[z_\gamma^*,[z_\beta^*,[z_\alpha^*,w]]]\otimes1_\chi.
\end{split}
\end{equation}
Since $\langle z_\beta^*,z_\gamma^*\rangle\neq0$ if and only if $z_\beta=z_\gamma^*$ and $z_\gamma=-(-1)^{|\beta|}z_\beta^*$, and $\langle z_\beta^*,z_\gamma^*\rangle=1$ in this case, then
\begin{equation}\label{3.6}
\begin{split}
&\sum\limits_{\beta\in S(-1)}z_\beta[z_\beta^*,[z_\gamma^*,w]]\otimes1_\chi\\=&\sum\limits_{\beta\in S(-1)}(z_\beta[[z_\beta^*,z_\gamma^*],w]+(-1)^{|\beta||\gamma|}z_\beta[z_\gamma^*,[z_\beta^*,w]])\otimes1_\chi\\
=&-z_\gamma^*[w,f]\otimes1_\chi+\sum\limits_{\beta\in S(-1)}(-1)^{|\beta||\gamma|}z_\beta[z_\gamma^*,[z_\beta^*,w]]\otimes1_\chi.
\end{split}
\end{equation}
Moreover, it is obvious that $z=\sum\limits_{\alpha\in S(-1)}[z_\alpha^*,z]z_\alpha$ for all $z\in \ggg(-1)$,
thus
\begin{equation}\label{3.7}
\begin{split}
&\sum\limits_{\alpha,\beta\in S(-1)}(-1)^{(|\alpha|+|\beta|)|\gamma|}z_\alpha z_\beta[z_\gamma^*,[z_\beta^*,[z_\alpha^*,w]]]\otimes1_\chi\\
=&\sum\limits_{\alpha,\beta\in S(-1)}(-1)^{(|\alpha|+|\beta|)|\gamma|}(z_\alpha z_\beta[[z_\gamma^*,z_\beta^*],[z_\alpha^*,w]]+(-1)^{|\beta||\gamma|}z_\alpha z_\beta[z_\beta^*,[z_\gamma^*,[z_\alpha^*,w]]])\otimes1_\chi\\
=&\sum\limits_{\alpha\in S(-1)}(-(-1)^{|\alpha||\gamma|}z_\alpha z_\gamma^*[f,[z_\alpha^*,w]]+(-1)^{|\alpha||\gamma|}z_\alpha[z_\gamma^*,[z_\alpha^*,w]])\otimes1_\chi\\
=&\sum\limits_{\alpha\in S(-1)}((-1)^{|\alpha||\gamma|}z_\alpha z_\gamma^*[z_\alpha^*,[w,f]]+(-1)^{|\alpha||\gamma|}z_\alpha[z_\gamma^*,[z_\alpha^*,w]])\otimes1_\chi\\
=&\sum\limits_{\alpha\in S(-1)}(-1)^{|\alpha||\gamma|}([z_\alpha, z_\gamma^*][z_\alpha^*,[w,f]]+(-1)^{|\alpha||\gamma|}z_\gamma^*z_\alpha[z_\alpha^*,[w,f]]+z_\alpha[z_\gamma^*,[z_\alpha^*,w]])\otimes1_\chi\\
=&-[z_\gamma^*,[w,f]]\otimes1_\chi+z_\gamma^*[w,f]\otimes1_\chi+\sum\limits_{\alpha\in S(-1)}(-1)^{|\alpha||\gamma|}z_\alpha[z_\gamma^*,[z_\alpha^*,w]]\otimes1_\chi.
\end{split}
\end{equation}

By \eqref{wxyuv}, \eqref{3.5}, \eqref{3.6} and \eqref{3.7} we conclude that $[z_\gamma^*,\Theta_w]=0$ for any $\gamma\in S(-1)$, completing the proof.
\end{proof}
\begin{rem}
It is worth noting that similar result as in Proposition \ref{w} has been obtained by Suh in \cite[Proposition 5.3]{suh}. However, by careful inspection one can find that there are some errors in the first equation of \cite[(5.13)]{suh} and also the equation \cite[(5.16)]{suh}. Thus there is an error in the last part of the second equation of (5.12) in \cite[Proposition 5.3]{suh}, which is a little different from the consequence we obtained in Proposition \ref{w}.
\end{rem}
\subsubsection{}
Let $v\in\ggg^e(0)$ and $w\in\ggg^e(1)$. To determine the commutator between $\Theta_{v}$ and $\Theta_{w}$ defined in Proposition \ref{v} and Proposition \ref{w} respectively, we need to introduce an automorphism on $W_\chi'$.

Similar to the Lie algebra case in \cite[\S2.1]{GG}, we can define a linear action of $\mathbb{C}^*$ on $\ggg$. Given $\mathfrak{sl}_2$-triple $e,h,f\in\ggg_{\bar0}$, consider
the Lie algebra homomorphism $\mathfrak{sl}_2(\mathbb{C})\rightarrow\ggg_{\bar0}$ defined by
$$\left(\begin{array}{cc} 0 & 1\\0 & 0\end{array}\right)\mapsto e,
\qquad \left(\begin{array}{@{\hspace{0pt}}c@{\hspace{8pt}}c@{\hspace{0pt}}} 1 & 0\\0 & -1\end{array}\right)\mapsto h,
\qquad \left(\begin{array}{cc} 0 & 0\\1 & 0\end{array}\right)\mapsto f.$$
This Lie algebra homomorphism exponentiates to a rational homomorphism $$\tilde{\gamma}:~SL_2\rightarrow G_{\text{ev}}\rightarrow G.$$ We put
$$\gamma:\mathbb{C}^*\rightarrow G,\qquad \gamma(t)=\tilde{\gamma}
\left(\begin{array}{@{\hspace{0pt}}c@{\hspace{8pt}}c@{\hspace{0pt}}} t & 0\\0 & t^{-1}\end{array}\right),
\qquad \forall\,t\in\mathbb{C}^*.$$

Define $\sigma:=\text{Ad}\gamma(-1)$, an element of order $\leqslant2$ in $\text{Ad}\,G_{\text{ev}}$. Since the grading $\ggg=\bigoplus_{i\in \mathbb{Z}}\ggg(i)$ is obtained under the action of $\text{ad}\,h$, then for $x\in\ggg(i)$ we have $\sigma(x)=(-1)^{|i|}x$. Obviously $\sigma$ acts on $U(\ggg)$ as algebra automorphism. As $\sigma$ preserves the left ideal $I_\chi$ of $U(\ggg)$, it then acts on $Q_\chi$. As $\sigma$ preserves $\mathfrak{n}$ too, it acts on $W_\chi'\cong Q_\chi^{\text{ad}\,\mathfrak{n}}$ as algebra automorphism. One can easily conclude that
\begin{equation}\label{edegree}
\sigma(x^\mathbf{a}y^\mathbf{b}u^\mathbf{c}v^\mathbf{d}\otimes1_\chi)=(-1)^{|(\mathbf{a},\mathbf{b},\mathbf{c},\mathbf{d})|_e}
x^\mathbf{a}y^\mathbf{b}u^\mathbf{c}v^\mathbf{d}\otimes1_\chi
\end{equation}
for all $(\mathbf{a},\mathbf{b},\mathbf{c},\mathbf{d})\in{\bbz}_+^m\times\Lambda'_n\times{\bbz}_+^{s}\times\Lambda'_{\sfr}$.

We continue to consider the elements $\Theta_{v}$ and $\Theta_{w}$ for $v\in\ggg^e(0)$ and $w\in\ggg^e(1)$. In fact, we have the following result.
\begin{prop}\label{vw}
Let $v$ be an element in $\ggg^e(0)$ and $w$ be an element in $\ggg^e(1)$. Then the commutator between $\Theta_{v}$ and $\Theta_{w}$ in Proposition \ref{v} and Proposition \ref{w} is
\begin{equation}
[\Theta_{v},\Theta_{w}]=\Theta_{[v,w]}.
\end{equation}
\end{prop}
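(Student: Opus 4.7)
The plan is to verify $[\Theta_v,\Theta_w]=\Theta_{[v,w]}$ by direct expansion, following the template of Premet's argument for finite $W$-algebras in \cite[Prop.~4.5]{P3}. First note that $[v,w]\in\ggg^e(1)$: the element $[v,w]$ lies in $\ggg(1)$ by the grading, and the super-Jacobi identity together with $[v,e]=[w,e]=0$ gives $[[v,w],e]=0$. Hence Proposition \ref{w} applied to $[v,w]$ supplies an unambiguous formula for $\Theta_{[v,w]}$. Since the leading Kazhdan-degree-$3$ part of $[\Theta_v,\Theta_w]$ is obviously $[v,w]\otimes 1_\chi$, matching the leading part of $\Theta_{[v,w]}$, the real work is to verify that the three families of subleading terms in $\Theta_{[v,w]}$ (the piece $-\sum_\alpha z_\alpha[z_\alpha^*,[v,w]]$, the piece $\tfrac{1}{3}\sum_{\alpha,\beta} z_\alpha z_\beta[z_\beta^*,[z_\alpha^*,[v,w]]]$, and the piece $-\tfrac{2}{3}[[v,w],f]$) match the subleading contributions to the commutator.

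To carry this out I would split $\Theta_v=(v+V_1)\otimes 1_\chi$ and $\Theta_w=(w+W_1+W_2+W_3)\otimes 1_\chi$ as in Propositions \ref{v} and \ref{w}, and expand the bracket into the nine pieces $[V_i,W_j]$. The simplifications used constantly are: (i) since $v\in\ggg^e\cap\ggg(0)$ lies in the trivial $\mathfrak{sl}_2$-summand of $\ggg$, $[v,h]=[v,f]=0$, so in particular $[v,[w,f]]=[[v,w],f]$; (ii) invariance of $(\cdot,\cdot)$ combined with $[v,e]=0$ yields the skew-invariance
\[
\langle [v,x],y\rangle+(-1)^{|v||x|}\langle x,[v,y]\rangle=0,\qquad x,y\in\ggg(-1),
\]
which translates into the dual-basis identity $\sum_\alpha[v,z_\alpha]\otimes z_\alpha^*=-\sum_\alpha(-1)^{|v||\alpha|}z_\alpha\otimes[v,z_\alpha^*]$; and (iii) any bracket $[z_\alpha,z_\beta^*]$ lies in $\ggg(-2)\subseteq\mmm$, so it collapses on $1_\chi$ to the scalar $\chi([z_\alpha,z_\beta^*])=\langle z_\alpha,z_\beta^*\rangle=\pm\delta_{\alpha,\beta}$. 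Combined with the completeness relation $z=\sum_\alpha\langle z_\alpha^*,z\rangle z_\alpha$ on $\ggg(-1)$, these three tools convert each $[V_i,W_j]$, via repeated super-Jacobi and reindexing, into an expression already written in the normal form of Theorem \ref{PBWC}, which can then be compared term-by-term with the expansion of $\Theta_{[v,w]}$.

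The main obstacle is sign bookkeeping: correctly tracking factors $(-1)^{|v||\alpha|}$, $(-1)^{|\alpha||\beta|}$, and the sign $(-1)^{|\alpha|}$ arising from the duality $z_\alpha^*\leftrightarrow z_\alpha$, and in particular verifying the pairwise cancellations between the contributions of $[v,W_2]$ and $[V_1,W_1]$ (both of which feed into every target subleading term). A useful shortcut that halves the case analysis is to observe that both sides are anti-invariant under the automorphism $\sigma=\text{Ad}\,\gamma(-1)$ of \S\ref{general}, since every surviving monomial has odd $e$-degree; in particular the residual $e$-degree-$1$ contributions of $[\Theta_v,\Theta_w]$ must collect into a single scalar multiple of $[[v,w],f]\otimes 1_\chi$, which pins down the coefficient $-\tfrac{2}{3}$ by matching the $[v,W_3]=-\tfrac{2}{3}[[v,w],f]\otimes 1_\chi$ contribution directly.
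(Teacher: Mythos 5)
Your strategy---expanding $[\Theta_v,\Theta_w]$ into the pieces $[V_i,W_j]$ and matching terms against the formula for $\Theta_{[v,w]}$---is viable in principle (the preliminary observations are all correct: $[v,w]\in\ggg^e(1)$, $[v,f]=[v,h]=0$ for $v\in\ggg^e(0)$ by $\mathfrak{sl}_2$-theory, the skew-invariance of $\langle\cdot,\cdot\rangle$ under $\mathrm{ad}\,v$, and the collapse of $[z_\alpha,z_\beta^*]\in\ggg(-2)$ to a scalar on $1_\chi$), and the paper itself remarks that such a direct computation can be made to work. But it is not the route the paper takes, and you stop short of carrying out the cancellations that constitute the entire content of your argument. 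The paper's proof is precisely the observation you present only as a shortcut, pushed to its logical conclusion: the involution $\sigma=\mathrm{Ad}\,\gamma(-1)$ fixes $\Theta_v$ and negates $\Theta_w$ (every monomial occurring in $\Theta_v$ has even $e$-degree and every monomial in $\Theta_w$ has odd $e$-degree), so $\sigma$ negates $\Theta:=[\Theta_v,\Theta_w]-\Theta_{[v,w]}$; on the other hand Theorem \ref{PBWC}(3) forces $\Theta$ to be a super-polynomial in the generators $\Theta_{x}$ with $x\in\ggg^e(0)$ (plus constants), all of which are $\sigma$-fixed, whence $\Theta=-\Theta=0$ with no term-by-term matching at all. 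What your route would buy, if executed, is a verification independent of the PBW relation \eqref{Thetaa2}; what it costs is exactly the sign bookkeeping you flag as the main obstacle, which remains undone. One caveat on your closing remark: $\sigma$-anti-invariance alone does not force the residual $e$-degree-$1$ contributions to collect into a multiple of $[[v,w],f]\otimes 1_\chi$, since every element of $\ggg(-1)\otimes 1_\chi$ is $\sigma$-anti-invariant; so that step, and the coefficient $-\tfrac{2}{3}$, still require either the explicit computation or an appeal to the PBW structure of $W_\chi'$.
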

\begin{proof}

In virtue of \eqref{edegree}, we have $\sigma(\Theta_{v})=\Theta_{v}$ and $\sigma(\Theta_{w})=-\Theta_{w}$ by the definition of $\Theta_{v}$ and $\Theta_{w}$. Set $\Theta:=[\Theta_{v},\Theta_{w}]-\Theta_{[v,w]}$, an element in $W_\chi'$. Note that
\begin{equation*}
\begin{split}
\sigma(\Theta)=&\sigma([\Theta_{v},\Theta_{w}]-\Theta_{[v,w]})\\=&\sigma(\Theta_{v})\sigma(\Theta_{w})-(-1)^{|v||w|}
\sigma(\Theta_{w})\sigma(\Theta_{v})-\sigma(\Theta_{[v,w]})\\
=&-\Theta_{v}\Theta_{w}+(-1)^{|v||w|}\Theta_{w}\Theta_{v}+\Theta_{[v,w]}\\
=&-([\Theta_{v},\Theta_{w}]-\Theta_{[v,w]})\\=&-\Theta.
\end{split}
\end{equation*}
On the other hand, it follows from Theorem \ref{PBWC}(3) that $\Theta$ is a super-polynomial in $\Theta_{x_i}$ with $x_i\in\ggg^e(0)$. From all the discussion above, one knows that this super-polynomial must be zero. So $[\Theta_{v},\Theta_{w}]=\Theta_{[v,w]}$ necessarily holds, completing the proof.
\end{proof}
\begin{rem}
In fact, one can also prove Proposition \ref{vw} by computing the commutator between $\Theta_{v}$ and $\Theta_{w}$ for $v\in\ggg^e(0)$ and $w\in\ggg^e(1)$ directly. However, that proof is much complicate than the one we have applied above.
\end{rem}

\subsection{The case of minimal $W$-superalgebras}
Now we come to the case of minimal $W$-superalgebras. In this subsection the structure and the existence of $1$-dimensional representations of minimal $W$-superalgebras will be determined.

\subsubsection{}\label{3.1.1}
Retain the notations as in \S\ref{general}. A root $-\theta$ is called minimal if it is even and there exists an additive function $\varphi:\Phi\rightarrow\mathbb{R}$ such that $\varphi_{\mid{\Phi}}\neq0$ and $\varphi(\theta)>\varphi(\eta)$ for all $\eta\in\Phi\backslash\{\theta\}$. It is obvious that a minimal root $-\theta$ is the lowest root of one of the simple components of $\ggg_{\bar0}$ (in the ordering defined by $\varphi$).

Fix a minimal root $-\theta$ of $\ggg$. We may choose root vectors $e:=e_\theta$ and $f:=e_{-\theta}$ such
that $$[e,f]=h\in\mathfrak{h},\quad[h,e]=2e,\quad[h,f]=-2f.$$Then $e$ is a minimal nilpotent element in $\ggg$. Due to the minimality of $-\theta$, the eigenspace decomposition of $\text{ad}\,h$ gives rise to a short $\mathbb{Z}$-grading
$$\ggg=\ggg(-2)\oplus\ggg(-1)\oplus\ggg(0)\oplus\ggg(1)\oplus\ggg(2).$$
Moreover, $\ggg(2)=\mathbb{C}e, \ggg(-2)=\mathbb{C}f$.
We thus have a bijective correspondence between minimal gradings (up to an automorphism of $\ggg$) and minimal roots (up to the
action of the Weyl group). Furthermore, one has
$$\ggg^e=\ggg(0)^\sharp\oplus\ggg(1)\oplus\ggg(2),$$where $\ggg^e$ denotes the centralizer of $e$ in $\ggg$, and $\ggg(0)^\sharp=\{x\in\ggg(0)\mid[x,e]=0\}$.
Note that $\ggg(0)^\sharp$ is the centralizer of the triple $\{e,f,h\}$ by $\mathfrak{sl}_2$-theory.
Moreover, $\ggg(0)^\sharp$ is the orthogonal complement to $\mathbb{C}h$ in $\ggg(0)$, and coincides with the image of the Lie
superalgebra endomorphism
\begin{equation}\label{xh}
\sharp: \ggg(0)\rightarrow\ggg(0), x\mapsto x-\frac{1}{2}(h,x)h.
\end{equation} Obviously $\ggg(0)^{\sharp}$ is an ideal of codimensional $1$ in the Levi subalgebra $\ggg(0)$.

\subsubsection{The proof of Proposition \ref{ge}}\label{3.1.3}
Let $C$ denote the Casimir element of $U(\ggg)$ corresponding to the invariant form $(\cdot,\cdot)$. Since $C$ induces a $\ggg$-endomorphism of $Q_\chi$, it can be viewed as a central element of $W_\chi'$. Now we will determine the explicit formula of $C$.

Let $\{a_i\mid i\in I\}$ and $\{b_i\mid i\in I\}$ be dual bases of $\ggg^e(0)$ with respect to the restriction of the invariant form $(\cdot,\cdot)$ to $\ggg^e(0)$, then
$$\{e,h,f\}\cup\{a_i\mid i\in I\}\cup\{[e,z_\alpha^*]\mid\alpha\in S(-1)\}\cup\{z_\alpha\mid \alpha\in S(-1)\}$$
and
$$\{f,\frac{h}{2},e\}\cup\{b_i\mid i\in I\}\cup\{z_\alpha\mid \alpha\in S(-1)\}\cup\{(-1)^{|\alpha|}[e,z_\alpha^*]\mid\alpha\in S(-1)\}$$
are dual bases of $\ggg$ with respect to $(\cdot,\cdot)$. Since $a_i$ and $b_i$ have the same parity for  given $i\in I$, we will denote this parity by $|i|$. Recall that if $z_\alpha=z_\beta^*$ for some $\alpha,\beta\in S(-1)$, then the assumption on dual bases of $\ggg(-1)$ in \S\ref{general} implies that $z_\alpha^*=-(-1)^{|\beta|}z_\beta$. In virtue of this, we have\begin{equation}\label{zalphazalpha}
\sum\limits_{\alpha\in S(-1)}[[z_\alpha,e],z_\alpha^*]\otimes1_\chi=-\sum\limits_{\alpha\in S(-1)}(-1)^{|\alpha|}[[z_\alpha^*,e],z_\alpha]\otimes1_\chi.\end{equation}Recall that $s=\text{dim}\,\ggg(-1)_{\bar0}$ and $\sfr=\text{dim}\,\ggg(-1)_{\bar1}$. Since
\begin{equation}\label{3.9}
\begin{split}
\sum\limits_{\alpha\in S(-1)}[z_\alpha,[e,z_\alpha^*]]\otimes1_\chi
=&\sum\limits_{\alpha\in S(-1)}([[z_\alpha,e],z_\alpha^*]+[e,[z_\alpha,z_\alpha^*]])\otimes1_\chi\\
=&-\sum\limits_{\alpha\in S(-1)}(-1)^{|\alpha|}([[z_\alpha^*,e],z_\alpha]
+[e,[z_\alpha^*,z_\alpha]])\otimes1_\chi\\=&-\sum\limits_{\alpha\in S(-1)}[z_\alpha,[e,z_\alpha^*]]\otimes1_\chi+(\sfr-s)h\otimes1_\chi,
\end{split}
\end{equation} where the second equation in \eqref{3.9} follows from \eqref{zalphazalpha}. Then we have $$\sum\limits_{\alpha\in S(-1)}[z_\alpha,[e,z_\alpha^*]]\otimes1_\chi=\frac{(\sfr-s)}{2}h\otimes1_\chi.$$
As a consequence,
\begin{equation}\label{C}
\begin{split}
C=&(2e+\frac{h^2}{2}-h+\sum\limits_{i\in I}(-1)^{|i|}a_ib_i+\sum\limits_{\alpha\in S(-1)}(-1)^{|\alpha|}[e,z_\alpha^*]z_\alpha
+\sum\limits_{\alpha\in S(-1)}z_\alpha[e,z_\alpha^*])\otimes1_\chi\\
=&(2e+\frac{h^2}{2}-h+\sum\limits_{i\in I}(-1)^{|i|}a_ib_i+\sum\limits_{\alpha\in S(-1)}(-1)^{|\alpha|}[e,z_\alpha^*]z_\alpha
+\sum\limits_{\alpha\in S(-1)}[z_\alpha,[e,z_\alpha^*]]\\
&+\sum\limits_{\alpha\in S(-1)}(-1)^{|\alpha|}[e,z_\alpha^*]z_\alpha)\otimes1_\chi\\
=&(2e+\frac{h^2}{2}-(1+\frac{s-\sfr}{2})h+\sum\limits_{i\in I}(-1)^{|i|}a_ib_i
+2\sum\limits_{\alpha\in S(-1)}(-1)^{|\alpha|}[e,z_\alpha^*]z_\alpha)\otimes1_\chi.
\end{split}
\end{equation}

Combining Proposition \ref{v}, Proposition \ref{w} and the discussion in \S\ref{3.1.1}, we now obtain a generating set (as an associative superalgebra) of a refined $W$-superalgebra associated with a minimal nilpotent element as presented in Proposition \ref{ge}.

Now we can easily make an account of Proposition \ref{ge} as below.
\begin{proof}
The proposition follows from Theorem \ref{PBWC}, Proposition \ref{v}, Proposition \ref{w}, \eqref{C} and the discussion in \S\ref{3.1.1}.
\end{proof}

\subsubsection{}
In this part, we make further preparation for Theorem \ref{maiin1}, by determining the commutators between the generators of $W_\chi'$ given in Proposition \ref{ge}.

Recall that in \S\ref{3.1.1} and \S\ref{3.1.3} we showed that $\ggg^e(0)=\ggg(0)^\sharp$ is an ideal of codimensional $1$ in the Levi subalgebra $\ggg(0)$ of $\ggg$, with $\{a_i\mid i\in I\}$ and $\{b_i\mid i\in I\}$ being dual bases of $\ggg^e(0)$ with respect to the restriction of the invariant form $(\cdot,\cdot)$ to $\ggg^e(0)$. Now let $C_0:=\sum_{i\in I}a_ib_i$ be the corresponding Casimir element of $U(\ggg^e(0))$, and set $\Theta_{\text{Cas}}:=\sum_{i\in I}(-1)^{|i|}\Theta_{a_i}\Theta_{b_i}$ to be an element of $W_\chi'$. Although $\Theta_{\text{Cas}}$ is not central in $W_\chi'$, we have the following result.
\begin{prop}\label{vcommutate}
The element $\Theta_{\text{Cas}}$ commutes with all operators $\Theta_v$ for $v\in\ggg^e(0)$.
\end{prop}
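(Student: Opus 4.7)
The plan is to realize $\Theta_{\mathrm{Cas}}$ as the image of the Casimir of $\ggg^e(0)$ under an algebra homomorphism from $U(\ggg^e(0))$ into $W_\chi'$, whence its commutativity with the $\Theta_v$ will be automatic.

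First, by Proposition \ref{v1v2}, the linear map $v \mapsto \Theta_v$ is a Lie superalgebra homomorphism $\ggg^e(0) \to W_\chi'$, since $[\Theta_{v_1},\Theta_{v_2}] = \Theta_{[v_1,v_2]}$ for all $v_1,v_2\in \ggg^e(0)$, and $v \mapsto \Theta_v$ is plainly linear from the formula in Proposition \ref{ge}. By the universal property of the enveloping superalgebra, it extends uniquely to an associative superalgebra homomorphism
\[
\varphi \colon U(\ggg^e(0)) \longrightarrow W_\chi', \qquad \varphi(v)=\Theta_v.
\]

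Next, I would observe that the restriction of $(\cdot,\cdot)$ to $\ggg^e(0)=\ggg(0)^\sharp$ remains a non-degenerate invariant supersymmetric bilinear form. Indeed, $h$ is central in the Levi subalgebra $\ggg(0)$, and the defining identity of $\sharp$ in \eqref{xh} forces an orthogonal decomposition $\ggg(0)=\mathbb{C}h\oplus\ggg(0)^\sharp$ with $(h,h)=2\neq 0$, so the non-degeneracy on $\ggg(0)$ descends to $\ggg(0)^\sharp$. Thus $\{a_i\}_{i\in I}$ and $\{b_i\}_{i\in I}$ are honest dual bases in the restricted setting, and
\[
C_0 := \sum_{i\in I}(-1)^{|i|}a_i b_i \in U(\ggg^e(0))
\]
is the corresponding super Casimir element. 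Its centrality is the standard sign-bookkeeping computation: expanding $[v,a_i]=\sum_j (v,[a_i,b_j])\,a_j$ and $[v,b_i]=\sum_j (-1)^{|j|}(v,[b_i,a_j])\,b_j$ via invariance, the two contributions to $[v,C_0]$ cancel after reindexing once the parity constraint $|v|\equiv|i|+|j|\pmod 2$ (imposed by non-vanishing of $(v,[a_i,b_j])$ and evenness of the form) is taken into account.

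Applying $\varphi$ to the identity $[v,C_0]=0$ in $U(\ggg^e(0))$ yields, for every $v\in\ggg^e(0)$,
\[
[\Theta_v,\Theta_{\mathrm{Cas}}] \,=\, [\varphi(v),\varphi(C_0)] \,=\, \varphi([v,C_0]) \,=\, 0,
\]
since by construction $\varphi(C_0)=\sum_{i\in I}(-1)^{|i|}\Theta_{a_i}\Theta_{b_i}=\Theta_{\mathrm{Cas}}$. The only step requiring care is confirming that the signs $(-1)^{|i|}$ in the definition of $\Theta_{\mathrm{Cas}}$ are precisely those that make $C_0$ central in $U(\ggg^e(0))$; this is the combinatorial verification indicated above and presents no substantial obstacle.
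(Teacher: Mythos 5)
Your proposal is correct and is essentially the paper's own argument in different packaging: the paper computes $[\Theta_{\text{Cas}},\Theta_v]$ directly by applying Proposition \ref{v1v2} and expanding $[a_i,v]$, $[b_i,v]$ in the dual bases, which is precisely the image under your homomorphism $\varphi$ of the standard verification that the super Casimir $C_0$ is central in $U(\ggg^e(0))$. The parity cancellation you defer to ``sign bookkeeping'' is exactly the computation the paper writes out in full (using evenness of the form to force $|v|(|i|+|j|)+|j|=|i|$ on the nonzero terms), so nothing substantive is missing.
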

\begin{proof}
Since $\{a_i\mid i\in I\}$ and $\{b_i\mid i\in I\}$ are dual bases of $\ggg^e(0)$ with respect to the invariant form $(\cdot,\cdot)$, we have
\begin{equation}\label{3.46}
\begin{split}
[a_i,v]=&\sum\limits_{j\in I}([a_i,v],b_j)a_j=\sum\limits_{j\in I}(a_i,[v,b_j])a_j\\
=&-\sum\limits_{j\in I}(-1)^{|v||j|}(a_i,[b_j,v])a_j,\\
[b_i,v]=&\sum\limits_{j\in I}(a_j,[b_i,v])b_j
\end{split}
\end{equation}for any $i\in I$.
Then one can obtain from  \eqref{3.46} and Proposition \ref{v1v2} that
\begin{equation}\label{3.47}
\begin{split}
&\Big[\sum\limits_{i\in I}(-1)^{|i|}\Theta_{a_i}\Theta_{b_i},\Theta_v\Big]\\
=&\sum\limits_{i\in I}((-1)^{(|v|+1)|i|}[\Theta_{a_i},\Theta_v]\Theta_{b_i}+(-1)^{|i|}\Theta_{a_i}[\Theta_{b_i},\Theta_v])\\
=&\sum\limits_{i\in I}((-1)^{(|v|+1)|i|}\Theta_{[{a_i},v]}\Theta_{b_i}+(-1)^{|i|}\Theta_{a_i}\Theta_{[{b_i},v]})\\
=&-\sum\limits_{i,j\in I}(-1)^{|v|(|i|+|j|)+|i|}(a_i,[b_j,v])\Theta_{a_j}\Theta_{b_i}
+\sum\limits_{i,j\in I}(-1)^{|i|}(a_j,[b_i,v])\Theta_{a_i}\Theta_{b_j}\\
=&-\sum\limits_{i,j\in I}(-1)^{|v|(|i|+|j|)+|j|}(a_j,[b_i,v])\Theta_{a_i}\Theta_{b_j}
+\sum\limits_{i,j\in I}(-1)^{|i|}(a_j,[b_i,v])\Theta_{a_i}\Theta_{b_j}.
\end{split}
\end{equation}

As the bilinear form $(\cdot,\cdot)$ is even, the first term of the last equation in \eqref{3.47} is nonzero if and only if
\begin{equation*}
|v|(|i|+|j|)+|j|=(|i|+|j|)^2+|j|=|i|+|j|+|j|=|i|.
\end{equation*}
Combining this with \eqref{3.47}, we have
$$\Big[\sum\limits_{i\in I}(-1)^{|i|}\Theta_{a_i}\Theta_{b_i},\Theta_v\Big]=0,$$completing the proof.
\end{proof}

Now we can calculate the remaining commutators of the generators of a minimal $W$-superalgebra $W_\chi'$ in Proposition \ref{ge}.

  First note that the Casimir element $C$ is a central element of $W_\chi'$, then we have
\begin{equation}\label{casimircom}
[C,W_\chi']=0.
\end{equation}
Then we have finished the arguments for all the items of relations in  Theorem \ref{maiin1} except (3). Next we deal with the exceptional item.

\begin{prop}\label{1commutator}
Let $w_1, w_2\in\ggg^e(1)$. Then the following relation holds in $W_\chi'$:
\begin{equation}
\begin{split}
[\Theta_{w_1},\Theta_{w_2}]=&\frac{1}{2}([w_1,w_2],f)(C-\Theta_{\text{Cas}}-c_0)-\frac{1}{2}\sum\limits_{\alpha\in S(-1)}(\Theta_{[w_1,z_\alpha]^{\sharp}}\Theta_{[z_\alpha^*, w_2]^{\sharp}}\\
&-(-1)^{|w_1||w_2|}\Theta_{[w_2,z_\alpha]^{\sharp}}\Theta_{[z_\alpha^*, w_1]^{\sharp}}),
\end{split}
\end{equation}
where $c_0\in\mathbb{C}$ is a constant depending on $\ggg$. To be explicit,
the constant $c_0$ satisfies the following equation:
\begin{equation*}
\begin{split}
c_0([w_1,w_2],f)=&\frac{1}{12}\sum\limits_{\alpha,\beta\in S(-1)}(-1)^{|\alpha||w_1|+|\beta||w_1|+|\alpha||\beta|}\otimes[[z_\beta,[z_\alpha,w_1]],[z_\beta^*,[z_\alpha^*,w_2]]]\\
&-\frac{3(s-\sfr)+4}{12}([w_1,w_2],f),
\end{split}
\end{equation*}
where $s=\text{dim}\,\ggg(-1)_{\bar0}$ and $\sfr=\text{dim}\,\ggg(-1)_{\bar1}$.
\end{prop}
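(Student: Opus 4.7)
The plan is to compute $[\Theta_{w_1}, \Theta_{w_2}]$ by direct expansion using the explicit formula for $\Theta_w$ in Proposition \ref{ge}, then to reorganize the result into the generators of $W_\chi'$ given by Theorem \ref{PBWC}. Write $\Theta_{w} = A(w) + B(w) + D(w)$ with $A(w) = w \otimes 1_\chi$, $B(w) = -\sum_{\alpha} z_\alpha[z_\alpha^*,w] \otimes 1_\chi$, and $D(w) = \tfrac{1}{3}(\sum_{\alpha,\beta} z_\alpha z_\beta[z_\beta^*,[z_\alpha^*,w]] - 2[w,f])\otimes 1_\chi$. Since the Kazhdan filtration satisfies $[\text{F}_i, \text{F}_j] \subseteq \text{F}_{i+j-2}$ (as the associated graded is supercommutative with a degree $-2$ Poisson bracket) and each $\Theta_{w_k}$ lies in $\text{F}_3$, the commutator lies in $\text{F}_4$. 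Using $\ggg^e = \ggg^e(0) \oplus \ggg^e(1) \oplus \mathbb{C}e$ together with the PBW theorem, the only admissible shapes in $\text{F}_4$ are a scalar multiple of $C$, quadratic monomials in $\{\Theta_v : v \in \ggg^e(0)\}$, linear terms $\Theta_v$, and scalars. The automorphism $\sigma$ of \S\ref{general} satisfies $\sigma(\Theta_{w_k}) = -\Theta_{w_k}$, so the commutator is $\sigma$-invariant, which further rules out any term involving $\Theta_{w}$ for $w \in \ggg^e(1)$. This justifies the shape of the RHS.

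Next I would compute each of the nine cross-brackets $[A_1, A_2], [A_1, B_2], \ldots, [D_1, D_2]$ inside $U(\ggg) \otimes 1_\chi$. The top-symbol piece $[A_1,A_2] = [w_1,w_2] \otimes 1_\chi$ lies in $\mathbb{C}e \otimes 1_\chi$ and equals $\tfrac{1}{2}([w_1,w_2],f) \cdot C$ modulo $\text{F}_3$, because the top term of $C$ is $2e$. The mixed pieces $[A_i, B_j]$ and $[B_1, B_2]$, after moving all $z_\alpha$'s to the right and using $[z_\alpha^*, z_\beta] = \delta_{\alpha,\beta} f$ together with $\chi(f) = 1$, simplify to expressions of shape $[[w_1,z_\alpha],[z_\alpha^*,w_2]] \otimes 1_\chi$. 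Splitting each bracket $[w_i,z_\alpha] \in \ggg(0)$ via the decomposition $\ggg(0) = \ggg^e(0) \oplus \mathbb{C}h$ of \eqref{xh}, the $\sharp$-component yields the quadratic sum $-\tfrac{1}{2}\sum_{\alpha}(\Theta_{[w_1,z_\alpha]^\sharp}\Theta_{[z_\alpha^*,w_2]^\sharp} - (-1)^{|w_1||w_2|}\Theta_{[w_2,z_\alpha]^\sharp}\Theta_{[z_\alpha^*,w_1]^\sharp})$ on the RHS modulo lower order, while the $h$-component, combined with the $\sum_i (-1)^{|i|} a_i b_i$ and $(-1)^{|\alpha|}[e,z_\alpha^*] z_\alpha$ summands of $C$ from Proposition \ref{ge}, produces the $-\tfrac{1}{2}([w_1,w_2],f)\Theta_{\text{Cas}}$ contribution.

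The remaining pieces $[A_i,D_j]$, $[B_i,D_j]$, and $[D_1,D_2]$ generate cubic and quartic $z_\alpha$-terms which, after normal ordering and reduction modulo $I^{\mathrm{fin}}$, must combine to a scalar. The triple-bracket expression appearing in the stated formula for $c_0$ originates from $[D_1, D_2]$ after repeated applications of the super Jacobi identity, while the $-\tfrac{3(s-\sfr)+4}{12}([w_1,w_2],f)$ correction arises from tracking the $h$-coefficients using the duality relation $z_\alpha^* = -(-1)^{|\beta|} z_\beta$ whenever $z_\alpha = z_\beta^*$, in combination with the $-(1+\tfrac{s-\sfr}{2})h$ term of $C$. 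The principal obstacle is the sheer bookkeeping: sign-tracking in the super-commutator calculus, careful normal ordering of the $z_\alpha$'s across $\ggg(0)$-factors, and systematic application of the super Jacobi identity, which is precisely the lengthy case-by-case verification deferred to \S\ref{proof}.
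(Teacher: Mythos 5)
Your overall strategy---direct expansion of the three pieces of $\Theta_w$, normal ordering modulo $I^{\text{fin}}$, and matching the result against the PBW generators---is essentially the route the paper takes in \S\ref{proof} (the paper merely organizes the bookkeeping through the algebra homomorphism $\mu\colon W_\chi'\to U(\ppp)\otimes A_e^{\text{op}}$ of Proposition \ref{translate}), and your a priori shape argument via the Kazhdan filtration and the involution $\sigma$ is a sensible supplement that the paper only uses for Proposition \ref{vw}. However, there is a genuine gap at the final step. After all cancellations you are left with a residual term $1\otimes b(w_1,w_2)$, and Lemma \ref{hwc2} (or your PBW shape argument) does force $b(w_1,w_2)\in\bbc$; but the proposition asserts much more, namely that $b(w_1,w_2)=-\tfrac{1}{2}c_0\,([w_1,w_2],f)$ for a \emph{single} constant $c_0$ independent of the pair $(w_1,w_2)$. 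Your proposal never explains why the bilinear map $(w_1,w_2)\mapsto b(w_1,w_2)$ is proportional to $([w_1,w_2],f)$: the double-bracket sum appearing in the formula for $c_0$ is bilinear in $w_1,w_2$ but is not manifestly such a multiple. This is not cosmetic---the single scalar $c_0$ is precisely what Proposition \ref{minimal nilpotent} later uses to build the codimension-one ideal through the powers $(C-c_0)^q$.

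The paper closes this gap in \S\ref{6.4} by a separate representation-theoretic argument: it verifies that $b$ is an even bilinear form on $\ggg^e(1)$ invariant under the adjoint action of $\ggg^e(0)_{\bar 0}$ (using Propositions \ref{v1v2}, \ref{vw} and \ref{vcommutate}), and then invokes the Kac--Roan--Wakimoto description of $\ggg(1)$ as a $\ggg(0)^\sharp$-module to conclude that the space of such invariant forms is spanned by $([\cdot,\cdot],f)$---except for $\ggg=\sssl(2|2)/\bbc I$, where $\ggg^e(1)\cong\bbc^2\oplus\bbc^2$ is not irreducible and the constant must be computed by hand. A complete write-up along your lines must either reproduce this invariance-plus-irreducibility argument (including the exceptional case) or verify directly that the cubic and quartic leftovers assemble into a multiple of $([w_1,w_2],f)$ with a coefficient independent of $w_1,w_2$.
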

Since the proof of Proposition \ref{1commutator} is rather lengthy, we will postpone it till \S\ref{proof}.
\begin{rem}
Compared with the result obtained by Suh in \cite[Proposition 5.4]{suh}, one can find that there are subtle differences between the last equation of (5.17) obtained by Suh in \cite{suh} and the one we introduced in Theorem \ref{maiin1}(3), i.e., the term $-\frac{1}{2}\sum\limits_{\alpha\in S(-1)}\Theta_{[w_1,z_\alpha]^{\sharp}}\Theta_{[z_\alpha^*, w_2]^{\sharp}}$ in Theorem \ref{maiin1}(3) did not appear in \cite[Proposition 5.4]{suh}. One reason may lie in the fact that the expression in the second equation of \cite[(5.12)]{suh} is error. On the other hand, from the detailed calculation in the proof of \eqref{3.57}, one will observe that without the term we mentioned above, the term $\frac{1}{4}\sum\limits_{\alpha\in S(-1)}(-1)^{|\alpha||w_2|}[[w_1,z_\alpha],[w_2,z_\alpha^*]]\otimes1$ (which is not a constant in general) cannot be eliminated  by other terms.
\end{rem}

We are now in a position to prove the main results of the present paper.

\subsubsection{The Proof of Theorem \ref{maiin1}} \label{5.2.4}
This theorem readily follows from Proposition \ref{ge}, Proposition \ref{v1v2}, Proposition \ref{vw}, \eqref{casimircom}, Proposition \ref{1commutator}, and finally Theorem \ref{PBWC}.

\subsubsection{The Proof of Theorem \ref{intromainminnimalf}}\label{5.2.5}

In virtue of the results we obtained above, we will show that Conjecture \ref{conjecture22} is true for minimal $W$-superalgebras, and the lower bounds for the representations of basic Lie superalgebras with minimal nilpotent $p$-characters in Theorem \ref{intromain-2} are accessible. We mainly follow the method used in \cite[Corollary 4.1]{P3} for the finite $W$-algebra case.

Let $\ggg$ be a basic Lie superalgebra over $\mathbb{C}$, and $e$ be a minimal nilpotent element in $\ggg$. Denote by $\ggg^e$ the centralizer of $e$ in $\ggg$. Set $\{x_1,\cdots,x_m\}$, $\{y_1,\cdots,y_n\}$, $\{u_1,\cdots,u_{s}\}$ and $\{v_1,\cdots,v_t\}$ to be bases of $\ggg^e(0)_{\bar0}$, $\ggg^e(0)_{\bar1}$, $\ggg^e(1)_{\bar0}$ and $\ggg^e(1)_{\bar1}$, respectively. Let $W_\chi'$ be the minimal $W$-superalgebra associated with the pair ($\ggg$, $e$). Denote by $(W_\chi')^+$ the $\mathbb{C}$-span of the monomials
$$\Theta_{x_1}^{i_1}\cdots\Theta_{x_m}^{i_m}\cdot
\Theta_{y_1}^{j_1}\cdots\Theta_{y_n}^{j_n}\cdot
\Theta_{u_1}^{k_1}\cdots\Theta_{u_{s}}^{k_{s}}\cdot
\Theta_{v_1}^{l_1}\cdots\Theta_{v_t}^{l_t}\cdot(C-c_0)^q$$
with $i_r, k_r, q\in\mathbb{Z}_+$ and $j_r, l_r\in\{0,1\}$ such that $\sum i_r+\sum j_r+\sum k_r+\sum l_r+q\geqslant1$. It follows from Theorem \ref{PBWC}(2) that $(W_\chi')^+$ is a subspace of codimensional $1$ in $W_\chi'$. We now prove that $(W_\chi')^+$ is a two-sided ideal of $W_\chi'$, i.e.,
\begin{prop}\label{minimal nilpotent}
The subspace $(W_\chi')^+$ is a two-sided ideal of codimension $1$ in the refined $W$-superalgebra $W_\chi'$ associated with minimal nilpotent element $e$. Moreover, if $\ggg$ is not of type $A(m|n)$ with $m, n\geqslant0$, then $(W_\chi')^+$ is the only ideal of codimensional $1$ in $W_\chi'$.
\end{prop}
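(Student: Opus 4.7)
The plan is to exhibit an augmentation character $\epsilon \colon W_\chi' \to \mathbb{C}$ whose kernel is exactly $(W_\chi')^+$. Using the generating set supplied by Proposition~\ref{ge}, I would set
\[
\epsilon(\Theta_v)=0 \ \ (v\in\ggg^e(0)),\qquad \epsilon(\Theta_w)=0 \ \ (w\in\ggg^e(1)),\qquad \epsilon(C)=c_0,
\]
and verify compatibility with the four families of relations of Theorem~\ref{maiin1}. Relations (1), (2) and (4) are automatic since every term involved lies in $\Theta_{\ggg^e(0)}\cup\Theta_{\ggg^e(1)}$ and maps to $0$; for relation (3), the factor $C-\Theta_{\mathrm{Cas}}-c_0$ maps to $c_0-0-c_0=0$, each quadratic correction $\Theta_{[w_1,z_\alpha]^\sharp}\Theta_{[z_\alpha^*,w_2]^\sharp}$ vanishes factorwise (both $[w_1,z_\alpha]^\sharp$ and $[z_\alpha^*,w_2]^\sharp$ lie in $\ggg^e(0)$), and the left-hand supercommutator is zero on the nose. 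Since by Theorem~\ref{PBWC}(4) recast via Theorem~\ref{maiin1} the algebra $W_\chi'$ is presented by these generators and relations, the prescription descends to a unital algebra homomorphism $\epsilon$.

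To identify $\ker\epsilon$ with $(W_\chi')^+$, I would invoke the PBW basis of Theorem~\ref{PBWC}(2), after replacing the central factor $C^q$ by $(C-c_0)^q$, a triangular change of basis that preserves the Kazhdan filtration. Under $\epsilon$ every nontrivial PBW monomial contains at least one factor going to $0$, while the empty monomial goes to $1$; hence $\ker\epsilon$ equals the $\mathbb{C}$-span of the nontrivial monomials, which is exactly $(W_\chi')^+$. This establishes that $(W_\chi')^+$ is a two-sided ideal of codimension one.

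For the uniqueness statement, let $J$ be any codimension-one two-sided ideal and let $\epsilon'\colon W_\chi'\to\mathbb{C}$ be the associated character. Because $\mathbb{C}$ is concentrated in degree $\bar 0$, every $\mathbb{Z}_2$-odd generator lies in $J$. Applying $\epsilon'$ to relation~(1) yields $\epsilon'(\Theta_{[v_1,v_2]})=0$ for all $v_1,v_2\in\ggg^e(0)_{\bar 0}$, so $\epsilon'$ vanishes on $\Theta_\eta$ for every $\eta$ in the derived subalgebra $[\ggg^e(0)_{\bar 0},\ggg^e(0)_{\bar 0}]$. When $\ggg$ is not of type $A(m|n)$, the even Levi factor $\ggg^e(0)_{\bar 0}=\ggg(0)^\sharp_{\bar 0}$ is semisimple, hence perfect, so $\epsilon'(\Theta_v)=0$ for every $v\in\ggg^e(0)$. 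Choosing $w_1,w_2\in\ggg^e(1)$ with $([w_1,w_2],f)\neq 0$ and substituting into relation~(3) then forces $\epsilon'(C)=c_0$, so $\epsilon'=\epsilon$ and $J=(W_\chi')^+$.

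The principal obstacle will be the uniqueness assertion, requiring two pieces of case analysis tied to the classification of basic Lie superalgebras: first, that $\ggg^e(0)_{\bar 0}$ is perfect outside type $A(m|n)$ (the type-$A$ exception arises from the central element produced by the difference of $\mathfrak{gl}$-traces); second, that some pair $(w_1,w_2)\in\ggg^e(1)\times\ggg^e(1)$ with $([w_1,w_2],f)\neq 0$ exists so that relation~(3) actually pins $\epsilon'(C)$ down to $c_0$. Both can be read off the explicit short $\mathbb{Z}$-gradings attached to minimal roots, so no new computation beyond the presentation of Theorem~\ref{maiin1} is required.
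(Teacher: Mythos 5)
Your existence argument is sound and is essentially a repackaging of what the paper does: the paper verifies directly, by induction on PBW monomials using Propositions \ref{v1v2}, \ref{vw} and \ref{1commutator}, that $(W_\chi')^+$ is closed under left and right multiplication by the generators, whereas you define an augmentation character on the presentation of Theorem \ref{maiin1} and identify its kernel with $(W_\chi')^+$ via the PBW basis. Both routes work; yours is cleaner but leans on the completeness of the presentation asserted in Theorem \ref{maiin1}, while the paper's only uses the commutation relations as rewriting rules.

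The genuine gap is in the uniqueness part, in the sentence ``Because $\mathbb{C}$ is concentrated in degree $\bar 0$, every $\mathbb{Z}_2$-odd generator lies in $J$.'' A codimension-one two-sided ideal of an associative superalgebra need not be $\mathbb{Z}_2$-graded, and a character need not kill odd elements: for odd $a,b$ one only gets $\epsilon'([a,b])=\epsilon'(ab+ba)=2\epsilon'(a)\epsilon'(b)$, which is not forced to vanish (the Clifford algebra $\mathbb{C}[\theta]/(\theta^2-1)$ with $\theta$ odd has two characters, neither killing $\theta$). This matters here: without knowing $\epsilon'(\Theta_{w})=0$ for $w\in\ggg^e(1)_{\bar 1}$ and $\epsilon'(\Theta_u)=0$ for $u\in\ggg^e(0)_{\bar 1}$ (note the elements $[w_i,z_\alpha]^\sharp$ occurring in relation (3) can be odd), that relation does not pin down $\epsilon'(C)$. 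The repair is precisely the route the paper takes: outside type $A(m|n)$ the Kac--Wakimoto tables give that $\ggg^e(0)$ is semisimple, hence perfect, and that $\ggg^e(1)$ is an irreducible $\ggg^e(0)$-module, so $\ggg^e(0)_{\bar 1}=[\ggg^e(0)_{\bar 0},\ggg^e(0)_{\bar 1}]$ and $\ggg^e(1)=[\ggg^e(0),\ggg^e(1)]$; applying $\epsilon'$ to relations (1) and (2) with one \emph{even} argument (where the supercommutator is an ordinary commutator and hence dies under any character) then forces $\epsilon'$ to vanish on all of $\Theta_{\ggg^e(0)}$ and $\Theta_{\ggg^e(1)}$, after which your derivation of $\epsilon'(C)=c_0$ from relation (3) goes through.
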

\begin{proof}
We first show that $h\cdot h'\in (W_\chi')^+$ for all $h, h'\in (W_\chi')^+$. Since $C-c_0$ is in the center of $W_\chi'$, we have $(C-c_0)\cdot (W_\chi')^+\subseteq (W_\chi')^+$. Therefore, we just need to prove that $\Theta_x\cdot (W_\chi')^+\subseteq(W_\chi')^+$ for all $x\in\ggg^e(0)\cup\ggg^e(1)$.

For the case with $x\in\ggg^e(0)$, we can conclude from Proposition \ref{v1v2} that the span of all $\Theta_{x_1}^{i_1}\cdots\Theta_{x_m}^{i_m}\cdot\Theta_{y_1}^{j_1}\cdots\Theta_{y_n}^{j_n}$ with  $i_r\in\mathbb{Z}_+, j_r\in\{0,1\}$ and $\sum i_r+\sum j_r\geqslant1$ is stable under the left multiplications of $\Theta_{x}$ by $x\in\ggg^e(0)$. Now we assume that $x\in\ggg^e(1)$. Since the monomials $\Theta_{x_1}^{i_1}\cdots\Theta_{x_m}^{i_m}\cdot
\Theta_{y_1}^{j_1}\cdots\Theta_{y_n}^{j_n}\cdot
\Theta_{u_1}^{k_1}\cdots\Theta_{u_{s}}^{k_{s}}\cdot
\Theta_{v_1}^{l_1}\cdots\Theta_{v_t}^{l_t}\cdot(C-c_0)^q$ with $i_r, k_r\in\mathbb{Z}_+$ and $j_r, l_r\in\{0,1\}$ form a basis of the refined $W$-superalgebra $W_\chi'$, by Theorem \ref{PBWC} and Proposition \ref{vw} it suffices to prove that $\Theta_{x}\cdot\Theta_{u_1}^{k_1}\cdots\Theta_{u_{k_M}}^{k_M}\cdot
\Theta_{v_1}^{l_1}\cdots\Theta_{v_N}^{l_N}\in (W_\chi')^+$ for all $k_1,\cdots,k_M\in\{1,\cdots,s\}$, $l_1,\cdots,l_N\in\{1,\cdots,t\}$ and all $x\in\ggg^e(1)$. In fact, in view of Proposition \ref{vw} and Proposition \ref{1commutator} we can reach this conclusion by induction on $M$ and $N$.

Let $\ggg$ be not of type $A(m|n)$ with $m, n\geqslant0$, then it follows from \cite[Table 1-Table 3]{KW} that $\ggg^e(0)$ is a semisimple Lie (super)algebra (which is denoted by $\ggg^\natural$ in the settings there). Let $I$ be any ideal of codimensional $1$ in the refined $W$-superalgebra $W_\chi'$. One can easily verify that $[x,y]=xy-(-1)^{|x||y|}yx\in I$ for all $x, y\in W_\chi'$. Since $\ggg^e(0)=[\ggg^e(0),\ggg^e(0)]$ by the above remark, Proposition \ref{v1v2} implies that $\Theta_x\in I$ for $x\in\ggg^e(0)$. Moreover, \cite[Table 1-Table 3]{KW} also showed that $\ggg^e(1)=\ggg(1)$ (which is denoted by $\ggg_{\frac{1}{2}}$ there) is an irreducible $\ggg^e(0)$-module, then we have $[\ggg^e(0),\ggg^e(1)]=\ggg^e(1)$. Thus $\Theta_x\in I$ for all $x\in \ggg^e(1)$ by Proposition \ref{vw}. As $I$ is a subalgebra of $W_\chi'$ containing $\ggg^e(0)\cup\ggg^e(1)$, Proposition \ref{1commutator} implies that $C-c_0$ also in $I$. All the above shows that $(W_\chi')^+\subseteq I$ by definition. Since the codimension of $(W_\chi')^+$ is $1$ in $W_\chi'$, we conclude that $I=(W_\chi')^+$, as desired.
\end{proof}

It is immediate from Proposition \ref{minimal nilpotent} that Conjecture \ref{conjecture22} is true for minimal $W$-superalgebras.
As a direct corollary of Theorem \ref{intromain-2} and Proposition \ref{minimal nilpotent}, we complete the proof of Theorem \ref{intromainminnimalf}.

\section{The proof of Proposition \ref{1commutator}}\label{proof}
This section is contributed to the proof of Proposition \ref{1commutator}. We mainly follow Premet's strategy on finite $W$-algebras \cite[\S4]{P3}, with a few modifications.

 In the first two subsections, we will make some necessary preparation. To simplify calculation, in \S\ref{improve} we rewrite  generators of minimal $W$-superalgebras appearing in Proposition \ref{ge} in another way. Then in \S\ref{6.2} we will present relations of the generators  by lots of computations. The final proof of Proposition \ref{1commutator} will be given in \S\ref{6.4}.

Retain all the notations as in \S\ref{structure} throughout this section.

\subsection{}\label{improve} In this subsection,  we first express  the generators $\Theta_w$ with $w\in\ggg^e(1)$  of $W_\chi'$ in Proposition \ref{ge} in some more computable way, which will be fulfilled  in (\ref{rewrite}) via Proposition \ref{translate}.

Let us begin with the following equation
\begin{equation}\label{zzwzzw}
\sum\limits_{\alpha\in S(-1)}[z_\alpha,[z_\alpha^*,w]]\otimes1_\chi=-\sum\limits_{\alpha\in S(-1)}(-1)^{|\alpha|}[z_\alpha^*,[z_\alpha,w]])\otimes1_\chi,
\end{equation} then we have
\begin{equation*}\label{zzw}
\begin{split}
\sum\limits_{\alpha\in S(-1)}[z_\alpha,[z_\alpha^*,w]]\otimes1_\chi
&=\sum\limits_{\alpha\in S(-1)}([[z_\alpha,z_\alpha^*],w]+(-1)^{|\alpha|}[z_\alpha^*,[z_\alpha,w]])\otimes1_\chi\\
&=((s-\sfr)[w,f]-\sum\limits_{\alpha\in S(-1)}[z_\alpha,[z_\alpha^*,w]])\otimes1_\chi,
\end{split}
\end{equation*} thus $\sum\limits_{\alpha\in S(-1)}[z_\alpha,[z_\alpha^*,w]]\otimes1_\chi=\frac{(s-\sfr)}{2}[w,f]\otimes1_\chi$.
It follows that
\begin{equation*}
\begin{split}
\sum\limits_{\alpha\in S(-1)}z_\alpha[z_\alpha^*,w]\otimes1_\chi&=\sum\limits_{\alpha\in S(-1)}([z_\alpha,[z_\alpha^*,w]]+(-1)^{|w||\alpha|+|\alpha|}[z_\alpha^*,w]z_\alpha)\otimes1_\chi\\
&=\Big(\frac{(s-\sfr)}{2}[w,f]-\sum\limits_{\alpha\in S(-1)}(-1)^{|\alpha|}[w,z_\alpha^*]z_\alpha\Big)\otimes1_\chi.
\end{split}
\end{equation*}

Define \begin{equation}\label{varphi}
\varphi_w:=\frac{1}{3}(\sum\limits_{\alpha,\beta\in S(-1)}z_\alpha z_\beta[z_\beta^*,[z_\alpha^*,w]]-\frac{3(s-\sfr)+4}{2}[w,f]),\end{equation}
then we can write
\begin{equation}\label{anotherw}
\Theta_w=(w+\sum\limits_{\alpha\in S(-1)}(-1)^{|\alpha|}[w,z_\alpha^*]z_\alpha+\varphi_w)\otimes1_\chi.
\end{equation}

Now we come to the case with $w_1, w_2\in\ggg^e(1)$. To simplify calculation, we need the following settings.

Since each vector $h\in W_\chi'$ can be uniquely expressed as $h=\sum\limits_{(\mathbf{i},\mathbf{j})\in\mathbb{Z}_+^{s}
\times\Lambda'_{\sfr}}x_\mathbf{i}y_\mathbf{j}u^\mathbf{i}v^\mathbf{j}\otimes1_\chi$ with $x_\mathbf{i}y_\mathbf{j}\in U(\mathfrak{p})$, one can define a natural linear injection
\begin{equation*}
\begin{array}{llll}
\mu:&W_\chi'&\longrightarrow&U(\mathfrak{p})\otimes A_e^{\text{op}}\\
&h&\mapsto&\sum\limits_{(\mathbf{i},\mathbf{j})\in\mathbb{Z}_+^{s}\times\Lambda'_{\sfr}}x_\mathbf{i}y_\mathbf{j}\otimes u^\mathbf{i}v^\mathbf{j},
\end{array}
\end{equation*}where $A_e^{\text{op}}$ denotes the opposite algebra of $A_e$ as in \S\ref{general}.

Obviously the mapping $\mu$ is injective. Moreover, we have the following result.
\begin{prop}\label{translate}
The map $\mu: W_\chi' \longrightarrow U(\mathfrak{p})\otimes A_e^{\text{op}}$ is an algebra homomorphism.
\end{prop}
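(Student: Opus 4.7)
The plan is to reduce the claim to a PBW normal-form computation in $Q_\chi$. Linearity and injectivity of $\mu$ are immediate from the uniqueness of the PBW decomposition, so only the multiplicativity $\mu(h_1 \cdot h_2) = \mu(h_1) \cdot \mu(h_2)$ requires work. Fix homogeneous representatives $y_k = \sum_{\alpha_k} P_k^{(\alpha_k)} U^{(\alpha_k)} \in U(\ggg)$ of $h_1, h_2$, where $U^{(\alpha_k)} := u^{\mathbf{i}_k} v^{\mathbf{j}_k}$ is a PBW monomial in $\ggg(-1)$ and $P_k^{(\alpha_k)} \in U(\ppp)$. Then $h_1 \cdot h_2 = \overline{y_1 y_2}$, and I would reduce $y_1 y_2$ to PBW normal form in two moves: first, commute every $\ggg(-1)$-factor of $y_1$ past the $U(\ppp)$-block of $y_2$; second, reorder the resulting product inside $U(\ggg(-1))$ via the Clifford--Weyl relations that define $A_e$.

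The heart of the argument is the auxiliary identity
\begin{equation*}
U \cdot y_2 \equiv (-1)^{|U||y_2|}\, y_2 \cdot U \pmod{I_\chi},
\qquad U \in U(\mmm'),
\end{equation*}
which I plan to prove by induction on the length of $U$. The length-one case is precisely the $\mmm'$-invariance of $h_2$ from the definition of $W_\chi'$. The inductive step reduces to the assertion that $I_\chi$ is stable under right multiplication by $\mmm'$; here the minimality of $e$ enters through the short grading $\ggg = \bigoplus_{|i|\leq 2} \ggg(i)$, which forces $[f, \mmm'] = 0$ and hence $(f-1)z = z(f-1)$ for all $z \in \mmm'$. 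Applying the identity with $U = U^{(\alpha_1)}$ to $y_1 y_2 = \sum P_1^{(\alpha_1)} U^{(\alpha_1)} y_2$ and expanding $y_2$ yields
\begin{equation*}
y_1 y_2 \equiv \sum_{\alpha_1, \alpha_2} (-1)^{|U^{(\alpha_1)}||y_2|}\, P_1^{(\alpha_1)} P_2^{(\alpha_2)}\, U^{(\alpha_2)} U^{(\alpha_1)} \pmod{I_\chi}.
\end{equation*}
Since every elementary bracket $[z_\alpha, z_\beta] = \langle z_\alpha, z_\beta\rangle f$ lies in $\bbc f$ and $f \cdot 1_\chi = \chi(f)\cdot 1_\chi = 1_\chi$, the reordering of $U^{(\alpha_2)} U^{(\alpha_1)}$ inside $Q_\chi$ is captured exactly by the product $U^{(\alpha_2)} \cdot_{A_e} U^{(\alpha_1)}$ in the Clifford--Weyl algebra $A_e$.

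Finally, I plan to match the resulting formula for $\mu(h_1 \cdot h_2)$ against $\mu(h_1) \cdot \mu(h_2) = \sum (-1)^{|U^{(\alpha_1)}||P_2^{(\alpha_2)}|}\, P_1^{(\alpha_1)} P_2^{(\alpha_2)} \otimes U^{(\alpha_2)} \cdot_{A_e} U^{(\alpha_1)}$ in the super tensor product $U(\ppp) \otimes A_e^{\text{op}}$, where the Koszul sign comes from crossing the two middle factors and the order reversal of the $A_e$-product encodes the super-opposite convention; the two expressions agree on each homogeneous summand using $|y_2| = |P_2^{(\alpha_2)}| + |U^{(\alpha_2)}|$. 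The main obstacle is the auxiliary identity above: while its length-one case is simply the definition of $W_\chi'$, propagating invariance through monomials of $U(\mmm')$ relies critically on the short grading (which fails in the general non-minimal setting), and every super-Koszul sign must be tracked with care to produce $A_e^{\text{op}}$ rather than $A_e$ in the target.
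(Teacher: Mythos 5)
Your overall strategy is the same as the paper's: the paper also reduces multiplicativity to the single identity that a $\ggg(-1)$-monomial applied on the left of an invariant element can be moved to its right at the cost of a Koszul sign (its identity \eqref{uvz}, proved by induction on the length of the monomial exactly as you propose), and then identifies the resulting reordering of the two $\ggg(-1)$-blocks next to $1_\chi$ with the product in $A_e^{\text{op}}$, the brackets $[z_\alpha,z_\beta]\in\bbc f$ collapsing to the scalars $\langle z_\alpha,z_\beta\rangle$ because $\chi(f)=1$.

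There is, however, a genuine mismatch in your auxiliary identity. You state it for $U\in U(\mmm')$, with base case the $\ad\,\mmm'$-invariance of $h_2$ from Definition \ref{rewcc}, but you then apply it with $U=U^{(\alpha_1)}$ a PBW monomial in a full basis of $\ggg(-1)$. In the model $W_\chi'=Q_\chi^{\ad\,\mmm'}$ these two things are incompatible: the monomials $u^{\mathbf i}v^{\mathbf j}$ occurring in the PBW normal form of $y_1$ are built from the basis vectors $u_1,\dots,u_{s/2},v_1,\dots,v_{\lfloor\sfr/2\rfloor}$, which (apart from at most one odd vector) lie \emph{outside} $\mmm'$, so the $\ad\,\mmm'$-invariance of $h_2$ gives no control over commuting them past $h_2$; moreover the full Clifford--Weyl algebra $A_e$ on $\ggg(-1)$ cannot be the target, since in $Q_\chi$ half of $\ggg(-1)$ is already identified with scalars. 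The proposition lives in the other model: by Theorem \ref{important} the paper takes $W_\chi'=(Q^{\text{fin}}_\chi)^{\ad\,\mathfrak n}$ with $\mathfrak n=\bigoplus_{i\leqslant-1}\ggg(i)$ as in Definition \ref{quafi}, where the unique expansion $h=\sum x_{\mathbf i}y_{\mathbf j}u^{\mathbf i}v^{\mathbf j}\otimes1_\chi$ over all of $\mathbb Z_+^{s}\times\Lambda'_{\sfr}$ makes sense and where $\ad\,\mathfrak n$-invariance yields $[z,h_2]\in I^{\text{fin}}$ for \emph{every} $z\in\ggg(-1)$ --- exactly what your induction needs. Replacing $I_\chi$ by $I^{\text{fin}}$ and $\mmm'$ by $\mathfrak n$ throughout turns your argument into the paper's. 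A secondary correction: minimality of $e$ is not what makes the inductive step work. Right stability of the ideal and the centrality of $[z_\alpha,z_\beta]$ modulo the ideal require only that $\chi$ vanish on $\ggg(i)$ for $i\leqslant-3$, which holds for an arbitrary nilpotent $e$; the paper's proof invokes precisely this fact and nowhere uses the short grading, so your claim that the identity ``fails in the general non-minimal setting'' is not correct.
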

\begin{proof}
Denote by $\mathscr{Z}$ the linear span of all $u^\mathbf{i}v^\mathbf{j}$ with $(\mathbf{i},\mathbf{j})\in\mathbb{Z}_+^{s}\times\Lambda'_{\sfr}$. Then we can identify $\mathscr{Z}$ with the span of the left regular representation of $A_e$ via $u^\mathbf{i}v^\mathbf{j}\otimes1_\chi\mapsto u^\mathbf{i}v^\mathbf{j}$. Let $\rho_\chi$ denote the left regular representation of $U(\mathfrak{n})$ in $\text{End}(\mathscr{Z})$. Recall that we have assumed that $\{z_\alpha\mid\alpha\in S(-1)\}=\{u_\alpha\mid\alpha\in S(-1)_{\bar0}\}\bigcup\{v_{\alpha}\mid\alpha\in S(-1)_{\bar1}\}$ in \S\ref{general}. As $\mathfrak{g}(-1)\subseteq\mathfrak{n}$, and $\mathfrak{g}(i)\subseteq\text{Ker}\,\chi$ for all $i\leqslant-3$, the definition of $W_\chi'$ and induction on $k$ show that
\begin{equation}\label{uvz}
\rho_\chi(z_1\cdots z_k)(h)=(-1)^{|h|(|z_1|+\cdots+|z_k|)}\sum\limits_{(\mathbf{i},\mathbf{j})\in\mathbb{Z}_+^{s}
\times\Lambda'_{\sfr}}x_\mathbf{i}y_\mathbf{j}\cdot\rho_\chi(u^\mathbf{i}v^\mathbf{j}\cdot z_1\cdots z_k)(1_\chi)
\end{equation}
for all $z_1,\cdots,z_k\in\mathfrak{g}(-1)$.

Let $h'$ be another element in $W_\chi'$ such that $h'=\sum\limits_{(\mathbf{i},\mathbf{j})\in\mathbb{Z}_+^{s}
\times\Lambda'_{\sfr}}
x'_\mathbf{i}y'_\mathbf{j}(u')^{\mathbf{i}}(v')^{\mathbf{j}}\otimes1_\chi$ with $x'_\mathbf{i}y'_\mathbf{j}\in U(\mathfrak{p})$, then it follows from \eqref{uvz} that \[
\begin{split}
h\cdot h'=&\sum\limits_{(\mathbf{i},\mathbf{j})}
\rho_\chi(x_\mathbf{i}y_\mathbf{j})\cdot\rho_\chi(u^{\mathbf{i}}v^{\mathbf{j}})(h')\\
=&\sum\limits_{(\mathbf{i},\mathbf{j})}\sum\limits_{(\mathbf{k},\mathbf{l})}(-1)^{|h'||\mathbf{j}|}x_\mathbf{i}y_\mathbf{j}\cdot
x'_\mathbf{k}y'_\mathbf{l}\cdot\rho_\chi((u')^\mathbf{k}(v')^\mathbf{l}\cdot u^{\mathbf{i}}v^{\mathbf{j}})\otimes1_\chi,
\end{split}\]
thus
\begin{equation*}
\mu(h\cdot h')=\sum\limits_{(\mathbf{i},\mathbf{j})}\sum\limits_{(\mathbf{k},\mathbf{l})}(-1)^{|h'||\mathbf{j}|}x_\mathbf{i}y_\mathbf{j}\cdot
x'_\mathbf{k}y'_\mathbf{l}\otimes(u')^\mathbf{k}(v')^\mathbf{l}\cdot u^{\mathbf{i}}v^{\mathbf{j}}.
\end{equation*}
It remains to note that the map $u^\mathbf{i}v^\mathbf{j}\otimes1_\chi\mapsto u^\mathbf{i}v^\mathbf{j}$ mentioned above identifies $\rho_\chi(U(\mathfrak{n}))$ with the image of $A_e$ in its left regular representation, then the proof is completed.
\end{proof}

In virtue of Proposition \ref{translate}, we can rewrite the generators of $W_\chi'$ in Proposition \ref{ge} and \eqref{anotherw} as follows:
\begin{equation}\label{rewrite}
\begin{split}
\Theta_v=&v\otimes1-\sum\limits_{\alpha\in S(-1)}\frac{1}{2}\otimes z_\alpha[z_\alpha^*,v],\\
\Theta_w=&w\otimes1+\sum\limits_{\alpha\in S(-1)}(-1)^{|\alpha|}[w,z_\alpha^*]\otimes z_\alpha+1\otimes\varphi_w,\\
C=&2e\otimes1+\frac{h^2}{2}\otimes1-(1+\frac{s-r}{2})h\otimes1+\sum\limits_{i\in I}(-1)^{|i|}a_ib_i\otimes1\\
&+2\sum\limits_{\alpha\in S(-1)}(-1)^{|\alpha|}[e,z_\alpha^*]\otimes z_\alpha,
\end{split}
\end{equation} where $\{z_\alpha^*\mid\alpha\in S(-1)\}$ and $\{z_\alpha\mid\alpha\in S(-1)\}$ are dual bases of $\ggg(-1)$ with respect to $\langle\cdot,\cdot\rangle$, $\{a_i\mid i\in I\}$ and $\{b_i\mid i\in I\}$ be dual bases of $\ggg^e(0)$ with respect $(\cdot,\cdot)$, and $\varphi_w=\frac{1}{3}(\sum\limits_{\alpha,\beta\in S(-1)}z_\alpha z_\beta[z_\beta^*,[z_\alpha^*,w]]-\frac{3(s-\sfr)+4}{2}[w,f])$.

\subsection{}\label{6.2} In this  subsection, we will present the relations of the generators described just above, which will be given in (\ref{3.57}). The process will be long and tedious.

First, notice that in $U(\mathfrak{p})\otimes A_e^{\text{op}}$ we have
\begin{equation}\label{aop}
\begin{split}
[a\otimes f,b\otimes g]&=(-1)^{|b||f|+|f||g|}ab\otimes gf-(-1)^{(|a|+|f|)(|b|+|g|)+|a||g|+|f||g|}ba\otimes fg\\
&=(-1)^{|f|(|b|+|g|)}[a,b]\otimes gf-(-1)^{|b|(|a|+|f|)}ba\otimes[f,g]
\end{split}
\end{equation}
for all $a,b\in U(\mathfrak{p})$ and $f,g\in A_e$.
For all $w_1,w_2\in\ggg^e(1)$, we will compute the commutators between $\Theta_{w_1}$ and $\Theta_{w_2}$ in this subsection.
\subsubsection{}
Keeping \eqref{aop} in mind, we have
\begin{equation}\label{theta12}
\begin{split}
[\Theta_{w_1},\Theta_{w_2}]
=&[w_1\otimes1+\sum\limits_{\alpha\in S(-1)}(-1)^{|\alpha|}[w_1,z_\alpha^*]\otimes z_\alpha+1\otimes\varphi_{w_1},w_2\otimes1\\
&+\sum\limits_{\alpha\in S(-1)}(-1)^{|\alpha|}[w_2,z_\alpha^*]\otimes z_\alpha+1\otimes\varphi_{w_2}]\\
=&[w_1,w_2]\otimes1+\sum\limits_{\alpha\in S(-1)}(-1)^{|\alpha|+|\alpha||w_2|}[[w_1,z_\alpha^*],w_2]\otimes z_\alpha\\
&+\sum\limits_{\alpha\in S(-1)}(-1)^{|\alpha|}[w_1,[w_2,z_\alpha^*]]\otimes z_\alpha\\
&+\sum\limits_{\alpha,\beta\in S(-1)}(-1)^{|\alpha|+|\beta|+|\alpha||w_2|}[[w_1,z_\alpha^*],[w_2,z_\beta^*]]\otimes z_\beta z_\alpha\\&-\sum\limits_{\alpha,\beta\in S(-1)}(-1)^{|\alpha|+|\beta|+|\beta||w_1|+|w_1||w_2|}[w_2,z_\beta^*][w_1,z_\alpha^*]\otimes[z_\alpha,z_\beta]\\
&-\sum\limits_{\alpha\in S(-1)}(-1)^{|\alpha|+|\alpha||w_1|+|w_1||w_2|}[w_2,z_\alpha^*]\otimes[\varphi_{w_1},z_\alpha]\\
&-\sum\limits_{\alpha\in S(-1)}(-1)^{|\alpha|}[w_1,z_\alpha^*]\otimes[z_\alpha,\varphi_{w_2}]-1\otimes[\varphi_{w_1},\varphi_{w_2}]\\
=&[w_1,w_2]\otimes1+\sum\limits_{\alpha\in S(-1)}(-1)^{|\alpha|}[[w_1,w_2],z_\alpha^*]\otimes z_\alpha\\
&+\sum\limits_{\alpha,\beta\in S(-1)}(-1)^{|\alpha|+|\beta|+|\alpha||w_2|}[[w_1,z_\alpha^*],[w_2,z_\beta^*]]\otimes z_\beta z_\alpha\\
&+\sum\limits_{\alpha\in S(-1)}(-1)^{|\alpha|+|\alpha||w_1|+|w_1||w_2|}[w_2,z_\alpha^*][w_1,z_\alpha]\otimes1\\
&-\sum\limits_{\alpha\in S(-1)}(-1)^{|w_1||w_2|}
[w_2,z_\alpha]\otimes[z_\alpha^*,\varphi_{w_1}]+\sum\limits_{\alpha\in S(-1)}[w_1,z_\alpha]\otimes[z_\alpha^*,\varphi_{w_2}]\\
&-1\otimes[\varphi_{w_1},\varphi_{w_2}].
\end{split}
\end{equation}

For any $y\in\ggg^e(1)$ and $\alpha\in S(-1)$, it follows from the definition of $\varphi_y$ and \eqref{3.5}, \eqref{3.6}, \eqref{3.7} that
\begin{equation}\label{zw}
\begin{split}
[z_\alpha^*,\varphi_y]=&[z_\alpha^*,\frac{1}{3}(\sum\limits_{\beta,\gamma\in S(-1)}z_\beta z_\gamma[z_\gamma^*,[z_\beta^*,y]]-\frac{3(s-\sfr)+4}{2}[y,f])]\\
=&\frac{\sfr-s-2}{2}[z_\alpha^*,[y,f]]+\sum\limits_{\beta\in S(-1)}(-1)^{|\alpha||\beta|}z_\beta[z_\alpha^*,[z_\beta^*,y]].
\end{split}
\end{equation}
It is worth noting that for any $u\in\ggg(-1)$, we have
\begin{equation}\label{zuz}
u=\sum\limits_{\alpha\in S(-1)}[z_\alpha^*,u]z_\alpha=-\sum\limits_{\alpha\in S(-1)}(-1)^{|\alpha|}[z_\alpha,u]z_\alpha^*.
\end{equation} One can conclude from \eqref{zzwzzw}, \eqref{zw} and \eqref{zuz} that
\begin{equation}\label{xzzy}
\begin{split}
&\sum\limits_{\alpha\in S(-1)}[x,z_\alpha]\otimes[z_\alpha^*,\varphi_y]\\
=&\sum\limits_{\alpha,\beta\in S(-1)}(-1)^{|\alpha||\beta|}[x,z_\alpha]\otimes z_\beta[z_\alpha^*,[z_\beta^*,y]]+\frac{\sfr-s-2}{2}\sum\limits_{\alpha\in S(-1)}[x,z_\alpha]\otimes[z_\alpha^*,[y,f]]\\
=&\sum\limits_{\alpha,\beta,\gamma\in S(-1)}(-1)^{|\alpha||\beta|}[x,z_\alpha]\otimes z_\beta [z_\gamma^*,[z_\alpha^*,[z_\beta^*,y]]]z_\gamma\\&+\frac{\sfr-s-2}{2}\sum\limits_{\alpha\in S(-1)}[x,z_\alpha]\otimes[z_\alpha^*,[y,f]]\\
=&\sum\limits_{\alpha,\beta,\gamma\in S(-1)}(-1)^{|\alpha||\beta|}[x,z_\alpha]\otimes z_\beta [[z_\gamma^*,z_\alpha^*],[z_\beta^*,y]]z_\gamma\\
&+\sum\limits_{\alpha,\beta,\gamma\in S(-1)}(-1)^{|\alpha|(|\beta|+|\gamma|)}[x,z_\alpha]\otimes z_\beta[z_\alpha^*,[z_\gamma^*,[z_\beta^*,y]]]z_\gamma\\&+\frac{\sfr-s-2}{2}\sum\limits_{\alpha\in S(-1)}[x,z_\alpha]\otimes[z_\alpha^*,[y,f]]\\
=&\sum\limits_{\alpha\in S(-1)}(-1)^{|\alpha||y|+|\alpha|}[x,z_\alpha^*]\otimes[y,f]z_\alpha\\
&+\sum\limits_{\beta,\gamma\in S(-1)}(-1)^{(|\beta|+|\gamma|)(1+|y|)}[x,[z_\gamma^*,[z_\beta^*,y]]]\otimes z_\beta z_\gamma\\
&+\frac{\sfr-s-2}{2}\sum\limits_{\alpha\in S(-1)}[x,z_\alpha]\otimes[z_\alpha^*,[y,f]]\\
=&-\sum\limits_{\alpha\in S(-1)}[x,z_\alpha]\otimes z_\alpha^*[y,f]+\sum\limits_{\beta,\gamma\in S(-1)}(-1)^{(|\beta|+|\gamma|)(1+|y|)}[x,[z_\gamma^*,[z_\beta^*,y]]]\otimes z_\beta z_\gamma\\
&+\frac{\sfr-s}{2}\sum\limits_{\alpha\in S(-1)}[x,z_\alpha]\otimes[z_\alpha^*,[y,f]]\\
=&-\sum\limits_{\alpha\in S(-1)}[x,z_\alpha]\otimes z_\alpha^*[y,f]+\sum\limits_{\beta,\gamma\in S(-1)}(-1)^{(|\beta|+|\gamma|)(1+|y|)}[x,[z_\gamma^*,[z_\beta^*,y]]]\otimes z_\beta z_\gamma\\
&+\frac{\sfr-s}{2}[x,[y,f]]\otimes1
\end{split}
\end{equation} for all $x,y\in\ggg^e(1)$.

It is worth noting  that the assumption in \S\ref{general} shows that the following relations hold in $U(\ggg)$:
\begin{equation}\label{zz*}
\begin{array}{lllll}
\sum\limits_{\alpha\in S(-1)_{\bar0}}z_\alpha z_\alpha^*&=-\sum\limits_{\alpha\in S(-1)_{\bar0}}z_\alpha^* z_\alpha&\equiv -\frac{\text{dim}\,\ggg(-1)_{\bar0}}{2}&=-\frac{s}{2}&(\text{mod}\,I_\chi),\\
\sum\limits_{\alpha\in S(-1)_{\bar1}}z_\alpha z_\alpha^*&=\sum\limits_{\alpha\in S(-1)_{\bar1}}z_\alpha^* z_\alpha&\equiv \frac{\text{dim}\,\ggg(-1)_{\bar1}}{2}&=\frac{\sfr}{2}&(\text{mod}\,I_\chi).
\end{array}
\end{equation}

Now we consider the second term of the last equation in \eqref{xzzy}.
In virtue of \eqref{zz*}, we can obtain
\begin{equation}\label{wzzw}
\begin{split}
&\sum\limits_{\alpha,\beta\in S(-1)}((-1)^{(|\alpha|+|\beta|)(1+|w_2|)}[w_1,[z_\beta^*,[z_\alpha^*,w_2]]]\otimes z_\alpha z_\beta\\
&-(-1)^{(|\alpha|+|\beta|)(1+|w_1|)+|w_1||w_2|}[w_2,[z_\beta^*,[z_\alpha^*,w_1]]]\otimes z_\alpha z_\beta)\\
=&\sum\limits_{\alpha,\beta\in S(-1)}((-1)^{(|\alpha|+|\beta|)(1+|w_2|)}([[w_1,z_\beta^*],[z_\alpha^*,w_2]]\otimes z_\alpha z_\beta\\
&+(-1)^{|\beta||w_1|}[z_\beta^*,[w_1,[z_\alpha^*,w_2]]]\otimes z_\alpha z_\beta)\\
&-(-1)^{(|\alpha|+|\beta|)(1+|w_1|)+|w_1||w_2|}([[w_2,z_\beta^*],[z_\alpha^*,w_1]]\otimes z_\alpha z_\beta\\
&+(-1)^{|\beta||w_2|}[z_\beta^*,[w_2,[z_\alpha^*,w_1]]]\otimes z_\alpha z_\beta))\\
=&\sum\limits_{\alpha,\beta\in S(-1)}(-(-1)^{|\alpha|+|\beta|+|\alpha||w_2|}[[w_1,z_\alpha^*],[w_2,z_\beta^*]]\otimes z_\beta z_\alpha\\
&+(-1)^{|\alpha|+|\beta|+|\alpha||\beta|}[[w_1,[w_2,z_\alpha^*]],z_\beta^*]\otimes z_\alpha z_\beta\\
&-(-1)^{|\alpha|+|\beta|+|\alpha||\beta|+|\alpha||w_2|}[[w_1,z_\alpha^*],[w_2,z_\beta^*]]\otimes z_\alpha z_\beta\\
&+(-1)^{|\alpha|+|\beta|+|\alpha||\beta|+|\alpha||w_2|}[[[w_1,z_\alpha^*],w_2],z_\beta^*]\otimes z_\alpha z_\beta)\\
=&-2\sum\limits_{\alpha,\beta\in S(-1)}(-1)^{|\alpha|+|\beta|+|\alpha||w_2|}[[w_1,z_\alpha^*],[w_2,z_\beta^*]]\otimes z_\beta z_\alpha\\
&+\sum\limits_{\alpha\in S(-1)}(-1)^{|\alpha||w_2|}[[w_1,z_\alpha],[w_2,z_\alpha^*]]\otimes1\\
&+\sum\limits_{\alpha,\beta\in S(-1)}(-1)^{|\alpha|+|\beta|+|\alpha||\beta|}([w_1,w_2],f)[e,[z_\alpha^*,z_\beta^*]]\otimes z_\alpha z_\beta\\
&+\sum\limits_{\alpha,\beta\in S(-1)}(-1)^{|\alpha|+|\beta|}([w_1,w_2],f)[[e,z_\beta^*],z_\alpha^*]\otimes z_\alpha z_\beta\\
=&-2\sum\limits_{\alpha,\beta\in S(-1)}(-1)^{|\alpha|+|\beta|+|\alpha||w_2|}[[w_1,z_\alpha^*],[w_2,z_\beta^*]]\otimes z_\beta z_\alpha\\
&+\sum\limits_{\alpha\in S(-1)}(-1)^{|\alpha||w_2|}[[w_1,z_\alpha],[w_2,z_\alpha^*]]\otimes1+\frac{s-\sfr}{2}([w_1,w_2],f)h\otimes1\\
&+\sum\limits_{\alpha,\beta\in S(-1)}(-1)^{|\alpha|+|\beta|}([w_1,w_2],f)[[e,z_\beta^*],z_\alpha^*]\otimes z_\alpha z_\beta.
\end{split}
\end{equation}

As \begin{equation*}\begin{split}
&[w_1,[w_2,f]]\otimes1-(-1)^{|w_1||w_2|}[w_2,[w_1,f]]\otimes1\\=&[[w_1,w_2],f]\otimes1=([w_1,w_2],f)[e,f]\otimes1=([w_1,w_2],f)h\otimes1,
\end{split}\end{equation*}
combining this with \eqref{theta12}, \eqref{xzzy} and \eqref{wzzw} we have
\begin{equation}\label{5.22}
\begin{split}
[\Theta_{w_1},\Theta_{w_2}]=&([w_1,w_2],f)(e\otimes1+\sum\limits_{\alpha\in S(-1)}(-1)^{|\alpha|}[e,z_\alpha^*]\otimes z_\alpha\\
&+\sum\limits_{\alpha,\beta\in S(-1)}(-1)^{|\alpha|+|\beta|}[[e,z_\beta^*],z_\alpha^*]\otimes z_\alpha z_\beta)\\&-\sum\limits_{\alpha,\beta\in S(-1)}(-1)^{|\alpha|+|\beta|+|\alpha||w_2|}[[w_1,z_\alpha^*],[w_2,z_\beta^*]]\otimes z_\beta z_\alpha\\&+\sum\limits_{\alpha\in S(-1)}
(-1)^{|\alpha||w_2|}[w_1,z_\alpha][w_2,z_\alpha^*]\otimes1+\frac{s-\sfr}{2}([w_1,w_2],f)h\otimes1\\
&+\frac{\sfr-s}{2}[w_1,[w_2,f]]\otimes1-(-1)^{|w_1||w_2|}\frac{\sfr-s}{2}[w_2,[w_1,f]]\otimes1\\
&-\sum\limits_{\alpha\in S(-1)}[w_1,z_\alpha]\otimes z_\alpha^*[w_2,f]\\
&+\sum\limits_{\alpha\in S(-1)}(-1)^{|w_1||w_2|}[w_2,z_\alpha]\otimes z_\alpha^*[w_1,f]-1\otimes[\varphi_{w_1},\varphi_{w_2}]\\
=&([w_1,w_2],f)(e\otimes1+\sum\limits_{\alpha\in S(-1)}(-1)^{|\alpha|}[e,z_\alpha^*]\otimes z_\alpha\\
&+\sum\limits_{\alpha,\beta\in S(-1)}(-1)^{|\alpha|+|\beta|}[[e,z_\beta^*],z_\alpha^*]\otimes z_\alpha z_\beta)\\
&-\sum\limits_{\alpha,\beta\in S(-1)}(-1)^{|\alpha|+|\beta|+|\alpha||w_2|}[[w_1,z_\alpha^*],[w_2,z_\beta^*]]\otimes z_\beta z_\alpha\\
&+\sum\limits_{\alpha\in S(-1)}
(-1)^{|\alpha||w_2|}[w_1,z_\alpha][w_2,z_\alpha^*]\otimes1-\sum\limits_{\alpha\in S(-1)}[w_1,z_\alpha]\otimes z_\alpha^*[w_2,f]\\
&+\sum\limits_{\alpha\in S(-1)}(-1)^{|w_1||w_2|}[w_2,z_\alpha]\otimes z_\alpha^*[w_1,f]-1\otimes[\varphi_{w_1},\varphi_{w_2}].
\end{split}
\end{equation}
\subsubsection{}
For all $x\in\ggg^e(1)$, we have defined the Lie superalgebra endomorphism $\sharp: \ggg(0)\rightarrow\ggg(0)$ in \eqref{xh} by
\begin{equation}\label{3.25}
[x,z_\alpha]^{\sharp}=[x,z_\alpha]-\frac{1}{2}(h,[x,z_\alpha])h.
\end{equation}
Moreover, we have
\begin{equation}\label{3.26}
\begin{split}
\sum\limits_{\alpha\in S(-1)}(h,[x,z_\alpha^*])z_\alpha=&\sum\limits_{\alpha\in S(-1)}(e,[f,[x,z_\alpha^*]])z_\alpha
=\sum\limits_{\alpha\in S(-1)}(-1)^{|\alpha||x|}\langle z_\alpha^*,[x,f] \rangle z_\alpha\\
=&(-1)^{|x|}[x,f]
\end{split}
\end{equation}
and \begin{equation}\label{3.27}
\sum\limits_{\alpha\in S(-1)}(h,[x,z_\alpha])z_\alpha^*=-\sum\limits_{\alpha\in S(-1)}(-1)^{|\alpha|}(h,[x,z_\alpha^*])z_\alpha=-[x,f].
\end{equation}

As $[w_2,z_\alpha]^{\sharp}\in\ggg^e(0)$, it follows from \eqref{rewrite}, \eqref{zz*} and \eqref{3.25} that
\begin{equation}\label{wzsharp}
\begin{split}
\Theta_{[w_2,z_\alpha]^{\sharp}}=&[w_2,z_\alpha]^{\sharp}\otimes1-\sum\limits_{\beta\in S(-1)}\frac{1}{2}\otimes z_\beta[z_\beta^*,[w_2,z_\alpha]^{\sharp}]\\
=&[w_2,z_\alpha]\otimes1-\frac{1}{2}(h,[w_2,z_\alpha])h\otimes1\\&-\sum\limits_{\beta\in S(-1)}\frac{1}{2}\otimes z_\beta[z_\beta^*,([w_2,z_\alpha]-\frac{1}{2}(h,[w_2,z_\alpha])h)]\\
=&[w_2,z_\alpha]\otimes1-\frac{1}{2}(h,[w_2,z_\alpha])h\otimes1\\&-\sum\limits_{\beta\in S(-1)}\frac{1}{2}\otimes z_\beta[z_\beta^*,[w_2,z_\alpha]]+\frac{1}{4}\sum\limits_{\beta\in S(-1)}(h,[w_2,z_\alpha])\otimes z_\beta z_\beta^*\\
=&[w_2,z_\alpha]\otimes1-\frac{1}{2}(h,[w_2,z_\alpha])h\otimes1\\&-\sum\limits_{\beta\in S(-1)}\frac{1}{2}\otimes z_\beta[z_\beta^*,[w_2,z_\alpha]]+\frac{\sfr-s}{8}(h,[w_2,z_\alpha]).
\end{split}
\end{equation}
The same discussion as in \eqref{wzsharp} shows that
\begin{equation}\label{zwsharp}
\begin{split}
\Theta_{[z_\alpha^*, w_1]^{\sharp}}=&[z_\alpha^*, w_1]\otimes1-\frac{1}{2}(h,[z_\alpha^*, w_1])h\otimes1-\sum\limits_{\beta\in S(-1)}\frac{1}{2}\otimes z_\beta[z_\beta^*,[z_\alpha^*, w_1]]\\&+\frac{\sfr-s}{8}(h,[z_\alpha^*, w_1]).
\end{split}
\end{equation}

Taking \eqref{wzsharp} and \eqref{zwsharp} into account, we can deduce that
\begin{equation}\label{3.30}
\begin{split}
&\sum\limits_{\alpha\in S(-1)}\Theta_{[w_2,z_\alpha]^{\sharp}}\Theta_{[z_\alpha^*, w_1]^{\sharp}}\\
=&\sum\limits_{\alpha\in S(-1)}(([w_2,z_\alpha]\otimes1-\frac{1}{2}(h,[w_2,z_\alpha])h\otimes1-\frac{1}{2}\sum\limits_{\beta\in S(-1)}1\otimes z_\beta[z_\beta^*,[w_2,z_\alpha]]\\
&+\frac{\sfr-s}{8}(h,[w_2,z_\alpha]))([z_\alpha^*, w_1]\otimes1-\frac{1}{2}(h,[z_\alpha^*, w_1])h\otimes1\\
&-\frac{1}{2}\sum\limits_{\beta\in S(-1)}1\otimes z_\beta[z_\beta^*,[z_\alpha^*, w_1]]+\frac{\sfr-s}{8}(h,[z_\alpha^*, w_1])))\\
=&\sum\limits_{\alpha\in S(-1)}[w_2,z_\alpha][z_\alpha^*, w_1]\otimes1-\frac{1}{2}\sum\limits_{\alpha\in S(-1)}(h,[z_\alpha^*, w_1])[w_2,z_\alpha]h\otimes1\\
&-\frac{1}{2}\sum\limits_{\alpha,\beta\in S(-1)}[w_2,z_\alpha]\otimes z_\beta[z_\beta^*,[z_\alpha^*, w_1]]+\frac{\sfr-s}{8}\sum\limits_{\alpha\in S(-1)}(h,[z_\alpha^*, w_1])[w_2,z_\alpha]\otimes1\\
&-\frac{1}{2}\sum\limits_{\alpha\in S(-1)}(h,[w_2,z_\alpha])[z_\alpha^*, w_1]h\otimes1+\frac{1}{4}\sum\limits_{\alpha\in S(-1)}(h,[w_2,z_\alpha])(h,[z_\alpha^*, w_1])h^2\otimes1\\
&+\frac{1}{4}\sum\limits_{\alpha,\beta\in S(-1)}(h,[w_2,z_\alpha])h\otimes z_\beta[z_\beta^*,[z_\alpha^*, w_1]]\\
&+\frac{s-\sfr}{16}\sum\limits_{\alpha\in S(-1)}(h,[z_\alpha^*, w_1])(h,[w_2,z_\alpha])h\otimes1\\&
-\frac{1}{2}\sum\limits_{\alpha,\beta\in S(-1)}(-1)^{(|w_1|+|\alpha|)(|w_2|+|\alpha|)}[z_\alpha^*, w_1]\otimes z_\beta[z_\beta^*,[w_2,z_\alpha]]\\
&+\frac{1}{4}\sum\limits_{\alpha,\beta\in S(-1)}(h,[z_\alpha^*, w_1])h\otimes z_\beta[z_\beta^*,[w_2,z_\alpha]]\\
&+\frac{1}{4}\sum\limits_{\alpha,\beta,\gamma\in S(-1)}(-1)^{(|w_1|+|\alpha|)(|w_2|+|\alpha|)}\otimes z_\beta[z_\beta^*,[z_\alpha^*, w_1]]z_\gamma[z_\gamma^*,[w_2,z_\alpha]]\\
&+\frac{s-\sfr}{16}\sum\limits_{\alpha,\beta\in S(-1)}(h,[z_\alpha^*, w_1])\otimes z_\beta[z_\beta^*,[w_2,z_\alpha]]
+\frac{\sfr-s}{8}\sum\limits_{\alpha\in S(-1)}(h,[w_2,z_\alpha])[z_\alpha^*, w_1]\otimes1\\
&+\frac{s-\sfr}{16}\sum\limits_{\alpha\in S(-1)}(h,[w_2,z_\alpha])(h,[z_\alpha^*, w_1])h\otimes1\\
&+\frac{s-\sfr}{16}\sum\limits_{\alpha,\beta\in S(-1)}(h,[w_2,z_\alpha])\otimes z_\beta[z_\beta^*,[z_\alpha^*, w_1]]\\
&+\frac{(s-\sfr)^2}{64}\sum\limits_{\alpha\in S(-1)}(h,[w_2,z_\alpha])(h,[z_\alpha^*, w_1]).
\end{split}
\end{equation}

Now we will deal with the terms in \eqref{3.30}. First note that
\begin{equation}\label{3.31}
\begin{split}
&\sum\limits_{\alpha\in S(-1)}[w_2,z_\alpha][z_\alpha^*, w_1]\otimes1
=-\sum\limits_{\alpha\in S(-1)}(-1)^{|\alpha||w_1|}[w_2,z_\alpha][w_1,z_\alpha^*]\otimes1\\
=&\sum\limits_{\alpha\in S(-1)}(-1)^{|\alpha||w_1|+|\alpha|+(|w_1|+|\alpha|)(|w_2|+|\alpha|)}(-[[w_1,z_\alpha],[w_2,z_\alpha^*]]+[w_1,z_\alpha][w_2,z_\alpha^*])\otimes1\\
=&-\sum\limits_{\alpha\in S(-1)}(-1)^{|\alpha||w_2|+|w_1||w_2|}[[w_1,z_\alpha],[w_2,z_\alpha^*]]\otimes1\\
&+\sum\limits_{\alpha\in S(-1)}(-1)^{|\alpha||w_2|+|w_1||w_2|}[w_1,z_\alpha][w_2,z_\alpha^*]\otimes1.
\end{split}
\end{equation}

Moreover, it follows from \eqref{zz*}, \eqref{3.26} and \eqref{3.27} that
\begin{equation}\label{3.32}
\begin{split}
&\frac{1}{4}\sum\limits_{\alpha,\beta\in S(-1)}(h,[w_2,z_\alpha])h\otimes z_\beta[z_\beta^*,[z_\alpha^*, w_1]]
+\frac{1}{4}\sum\limits_{\alpha,\beta\in S(-1)}(h,[z_\alpha^*, w_1])h\otimes z_\beta[z_\beta^*,[w_2,z_\alpha]]\\
=&\frac{1}{4}\sum\limits_{\alpha,\beta\in S(-1)}h\otimes z_\beta[z_\beta^*,[(h,[w_2,z_\alpha])z_\alpha^*, w_1]]\\
&-\frac{1}{4}\sum\limits_{\alpha,\beta\in S(-1)}(-1)^{|\alpha||w_1|}h\otimes z_\beta[z_\beta^*,[w_2,(h,[w_1,z_\alpha^*])z_\alpha]]\\
=&-\frac{1}{4}\sum\limits_{\alpha\in S(-1)}h\otimes z_\alpha[z_\alpha^*,[[w_2,f], w_1]]
-\frac{1}{4}\sum\limits_{\alpha\in S(-1)}h\otimes z_\alpha[z_\alpha^*,[w_2,[w_1,f]]]\\
=&\frac{1}{4}(-1)^{|w_1||w_2|}\sum\limits_{\alpha\in S(-1)}h\otimes z_\alpha[z_\alpha^*,[[w_1,w_2],f]]\\
=&\frac{1}{4}(-1)^{|w_1||w_2|}([w_1,w_2],f)\sum\limits_{\alpha\in S(-1)}h\otimes z_\alpha[z_\alpha^*,h]\\
=&\frac{1}{4}(-1)^{|w_1||w_2|}([w_1,w_2],f)\sum\limits_{\alpha\in S(-1)}h\otimes z_\alpha z_\alpha^*
=\frac{\sfr-s}{8}(-1)^{|w_1||w_2|}([w_1,w_2], f)h\otimes1.
\end{split}
\end{equation}
By the same discussion as in \eqref{3.32} we can obtain that
\begin{equation}
\begin{split}
&-\frac{1}{2}\sum\limits_{\alpha\in S(-1)}(h,[z_\alpha^*, w_1])[w_2,z_\alpha]h\otimes1-\frac{1}{2}\sum\limits_{\alpha\in S(-1)}(h,[w_2,z_\alpha])[z_\alpha^*, w_1]h\otimes1\\
=&-\frac{(-1)^{|w_1||w_2|}}{2}([w_1,w_2],f)h^2\otimes1,
\end{split}
\end{equation}
\begin{equation}
\begin{split}
&\frac{\sfr-s}{8}\sum\limits_{\alpha\in S(-1)}(h,[z_\alpha^*, w_1])[w_2,z_\alpha]\otimes1+\frac{\sfr-s}{8}\sum\limits_{\alpha\in S(-1)}(h,[w_2,z_\alpha])[z_\alpha^*, w_1]\otimes1\\
=&\frac{\sfr-s}{8}(-1)^{|w_1||w_2|}([w_1,w_2], f)h\otimes1,
\end{split}
\end{equation}
\begin{equation}
\begin{split}
&\frac{1}{4}\sum\limits_{\alpha\in S(-1)}(h,[w_2,z_\alpha])(h,[z_\alpha^*, w_1])h^2\otimes1
=\frac{1}{4}\sum\limits_{\alpha\in S(-1)}(h,[(h,[w_2,z_\alpha])z_\alpha^*, w_1])h^2\otimes1\\
=&\frac{1}{4}(-1)^{|w_1||w_2|}([w_1,w_2], f)h^2\otimes1,
\end{split}
\end{equation}
\begin{equation}
\begin{split}
&\frac{s-\sfr}{16}(\sum\limits_{\alpha\in S(-1)}(h,[z_\alpha^*, w_1])(h,[w_2,z_\alpha])h+\sum\limits_{\alpha\in S(-1)}(h,[w_2,z_\alpha])(h,[z_\alpha^*, w_1])h)\otimes1\\
=&\frac{s-\sfr}{8}(-1)^{|w_1||w_2|}([w_1,w_2],f)h\otimes1,
\end{split}
\end{equation}
\begin{equation}
\begin{split}
&\frac{s-\sfr}{16}(\sum\limits_{\alpha,\beta\in S(-1)}(h,[z_\alpha^*, w_1])\otimes z_\beta[z_\beta^*,[w_2,z_\alpha]]+\sum\limits_{\alpha,\beta\in S(-1)}(h,[w_2,z_\alpha])\otimes z_\beta[z_\beta^*,[z_\alpha^*, w_1]])\\
=&\frac{s-\sfr}{16}\sum\limits_{\alpha\in S(-1)}(-1)^{|w_1||w_2|}([w_1,w_2],f)\otimes z_\alpha z_\alpha^*=-\frac{(s-\sfr)^2}{32}(-1)^{|w_1||w_2|}([w_1,w_2],f),
\end{split}
\end{equation}
\begin{equation}\label{3.38}
\frac{(s-\sfr)^2}{64}\sum\limits_{\alpha\in S(-1)}(h,[w_2,z_\alpha])(h,[z_\alpha^*, w_1])=\frac{(s-\sfr)^2}{64}(-1)^{|w_1||w_2|}([w_1,w_2],f).
\end{equation}

On the other hand, it follows from the procedure of \eqref{xzzy} that
\begin{equation}\label{3.39}
\begin{split}
&\sum\limits_{\alpha,\beta\in S(-1)}[w_2,z_\alpha]\otimes z_\beta[z_\beta^*,[z_\alpha^*, w_1]]\\
=&\sum\limits_{\alpha,\beta\in S(-1)}[w_2,z_\alpha]\otimes z_\beta[[z_\beta^*,z_\alpha^*], w_1]+\sum\limits_{\alpha,\beta\in S(-1)}(-1)^{|\alpha||\beta|}[w_2,z_\alpha]\otimes z_\beta[z_\alpha^*,[z_\beta^*,w_1]]\\
=&-\sum\limits_{\alpha\in S(-1)}(-1)^{|\alpha|}[w_2,z_\alpha^*]\otimes z_\alpha[f,w_1]+[w_2,[w_1,f]]\otimes1-\sum\limits_{\alpha\in S(-1)}[w_2,z_\alpha]\otimes z_\alpha^*[w_1,f]\\
&+\sum\limits_{\beta,\gamma\in S(-1)}(-1)^{(|\beta|+|\gamma|)(1+|w_1|)}[w_2,[z_\gamma^*,[z_\beta^*, w_1]]]\otimes z_\beta z_\gamma\\
=&-2\sum\limits_{\alpha\in S(-1)}[w_2,z_\alpha]\otimes z_\alpha^*[w_1,f]+[w_2,[w_1,f]]\otimes1\\
&+\sum\limits_{\beta,\gamma\in S(-1)}(-1)^{(|\beta|+|\gamma|)(1+|w_1|)}[w_2,[z_\gamma^*,[z_\beta^*, w_1]]]\otimes z_\beta z_\gamma.
\end{split}
\end{equation}
By the same discussion as in \eqref{3.39}, one can conclude that
\begin{equation}\label{3.40}
\begin{split}
&\sum\limits_{\alpha,\beta\in S(-1)}(-1)^{(|w_1|+|\alpha|)(|w_2|+|\alpha|)}[z_\alpha^*, w_1]\otimes z_\beta[z_\beta^*,[w_2,z_\alpha]]\\
=&(-1)^{|w_1||w_2|}\sum\limits_{\alpha,\beta\in S(-1)}(-1)^{|\alpha|}[w_1,z_\alpha^*]\otimes z_\beta[z_\beta^*,[z_\alpha,w_2]]\\
=&-(-1)^{|w_1||w_2|}\sum\limits_{\alpha,\beta\in S(-1)}[w_1,z_\alpha]\otimes z_\beta[z_\beta^*,[z_\alpha^*,w_2]]\\
=&2\sum\limits_{\alpha\in S(-1)}(-1)^{|w_1||w_2|}[w_1,z_\alpha]\otimes z_\alpha^*[w_2,f]-(-1)^{|w_1||w_2|}[w_1,[w_2,f]]\otimes1\\
&-\sum\limits_{\beta,\gamma\in S(-1)}(-1)^{(|\beta|+|\gamma|)(1+|w_2|)+|w_1||w_2|}[w_1,[z_\gamma^*,[z_\beta^*, w_2]]]\otimes z_\beta z_\gamma.
\end{split}
\end{equation}
Combining \eqref{3.39} with \eqref{3.40}, it is immediate from \eqref{wzzw} that
\begin{equation}\label{3.41}
\begin{split}
&\sum\limits_{\alpha,\beta\in S(-1)}[w_2,z_\alpha]\otimes z_\beta[z_\beta^*,[z_\alpha^*, w_1]]\\
&+\sum\limits_{\alpha,\beta\in S(-1)}(-1)^{(|w_1|+|\alpha|)(|w_2|+|\alpha|)}[z_\alpha^*, w_1]\otimes z_\beta[z_\beta^*,[w_2,z_\alpha]]\\
=&-2\sum\limits_{\alpha\in S(-1)}[w_2,z_\alpha]\otimes z_\alpha^*[w_1,f]+2\sum\limits_{\alpha\in S(-1)}(-1)^{|w_1||w_2|}[w_1,z_\alpha]\otimes z_\alpha^*[w_2,f]\\
&-(-1)^{|w_1||w_2|}([w_1,w_2],f)h\otimes1-\frac{s-\sfr}{2}(-1)^{|w_1||w_2|}([w_1,w_2],f)h\otimes1\\
&+2\sum\limits_{\alpha,\beta\in S(-1)}(-1)^{|\alpha|+|\beta|+|\alpha||w_2|+|w_1||w_2|}[[w_1,z_\alpha^*],[w_2,z_\beta^*]]\otimes z_\beta z_\alpha\\
&-\sum\limits_{\alpha\in S(-1)}(-1)^{|\alpha||w_2|+|w_1||w_2|}[[w_1,z_\alpha],[w_2,z_\alpha^*]]\otimes1\\
&-([w_1,w_2],f)\sum\limits_{\alpha,\beta\in S(-1)}(-1)^{|\alpha|+|\beta|+|w_1||w_2|}[[e,z_\beta^*],z_\alpha^*]\otimes z_\alpha z_\beta.
\end{split}
\end{equation}

Now we can deduce from \eqref{3.30}-\eqref{3.38} and \eqref{3.41} that
\begin{equation}\label{3.42}
\begin{split}
&\sum\limits_{\alpha\in S(-1)}(-1)^{|w_1||w_2|}\Theta_{[w_2,z_\alpha]^{\sharp}}\Theta_{[z_\alpha^*, w_1]^{\sharp}}\\
=&\sum\limits_{\alpha\in S(-1)}(-1)^{|\alpha||w_2|}[w_1,z_\alpha][w_2,z_\alpha^*]\otimes1-\frac{1}{2}\sum\limits_{\alpha\in S(-1)}(-1)^{|\alpha||w_2|}[[w_1,z_\alpha],[w_2,z_\alpha^*]]\otimes1\\
&-\frac{1}{2}([w_1,w_2],f)h^2\otimes1+\frac{\sfr-s}{8}([w_1,w_2],f)h\otimes1+\frac{1}{4}([w_1,w_2],f)h^2\otimes1\\
&+\frac{\sfr-s}{8}([w_1,w_2],f)h\otimes1+\frac{s-\sfr}{8}([w_1,w_2],f)h\otimes1-\frac{(s-\sfr)^2}{32}([w_1,w_2],f)\\
&+\frac{(s-\sfr)^2}{64}([w_1,w_2],f)+\sum\limits_{\alpha\in S(-1)}(-1)^{|w_1||w_2|}[w_2,z_\alpha]\otimes z_\alpha^*[w_1,f]\\
&-\sum\limits_{\alpha\in S(-1)}[w_1,z_\alpha]\otimes z_\alpha^*[w_2,f]+\frac{1}{2}([w_1,w_2],f)h\otimes1+\frac{s-\sfr}{4}([w_1,w_2],f)h\otimes1\\
&-\sum\limits_{\alpha,\beta\in S(-1)}(-1)^{|\alpha|+|\beta|+|\alpha||w_2|}[[w_1,z_\alpha^*],[w_2,z_\beta^*]]\otimes z_\beta z_\alpha\\
&+\frac{1}{2}([w_1,w_2],f)\sum\limits_{\alpha,\beta\in S(-1)}(-1)^{|\alpha|+|\beta|}[[e,z_\beta^*],z_\alpha^*]\otimes z_\alpha z_\beta\\
&+\frac{1}{4}\sum\limits_{\alpha,\beta,\gamma\in S(-1)}(-1)^{|\alpha||w_1|+|\alpha||w_2|+|\alpha|}\otimes z_\beta[z_\beta^*,[z_\alpha^*, w_1]]z_\gamma[z_\gamma^*,[w_2,z_\alpha]]\\
=&\sum\limits_{\alpha\in S(-1)}(-1)^{|\alpha||w_2|}[w_1,z_\alpha][w_2,z_\alpha^*]\otimes1-\frac{1}{2}\sum\limits_{\alpha\in S(-1)}(-1)^{|\alpha||w_2|}[[w_1,z_\alpha],[w_2,z_\alpha^*]]\otimes1\\
&-\frac{1}{4}([w_1,w_2],f)h^2\otimes1+\frac{s-\sfr}{8}([w_1,w_2],f)h\otimes1-\frac{(s-\sfr)^2}{64}([w_1,w_2],f)\\
&+\sum\limits_{\alpha\in S(-1)}(-1)^{|w_1||w_2|}[w_2,z_\alpha]\otimes z_\alpha^*[w_1,f]-\sum\limits_{\alpha\in S(-1)}[w_1,z_\alpha]\otimes z_\alpha^*[w_2,f]\\
&+\frac{1}{2}([w_1,w_2],f)h\otimes1-\sum\limits_{\alpha,\beta\in S(-1)}(-1)^{|\alpha|+|\beta|+|\alpha||w_2|}[[w_1,z_\alpha^*],[w_2,z_\beta^*]]\otimes z_\beta z_\alpha\\
&+\frac{1}{2}([w_1,w_2],f)\sum\limits_{\alpha,\beta\in S(-1)}(-1)^{|\alpha|+|\beta|}[[e,z_\beta^*],z_\alpha^*]\otimes z_\alpha z_\beta\\
&+\frac{1}{4}\sum\limits_{\alpha,\beta,\gamma\in S(-1)}(-1)^{|\alpha||w_1|+|\alpha||w_2|+|\alpha|}\otimes z_\beta[z_\beta^*,[z_\alpha^*, w_1]]z_\gamma[z_\gamma^*,[w_2,z_\alpha]].
\end{split}
\end{equation}

Interchanging the roles of $w_1$ and $w_2$ in \eqref{3.42}, we have
\begin{equation}\label{3.43}
\begin{split}
&\sum\limits_{\alpha\in S(-1)}\Theta_{[w_1,z_\alpha]^{\sharp}}\Theta_{[z_\alpha^*, w_2]^{\sharp}}\\
=&\sum\limits_{\alpha\in S(-1)}(-1)^{|\alpha||w_1|+|w_1||w_2|}[w_2,z_\alpha][w_1,z_\alpha^*]\otimes1\\
&-\frac{1}{2}\sum\limits_{\alpha\in S(-1)}(-1)^{|\alpha||w_1|+|w_1||w_2|}[[w_2,z_\alpha],[w_1,z_\alpha^*]]\otimes1\\
&-\frac{1}{4}(-1)^{|w_1||w_2|}([w_2,w_1],f)h^2\otimes1\\
&+\frac{s-\sfr}{8}(-1)^{|w_1||w_2|}([w_2,w_1],f)h\otimes1-\frac{(s-\sfr)^2}{64}(-1)^{|w_1||w_2|}([w_2,w_1],f)\\
&+\sum\limits_{\alpha\in S(-1)}[w_1,z_\alpha]\otimes z_\alpha^*[w_2,f]-\sum\limits_{\alpha\in S(-1)}(-1)^{|w_1||w_2|}[w_2,z_\alpha]\otimes z_\alpha^*[w_1,f]\\
&+\frac{1}{2}(-1)^{|w_1||w_2|}([w_2,w_1],f)h\otimes1\\
&-\sum\limits_{\alpha,\beta\in S(-1)}(-1)^{|\alpha|+|\beta|+|\alpha||w_1|+|w_1||w_2|}[[w_2,z_\alpha^*],[w_1,z_\beta^*]]\otimes z_\beta z_\alpha\\
&+\frac{1}{2}(-1)^{|w_1||w_2|}([w_2,w_1],f)\sum\limits_{\alpha,\beta\in S(-1)}(-1)^{|\alpha|+|\beta|}[[e,z_\beta^*],z_\alpha^*]\otimes z_\alpha z_\beta\\
&+\frac{1}{4}\sum\limits_{\alpha,\beta,\gamma\in S(-1)}(-1)^{|\alpha||w_1|+|\alpha||w_2|+|\alpha|+|w_1||w_2|}\otimes z_\beta[z_\beta^*,[z_\alpha^*, w_2]]z_\gamma[z_\gamma^*,[w_1,z_\alpha]]\\
=&-\sum\limits_{\alpha\in S(-1)}(-1)^{|\alpha||w_2|}[w_1,z_\alpha][w_2,z_\alpha^*]\otimes1\\
&-\frac{1}{2}\sum\limits_{\alpha\in S(-1)}(-1)^{|\alpha||w_2|}[[w_1,z_\alpha],[w_2,z_\alpha^*]]\otimes1+\frac{1}{4}([w_1,w_2],f)h^2\otimes1\\
&-\frac{s-\sfr}{8}([w_1,w_2],f)h\otimes1
+\frac{(s-\sfr)^2}{64}([w_1,w_2],f)+\sum\limits_{\alpha\in S(-1)}[w_1,z_\alpha]\otimes z_\alpha^*[w_2,f]\\
&-\sum\limits_{\alpha\in S(-1)}(-1)^{|w_1||w_2|}[w_2,z_\alpha]\otimes z_\alpha^*[w_1,f]-\frac{1}{2}([w_1,w_2],f)h\otimes1\\
&+\sum\limits_{\alpha,\beta\in S(-1)}(-1)^{|\alpha|+|\beta|+|\alpha||w_2|}[[w_1,z_\alpha^*],[w_2,z_\beta^*]]\otimes z_\beta z_\alpha\\
&-\frac{1}{2}([w_1,w_2],f)\sum\limits_{\alpha,\beta\in S(-1)}(-1)^{|\alpha|+|\beta|}[[e,z_\beta^*],z_\alpha^*]\otimes z_\alpha z_\beta\\
&+\frac{1}{4}\sum\limits_{\alpha,\beta,\gamma\in S(-1)}(-1)^{(|\alpha|+|w_1|)(|\alpha|+|w_2|)}\otimes z_\beta[z_\beta^*,[z_\alpha^*, w_2]]z_\gamma[z_\gamma^*,[w_1,z_\alpha]].
\end{split}
\end{equation}
In \eqref{3.43} we used the fact that
\begin{equation}
\begin{split}
&-\sum\limits_{\alpha,\beta\in S(-1)}(-1)^{|\alpha|+|\beta|+|\alpha||w_1|+|w_1||w_2|}[[w_2,z_\alpha^*],[w_1,z_\beta^*]]\otimes z_\beta z_\alpha\\
=&-\sum\limits_{\alpha,\beta\in S(-1)}(-1)^{|\alpha|+|\beta|+|\beta||w_1|+|w_1||w_2|}[[w_2,z_\beta^*],[w_1,z_\alpha^*]]\otimes z_\alpha z_\beta\\
=&\sum\limits_{\alpha,\beta\in S(-1)}(-1)^{|\alpha|+|\beta|+|\alpha||w_2|}[[w_1,z_\alpha^*],[w_2,z_\beta^*]]\otimes z_\beta z_\alpha\\
&+\sum\limits_{\alpha,\beta\in S(-1)}(-1)^{|\alpha|+|\beta|+|\alpha||\beta|+|\alpha||w_2|}[[w_1,z_\alpha^*],[w_2,z_\beta^*]]\otimes [z_\alpha,z_\beta]\\
=&\sum\limits_{\alpha,\beta\in S(-1)}(-1)^{|\alpha|+|\beta|+|\alpha||w_2|}[[w_1,z_\alpha^*],[w_2,z_\beta^*]]\otimes z_\beta z_\alpha\\
&-\sum\limits_{\alpha\in S(-1)}(-1)^{|\alpha||w_2|}[[w_1,z_\alpha],[w_2,z_\alpha^*]]\otimes1.
\end{split}
\end{equation}

We now combine \eqref{5.22} with \eqref{3.42} and \eqref{3.43} to deduce that
\begin{equation}\label{3.45}
\begin{split}
&[\Theta_{w_1},\Theta_{w_2}]+\frac{1}{2}\sum\limits_{\alpha\in S(-1)}\Theta_{[w_1,z_\alpha]^{\sharp}}\Theta_{[z_\alpha^*, w_2]^{\sharp}}-\frac{1}{2}\sum\limits_{\alpha\in S(-1)}(-1)^{|w_1||w_2|}\Theta_{[w_2,z_\alpha]^{\sharp}}\Theta_{[z_\alpha^*, w_1]^{\sharp}}\\
=&([w_1,w_2],f)((e+\frac{h^2}{4}-(\frac{1}{2}+\frac{(s-\sfr)}{8})h)\otimes1+\sum\limits_{\alpha\in S(-1)}(-1)^{|\alpha|}[e,z_\alpha^*]\otimes z_\alpha\\
&+\frac{1}{2}\sum\limits_{\alpha,\beta\in S(-1)}(-1)^{|\alpha|+|\beta|}[[e,z_\alpha^*],z_\beta^*]\otimes z_\beta z_\alpha)\\
&+\frac{1}{8}\sum\limits_{\alpha,\beta,\gamma\in S(-1)}(-1)^{(|\alpha|+|w_1|)(|\alpha|+|w_2|)}\otimes z_\beta[z_\beta^*,[z_\alpha^*, w_2]]z_\gamma[z_\gamma^*,[w_1,z_\alpha]]\\
&-\frac{1}{8}\sum\limits_{\alpha,\beta,\gamma\in S(-1)}(-1)^{|\alpha||w_1|+|\alpha||w_2|+|\alpha|}\otimes z_\beta[z_\beta^*,[z_\alpha^*, w_1]]z_\gamma[z_\gamma^*,[w_2,z_\alpha]]\\
&+\frac{(s-\sfr)^2}{64}([w_1,w_2],f)-1\otimes[\varphi_{w_1},\varphi_{w_2}].
\end{split}
\end{equation}
\subsubsection{}
Recall that in \eqref{C} we introduced $C$, which is the central element of $W_\chi'$, and showed that the element $\Theta_{\text{Cas}}=\sum_{i\in I}(-1)^{|i|}\Theta_{a_i}\Theta_{b_i}$
commutes with all operators $\Theta_v$ for $v\in\ggg^e(0)$ in Proposition \ref{vcommutate}, where $\{a_i\mid i\in I\}$ and $\{b_i\mid i\in I\}$ are dual bases of $\ggg^e(0)$ with respect to the restriction of the invariant form $(\cdot,\cdot)$ to $\ggg^e(0)$. Now we will calculate $C-\Theta_{\text{Cas}}$ in this part.

Since $[z_\beta^*,b_i],\,[z_\beta^*,a_i]\in\ggg(-1)$ for any $\beta\in S(-1)$ and $i\in I$, it is immediate from \eqref{zuz} that
\begin{equation}\label{3.48}
\begin{split}
[z_\beta^*,b_i]=&\sum\limits_{\alpha\in S(-1)}[z_\alpha^*,[z_\beta^*,b_i]]z_\alpha=\sum\limits_{\alpha\in S(-1)}(e,[z_\alpha^*,[z_\beta^*,b_i]])z_\alpha,\\
[z_\beta^*,a_i]=&\sum\limits_{\alpha\in S(-1)}[z_\alpha^*,[z_\beta^*,a_i]]z_\alpha=\sum\limits_{\alpha\in S(-1)}(e,[z_\alpha^*,[z_\beta^*,a_i]])z_\alpha.
\end{split}
\end{equation}
As the even bilinear form $(\cdot,\cdot)$ is invariant and $\mathbb{C}h$ is orthogonal to $\ggg(0)^\sharp$ with respect to $(\cdot,\cdot)$, it follows from \eqref{3.48} that
\begin{equation}\label{3.49}
\begin{split}
&\sum\limits_{\alpha,\beta\in S(-1)}(-1)^{|\alpha|+|\beta|}[[e,z_\alpha^*],z_\beta^*]^\sharp\otimes z_\beta z_\alpha\\
&=\sum\limits_{\alpha,\beta\in S(-1),i\in I}(-1)^{|i|}([[e,z_\alpha^*],z_\beta^*]^\sharp,b_i)a_i\otimes z_\beta z_\alpha\\
&=\sum\limits_{\alpha,\beta\in S(-1),i\in I}(-1)^{|i|}a_i\otimes z_\beta (e,[z_\alpha^*,[z_\beta^*,b_i]])z_\alpha\\
&=\sum\limits_{\beta\in S(-1),i\in I}(-1)^{|i|}a_i\otimes  z_\beta[z_\beta^*,b_i].
\end{split}
\end{equation}
Interchanging the roles of $\{a_i\}$ and $\{b_i\}$, we can obtain
\begin{equation}\label{3.50}
\begin{split}
&\sum\limits_{\alpha,\beta\in S(-1)}(-1)^{|\alpha|+|\beta|}[[e,z_\alpha^*],z_\beta^*]^\sharp\otimes z_\beta z_\alpha\\
=&\sum\limits_{\alpha,\beta\in S(-1),i\in I}(-1)^{|i|}(a_i,[[e,z_\alpha^*],z_\beta^*]^\sharp)b_i\otimes z_\beta z_\alpha\\
=&\sum\limits_{\alpha,\beta\in S(-1),i\in I}(-1)^{|i|}(a_i,[[e,z_\alpha^*],z_\beta^*])b_i\otimes z_\beta z_\alpha\\
=&\sum\limits_{\alpha,\beta\in S(-1),i\in I}b_i\otimes z_\beta(e,[z_\alpha^*,[z_\beta^*,a_i]])z_\alpha\\
=&\sum\limits_{\beta\in S(-1),i\in I}b_i\otimes z_\beta[z_\beta^*,a_i],
\end{split}
\end{equation}
where the third equation comes from the supersymmetry of the invariant bilinear form $(\cdot,\cdot)$.

On the other hand, it can be observed that
\begin{equation}\label{3.51}
\begin{split}
&\sum\limits_{\alpha,\beta\in S(-1)}(-1)^{|\alpha|+|\beta|}[[e,z_\alpha^*],z_\beta^*]^\sharp\otimes z_\beta z_\alpha\\
=&\sum\limits_{\alpha,\beta\in S(-1)}(-1)^{|\alpha|+|\beta|}([[e,z_\alpha^*],z_\beta^*]\otimes z_\beta z_\alpha-\frac{1}{2}(h,[[e,z_\alpha^*],z_\beta^*])h\otimes z_\beta z_\alpha)\\
=&\sum\limits_{\alpha,\beta\in S(-1)}(-1)^{|\alpha|+|\beta|}([[e,z_\alpha^*],z_\beta^*]\otimes z_\beta z_\alpha-\frac{1}{2}(-1)^{|\alpha||\beta|}([[h,z_\beta^*],z_\alpha^*],e)h\otimes z_\beta z_\alpha)\\
=&\sum\limits_{\alpha,\beta\in S(-1)}(-1)^{|\alpha|+|\beta|}[[e,z_\alpha^*],z_\beta^*]\otimes z_\beta z_\alpha+\frac{1}{2}\sum\limits_{\alpha,\beta\in S(-1)}(-1)^{|\alpha|+|\beta|}([[h,z_\beta^*],z_\alpha],e)h\otimes z_\beta z_\alpha^*\\
=&\sum\limits_{\alpha,\beta\in S(-1)}(-1)^{|\alpha|+|\beta|}[[e,z_\alpha^*],z_\beta^*]\otimes z_\beta z_\alpha+\frac{(s-\sfr)}{4}h\otimes 1,
\end{split}
\end{equation}
where the last equation in \eqref{3.51} follows from \eqref{zz*} and the fact that
\begin{equation*}
\begin{split}
&\sum\limits_{\alpha,\beta\in S(-1)}(-1)^{|\alpha|+|\beta|}([[h,z_\beta^*],z_\alpha],e)h\otimes z_\beta z_\alpha^*\\
=&-\sum\limits_{\alpha,\beta\in S(-1)}(-1)^{|\alpha|+|\beta|}([z_\beta^*,z_\alpha],e)h\otimes z_\beta z_\alpha^*\\
=&-\sum\limits_{\alpha\in S(-1)}h\otimes z_\alpha z_\alpha^*=\frac{(s-\sfr)}{2}h\otimes1.
\end{split}
\end{equation*}

As a result, it follows from \eqref{3.49}, \eqref{3.50} and \eqref{3.51} that
\begin{equation}\label{3.52}
\begin{split}
&\sum\limits_{\beta\in S(-1),i\in I}((-1)^{|i|}a_i\otimes  z_\beta[z_\beta^*,b_i]+b_i\otimes  z_\beta[z_\beta^*,a_i])\\&=2\sum\limits_{\alpha,\beta\in S(-1)}(-1)^{|\alpha|+|\beta|}[[e,z_\alpha^*],z_\beta^*]\otimes z_\beta z_\alpha+\frac{(s-\sfr)}{2}h\otimes 1.
\end{split}
\end{equation}

As $\mathbb{C}h$ is orthogonal to $\ggg(0)^\sharp$ with respect to $(\cdot,\cdot)$, \eqref{3.48} yields
\begin{equation}\label{3.53}
\begin{split}
&\sum\limits_{\alpha,\beta\in S(-1),i\in I}1\otimes z_\alpha[z_\alpha^*,b_i]z_\beta[z_\beta^*,a_i]\\
=&\sum\limits_{\alpha,\beta,\gamma,\delta\in S(-1),i\in I}1\otimes z_\alpha(e,[z_\gamma^*,[z_\alpha^*,b_i]])z_\gamma z_\beta(e,[z_\delta^*,[z_\beta^*,a_i]])z_\delta\\
=&\sum\limits_{\alpha,\beta,\gamma,\delta\in S(-1),i\in I}([[e,z_\delta^*],z_\beta^*],a_i)([[e,z_\gamma^*],z_\alpha^*],b_i)\otimes z_\alpha z_\gamma z_\beta z_\delta\\
=&\sum\limits_{\alpha,\beta,\gamma,\delta\in S(-1),i\in I}([[e,z_\delta^*],z_\beta^*]^\sharp,a_i)([[e,z_\gamma^*],z_\alpha^*]^\sharp,b_i)\otimes z_\alpha z_\gamma z_\beta z_\delta\\
=&\sum\limits_{\alpha,\beta,\gamma,\delta\in S(-1),i\in I}([[e,z_\delta^*],z_\beta^*]^\sharp,([[e,z_\gamma^*],z_\alpha^*]^\sharp,b_i)a_i)\otimes z_\alpha z_\gamma z_\beta z_\delta\\
=&\sum\limits_{\alpha,\beta,\gamma,\delta\in S(-1)}([[e,z_\delta^*],z_\beta^*]^\sharp,[[e,z_\gamma^*],z_\alpha^*]^\sharp)\otimes z_\alpha z_\gamma z_\beta z_\delta.
\end{split}
\end{equation}

Since $a_i$ and $b_i$ are in $\ggg^e(0)$ for all $i\in I$, we can conclude from \eqref{rewrite}, \eqref{3.52} and \eqref{3.53} that
\begin{equation}\label{3.54}
\begin{split}
&\Theta_{\text{Cas}}=\sum\limits_{i\in I}(-1)^{|i|}\Theta_{a_i}\Theta_{b_i}\\
=&\sum\limits_{i\in I}(-1)^{|i|}(a_i\otimes1-\frac{1}{2}\otimes\sum\limits_{\alpha\in S(-1)}z_\alpha[z_\alpha^*,a_i])(b_i\otimes1-\frac{1}{2}\otimes\sum\limits_{\alpha\in S(-1)}z_\alpha[z_\alpha^*,b_i])\\
=&\sum\limits_{i\in I}(-1)^{|i|}a_ib_i\otimes1-\frac{1}{2}\sum\limits_{\alpha\in S(-1),i\in I}((-1)^{|i|}a_i\otimes z_\alpha[z_\alpha^*,b_i]+b_i\otimes z_\alpha[z_\alpha^*,a_i])\\
&+\frac{1}{4}\sum\limits_{\alpha,\beta\in S(-1),i\in I}z_\alpha[z_\alpha^*,b_i]z_\beta[z_\beta^*,a_i]\\
=&\sum\limits_{i\in I}(-1)^{|i|}a_ib_i\otimes1-\sum\limits_{\alpha,\beta\in S(-1)}(-1)^{|\alpha|+|\beta|}[[e,z_\alpha^*],z_\beta^*]\otimes z_\beta z_\alpha-\frac{(s-\sfr)}{4}h\otimes1\\
&+\frac{1}{4}\sum\limits_{\alpha,\beta,\gamma,\delta\in S(-1)}([[e,z_\delta^*],z_\beta^*]^\sharp,[[e,z_\gamma^*],z_\alpha^*]^\sharp)\otimes z_\alpha z_\gamma z_\beta z_\delta.
\end{split}
\end{equation}

Recall that in \eqref{rewrite} we introduced a central element of $W_\chi'$:
\begin{equation*}
\begin{split}
C=&2e\otimes1+\frac{h^2}{2}\otimes1-(1+\frac{s-\sfr}{2})h\otimes1+\sum\limits_{i\in I}(-1)^{|i|}a_ib_i\otimes1\\
&+2\sum\limits_{\alpha\in S(-1)}(-1)^{|\alpha|}[e,z_\alpha^*]\otimes z_\alpha.
\end{split}
\end{equation*}
In view of \eqref{3.54} we have
\begin{equation}\label{3.56}
\begin{split}
C-\Theta_{\text{Cas}}=&(2e+\frac{h^2}{2}-(1+\frac{(s-\sfr)}{4})h)\otimes1+2\sum\limits_{\alpha\in S(-1)}(-1)^{|\alpha|}[e,z_\alpha^*]\otimes z_\alpha\\
&+\sum\limits_{\alpha,\beta\in S(-1)}(-1)^{|\alpha|+|\beta|}[[e,z_\alpha^*],z_\beta^*]\otimes z_\beta z_\alpha\\
&-\frac{1}{4}\sum\limits_{\alpha,\beta,\gamma,\delta\in S(-1)}([[e,z_\delta^*],z_\beta^*]^\sharp,[[e,z_\gamma^*],z_\alpha^*]^\sharp)\otimes z_\alpha z_\gamma z_\beta z_\delta.
\end{split}
\end{equation}
We finally combine \eqref{3.45} and \eqref{3.56} to deduce that
\begin{equation}\label{3.57}
\begin{split}
&[\Theta_{w_1},\Theta_{w_2}]+\frac{1}{2}\sum\limits_{\alpha\in S(-1)}\Theta_{[w_1,z_\alpha]^{\sharp}}\Theta_{[z_\alpha^*, w_2]^{\sharp}}-\frac{1}{2}\sum\limits_{\alpha\in S(-1)}(-1)^{|w_1||w_2|}\Theta_{[w_2,z_\alpha]^{\sharp}}\Theta_{[z_\alpha^*, w_1]^{\sharp}}\\
&-\frac{1}{2}([w_1,w_2],f)(C-\Theta_{\text{Cas}})\\
=&\frac{1}{8}\sum\limits_{\alpha,\beta,\gamma\in S(-1)}(-1)^{(|\alpha|+|w_1|)(|\alpha|+|w_2|)}\otimes z_\beta[z_\beta^*,[z_\alpha^*, w_2]]z_\gamma[z_\gamma^*,[w_1,z_\alpha]]\\
&-\frac{1}{8}\sum\limits_{\alpha,\beta,\gamma\in S(-1)}(-1)^{|\alpha||w_1|+|\alpha||w_2|+|\alpha|}\otimes z_\beta[z_\beta^*,[z_\alpha^*, w_1]]z_\gamma[z_\gamma^*,[w_2,z_\alpha]]\\
&+\frac{1}{8}([w_1,w_2],f)\sum\limits_{\alpha,\beta,\gamma,\delta\in S(-1)}([[e,z_\delta^*],z_\gamma^*]^\sharp,[[e,z_\beta^*],z_\alpha^*]^\sharp)\otimes z_\alpha z_\beta z_\gamma z_\delta\\
&+\frac{(s-\sfr)^2}{64}([w_1,w_2],f)-1\otimes[\varphi_{w_1},\varphi_{w_2}],
\end{split}
\end{equation}
where $\varphi_{w_1}$ and $\varphi_{w_2}$ are defined as in \eqref{varphi}, taking $w_1$ and $w_2$ in place of $w$, respectively.

By a direct but rather lengthy calculation, we can obtain that the right hand of \eqref{3.57} equals
\begin{equation}\label{3.58}
\begin{split}
&-\frac{1}{24}\sum\limits_{\alpha,\beta\in S(-1)}(-1)^{|\alpha||w_1|+|\beta||w_1|+|\alpha||\beta|}\otimes[[z_\beta,[z_\alpha,w_1]],[z_\beta^*,[z_\alpha^*,w_2]]]\\
&+\frac{3(s-\sfr)+4}{24}([w_1,w_2],f).
\end{split}
\end{equation}
As \eqref{3.58} is computed by brute force, the completely elementary yet tedious proof will be omitted.

\subsection{}\label{6.4} Now we are in a position to give the proof of Proposition \ref{1commutator}. The arguments will be divided into two steps, according to whether or not $\mathfrak{g}=\mathfrak{sl}(2|2)/\mathbb{C}I$.

(Step 1) Set \begin{equation}\label{bw}
\begin{split}
B(w_1,w_2):=&[\Theta_{w_1},\Theta_{w_2}]-\frac{1}{2}([w_1,w_2],f)(C-\Theta_{\text{Cas}})+\frac{1}{2}\sum\limits_{\alpha\in S(-1)}\Theta_{[w_1,z_\alpha]^{\sharp}}\Theta_{[z_\alpha^*, w_2]^{\sharp}}\\
&-\frac{1}{2}\sum\limits_{\alpha\in S(-1)}(-1)^{|w_1||w_2|}\Theta_{[w_2,z_\alpha]^{\sharp}}\Theta_{[z_\alpha^*, w_1]^{\sharp}},
\end{split}
\end{equation}
which is an element in $W_\chi'$. Moreover, the discussion in \eqref{3.57} shows that $B(w_1,w_2)=1\otimes b(w_1,w_2)$ for some
$b(w_1,w_2)\in A_e$. In conjunction with Lemma \ref{hwc2} this shows that $b(w_1,w_2)\in\mathbb{C}$ for all $w_1, w_2\in\ggg^e(1)$. Then
\begin{equation*}
\begin{array}{lcll}
b:&\ggg^e(1)\times\ggg^e(1)&\rightarrow&\mathbb{C}\\
&(w_1,w_2)&\mapsto &b(w_1,w_2)
\end{array}\end{equation*} is a bilinear form on $\ggg^e(1)$. Moreover, it is immediate from \eqref{3.57} and \eqref{3.58} that this bilinear form is even, i.e., $b(w_1,w_2)=0$ if $w_1$ and $w_2$ have different parities. In the following discussion  we will show that this bilinear form is invariant under the adjoint action of $\ggg^e(0)_{\bar0}$, i.e., for any $v\in\ggg^e(0)_{\bar0}$, we will prove
\begin{equation}\label{5.}
b([w_1,v],w_2)-b(w_1,[v,w_2])=0.
\end{equation}

First note that
\begin{equation}\label{invariantform}
\begin{split}
&b([w_1,v],w_2)-b(w_1,[v,w_2])\\
=&[\Theta_{[{w_1},v]},\Theta_{w_2}]-[\Theta_{w_1},\Theta_{[v,{w_2}]}]-\frac{1}{2}([[{w_1},v],w_2],f)(C-\Theta_{\text{Cas}})\\
&+\frac{1}{2}([w_1,[v,{w_2}]],f)(C-\Theta_{\text{Cas}})
+\frac{1}{2}\sum\limits_{\alpha\in S(-1)}\Theta_{[[{w_1},v],z_\alpha]^{\sharp}}\Theta_{[z_\alpha^*, w_2]^{\sharp}}\\
&-\frac{1}{2}\sum\limits_{\alpha\in S(-1)}\Theta_{[w_1,z_\alpha]^{\sharp}}\Theta_{[z_\alpha^*,[v,{w_2}]]^{\sharp}}
-\frac{1}{2}\sum\limits_{\alpha\in S(-1)}(-1)^{|w_1||w_2|}\Theta_{[w_2,z_\alpha]^{\sharp}}\Theta_{[z_\alpha^*,[{w_1},v]]^{\sharp}}\\
&+\frac{1}{2}\sum\limits_{\alpha\in S(-1)}(-1)^{|w_1||w_2|}\Theta_{[[v,{w_2}],z_\alpha]^{\sharp}}\Theta_{[z_\alpha^*, w_1]^{\sharp}}.
\end{split}
\end{equation}
In virtue of Proposition \ref{vw}, we have
\begin{equation}\label{key1}
\begin{split}
[\Theta_{[w_1,v]},\Theta_{w_2}]-[\Theta_{w_1},\Theta_{[v,w_2]}]
=&[[\Theta_{w_1},\Theta_v],\Theta_{w_2}]
-[\Theta_{w_1},[\Theta_v,\Theta_{w_2}]]\\
=&[[\Theta_{w_1},\Theta_{w_2}],\Theta_v].
\end{split}
\end{equation}
Moveover, it is immediate from $v\in\ggg^e$ that
\begin{equation}\label{key2}
\begin{split}
-([[{w_1},v],w_2],f)+([w_1,[v,{w_2}]],f)=&-([[w_1,w_2],v],f)\\
=&-([w_1,w_2],f)([e,v],f)=0.
\end{split}
\end{equation}

For the remaining terms in \eqref{invariantform}, first note that
\begin{equation}\label{thegoal}
\begin{split}
&\sum\limits_{\alpha\in S(-1)}\Theta_{[[w_1,v],z_\alpha]^{\sharp}}\Theta_{[z_\alpha^*, w_2]^{\sharp}}-\sum\limits_{\alpha\in S(-1)}\Theta_{[w_1,z_\alpha]^{\sharp}}\Theta_{[z_\alpha^*,[v,w_2]]^{\sharp}}\\
=&\sum\limits_{\alpha\in S(-1)}\Theta_{[[w_1,z_\alpha],v]^{\sharp}}\Theta_{[z_\alpha^*, w_2]^{\sharp}}+\sum\limits_{\alpha\in S(-1)}\Theta_{[w_1,[v,z_\alpha]]^{\sharp}}\Theta_{[z_\alpha^*, w_2]^{\sharp}}\\
&-\sum\limits_{\alpha\in S(-1)}\Theta_{[w_1,z_\alpha]^{\sharp}}\Theta_{[[z_\alpha^*,v],w_2]^{\sharp}}-\sum\limits_{\alpha\in S(-1)}\Theta_{[w_1,z_\alpha]^{\sharp}}\Theta_{[v,[z_\alpha^*,w_2]]^{\sharp}}.
\end{split}
\end{equation}

For any $\alpha\in S(-1)$, it is worth noting that both $[v,z_\alpha]$ and $[z_\alpha^*,v]$ are in $\ggg(-1)$, then we have
\begin{equation}\label{extend}
[v,z_\alpha]=\sum\limits_{\beta\in S(-1)}\langle z_\beta^*,[v,z_\alpha]\rangle z_\beta,\qquad
[z_\alpha^*,v]=-\sum\limits_{\beta\in S(-1)}(-1)^{|\beta|}\langle z_\beta,[z_\alpha^*,v]\rangle z_\beta^*.
\end{equation}
We claim that the bilinear form $\langle\cdot,\cdot\rangle$ is $\ggg^e(0)_{\bar0}$-invariant. In fact, for any $v\in\ggg^e(0)_{\bar0}$ and $\alpha, \beta\in S(-1)$, we have
\begin{equation}\label{angle}
\begin{split}
\langle[z_\alpha,v],z_\beta\rangle=&(e,[[z_\alpha,v],z_\beta])
=(e,[z_\alpha,[v,z_\beta]])+(e,[[z_\alpha,z_\beta],v])\\
=&(e,[z_\alpha,[v,z_\beta]])-(e,[v,[z_\alpha,z_\beta]])\\
=&(e,[z_\alpha,[v,z_\beta]])-([e,v],[z_\alpha,z_\beta])\\
=&(e,[z_\alpha,[v,z_\beta]])=\langle z_\alpha,[v,z_\beta]\rangle.
\end{split}
\end{equation}

Taking \eqref{extend} and \eqref{angle} into account, one can conclude that
\begin{equation}\label{3.66angle}
\begin{split}
&\sum\limits_{\alpha\in S(-1)}\Theta_{[w_1,[v,z_\alpha]]^{\sharp}}\Theta_{[z_\alpha^*, w_2]^{\sharp}}-\sum\limits_{\alpha\in S(-1)}\Theta_{[w_1,z_\alpha]^{\sharp}}\Theta_{[[z_\alpha^*,v],w_2]^{\sharp}}\\
=&\sum\limits_{\alpha,\beta\in S(-1)}\langle z_\beta^*,[v,z_\alpha]\rangle \Theta_{[w_1,z_\beta]^{\sharp}}\Theta_{[z_\alpha^*, w_2]^{\sharp}}+\sum\limits_{\alpha,\beta\in S(-1)}(-1)^{|\beta|}\langle z_\beta,[z_\alpha^*,v]\rangle \Theta_{[w_1,z_\alpha]^{\sharp}}\Theta_{[z_\beta^*,w_2]^{\sharp}}\\
=&\sum\limits_{\alpha,\beta\in S(-1)}\langle z_\alpha^*,[v,z_\beta]\rangle \Theta_{[w_1,z_\alpha]^{\sharp}}\Theta_{[z_\beta^*, w_2]^{\sharp}}-\sum\limits_{\alpha,\beta\in S(-1)}\langle [z_\alpha^*,v],z_\beta\rangle \Theta_{[w_1,z_\alpha]^{\sharp}}\Theta_{[z_\beta^*,w_2]^{\sharp}}\\
=&\sum\limits_{\alpha,\beta\in S(-1)}\langle z_\alpha^*,[v,z_\beta]\rangle \Theta_{[w_1,z_\alpha]^{\sharp}}\Theta_{[z_\beta^*, w_2]^{\sharp}}-\sum\limits_{\alpha,\beta\in S(-1)}\langle z_\alpha^*,[v,z_\beta]\rangle \Theta_{[w_1,z_\alpha]^{\sharp}}\Theta_{[z_\beta^*,w_2]^{\sharp}}\\
=&0.
\end{split}
\end{equation}

For any $w\in\ggg^e(1)$ and $\alpha\in S(-1)$, it follows from the definition of ${\sharp}$ in \S\ref{3.1.1} that $[w,z_\alpha]^{\sharp}\in\ggg^e(0)$, then we have
\begin{equation*}
\begin{split}
[[w,z_\alpha]^{\sharp},v]^{\sharp}=&[[w,z_\alpha]^{\sharp},v]-\frac{1}{2}(h,[[w,z_\alpha]^{\sharp},v])h\\
=&[[w,z_\alpha]^{\sharp},v]-\frac{1}{2}([h,[w,z_\alpha]],v)h\\
=&[[w,z_\alpha]^{\sharp},v].
\end{split}
\end{equation*}
As $v\in\ggg^e(0)_{\bar0}$, and $\mathbb{C}h$ is orthogonal to $\ggg^e(0)$ with respect to $(\cdot,\cdot)$, then
\begin{equation*}
[[w,z_\alpha],v]^{\sharp}=[([w,z_\alpha]-\frac{1}{2}(h,[w,z_\alpha])h),v]^{\sharp}=[[w,z_\alpha]^{\sharp},v]^{\sharp}=[[w,z_\alpha]^{\sharp},v].
\end{equation*}
By the same discussion we have $[v,[z_\alpha^*,w_2]]^{\sharp}=[v,[z_\alpha^*,w_2]^{\sharp}]$.
In view of Proposition \ref{v1v2} this yields
\begin{equation}\label{thethird}
\begin{split}
&\sum\limits_{\alpha\in S(-1)}\Theta_{[[w_1,z_\alpha],v]^{\sharp}}\Theta_{[z_\alpha^*, w_2]^{\sharp}}-\sum\limits_{\alpha\in S(-1)}\Theta_{[w_1,z_\alpha]^{\sharp}}\Theta_{[v,[z_\alpha^*,w_2]]^{\sharp}}\\
=&\sum\limits_{\alpha\in S(-1)}\Theta_{[[w_1,z_\alpha]^{\sharp},v]}\Theta_{[z_\alpha^*, w_2]^{\sharp}}-\sum\limits_{\alpha\in S(-1)}\Theta_{[w_1,z_\alpha]^{\sharp}}\Theta_{[v,[z_\alpha^*,w_2]^{\sharp}]}\\
=&\sum\limits_{\alpha\in S(-1)}[\Theta_{[w_1,z_\alpha]^{\sharp}},\Theta_v]\Theta_{[z_\alpha^*, w_2]^{\sharp}}-\sum\limits_{\alpha\in S(-1)}\Theta_{[w_1,z_\alpha]^{\sharp}}[\Theta_v,\Theta_{[z_\alpha^*,w_2]^{\sharp}}]\\
=&\sum\limits_{\alpha\in S(-1)}[\Theta_{[w_1,z_\alpha]^{\sharp}}\Theta_{[z_\alpha^*, w_2]^{\sharp}},\Theta_v].
\end{split}
\end{equation}

Combining \eqref{thegoal} with \eqref{3.66angle} and \eqref{thethird}, we obtain
\begin{equation}\label{reachthegoal1}
\sum\limits_{\alpha\in S(-1)}\Theta_{[[w_1,v],z_\alpha]^{\sharp}}\Theta_{[z_\alpha^*, w_2]^{\sharp}}-\sum\limits_{\alpha\in S(-1)}\Theta_{[w_1,z_\alpha]^{\sharp}}\Theta_{[z_\alpha^*,[v,w_2]]^{\sharp}}
=\sum\limits_{\alpha\in S(-1)}[\Theta_{[w_1,z_\alpha]^{\sharp}}\Theta_{[z_\alpha^*, w_2]^{\sharp}},\Theta_v].
\end{equation}
Interchanging the roles of $w_1$ and $w_2$ in \eqref{reachthegoal1}, we have
\begin{equation}\label{reachthegoal2}
\begin{split}
&\sum\limits_{\alpha\in S(-1)}(-1)^{|w_1||w_2|}\Theta_{[[v,w_2],z_\alpha]^{\sharp}}\Theta_{[z_\alpha^*, w_1]^{\sharp}}-\sum\limits_{\alpha\in S(-1)}(-1)^{|w_1||w_2|}\Theta_{[w_2,z_\alpha]^{\sharp}}\Theta_{[z_\alpha^*,[w_1,v]]^{\sharp}}\\
=&-(-1)^{|w_1||w_2|}\sum\limits_{\alpha\in S(-1)}[\Theta_{[w_2,z_\alpha]^{\sharp}}\Theta_{[z_\alpha^*, w_1]^{\sharp}},\Theta_v].
\end{split}
\end{equation}
Moreover, it is worth noting that $[C-\Theta_{\text{Cas}},\Theta_v]=0$ by Proposition \ref{vcommutate}.
As an immediate consequence of \eqref{invariantform}, \eqref{key1}, \eqref{key2}, \eqref{reachthegoal1} and \eqref{reachthegoal2}, we have
\begin{equation*}
\begin{split}
&b([w_1,v],w_2)-b(w_1,[v,w_2])\\
=&[[\Theta_{w_1},\Theta_{w_2}],\Theta_v]+\frac{1}{2}\sum\limits_{\alpha\in S(-1)}[\Theta_{[w_1,z_\alpha]^{\sharp}}\Theta_{[z_\alpha^*, w_2]^{\sharp}},\Theta_v]\\
&-\frac{1}{2}\sum\limits_{\alpha\in S(-1)}(-1)^{|w_1||w_2|}[\Theta_{[w_2,z_\alpha]^{\sharp}}\Theta_{[z_\alpha^*, w_1]^{\sharp}},\Theta_v]\\
=&[([\Theta_{w_1},\Theta_{w_2}]-\frac{1}{2}([w_1,w_2],f)(C-\Theta_{\text{Cas}})+\frac{1}{2}\sum\limits_{\alpha\in S(-1)}\Theta_{[w_1,z_\alpha]^{\sharp}}\Theta_{[z_\alpha^*, w_2]^{\sharp}}\\
&-\frac{1}{2}\sum\limits_{\alpha\in S(-1)}(-1)^{|w_1||w_2|}\Theta_{[w_2,z_\alpha]^{\sharp}}\Theta_{[z_\alpha^*, w_1]^{\sharp}}),\Theta_v]\\
=&[1\otimes b(w_1,w_2),\Theta_v]=0,
\end{split}
\end{equation*}
where the last equation comes from the fact that $b(w_1,w_2)\in\mathbb{C}$. We complete the proof of \eqref{5.}.

On the other hand, Kac-Roan-Wakimoto gave a description of the $\ggg(0)^\sharp$-module $\ggg(1)$ in \cite[Proposition 4.1]{KRW} (see also \cite[Table 1-Table 3]{KW}), which showed that except the case with $\ggg=\mathfrak{sl}(2|2)/\mathbb{C}I$ (where $I$ denotes the unitary matrix), either $\ggg^e(1)$ is an irreducible $\ggg(0)^\sharp$-module, or $\ggg^e(1)\cong M\oplus M^*$ with $M$ and $M^*$ being irreducible $\ggg(0)^\sharp$-modules such that $M\ncong M^*$ (note that the grading on $\ggg$ applied there was under the action of $\text{ad}\,\frac{h}{2}$, thus $\ggg_{\frac{1}{2}}$ there is just $\ggg(1)$ in our case). If just consider the even part of $\ggg(0)^\sharp$, one can readily conclude from \cite[Table 1-Table 3]{KW} that $\ggg(0)^\sharp_{\bar0}$-module $\ggg(1)$ has the same property as described above,
except the case $\mathfrak{sl}(2|2)/\mathbb{C}I$.

Since $\ggg(0)^\sharp_{\bar0}=\ggg^e(0)_{\bar0}$ and $\ggg(1)=\ggg^e(1)$ in this situation, from all the discussion above one can conclude that $b=c_0([\cdot,\cdot],f)$ for some $c_0\in\mathbb{C}$ except  $\ggg=\mathfrak{sl}(2|2)/\mathbb{C}I$.

(Step 2) Now we turn to  the case when $\ggg=\mathfrak{sl}(2|2)/\mathbb{C}I$. In this case, it follows from \cite[Table 2]{KW}
that the $\ggg^e(0)_{\bar0}$-module $\ggg^e(1)$ is isomorphic to $\mathfrak{sl}_2$-module $\mathbb{C}^2\oplus\mathbb{C}^2$, thus the discussion in the end of (Step 1) can not be applied. So we need to calculate the value of \eqref{3.58} with $w_1,w_2\in\ggg^e(1)$.

Recall that $\mathfrak{sl}(2|2)\subseteq\mathfrak{gl}(2|2)$ consists of $4\times4$ matrices in the following $(2|2)$-block form
\begin{center}
$\begin{matrix}
\begin{pmatrix} A & B\\C & D \end{pmatrix}
\end{matrix}$,
\end{center}
where $A, B, C, D$ are all $2\times2$ matrices, and $\text{tr}\,A-\text{tr}\,D=0$. Then $\ggg$ is a quotient of  $\mathfrak{sl}(2|2)$ by the scalars of unitary matrix $I$. Denote by $e_{\bar i\bar j}, e_{\bar ij}, e_{i\bar j}, e_{ij}\in\mathfrak{gl}(2|2)$ the matrix with $1$ in $ij$-entry of $A, B, C, D$ respectively, and $0$ other entries.
It is a direct consequence from PBW theorem that $\ggg$ has a basis
\begin{equation*}
\begin{split}
&h=e_{\bar1\bar1}-e_{\bar2\bar2}, H_1=e_{\bar2\bar2}+e_{11}, H_2=e_{11}-e_{22},\\
&e_{\bar1\bar2}, e_{\bar2\bar1}, e_{\bar11},e_{1\bar1},e_{\bar12}, e_{1\bar2}, e_{\bar21}, e_{2\bar1}, e_{\bar22}, e_{2\bar2}, e_{12}, e_{21}.
\end{split}
\end{equation*}

Consider the $\mathfrak{sl}_2$-triple $(e,h,f)$ with $e=e_{\bar 1\bar 2}, h=e_{\bar 1\bar 1}-e_{\bar 2\bar 2}, f=e_{\bar 2\bar 1}$. It is obvious that $e$ is a minimal nilpotent element in $\ggg$. Set $str(\cdot,\cdot)$ to be the nondegenerate supersymmetric invariant bilinear form on $\ggg$, then we have $(e,f)=\frac{1}{2}(h,h)=1$.
Let ${\ggg}(i)=\{x\in{\ggg}\mid[h,x]=ix\}$, then ${\ggg}=\bigoplus_{i\in{\bbz}}{\ggg}(i)$.  It can be observed that
\begin{equation*}
\begin{array}{rlrl}
h, H_1, H_2, e_{12}, e_{21}&\in\ggg(0)_{\bar0},&&\\
e_{\bar11}, e_{\bar12}, e_{1\bar2}, e_{2\bar2}&\in\ggg(1)_{\bar1},&e_{\bar1\bar2}&\in\ggg(2),\\
e_{\bar21}, e_{\bar22}, e_{1\bar1}, e_{2\bar1}&\in\ggg(-1)_{\bar1},&e_{\bar2\bar1}&\in\ggg(-2).
\end{array}
\end{equation*}

As
\begin{equation*}
h+2H_1, H_2, e_{12}, e_{21}, e_{\bar1\bar2}, e_{\bar11}, e_{\bar12}, e_{1\bar2}, e_{2\bar2}
\end{equation*}
constitute a basis of $\ggg^e$, by Proposition \ref{v1v2} and Proposition \ref{vw} we can choose
\begin{equation*}
h+2H_1, e_{12}, e_{21}\in\ggg(0)_{\bar0},\qquad e_{\bar11}, e_{1\bar2}\in\ggg(1)_{\bar1}
\end{equation*}
as the generators of $\ggg^e$. Note that
\begin{equation*}1\otimes e_{\bar21}^2=\frac{1}{2}\otimes[e_{\bar21},e_{\bar21}]=0,\end{equation*}
and by the same discussion we have
\begin{equation*}
1\otimes e_{\bar22}^2=1\otimes e_{1\bar1}^2=1\otimes e_{2\bar1}^2=0.
\end{equation*}
Moreover, $e_{\bar21}$ and $e_{\bar22}$ are dual bases of $e_{1\bar1}$ and $e_{2\bar1}$ with respect to $(e,[\cdot,\cdot])$. 

Let $w_1, w_2$ be any of $e_{\bar11}$ and $e_{1\bar2}$. By case-by-case calculations one can obtain that
\begin{equation*}
\sum\limits_{\alpha,\beta\in S(-1)}(-1)^{|\alpha||w_1|+|\beta||w_1|+|\alpha||\beta|}[[z_\beta,[z_\alpha,w_1]],[z_\beta^*,[z_\alpha^*,w_2]]]=4([w_1,w_2],f).
\end{equation*}
Taking $ s=0, \sfr=4$ into account, it is immediate from \eqref{3.57} and \eqref{3.58} that
\begin{equation*}
\begin{split}
[\Theta_{w_1},\Theta_{w_2}]=&\frac{1}{2}([w_1,w_2],f)(C-\Theta_{\text{Cas}}-1)-\frac{1}{2}\sum\limits_{\alpha\in S(-1)}(\Theta_{[w_1,z_\alpha]^{\sharp}}\Theta_{[z_\alpha^*, w_2]^{\sharp}}\\
&-(-1)^{|w_1||w_2|}\Theta_{[w_2,z_\alpha]^{\sharp}}\Theta_{[z_\alpha^*, w_1]^{\sharp}}).
\end{split}
\end{equation*}
Thus $c_0=1$ in this case.

Summing up both results in Steps (1) and (2), and also \eqref{3.57}, \eqref{3.58}, we complete the proof of Proposition \ref{1commutator}.

\begin{rem}
For the Lie algebra version of Proposition \ref{1commutator}, Premet made a crucial use of the machinery of associated varieties and Joseph ideal for Lie algebras, by which Premet calculated the exact values of $c_0$ for each type of simple Lie algebras, respectively. Unfortunately, we are in lack of such a powerful tool for Lie superalgebras. On the other hand, it is still a hard work to compute the exact values of $c_0$ in Proposition \ref{1commutator} with \eqref{3.58}. Since our finial result in Theorem \ref{intromainminnimalf} does not have much to do with the exact values of $c_0$, we  give up the calculation of $c_0$ in the present paper.
\end{rem}

\vskip0.2cm

{\bf Acknowledgements} \; The authors got much help from Suh, who explained some results in her paper \cite{suh}. The authors express great thanks to her.

\end{document}